\DeclareMathAlphabet\oldmathcal{OMS}        {cmsy}{b}{n}
\SetMathAlphabet    \oldmathcal{normal}{OMS}{cmsy}{m}{n}
\DeclareMathAlphabet\oldmathbcal{OMS}       {cmsy}{b}{n}
\def\TT{{\mathbb{T}}}
\def\PP{{\mathbb{P}}} 
\def\ZZ{{\mathbb{Z}}}
\def\CC{{\mathbb{C}}}
\def\RR{{\mathbb{R}}} 
\def\TT{{\mathbb{T}}}
\newtheorem{thm}{Theorem}[section]
\newtheorem{question}[thm]{Question}
\newtheorem{cor}[thm]{Corollary}
\newtheorem{lemma}[thm]{Lemma}
\newtheorem{prop}[thm]{Proposition}
\newtheorem{Def}[thm]{Definition}
\newtheorem{def/prop}[thm]{Definition/Proposition}
\newtheorem*{ack}{Acknowledgements}
\newenvironment{example}{\medskip \refstepcounter{theorem}
\noindent  {\bf Example \thetheorem}.\rm}{\,}
\def\<{\langle}
\def\>{\rangle}
\def\BOne{{\mathchoice {\rm 1\mskip-4mu l} {\rm 1\mskip-4mu l}
                          {\rm 1\mskip-4.5mu l} {\rm 1\mskip-5mu l}}}
\def\fract#1#2{\raise4pt\hbox{$ #1 \atop #2 $}}
\def\decdnar#1{\phantom{\hbox{$\scriptstyle{#1}$}}
\left\downarrow\vbox{\vskip15pt\hbox{$\scriptstyle{#1}$}}\right.}
\def\bbc{{\mathbb C}}
\def\bbp{{\mathbb P}}
\def\bbq{{\mathbb Q}}
\def\bbr{{\mathbb R}}
\def\bbs{{\mathbb S}}
\def\bbt{{\mathbb T}}
\def\bbz{{\mathbb Z}}
\def\gra{\alpha}
\def\grb{\beta}
\def\gre{\epsilon}
\def\grk{\kappa}
\def\gro{\omega}
\def\grr{\rho}
\def\grt{\tau}
\def\grD{\Delta}
\def\grG{\Gamma}
\def\grO{\Omega}
\def\grS{\Sigma}
\def\bfb{{\bf b}}
\def\bfu{{\bf u}}
\def\bfv{{\bf v}}
\def\bfw{{\bf w}}
\def\cala{{\mathcal A}}
\def\calb{{\mathcal B}}
\def\calo{{\mathcal O}}
\def\cald{{\mathcal D}}
\def\cale{{\mathcal E}}
\def\calf{{\mathcal F}}
\def\calh{{\mathcal H}}
\def\call{{\mathcal L}}
\def\calm{{\mathcal M}}
\def\calo{{\mathcal O}}
\def\calr{{\mathcal R}}
\def\cals{{\oldmathcal S}}
\def\calv{{\mathcal V}}
\def\la#1{\hbox to #1pc{\leftarrowfill}}
\def\ra#1{\hbox to #1pc{\rightarrowfill}}
\def\calz{{\oldmathcal Z}}
\def\X{\frak{X}}
\def\ga{{\mathfrak a}}
\def\gb{{\mathfrak b}}
\def\gf{{\mathfrak f}}
\def\gg{{\mathfrak g}}
\def\gh{{\mathfrak h}}
\def\gl{{\mathfrak l}}
\def\gn{{\mathfrak n}}
\def\go{{\mathfrak o}}
\def\gr{{\mathfrak r}}
\def\gs{{\mathfrak s}}
\def\gt{{\mathfrak t}}
\def\gu{{\mathfrak u}}
\def\gA{{\mathfrak A}}
\def\gB{{\mathfrak B}}
\def\gC{{\mathfrak C}}
\def\gF{{\mathfrak F}}
\def\gM{{\mathfrak M}}
\def\hook{\mathbin{\hbox to 6pt{%
                 \vrule height0.4pt width5pt depth0pt
                 \kern-.4pt
                 \vrule height6pt width0.4pt depth0pt\hss}}}
\def\R{\mathbb R}\def\C{\mathbb C}\def\Z{\mathbb Z} \def\Q{\mathbb Q}  
\def\bT{\mathbb T}
\def\bu{{\bf n}}
\def\kt{\mathfrak{t}}
\def\rar{\rightarrow}
\def\Span{\mbox{span}}
\def\im{\mbox{Im}}
 \newcommand*{\quot}[2]%
{\ensuremath{%
   \raisebox{.35ex}{\ensuremath{#1}}\big/\raisebox{-.35ex}{\ensuremath{#2}}}}
 \theoremstyle{remark}
 \newtheorem{rem}[thm]{Remark}
\title{Reducibility in Sasakian Geometry} 
\author{Charles P. Boyer}
\author{ Hongnian Huang}
\author{Eveline Legendre}
\author{Christina W. T{\o}nnesen-Friedman}
\date{\today}
\address{Charles P. Boyer, Department of Mathematics and Statistics,
University of New Mexico, Albuquerque, NM 87131.}
\email{cboyer@math.unm.edu} 
\address{Hongnian Huang, Department of Mathematics and Statistics,
University of New Mexico, Albuquerque, NM 87131.}
\email{hnhuang@gmail.com} 
\address{Eveline Legendre\\ Universit\'e Paul Sabatier\\
Institut de Math\'ematiques de Toulouse\\ 118 route de Narbonne\\
31062 Toulouse\\ France}
\email{eveline.legendre@math.univ-toulouse.fr}
\address{Christina W. T{\o}nnesen-Friedman, Department of Mathematics, Union
College, Schenectady, New York 12308, USA } \email{tonnesec@union.edu}
\thanks{The first author was partially supported by a grant (\#245002) from the Simons Foundation. The third author is partially supported by France ANR project EMARKS No ANR-14-CE25-0010. The fourth author was partially supported by grant \#208799 from the Simons Foundation.}
\keywords{Sasakian, reducible, decomposable, join}
\subjclass{53C25 primary, 53C21 secondary}
\begin{document}

\maketitle

\markboth{Reducibility in Sasakian Geometry}{C. Boyer, H. Huang, E. Legendre and C. T{\o}nnesen-Friedman}

\begin{abstract}
The purpose of this paper is to study reducibility properties in Sasakian geometry. First we give the Sasaki version of the de Rham Decomposition Theorem; however, we need a mild technical assumption on the Sasaki automorphism group which includes the toric case. Next we introduce the concept of {\it cone reducible} and consider $S^3$ bundles over a smooth projective algebraic variety where we give a classification result concerning contact structures admitting the action of a 2-torus of Reeb type. In particular, we can classify all such Sasakian structures up to contact isotopy on $S^3$ bundles over a Riemann surface of genus greater than zero. Finally, we show that in the toric case an extremal Sasaki metric on a Sasaki join always splits.
\end{abstract}

\tableofcontents

\section{Introduction}

A fundamental result in Riemannian geometry is the de Rham decomposition theorem stating that any simply connected complete Riemannian manifold with reducible holonomy is the product of simply connected complete Riemannian manifolds. Of course, the product cannot make sense in the Sasaki category because a product of Sasaki manifolds cannot be Sasaki for dimensional reasons. However, we can consider product structures that are transverse to the characteristic foliation of the Reeb vector field. This leads directly to the concept of a reducible Sasakian structure which is the subject of this paper.

This notion of reducibility in Sasakian geometry was first discussed in the context of the ``join construction'' for quasi-regular Sasaki-Einstein manifolds in \cite{BG00a}, and later developed in the general Sasakian context in \cite{BGO06}. However, recently a more intrinsic definition of reducibility on the tangent level was given by He and Sun \cite{HeSu12b}, and following the Riemannian analog we shall refer to the join as {\it decomposable}.

Given two quasi-regular Sasaki manifolds $M_1, M_2$ with Reeb vector fields $\xi_1,\xi_2$, respectively and a pair of relatively prime integers $(l_1, l_2)$, one can construct a new quasi-regular Sasaki manifold $M_3= M_1 \star_{l_1, l_2} M_2$ whose Reeb vector field $\xi_3$ is a certain linear combination of $\xi_1$ and $\xi_2$ and whose underlying manifold is the quotient of the product $M_1\times M_2$ by an $S^1$ action depending on $l_1$ and $l_2$. This construction is known as the {\it join construction} and is the analogue of the product in Sasakian geometry, that is the closest notion one can define in this category. Let $S^1_\xi$ denote the $S^1$ action generated by the Reeb vector field $\xi$. In particular, there is a K\"ahler isometry between the K\"ahler orbifolds $(\quot{M_3}{S^1_{\xi_{3}}},\gro_3)$ and $(\quot{M_1}{S^1_{\xi_{1}}} \times \quot{M_2}{S^1_{\xi_{2}}},l_1\gro_1+l_2\gro_2)$.

The purpose of this paper is to study further the concept of reducible Sasakian structures and the join of Sasaki manifolds. We address three main issues concerning this notion.

\subsection{Are Simply Connected Reducible Sasaki Manifolds always a Join?}
This is essentially Question 1.1 of He-Sun \cite{HeSu12b} and the theme of Section \ref{joinredsect}.
As in the product operation, the resulting structure has a special splitting or {\it reducible} property in terms of holonomy. However, on a Sasaki manifold we have to consider the so-called `transverse holonomy' that is the holonomy of the metric $g^T$ that is transverse to the characteristic foliation $\calf_\xi$ generated by the Reeb vector field $\xi$. The concept of a transverse decomposable Sasakian structure is that of the join as mentioned previously. Contrary to the Riemannian or K\"ahlerian case, this reduciblility property is not quite as obvious in the Sasakian case. 

He and Sun \cite{HeSu12b} gave a definition of reducible Sasaki manifold (see definition \ref{HSdef}) and proved that a reducible Sasaki manifold whose irreducible components have either transverse positive Ricci curvature or a transverse flat structure must be quasi-regular. That is, irregular Sasakian structures are irreducible in this case. 

Here we extend the He-Sun result to more general cases which include the toric case. However, at this stage we fall short of a complete answer to the question. Our first main result is:

\begin{thm}\label{theoREDimpliesREG} Let $\cals=(\xi,\eta,\Phi,g)$ be a reducible Sasakian structure on a compact connected manifold $M$. Assume further that each piece (not necessarily irreducible) has Sasaki automorphism groups of dimension greater than one. Then $\xi$ is quasi-regular. Furthermore, if $M$ is simply connected, $\cals$ is the join of quasi-regular compact Sasaki manifolds with Sasaki automorphism groups of dimension greater than one. Equivalently $\cals$ is decomposable.  
 \end{thm}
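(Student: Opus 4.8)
The plan is to combine a transverse de Rham decomposition of the transverse metric $g^T$ with the rigidity coming from the Sasaki automorphism group to force quasi-regularity, and then, in the simply connected case, to integrate the resulting transverse splitting to a global join. First, recall that reducibility (Definition \ref{HSdef}) gives a parallel splitting of the transverse tangent bundle $D = \bigoplus_a D_a$ with respect to the transverse Levi-Civita connection, holonomy-invariant and $\Phi$-invariant, so each $D_a$ is a transverse Kähler foliation factor; by hypothesis each piece has Sasaki automorphism group of dimension $>1$. The key point is that on each piece the transverse automorphism group contains a torus of dimension at least $2$ acting by transverse holomorphic isometries, and in particular one can choose in the Sasaki cone of that piece a quasi-regular Reeb-type vector field. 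The obstruction to quasi-regularity of $\xi$ itself is that the closure of the flow of $\xi$ is a torus $T_\xi$; the argument is to show that, piece by piece, the transverse structure admits a compatible quasi-regular deformation and that these can be chosen coherently. Concretely, I would pass to the Kähler cone $(C(M),\bar g, J)$, where the splitting $D = \bigoplus D_a$ together with the $\ddb$-nature of the cone metric produces a Kähler product decomposition of a covering of the cone minus its apex (this is the cone-reducible picture alluded to in the abstract), and the extra torus symmetry on each factor lets one run a rationality/density argument in the dual of the Lie algebra of the acting torus: the Reeb vector field lies in an open cone of the Lie algebra $\mathfrak{t}$ of a torus of dimension $\geq 2$ on each factor, hence can be approximated — but here one must upgrade approximation to exact quasi-regularity, and this is where the dimension-$>1$ hypothesis is used essentially, exactly as in He–Sun, since with a $2$-torus the quasi-regular locus in the Sasaki cone of each factor is dense and one shows the actual $\xi$ must already lie there.

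The main obstacle, and the heart of the argument, is to pass from ``each irreducible/indecomposable piece individually admits a quasi-regular Reeb field'' to ``the given $\xi$, which couples all the pieces, is itself quasi-regular.'' I would handle this by the following steps. (i) Show that the transverse holonomy splitting integrates: on the universal cover the leaf space (as an orbifold) of $\calf_\xi$ splits as a product of Kähler orbifolds $W = W_1 \times \cdots \times W_k$, using that the $D_a$ are parallel and $\Phi$-invariant and that $\xi$ normalizes each (so the splitting descends to the transverse level). (ii) Identify the torus $T_\xi$ (closure of the Reeb flow) and note it acts on each factor; the hypothesis that each piece has automorphism group of dimension $>1$ means the induced transverse automorphism group of each $W_a$ has a torus of rank $\geq 1$ beyond $\xi$, so the total acting torus has rank $\geq k$ with one ``extra'' circle per factor. (iii) Use the structure theory of the Sasaki cone (type II deformations): the Reeb vector field $\xi = \sum c_a \xi_a$ decomposes along the factors, and the condition that all the $\xi_a$ together with the extra circles generate a torus forces each $c_a \xi_a$ to have closed orbits, i.e. each piece is quasi-regular, hence $\xi$ is quasi-regular. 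The delicate part is (iii): one must rule out an irrational ``tilt'' that mixes a factor direction with an extra-symmetry direction; this is exactly where having a genuine $2$-torus (rather than just an extra $\RR$) on each piece is indispensable, because it makes the lattice of closed orbits in that factor's Sasaki cone a full-rank object whose rational points are dense, and a continuity/closedness argument then pins $\xi$ down.

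For the final ``decomposable'' assertion when $M$ is simply connected: once $\xi$ is quasi-regular, the leaf space $W = M/S^1_\xi$ is a compact Kähler orbifold, and the transverse splitting gives $W = W_1 \times \cdots \times W_k$ with $W_a$ simply connected (orbifold) Kähler–Sasaki quotients, each $\gro_a$ a multiple of an integral orbifold Kähler class. I would then invoke the standard description of the join: $M$ is recovered, up to the $S^1$ action, as the total space of the $S^1$-bundle over $W_1\times\cdots\times W_k$ whose Euler class is $\sum l_a [\gro_a]$ for appropriate coprime integers $l_a$ (coprimality arranged by scaling within each Sasaki cone), which is precisely $M_1 \star_{l_1,\dots,l_k} M_2 \star \cdots$, the iterated join of the $M_a$. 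Simple connectivity of $M$ together with the homotopy exact sequence of the $S^1$-bundle forces the $M_a$ to be simply connected (or more precisely forces the orbifold fundamental groups to vanish), and the automorphism-dimension hypothesis is inherited by each $M_a$ from the corresponding factor. This last part is essentially bookkeeping once the transverse product structure and quasi-regularity are in hand; the only care needed is matching the orbifold structures and the coprimality of the join weights, which one fixes by a rescaling argument inside the product of Sasaki cones.
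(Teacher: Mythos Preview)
Your proposal has a genuine gap at exactly the point you yourself flag as ``the heart of the argument.'' In step (iii) you assert that the fact that the $\xi_a$ together with the extra circles generate a torus ``forces each $c_a\xi_a$ to have closed orbits,'' and then appeal to density of quasi-regular directions in each factor's Sasaki cone plus an unspecified ``continuity/closedness argument'' to conclude that $\xi$ itself is quasi-regular. But density is the wrong kind of statement here: the set of quasi-regular Reeb vectors is dense in the Sasaki cone and \emph{not} closed, so no amount of approximation pins down the actual $\xi$. You never say what the closedness argument is, and there is no obvious one along these lines. Knowing that each factor carries a $2$-torus only tells you there are many nearby quasi-regular choices; it does not by itself rule out the given $\xi$ being an irrational combination. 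This is precisely the subtle point, and your outline does not resolve it.

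The paper's route is entirely different and bypasses this difficulty through convex geometry of the moment cone. One first shows (Lemma \ref{lemQUOTsplit}) that reducibility forces the quotient $\gt_k/\bbr\xi$ of the Lie algebra of a maximal torus to split as $\gg_1\oplus\gg_2$, and the hypothesis ``each piece has automorphism group of dimension $>1$'' is exactly that neither $\gg_i$ is trivial. One then proves that the $\eta$-moment polytope $P=\mu_\eta(M)\subset\gt_k^*$ is a \emph{product} of two nontrivial labelled polytopes $P_1\times P_2$ (Proposition \ref{propREDimpliesREG}). The key new ingredient is a purely lattice-theoretic lemma (Proposition \ref{prodpolyquasi}): if a product labelled polytope is characteristic to a rational cone, then each factor is a rational labelled polytope, and in particular the product itself is rational. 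Since the moment cone of a contact torus action is always rational with respect to the lattice of circle subgroups, this applies, and rationality of the characteristic polytope is equivalent to quasi-regularity of $\xi$ (Proposition \ref{bg00b}). No approximation or closedness is invoked; the argument is a direct rank count on $\bbz$-modules of defining affine functions.

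Your treatment of the second assertion (decomposability when $M$ is simply connected, given quasi-regularity) is essentially correct and matches the paper: pass to the K\"ahler orbifold quotient, apply the orbifold de Rham decomposition (Lemma \ref{orbdeRh}) to split it as a product, and then run orbifold Boothby--Wang to realize $M$ as a join (Proposition \ref{redjoin}). So the gap is confined to the quasi-regularity step.
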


As a particular case we have

\begin{cor}\label{toricred}
An irregular toric Sasaki manifold is irreducible.
\end{cor}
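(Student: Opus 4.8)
The plan is to deduce Corollary \ref{toricred} from Theorem \ref{theoREDimpliesREG} by contraposition: it is enough to show that \emph{every} reducible toric Sasakian structure automatically has all of its pieces with Sasaki automorphism group of dimension greater than one. Granting this, Theorem \ref{theoREDimpliesREG} forces the Reeb field of any reducible toric Sasakian structure to be quasi-regular, and hence an irregular toric Sasakian structure must be irreducible.

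So suppose $\cals=(\xi,\eta,\Phi,g)$ is a reducible toric Sasakian structure on $M^{2n+1}$, so that a torus $T$ of dimension $n+1$ lies in $\mathrm{Aut}(\cals)$, acts effectively, and has $\xi$ in its Lie algebra $\mathfrak{t}$. Write the transverse splitting $\ker\eta = D_1 \oplus \cdots \oplus D_r$ corresponding to the pieces $\cals_1,\dots,\cals_r$, of transverse complex dimensions $n_1,\dots,n_r$ with $\sum_i n_i = n$; each $n_i\ge 1$ since a piece is an honest Sasakian structure, hence of dimension at least three. The transverse de Rham decomposition of $g^T$ is canonical up to permutation of mutually isometric factors, so it is preserved by the identity component $\mathrm{Aut}_0(\cals)$, in particular by the connected torus $T$; thus $T$ acts on the transverse holomorphic structure of each piece $\cals_i$ preserving it.

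Now run a dimension count. Let $\mathfrak{k}_i\subseteq\mathfrak{t}$ be the subalgebra acting trivially on the transverse structure along $D_i$. Since a torus acting effectively on a transverse K\"ahler structure of complex dimension $n_i$ has rank at most $n_i$, we get $\dim\mathfrak{k}_i \ge (n+1)-n_i$; on the other hand a vector in $\mathfrak{k}_1\cap\cdots\cap\mathfrak{k}_r$ acts trivially on all of $\ker\eta$ and hence, being a Killing field, is a constant multiple of $\xi$, so $\dim(\mathfrak{k}_1\cap\cdots\cap\mathfrak{k}_r)=1$. Inclusion--exclusion inside the $(n+1)$-dimensional $\mathfrak{t}$ then forces $\dim\mathfrak{k}_i=(n+1)-n_i$ for every $i$, so $T$ induces on each piece an effective transverse torus action of the maximal rank $n_i$, which together with the Reeb direction shows $\dim\mathrm{Aut}(\cals_i)\ge n_i+1\ge 2$. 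This is exactly the hypothesis of Theorem \ref{theoREDimpliesREG}, so $\xi$ is quasi-regular and the corollary follows. The one point requiring care is the precise meaning of the ``induced transverse action on $\cals_i$'' and the rank bound $\dim\mathfrak{k}_i\ge(n+1)-n_i$ when the pieces are not a priori quasi-regular, so that $M_i$ need not be a manifold; this is handled either by working throughout with the transverse holomorphic structure and its automorphism algebra, or by passing to the K\"ahler cone $C(M)$ and using that a reducible transverse structure makes the cone split as a product of toric K\"ahler cones.
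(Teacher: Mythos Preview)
Your approach is the same as the paper's: the corollary is stated there as ``a particular case'' of Theorem~\ref{theoREDimpliesREG} with no separate proof, so the only content is the verification you supply, namely that for a toric reducible Sasakian structure the hypothesis of Theorem~\ref{theoREDimpliesREG} is automatically met. Your dimension count is essentially correct and in fact proves more than needed (you obtain $\dim\gg_i=n_i$, whereas one only needs $\gg_i\neq 0$ in the splitting $\gt/\bbr\xi=\gg_1\oplus\gg_2$ of Lemma~\ref{lemQUOTsplit}, which via Proposition~\ref{propREDimpliesREG} is equivalent to the hypothesis of Theorem~\ref{theoREDimpliesREG}).

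The one point you flag yourself---the rank bound $\dim(\gt/\gk_i)\le n_i$---does require justification, and your suggested fixes are a bit vague. Here is the clean argument. Since $\eta$ is $\bbt^{n+1}$-invariant and the torus is abelian, for any $a,b\in\gt$ one has $d\eta(X_a,X_b)=X_a\eta(X_b)-X_b\eta(X_a)-\eta([X_a,X_b])=0$, so the torus orbits are isotropic for $d\eta$. At a point $p$ where the action is free the $\cald$-projection of the orbit is an $n$-dimensional Lagrangian $L\subset\cald_p$. By Lemma~\ref{lemQUOTsplit}, $X_a^{\cald}\in\cald_i$ when $a\in p^{-1}(\gg_i)$, so $L$ splits as $L_1\oplus L_2$ with $L_i\subset(\cald_i)_p$ isotropic for $d\eta|_{\cald_i}$; hence $\dim L_i\le n_i$. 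Since $\dim L_i=\dim\gg_i$ (the map $a\mapsto X_a^{\cald}(p)$ is injective on $\gt/\bbr\xi$ at a free point) and $\dim\gg_1+\dim\gg_2=n=n_1+n_2$, one gets $\dim\gg_i=n_i\ge 1$. This closes the gap and your deduction from Theorem~\ref{theoREDimpliesREG} goes through.
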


Even when one piece has a one dimensional Sasaki automorphism group we can say more if that piece also has either positive transverse Ricci curvature or is transversally flat by incorporating the He-Sun result with ours using Molino theory. This is Proposition \ref{poss1} below.

\subsection{When is a Sasaki Manifold Cone Reducible?}
In Section \ref{coneredsect} we introduce the concepts of {\it cone reducible (decomposable)} in which case the given Sasakian structure is not reducible (decomposable); however, one can obtain a reducible (decomposable) Sasakian structure after deforming in the Sasaki cone. We concentrate our attention mainly, though not entirely, on $S^3$ bundles over a smooth compact projective algebraic variety. The reason for this is two-fold. First many such examples of this occur in \cite{BoTo13,BoTo14a}, and second a well known theorem of Eliashberg \cite{Eli92} says that there is only one tight contact structure on $S^3$ up to oriented isotopy, and that is the standard contact structure. This allows us to give certain classification results.

\begin{question}
Which Sasakian structures can be obtained from the join construction allowing for deformations of the Reeb vector field in the Sasaki cone?
\end{question}

Such a Sasakian structure is said to be {\it cone decomposable}. See Definition \ref{conered} below.

\begin{thm}\label{s3riemcor}
Let $M$ be an $S^3$-bundle over a compact smooth algebraic variety of the form $N=N'\times A(N)$ where $N'$ has finite automorphism group and $A(N)$ is the Albanese variety of $N$, and let $\cald$ be a co-oriented contact structure on $M$ with an effective $\bbt^2$ action of Reeb type. Then $\bbt^2$ acts trivially on all $N$. Suppose  further that the Picard number $\grr(N)=1$, then $(M,\cald)$ is of Sasaki type and there is an underlying $\bbt^2$ invariant Sasaki CR structure $(\cald,J)$ that is cone decomposable, i.e. there is a Sasakian structure $\cals\in\gt^+_2$ that is isomorphic to an $S^3_\bfw$-join construction of \cite{BoTo14a}. 
\end{thm}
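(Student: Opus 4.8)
The plan is to prove the two assertions separately: first that the $\bbt^2$-action on the base $N$ is trivial (for which no hypothesis on $\grr(N)$ is needed), and then, assuming $\grr(N)=1$, that $(M,\cald)$ is of Sasaki type and cone decomposable. Since $\bbt^2$ is of Reeb type, I would begin by averaging a defining contact form to obtain a $\bbt^2$-invariant contact form $\eta$ with $\ker\eta=\cald$ and Reeb field $\xi\in\gt_2$, together with a $\bbt^2$-invariant K-contact metric adapted to $\xi$ (which always exists, by averaging an associated metric over the closure of the action). The next step is to check that the $\bbt^2$-action preserves the $S^3$-bundle structure, hence descends to a holomorphic action on $N=N'\times A(N)$; by K\"unneth one has $\mathrm{Aut}^0(N)=\mathrm{Aut}^0(N')\times\mathrm{Aut}^0(A(N))=\{e\}\times A(N)$, so the image of $\bbt^2$ is a subtorus $T''\subseteq A(N)$ acting by translations. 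To rule out $T''\neq\{e\}$ I would use the Albanese map $\mathrm{alb}\colon M\to A(N)$: for a nonzero holomorphic $1$-form $\alpha$ on $A(N)$, the pull-back $\mathrm{alb}^*\alpha$ is a $\bbt^2$-invariant closed $1$-form with $\iota_X\,\mathrm{alb}^*\alpha=\alpha(\overline X)$ constant, where $\overline X$ denotes the translation vector field induced on $A(N)$ by $X\in\gt_2$. Taking $X=\xi$ and invoking the classical fact that on a compact K-contact manifold every harmonic $1$-form is $\xi$-basic, one gets $\alpha(\overline\xi)=\mathcal L_\xi f$ for a smooth function $f$, which on a compact manifold forces $\alpha(\overline\xi)=0$; hence $\xi$ lies in the kernel of $\gt_2\to\mathrm{Lie}(A(N))$. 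A short supplementary argument --- any surviving generator $Y$ with $\overline Y\neq 0$ would be nowhere vanishing and would generate a free circle action on $M$ covering translations of $A(N)$, incompatible with the Reeb dynamics on the compact total space --- then yields $T''=\{e\}$, i.e.\ $\bbt^2$ acts trivially on $N$.

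For the second assertion, observe first that $\grr(N)=1$ forces $A(N)$ to be a point: otherwise $A(N)$ is a positive-dimensional abelian variety and, both factors being projective, $\grr(N)\geq\grr(N')+\grr(A(N))\geq 2$. Thus $N=N'$ has finite automorphism group, $\mathrm{Pic}^0(N)=0$, and $\mathrm{Pic}(N)=\mathrm{NS}(N)=\ZZ\langle L_0\rangle\oplus(\text{torsion})$ with $L_0$ ample. Since $\bbt^2$ now acts fibrewise on $M\to N$ and an effective $\bbt^2$-action on $S^3$ is, up to an automorphism of $\bbt^2$ and conjugation, the standard linear one, the structure group reduces and $M\cong S(L_1\oplus L_2)\cong S(\calo\oplus L)$ with $L=L_1^{-1}\otimes L_2$. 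Picard rank one gives $L\equiv L_0^{m}$ modulo torsion, and --- up to the bundle identifications used in \cite{BoTo14a} --- this exhibits $M$ as a join manifold $M_{l_1,l_2,\bfw}$ of \cite{BoTo14a} for suitable $l_1,l_2$ and a primitive weight vector $\bfw=(w_1,w_2)$, carrying an explicit Sasakian structure $\cals_0$ with underlying CR structure $(\cald_0,J_0)$ and a $\bbt^2$-action of Reeb type. It then remains to identify the two contact structures: along a fibre, $\cald$ and $\cald_0$ induce $\bbt^2$-invariant, necessarily tight, contact structures on $S^3$, which by Eliashberg \cite{Eli92} both coincide with the standard (weighted) model; a parametrised Gray-stability argument over $N$ --- here $\grr(N)=1$ is used to see that the space of $\bbt^2$-invariant contact structures restricting to this model on the fibres is connected --- then upgrades this to a global $\bbt^2$-equivariant contactomorphism $\phi$. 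Transporting $J_0$, the pair $(\cald,\phi_*J_0)$ is a $\bbt^2$-invariant Sasaki CR structure whose Sasaki cone is $\gt^+_2=\gt^+(\cald,\phi_*J_0)$, and $\phi_*\cals_0\in\gt^+_2$ is the desired $S^3_\bfw$-join; in particular $(M,\cald)$ is of Sasaki type and cone decomposable.

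I expect the crux to be this last step: promoting the fibrewise rigidity supplied by Eliashberg's theorem to a $\bbt^2$-equivariant contactomorphism over all of $N$ requires a careful parametrised Gray-stability (Moser-type) argument, and it is precisely here that the Picard-rank-one hypothesis enters, ruling out nontrivial twists of $\cald$ along the base. The other delicate point is in the first part, namely the verification that $\bbt^2$ preserves the bundle structure, which must be settled before the Hodge-theoretic obstruction can be applied.
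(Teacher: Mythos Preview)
There is a concrete error in your reduction for the second part: the claim that $\grr(N)=1$ forces $A(N)$ to be a point is false. Take $N'=\{\mathrm{pt}\}$ and $N=A(N)$ any elliptic curve (or more generally a simple abelian variety with $\mathrm{End}=\ZZ$); then the product hypothesis $N=N'\times A(N)$ holds trivially, $N'$ has finite automorphism group, and $\grr(N)=1$, yet $A(N)$ is positive-dimensional. Your inequality $\grr(N)\geq\grr(N')+\grr(A(N))\geq 2$ fails because $\grr(\mathrm{pt})=0$. Consequently $\mathrm{Pic}^0(N)$ need not vanish, and line bundles with $c_1(L)=n[\gro_N]$ form a $\mathrm{Pic}^0(N)$-torsor rather than a singleton; only one choice in that torsor yields the genuine join, the others giving cone reducible but indecomposable CR structures (the paper makes exactly this point in the remark following its main $S^3$-bundle theorem and in the ruled-surface example).

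More seriously, your route to cone decomposability via a parametrised Gray--stability argument is not the paper's, and the key input --- connectedness of the space of $\bbt^2$-invariant contact structures on $M$ restricting to the standard model on fibres --- is asserted without justification; I do not see how $\grr(N)=1$ would imply it. The paper bypasses this entirely by working intrinsically with the given $\cald$: it first shows (from the $\bbt^2$-invariant K-contact structure) that each fibre is a contact submanifold, invokes Eliashberg only to identify the induced fibrewise contact structure as the standard one, reduces the structure group to $S^1$ so that $M$ is the sphere bundle of a split rank-two holomorphic bundle, and then chooses an almost regular Reeb vector field whose $S^1$-quotient is a log pair $(S_L,\grD)$ with $c_1(L)=n[\gro_N]$ (this is precisely where $\grr(N)=1$ enters, as it forces every $L\in\mathrm{Pic}(N)$ to satisfy $c_1(L)\in\ZZ[\gro_N]$). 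An explicit parameter-matching proposition then identifies this quotient with that of a specific $S^3_\bfw$-join. For the first assertion, your harmonic $1$-form argument for $\overline\xi=0$ is a reasonable alternative to the paper's Hamiltonian-lifting criterion, but the ``short supplementary argument'' for the second generator (``incompatible with the Reeb dynamics'') is not an argument; the paper handles both generators at once via Bredon's theorem on effective connected group actions on tori together with the observation that translation vector fields on $A(N)$ are not Hamiltonian and hence cannot arise from $\ga\gu\gt(\cals)$.
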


There are also choices of Sasaki CR structures that are cone reducible, but not cone decomposable. An example is given by Example \ref{rulsurfex} below. See also Remark \ref{nneq0rem}. It also follows from the orbifold version of the de Rham decomposition, Lemma \ref{orbdeRh}, that any simply connected cone reducible Sasaki manifold is cone decomposable.

A particular case of interest where we obtain stronger results occurs when $N$ is a Riemann surface of genus $g$ studied in \cite{BoTo13} in which case the Picard number is automatically one. We recall that the Picard group of line bundles on $\grS_g$ has the form ${\rm Pic}(\grS_g)\approx T^{2g}\times \bbz$ where $T^{2g}$ is the Jacobian torus. However, for us line bundles of degree $-n$ are equivalent to those of degree $n$, so we are interested in ${\rm Pic}(\grS_g)/\bbz_2\approx T^{2g}\times \bbz_{\geq 0}$. Here we let $\gM_g$ denote the moduli space of complex structures on $\grS_g$.

\begin{thm}\label{Riesurfcor}
Let $M^5$ be an $S^3$-bundle over a Riemann surface $\grS_g$ of genus $g>0$ with a contact structure $\cald$ that admits an action of a $2$-torus $\bbt^2$ of Reeb type. Then each compatible $\bbt^2$-invariant CR structure $(\cald,J)$ is of Sasaki type and cone reducible. Moreover, if $g>1$ for each fixed $\grt\in \gM_g(\grS_g)$, there is a one-to-one correspondence between such CR structures and elements of ${\rm Pic}(\grS_g)/\bbz_2\approx T^{2g}\times\bbz_{\geq 0}$ and for each degree $n\in\bbz$ there is precisely one choice of CR structure $(\cald,J)$ that is cone decomposable. If $g=1$ and the degree of the line bundle is zero, then for each fixed $\grt\in \gM_g(\grS_1)$ there is a one-to-one correspondence between cone reducible $\bbt^2$ invariant CR structures and points of $\bbc\bbp^1$; whereas, if the line bundle has degree $n> 0$, then up to biholomorphism there is precisely one $\bbt^2$ invariant CR structure and it is cone decomposable.
\end{thm}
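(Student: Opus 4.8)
The plan is to reduce the statement to a classification of holomorphic $\mathbb{CP}^1$-bundles over $\Sigma_g$ together with the compatible transverse K\"ahler data, and then feed that into the $S^3_{\bf w}$-join machinery of \cite{BoTo14a}. First I would observe that a co-oriented contact structure $\cald$ on $M^5$ carrying an effective $\bbt^2$-action of Reeb type is automatically of Sasaki type: any Reeb vector field in the interior of the moment cone generates a quasi-regular (after perturbation) Sasakian structure whose transverse geometry is a K\"ahler orbifold fibering over $\grS_g$, and since $H^1(\grS_g;\ZZ)$ has no torsion the base of that fibration is actually the smooth surface $\grS_g$. This is where Theorem \ref{s3riemcor} applies with $N=\grS_g$ (so $N'$ is a point when $g=0$, but here $g>0$ so $N=A(N)$ is its own Albanese and $\grr(N)=1$): the $\bbt^2$ acts trivially on the base, hence every $\bbt^2$-invariant compatible CR structure $(\cald,J)$ induces a holomorphic structure making $M$ the unitization of a rank-two holomorphic bundle $E\to\grS_g$, i.e. $M=S(E)$ with $E=L_1\oplus L_2$ splitting because the $\bbt^2$-action lifts to $E$ and decomposes it into weight spaces. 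Twisting, we may take $E={\mathcal O}\oplus L_n$ where $L_n$ has degree $n$; the sign ambiguity $n\leftrightarrow -n$ comes from swapping the two summands, which is why the relevant parameter space is $\mathrm{Pic}(\grS_g)/\ZZ_2\approx T^{2g}\times\ZZ_{\ge 0}$.

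Next I would establish cone reducibility in all cases. The point is that the admissible (Calabi-type) construction on $S({\mathcal O}\oplus L_n)$ produces, for a suitable choice of quasi-regular Reeb field in $\gt_2^+$, a transverse K\"ahler metric which is a \emph{local} product of the Fubini--Study metric on the $\mathbb{CP}^1$-fiber direction and a K\"ahler--Einstein (or constant scalar curvature) metric on $\grS_g$ pulled back from the base; this is exactly the transverse-holonomy splitting in Definition \ref{HSdef}, so the Sasakian structure is reducible, hence the CR structure is cone reducible. For the decomposable refinement: the $S^3_{\bf w}$-join $M_{l_1,l_2,{\bf w}}=\grS_g\star_{l_1,l_2}S^3_{\bf w}$ of \cite{BoTo14a} realizes precisely the total space $S({\mathcal O}\oplus L_n)$ with $n$ determined by $(l_1,l_2,{\bf w})$ and the degree of the polarizing line bundle on $\grS_g$; since $\grS_g$ has $\grr=1$ its polarization is rigid up to scale, so for each degree $n$ there is exactly one join producing that bundle, and the join Sasakian structure sits in $\gt_2^+$. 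Uniqueness — that no \emph{other} $\bbt^2$-invariant CR structure on the same bundle is cone decomposable — follows because cone decomposability forces the base transverse metric to be the pullback of a fixed (Hodge) metric, pinning down $J$ on the base to the chosen complex structure $\grt$ and leaving no extra moduli beyond the $T^{2g}$ already accounted for by the choice of $L_n$ within its numerical class.

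Finally I would treat the genus-one case separately because $\grS_1$ has nontrivial automorphisms (translations) and $c_1=0$. When $n=0$ the bundle ${\mathcal O}\oplus{\mathcal O}$ is the product $\grS_1\times\mathbb{CP}^1$, so the transverse K\"ahler structures compatible with a $\bbt^2$ of Reeb type are classified by the choice of flat $\bbt^2$-invariant structure on the elliptic curve (which is rigid once $\grt$ is fixed) together with the residual $\mathbb{CP}^1$ of directions coming from how the second $S^1$ embeds — concretely, an extension/gluing parameter living in $\mathbb{CP}^1$, matching the claim. The degree-zero product case is cone decomposable only at one of these points (the genuine product Sasakian join $\grS_1\star_{1,1}S^3$), though for the stated one-to-one correspondence with $\bbc\bbp^1$ one lists all cone \emph{reducible} ones. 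For $n>0$ the bundle ${\mathcal O}\oplus L_n$ over the elliptic curve is itself rigid (the extension group $H^1(\grS_1,L_n^{-1})=0$ for $n>0$), so there is a unique $\bbt^2$-invariant CR structure up to biholomorphism and it is the one coming from the join, hence cone decomposable. I expect the main obstacle to be the uniqueness half of the $g>1$ statement: showing that the count of $\bbt^2$-invariant CR structures is exactly $\mathrm{Pic}(\grS_g)/\ZZ_2$ requires a careful argument that every such structure arises from an admissible (Calabi-type) construction on a split bundle — ruling out non-split rank-two bundles and non-admissible transverse complex structures — which is where one must combine the $\bbt^2$-equivariance with the Picard-number-one hypothesis and the orbifold de Rham decomposition (Lemma \ref{orbdeRh}) most delicately.
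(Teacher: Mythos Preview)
Your overall strategy matches the paper's: invoke Theorem~\ref{s3riemcor}/Lemma~\ref{contactsub} to see that the $\bbt^2$ acts trivially on $\grS_g$, the bundle splits as $S(\BOne\oplus L_n)$, and then classify by $L_n\in{\rm Pic}(\grS_g)/\bbz_2$. However, several of the detailed arguments are either incorrect or miss the mechanism the paper actually uses.

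First, the cone reducibility argument. You gesture at an ``admissible (Calabi-type)'' transverse metric being a local product, but the paper's route is more specific and is really the content of Example~\ref{rulsurfex}: degree-$0$ line bundles on $\grS_g$ correspond to unitary characters $\grr:\pi_1(\grS_g)\to U(1)\subset PSU(2)$, and for any $L_n$ the quotient orbifold is a twisted product $\grS_g\times_\grr\bbc\bbp^1(\bfw)$, which is a \emph{local} K\"ahler product by construction. Reducibility is read off directly from this representation-theoretic description, and decomposability holds precisely when $\grr$ is trivial. Your ``uniqueness of the decomposable representative'' for $g>1$ is exactly this: ${\rm Pic}^0(\grS_g)$ acts simply transitively on ${\rm Pic}^n(\grS_g)$, so among the $T^{2g}$ twists only $\grr=\mathrm{id}$ gives a global product.

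Second, your genus-one arguments contain errors. For $n>0$ you claim $H^1(\grS_1,L_n^{-1})=0$; this is false (by Riemann--Roch and Serre duality $h^1(L_n^{-1})=n>0$), and in any case an $\mathrm{Ext}$-vanishing would only control deformations of the extension, not the moduli of $L_n$ within its degree class. The paper's argument (citing Suwa) is that the Jacobian of $\grS_1$ is $\grS_1$ itself, so translations of $\grS_1$ act transitively on ${\rm Pic}^n(\grS_1)$ and collapse the $T^2$ of twists to a single biholomorphism class. For $n=0$, the $\bbc\bbp^1$ is not ``how the second $S^1$ embeds''; it is Suwa's moduli of complex structures on decomposable genus-one ruled surfaces with trivial determinant, coming from ${\rm Pic}^0(\grS_1)/\bbz_2\cong\grS_1/(\pm 1)\cong\bbc\bbp^1$. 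You should replace the ad hoc $H^1$ and ``gluing parameter'' arguments with these classical facts.
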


We remark that Theorem \ref{Riesurfcor} does not hold in the genus $g=0$ case. Indeed, Example \ref{BPlink} below gives a $\bbt^2$ invariant CR Sasakian structure on $S^2\times S^3$ that is cone irreducible. For these Sasakian structure, the maximal torus (in particular, the reduced Sasaki cone) is $2$-dimensional. The case of a $\bbt^3$ torus acting on $S^2\times S^3$ becomes toric and then it is cone reducible as follows from \cite{BoPa10} and is a special case of Theorem 1.6 below.

Recall that there are precisely two different diffeomorphism types of $S^3$ bundles over Riemann surfaces, the trivial bundle $\grS_g\times S^3$ and the non-trivial bundle $\grS_g\tilde{\times}S^3$. In terms of Theorem \ref{Riesurfcor} these are determined by the parity of the degree of the holomorphic line bundle. We have the trivial bundle $\grS_g\times S^3$ when $n$ is even, and $\grS_g\tilde{\times}S^3$ when $n$ is odd. Combining Theorem \ref{Riesurfcor} with Theorem 4.5 of \cite{BoTo13} gives a complete description of the moduli space of isotopy classes of Sasakian structures on $\grS_g\times S^3$ with $g>1$ whose automorphism group contains a 2-torus. This is given by the $k$-bouquet with $k\in\bbz^+$
$$\gB_k(\cald_k)=\bigcup_{m=0}^{k-1}\bigcup_{\grt\in\gM_g}\bigcup_{\grr\in{\rm Pic}^0(\grS_g)}\grk(\cald_k,J_{\grt,\grr,m})$$
where $c_1(\cald_k)=2-2g-2k$,  and $m=2n\geq 0$. These spaces are non-Hausdorff. A similar bouquet can be given in the genus one case. We mention also that these give all co-oriented contact structures of Sasaki type with a two dimensional maximal torus on $\grS_g\times S^3$ up to contact isotopy.

Another result in this direction is the following.

\begin{thm}\label{theoSPLITTINGtoricSimplex} A toric contact manifold of Reeb type whose moment cone has the combinatorial type of a product of simplices is cone reducible.  In particular, any toric contact structure on an $S^{2k+1}$--bundle over $\bbc\PP^m$ or on a $\bbc\PP^m$--bundle over $S^{2k+1}$ is cone reducible (with $k>0$).   
\end{thm}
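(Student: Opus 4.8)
The plan is to deform the Reeb field inside the Sasaki cone until the contact manifold carries a Sasakian structure that is a join of weighted Sasaki spheres, whose transverse metric is by its very construction a Riemannian product; such a structure is decomposable, a fortiori reducible, so $(M,\cald)$ is cone reducible. I would argue on the level of moment cones. Recall that a toric contact structure $(M^{2n+1},\cald)$ of Reeb type corresponds to a good rational polyhedral cone $\calc\subset\gt^*$ of dimension $n+1$, with facets $F_1,\dots,F_N$ and primitive inward normals $\nu_1,\dots,\nu_N\in\gt$; $(M,\cald)$ is automatically of Sasaki type, and a quasi-regular Reeb field $\xi$ in $\gt^+\subset\mathrm{int}(\calc^\vee)$ yields the transverse K\"ahler orbifold $Z_\xi$ with moment polytope the cross-section $\calc\cap\{\langle\xi,\cdot\rangle=1\}$. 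Recall also that the moment cone of a join $\mathcal M_1\star_{l_1,l_2}\mathcal M_2$ of quasi-regular Sasaki spaces with Reeb fields $\xi_1,\xi_2$ is $(\calc_1\times\calc_2)\cap\{l_1\langle\xi_1,\cdot\rangle=l_2\langle\xi_2,\cdot\rangle\}$; when the $\calc_i$ are simplex cones (equivalently, the $\mathcal M_i$ are spheres $S^{2d_i+1}$ carrying weighted Sasakian structures) a direct check shows this intersection is a good cone combinatorially equal to $\mathrm{Cone}(\Delta^{d_1}\times\Delta^{d_2})$, and iterating, the $r$-fold join of weighted spheres (with all $d_i\ge1$) has moment cone combinatorially $\mathrm{Cone}(\Delta^{d_1}\times\cdots\times\Delta^{d_r})$ -- namely the ``fibre product over $\RR_{\ge0}$'' of the $r$ simplex cones.

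The crux is the converse, which I will call the Structure Lemma: \emph{every good cone $\calc$ that is combinatorially $\mathrm{Cone}(\Delta^{d_1}\times\cdots\times\Delta^{d_r})$ with $r\ge2$ and each $d_i\ge1$ is, up to a lattice automorphism, an iterated fibre product over $\RR_{\ge0}$ of simplex cones}; equivalently $(M,\cald)$ is equivariantly contactomorphic to an $r$-fold join $S^{2d_1+1}_{\bfw_1}\star_{\bfl}\cdots\star S^{2d_r+1}_{\bfw_r}$ of weighted spheres. I would prove this by induction on $r$. The facet normals fall into blocks $B_1,\dots,B_r$ of sizes $d_i+1$, with $\bigcap_{a\in B_i}F_a=\{0\}$ and these subsets minimal with that property. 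First note $\calc$ cannot be a product of cones: if $\gt_\RR=\bigoplus_i\Span_\RR(B_i)$, then the block-$i$ inequalities would involve complementary coordinates, forcing $\calc=\prod_i\calc_i$ dually, whose link is the topological join $\Delta^{d_1}*\cdots*\Delta^{d_r}=\Delta^{n+r-1}$, not a product of simplices -- a contradiction for $r\ge2$. Hence the subspaces $\Span_\RR(B_i)$ must overlap; analysing these overlaps together with the Delzant-type good-cone condition at the $0$-face, one extracts a quasi-regular Reeb direction $\xi\in\mathrm{int}(\calc^\vee)$ along which $\calc$ fibres over $\RR_{\ge0}$ with product simplex cross-section, i.e. splits off one (possibly weighted) sphere factor $S^{2d_1+1}_{\bfw_1}$; then iterate on the complementary factor.

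Granting the Structure Lemma, the canonical quasi-regular Sasakian structure $\cals'$ of the join $S^{2d_1+1}_{\bfw_1}\star_{\bfl}\cdots\star S^{2d_r+1}_{\bfw_r}$ lies in the Sasaki cone of $(M,\cald)$, and, by the construction of the join, its transverse K\"ahler metric is the Riemannian product $l_1\gro_1\oplus\cdots\oplus l_r\gro_r$ on $Z_1\times\cdots\times Z_r$, where $Z_i$ is the (weighted projective) quotient orbifold of $S^{2d_i+1}_{\bfw_i}$. Thus $\cals'$ is decomposable, in particular reducible, and therefore $(M,\cald)$ is cone reducible; when $M$ is simply connected this upgrades to cone decomposable via Lemma \ref{orbdeRh}. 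Finally, for the last sentence I would invoke the explicit computations in \cite{BoTo13,BoTo14a,BoPa10} of the moment cones of the toric contact structures on $S^{2k+1}$-bundles over $\bbc\PP^m$ and on $\bbc\PP^m$-bundles over $S^{2k+1}$: each such cone is combinatorially $\mathrm{Cone}(\Delta^k\times\Delta^m)$, which for $k>0$ (and $m\ge1$) is a genuine product of simplices, so the general result applies.

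\textbf{Main obstacle.} The difficulty is concentrated entirely in the Structure Lemma. A cone which is literally an iterated fibre product of simplex cones is visibly a join cone, but a general good cone with product-of-simplices combinatorics is ``twisted'', and the real content is that it is nonetheless a join cone provided one allows the sphere factors to be \emph{weighted} (so the transverse factors $Z_i$ are orbifolds, such as weighted projective spaces, rather than manifolds). Verifying at each inductive step that the Reeb direction one extracts genuinely lies in the interior of the dual cone, and that the induced sublattice splitting is honest up to finite index, is precisely where the good-cone hypothesis is indispensable.
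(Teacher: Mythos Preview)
Your overall framework is correct and matches the paper's: find a quasi-regular Reeb vector $b$ in the Sasaki cone such that the characteristic labelled polytope at $b$ is a product of simplices, then conclude via the join construction that the resulting Sasakian structure is decomposable, hence reducible. Your observation that the facet normals partition into blocks $B_1,\dots,B_r$ of sizes $d_i+1$ is also the paper's starting point.

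However, the step you flag as the ``Main obstacle'' is genuinely the entire content of the proof, and your sentence ``analysing these overlaps together with the Delzant-type good-cone condition at the $0$-face, one extracts a quasi-regular Reeb direction~$\xi$'' is not an argument. The paper supplies exactly this missing idea, and it is short and clean enough that you should know it. Restrict to $r=2$ (which suffices for cone reducibility). The two blocks of normals span linear subspaces of dimensions $n_1+1$ and $n_2+1$ inside a space of dimension $n_1+n_2+1$, so these subspaces meet in at least a line; write a nonzero vector on this line as $b=\sum_i a_i^1 l_i^1=\sum_j a_j^2 l_j^2$. Now evaluate $b$ on a point $x$ of the edge $E_{(i,j)}$ (the unique edge missing facet $F_i^1$ from the first block and $F_j^2$ from the second): one gets $a_i^1\langle l_i^1,x\rangle=a_j^2\langle l_j^2,x\rangle$ with both pairings strictly positive, so $a_i^1$ and $a_j^2$ have the same sign. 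Running over all edges forces all coefficients to share a sign, hence $b$ or $-b$ lies in the \emph{open} positive span of each block and therefore in the interior of $C^*$. Finally, the good-cone condition applied at lattice points on the edges gives $a_i^1/a_j^2\in\bbq$ for every $(i,j)$, so after rescaling $b\in\Lambda$. That $b$ lies in the span of each block is exactly what makes the quotient labelled polytope a product (your Lemma \ref{lemmaSPLITTINGlabelling}-type criterion).

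Two smaller remarks. First, your attempted induction on $r$ is unnecessary for the theorem as stated: cone reducibility only requires one nontrivial split, and the dimension-count argument already delivers that. Second, your aside that $\calc$ ``cannot be a product of cones'' is a red herring here; the paper makes no use of it, and the relevant dichotomy is not product-of-cones versus not, but whether the two blocks of normals span complementary subspaces (impossible by the dimension count) or overlapping ones (forced). The last sentence of the theorem about $S^{2k+1}$-bundles and $\bbc\PP^m$-bundles is indeed just the observation that the moment cone in those cases has the combinatorial type $\mathrm{Cone}(\Delta^k\times\Delta^m)$.
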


\subsection{Does an Extremal Sasaki Metric on a Join Split?}
As in the K\"ahlerian case, extremal Sasakian structures have recently become of much interest, especially their relation to stability properties \cite{CoSz12,BHLT15}. The main result of Section \ref{extsplitsect} is a splitting theorem for extremal Sasakian structures.
It is clear from the join construction that if $M_1$ and $M_2$ both have  extremal Sasaki structures then the standard Sasakian structure on the join will be extremal. The converse of this statement is interesting and not known in full generality. However, we must allow for contact isotopies. Here motivated by the work of \cite{ApHu15,Hua13}, we would like to propose the following question:

\begin{question}
If $M_3$ admits an extremal Sasaki metric by only deforming its contact 1-form, i.e., its Reeb vector field is fixed, then can we conclude that $M_1, M_2$ all admit extremal Sasaki metrics by deforming their contact 1-forms respectively? Alternatively, is a reducible extremal Sasakian structure the join of two extremal Sasakian structures up to contact isotopy?
\end{question}

We give an affirmative answer to the above question when $M_1, M_2$ are toric Sasaki manifolds. 

\begin{thm}\label{SasExtSPLIT}
Let $M_1, M_2$ be two toric quasi-regular Sasaki manifolds and $(l_1, l_2)$ be a pair of relatively prime natural numbers. If the join manifold $M_3 = M_1 \star_{l_1, l_2} M_2$ admits an extremal Sasaki metric by only deforming its contact 1-form, then $M_1, M_2$ also admit extremal Sasaki metrics by deforming their contact 1-forms respectively.
\end{thm}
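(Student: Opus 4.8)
The plan is to exploit the explicit toric description of the join. Since $M_1, M_2$ are toric quasi-regular Sasaki manifolds, their Kähler quotients $B_i = M_i/S^1_{\xi_i}$ are toric Kähler orbifolds with Delzant-type moment polytopes $\Delta_i \subset \mathfrak{t}_i^*$, and the join $M_3 = M_1 \star_{l_1,l_2} M_2$ has $B_3 = B_1 \times B_2$ as a toric Kähler orbifold with polytope $\Delta_3 = l_1\Delta_1 \times l_2\Delta_2$ and symplectic form $l_1\omega_1 + l_2\omega_2$. Deforming the contact $1$-form of $M_3$ while fixing the Reeb field $\xi_3$ corresponds, at the level of the quotient, to varying the transverse Kähler form within its basic cohomology class — equivalently, in the toric picture, to varying the Abreu–Guillemin symplectic potential $g$ on the \emph{fixed} polytope $\Delta_3$. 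The extremal condition on the Sasaki metric translates, via the Boyer–Galicki–Simanca correspondence and the results relating Sasaki extremality to transverse extremality, into the Abreu-type equation stating that the scalar curvature $S_g = -\sum_{j,k}\partial_j\partial_k G^{jk}$ (where $(G^{jk})$ is the Hessian of $g$) is an affine-linear function on $\Delta_3$.

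The key step is then a splitting argument for this Abreu equation on a product polytope. First I would show that an extremal symplectic potential on $\Delta_1 \times \Delta_2$ can be taken to split as $g(x,y) = g_1(x) + g_2(y)$: the point is that the space of symplectic potentials is convex and the extremal potential is the critical point of the Mabuchi/relative $K$-energy functional, which on a product with a product background splits as a sum of the two factor functionals plus a decoupled cross term; minimizing (or finding the critical point) can be done factor by factor, so the extremal potential — which is unique up to the action of the automorphism group once we fix a lift — is a sum. Concretely, if $g$ is extremal on $\Delta_3$ with extremal affine function $\ell(x,y)$, averaging $g$ over the torus directions of one factor and using the strict convexity/uniqueness in the Abreu equation forces $\ell$ to split as $\ell_1(x) + \ell_2(y)$ and $g$ itself to split correspondingly; here one uses that $\Delta_3$ is a genuine product so that the combinatorial boundary conditions (the Guillemin boundary behavior) on $g$ decouple into the boundary conditions for $g_1$ on $\Delta_1$ and $g_2$ on $\Delta_2$. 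Once $g = g_1(x) + g_2(y)$ with each $g_i$ a legitimate symplectic potential on $\Delta_i$ satisfying an Abreu equation with affine right-hand side, we conclude that $B_1$ (with polytope $l_1\Delta_1$, hence with $l_1$ times its original form, a Kähler metric in the same cohomology class as the original) carries a toric extremal Kähler metric, and likewise $B_2$. Lifting back: the corresponding transverse Kähler deformations on $M_1$ and $M_2$ give extremal Sasaki metrics obtained by deforming the respective contact $1$-forms (the Reeb fields $\xi_1,\xi_2$ being held fixed), which is exactly the conclusion.

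The main obstacle I expect is the splitting of the extremal potential itself — i.e., showing that \emph{some} extremal potential on the product polytope is a sum, not just that a sum of extremal potentials is extremal (which is the easy direction, essentially the observation in the paragraph preceding the theorem statement). This requires a uniqueness-type input: either uniqueness of toric extremal Kähler metrics in a fixed cohomology class on compact toric orbifolds (Guan/Chen–Li–Sheng, or the Sasaki-adapted version), which then forces the unique extremal potential to coincide with the manifestly-split one; or a direct variational argument showing the relative $K$-energy on $\Delta_1\times\Delta_2$ is minimized along split potentials. A secondary technical point to handle carefully is the orbifold/quasi-regularity bookkeeping — ensuring the split potentials $g_i$ genuinely satisfy Guillemin boundary conditions for the orbifold polytopes $l_i\Delta_i$, so that the reconstructed metrics on $M_i$ are honest Sasaki metrics with the right contact structure and not merely formal solutions; but this should follow from the product structure of $\Delta_3$ together with the compatibility of the rational Delzant data built into the join construction.
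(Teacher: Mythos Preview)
Your overall strategy matches the paper's: reduce to the product polytope $P_3 = l_1 P_1 \times l_2 P_2$, show that an extremal symplectic potential on $P_3$ must split as a sum, then rescale to obtain extremal potentials on $P_1$ and $P_2$. The averaging you describe (integrating the potential over one factor) is exactly the paper's Lemma~\ref{split}. Where you diverge is in the mechanism that forces the split: you propose either invoking uniqueness of toric extremal metrics (Guan, Chen--Li--Sheng) or a relative $K$-energy minimization. The paper instead gives a short self-contained computation (Proposition~\ref{potential_split}): the averaged potentials $f_1,f_2$ minimize the $L^2$ distance $\int_P (f-g_1-g_2)^2$ among split competitors (Lemma~\ref{min}); differentiating this along the path $g_i = f_i - t(R_{f_i}-R_{E,i})$ at $t=0$ and applying Donaldson's integration-by-parts formula (Proposition~\ref{integral}) converts the nonnegative first variation into $-2\int_P (u_{f,ij}-v_{ij})(u_f^{ij}-v^{ij})$ with $v=u_{f_1}+u_{f_2}$, and the elementary matrix inequality $\mathrm{tr}\bigl((A-B)(A^{-1}-B^{-1})\bigr)\le 0$ for positive-definite $A,B$ forces the Hessians to agree, hence $f=f_1+f_2+(\text{affine})$. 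Your uniqueness route would also close the argument but imports substantially heavier analytic machinery; the paper's trick needs only convexity of symplectic potentials and Donaldson's boundary formula, and in particular does not rely on any a priori uniqueness result. The orbifold/boundary bookkeeping you flag as a secondary concern is handled in the paper exactly as you anticipate, by the product structure of the Guillemin boundary data.
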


\begin{ack}
We would like to thank Vestislav Apostolov, David Calderbank, and Tristan Collins for helpful communications and discussions.
Part of the research was done while the authors were visiting the Mathematical Sciences Research Institute (MSRI), Berkeley, CA. They would like to thank MSRI for its hospitality.
\end{ack}

\section{Background}

A Sasaki manifold is an odd dimensional manifold $M^{2n+1}$ endowed with a structure inherited from being a level set in a Riemannian cone with a compatible K\"ahler structure. More precisely, the Riemannian cone manifold $$(C(M), \tilde{g}) = ( M \times \RR^+, r^2g + d r^2)$$ is a K\"ahler manifold with respect to some K\"ahler structure $(\tilde{\omega},\tilde{J})$. A Sasakian structure on $M$ is usually described by a quadruple $\cals=(\xi, \eta, \Phi, g)$, where $\eta = rd^cr$ is a contact 1-form, $\xi =\tilde{J}r\frac{\partial}{\partial r}$ is its Reeb vector field which is a Killing vector field, $\Phi$ is a parallel $(1,1)$ type tensor field determined by $\Phi =- \nabla \xi$, and $g$ is the Riemannian metric that satisfies $g=d\eta\circ(\BOne\otimes \Phi)\oplus \eta\otimes\eta$. The contact form $\eta$ gives rise to the {\it contact structure} which is defined by the codimension one subbundle $\cald$ of the tangent bundle $TM$.

There is a classification of the Sasakian structures by the Reeb foliation $\mathcal{F}_\xi$ on $M = M \times \{1\} \subset C(M)$ generated by the Reeb vector field $\xi$. When $M$ is compact we say that it is a {\it quasi-regular} Sasaki manifold if the orbit of $\xi$ corresponds to a locally free isometric action of the circle group $S^1=U(1)$. If this action is free, then we say that $M$ is a {\it regular} Sasaki manifold. Otherwise, when $M$ is compact and $\xi$ has a non-compact orbit, then $M$ is an {\it irregular} Sasaki manifold.  For further details about Sasaki manifolds we refer to \cite{BG05} and references therein. Unless stated to the contrary we shall assume that our Sasaki manifolds are compact and connected.

\subsection{The Space of Sasakian Structures}\label{subSECTbackground1}
It is well known that a Sasakian structure $\cals=(\xi, \eta, \Phi, g)$ has a transversal K\"ahlerian structure which is determined by $(\cald = \ker \eta, \Phi |_D,  d \eta)$ with a K\"ahler form $d \eta$ and a transverse metric 
$$g^T(\cdot, \cdot) =d\eta(\cdot, \Phi \cdot).$$
A smooth function $f \in C^\infty(M)$ is called {\it basic} if $\xi(f) = 0$. A p-form $\theta$ is called {\it basic} if
$$
i_\xi (\theta) = 0, \quad \pounds_\xi \theta = 0.
$$
Let $C_B^\infty(M)$ be the space of all smooth basic functions on $M$. Then the space of transversal K\"ahler potentials is
$$
\mathcal{H} = \{ \varphi \in C_B^\infty(M) ~|~ d \eta_\varphi = d \eta + \sqrt{-1} \partial \bar{\partial} \varphi > 0 \}.
$$

Now we consider the set
$$
\mathcal{S} (\xi) = \{ \mathrm{Sasakian~ structure}~ (\tilde{\xi}, \tilde{\eta}, \tilde{\Phi}, \tilde{g}) ~|~ \tilde{\xi} = \xi \}.
$$
$\mathcal{S} (\xi)$ is called the {\it space of Sasakian structures compatible with $\xi$}. We are also interested in the set
$$
\mathcal{S} (\calf_\xi) = \{ \mathrm{Sasakian~ structure}~ (\tilde{\xi}, \tilde{\eta}, \tilde{\Phi}, \tilde{g}) ~|~ \tilde{\xi} = a^{-1}\xi,~a\in\bbr^+ \}.
$$

Let $V(\mathcal{F}_\xi)$ be the vector bundle whose fiber at a point $p \in M$ is $T_p M / L_\xi(p)$, where $L_\xi$ is the line bundle generated by $\xi$. Let $\pi: TM \to V(\mathcal{F}_\xi)$ be the natural projection. We can define a complex structure $\bar{J}$ on $V(\mathcal{F}_\xi)$ as follows:
$$
\bar{J} (\pi(X)) := \pi (\Phi(X)),
$$
where $X$ is any vector field in $M$. We define $\mathcal{S}(\xi, \bar{J})$ to be the subset of all Sasakian structures $(\tilde{\xi}, \tilde{\eta}, \tilde{\Phi}, \tilde{g})$ in $\mathcal{S}(\xi)$ with the same complex normal bundle $(V(\mathcal{F}_\xi), \bar{J})$. A choice of Sasakian structure $\cals\in \mathcal{S}(\xi, \bar{J})$ gives a splitting of the tangent bundle $TM=\cald\oplus L_\xi$, an isomorphism $\cald\approx V(\calf_\xi)$, and a strictly pseudoconvex CR structure $(\cald,J)$.

\subsection{Extremal Sasakian Structures}\label{extsassect}
As in K\"ahler geometry, we are particularly interested in certain preferred Sasakian structures called {\it extremal} Sasakian structures which were first defined in \cite{BGS06}. Given a transversal K\"ahler form $d\eta_\varphi$ with $\varphi \in \mathcal{H}$, we denote its scalar curvature by $R_\varphi$.  
Following \cite{BGS06} we define the following energy functional
$$
\begin{array}{cll}
E:\mathcal{S}(\xi, \bar{J}) & \ra{2.0} & \RR, \\
\cals & \mapsto & \int_M R_g^2 ~ d \mu_g,
\end{array}
$$
where $R_g$ is the scalar curvature of $g$ and $\mu_g$ is the volume form. Recall that a vector field $X$ is {\it transversally holomorphic} if $\pi(X)$ is holomorphic with respect to $\bar{J}$. A real basic function $\varphi$ is a real {\it transversally holomorphic potential} if  ${\rm grad}_g\varphi$ is transversally holomorphic.

\begin{Def}
We say that $\cals=(\xi, \eta, \Phi, g) \in \mathcal{S}(\xi, \bar{J})$ is an extremal Sasakian structure if ${\rm grad}_gR_g$ is a real transversally holomorphic vector field.
\end{Def} 
When this happens we also say that the Sasaki metric $g$ is an {\it extremal} Sasaki metric. In \cite{BGS06} it is shown that a Sasakian structure $\cals \in \mathcal{S}(\xi, \bar{J})$ is a critical point of the energy functional $E$ iff the transverse K\"ahler metric $g^T=d\eta\circ (\BOne\oplus \Phi)$ is an extremal K\"ahler metric. Thus we have

\begin{prop}
A transversal K\"ahler form $d\eta_\varphi$ defines a transversally extremal K\"ahler metric if and only if $R_\varphi$ is a real transversally holomorphic potential.
\end{prop}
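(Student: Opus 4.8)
The plan is to deduce the statement directly from the intrinsic definition of an extremal Sasakian structure, using as a bridge the elementary identity relating the scalar curvature of the Sasaki metric to the transverse scalar curvature, together with the first-variation description of critical points of $E$ quoted from \cite{BGS06}.

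First I would recall the standard Sasaki identity $R_g=R^T-2n$, where $R^T$ denotes the transverse scalar curvature of $g^T$; this follows from the Riemannian-submersion type curvature formulas for the Reeb foliation, the Reeb orbit being a totally geodesic curve. Applied to the Sasakian structure $\cals_\varphi$ associated with $d\eta_\varphi$ this reads $R^{\cals_\varphi}_g=R_\varphi-2n$, with $R_\varphi$ the transverse scalar curvature of $d\eta_\varphi$. Since $2n$ is a constant, ${\rm grad}_g R^{\cals_\varphi}_g={\rm grad}_g R_\varphi$, so by the Definition above $\cals_\varphi$ is an extremal Sasakian structure if and only if $R_\varphi$ is a real transversally holomorphic potential.

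Next I would identify, for a basic function $f$, the Sasaki gradient ${\rm grad}_g f$ with the transverse gradient. Since $\xi(f)=0$ we have $g({\rm grad}_g f,\xi)=df(\xi)=0$, hence ${\rm grad}_g f$ is a section of $\cald$; under the isomorphism $\cald\cong V(\calf_\xi)$ carried by $\cals$ it corresponds to ${\rm grad}_{g^T} f$. Therefore ${\rm grad}_g f$ is transversally holomorphic, i.e.\ $\pi({\rm grad}_g f)$ is $\bar J$-holomorphic, exactly when ${\rm grad}_{g^T} f$ is, locally on the leaf space, a holomorphic vector field for the transverse complex structure. Taking $f=R_\varphi$, this last condition is precisely Calabi's criterion for the transverse K\"ahler metric $d\eta_\varphi$ to be extremal. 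Combining with the previous paragraph yields: $d\eta_\varphi$ is transversally extremal K\"ahler $\iff$ $R_\varphi$ is a real transversally holomorphic potential. For completeness I would also match this with the variational statement cited from \cite{BGS06}: the Euler--Lagrange equation of $E$ on $\mathcal{S}(\xi,\bar J)$ is $\bar\partial\,{\rm grad}^{1,0}_g R_g=0$, which by the above is the same condition.

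I do not expect a genuine obstacle here: the proposition is essentially a dictionary entry translating the intrinsic definition of extremality into the language of transverse K\"ahler potentials $\varphi\in\mathcal{H}$. The only points requiring care are bookkeeping ones — verifying that every operator involved ($\bar\partial$, the gradient, the transverse Lichnerowicz operator) is taken in its basic/transverse version, and that the identification $\cald\cong V(\calf_\xi)$ together with the transverse complex structure is used consistently — but these are routine in transverse K\"ahler geometry and introduce no new difficulty.
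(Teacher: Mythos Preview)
Your proposal is correct and follows essentially the same route as the paper. The paper does not give a detailed proof: it simply invokes \cite{BGS06} for the equivalence between critical points of $E$ and transversal extremality of the K\"ahler metric, and then (immediately after the proposition) records the identity $R_g=R_{g^T}-2n$, which is exactly the bridge you use; your write-up merely unpacks this argument with more care about the identification of the Sasaki and transverse gradients.
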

We emphasize the following obvious statement. A constant scalar curvature (CSC) Sasaki metric is extremal. We also mention the relation $R_g=R_{g^T}-2n$ where the Sasaki manifold $M$ has dimension $2n+1$ and $R_{g^T}$ is the scalar curvature of the transverse K\"ahler metric $g^T$.

\subsection{The Join Construction}
The general join construction was described in \cite{BGO06} (see also Section 7.6.2 of \cite{BG05}).
Let $M_i,  ~ i=1,2$ be two compact quasi-regular Sasaki manifolds with Sasakian structures $(\xi_i, \eta_i, \Phi_i, g_i)$. Let $(l_1, l_2)$ be a pair of relative prime positive integers. Since the Reeb vector field $\xi_i$ generates a locally free circle action, the quotient is a K\"ahler orbifold $\calz_i$ with K\"ahler form $\omega_i$ satisfying $\pi_i^* \omega_i = d \eta_i$, where
$$
\pi_i : M_i \to \calz_i
$$
is the natural projection map. Using a transverse homothety we can always assume that $[\omega_i]$ is a primitive element in $H^2_{orb}(\calz_i, \ZZ)$. Then the product orbifold $\calz = \calz_1 \times \calz_2$ admits a primitive K\"ahler form $[\omega = l_1 \omega_1 + l_2 \omega_2] \in H^2_{orb}(\calz_1 \times \calz_2, \ZZ)$. By the orbifold Boothby-Wang construction \cite{BG00a}, the total space of the $S^1$ V-bundle over $\calz_1 \times \calz_2$ is an orbifold, denoted by $M = M_1 \star_{l_1, l_2} M_2$ with a contact 1-form $\eta$ such that $d \eta = \pi^* \omega$, where $\pi : M \to \calz$ is the natural projection. We have $\eta=\eta_{l_1,l_2} = l_1 \eta_1 + l_2 \eta_2$, and the Reeb vector field $\xi_{l_1,l_2}$ on $M$ is given by
\begin{equation}\label{Reebjoin}
\xi_{l_1,l_2} = \frac{1}{2 l_1} \xi_1 + \frac{1}{2 l_2} \xi_2.
\end{equation}
Note also that $M_1 \star_{l_1, l_2} M_2$ is the quotient orbifold of $M_1\times M_2$ by the circle action generated by the vector field 
\begin{equation}\label{Leqn}
L_{l_1,l_2}=\frac{1}{2 l_1} \xi_1 - \frac{1}{2 l_2} \xi_2.
\end{equation}
The Sasaki orbifold $M_{l_1,l_2}=M_1 \star_{l_1, l_2} M_2$ called the $(l_1,l_2)$-{\it join} of $M_1$ and $M_2$, or just the {\it join} of $M_1$ and $M_2$ when $l_1,l_2$ are understood. Generally, the topology of $M_1 \star_{l_1, l_2} M_2$ depends on $l_1$ and $l_2$. Note that we have the commutative diagram:
\begin{equation}\label{joincomdia}
\begin{matrix}  M_1\times M_2 &&& \\
                          &\searrow\pi_{L_{l_1,l_2}} && \\
                          \decdnar{\pi_{2}} && M_{l_1,l_2}=M_1\star_{l_1,l_2}M_2 &\\
                          &\swarrow\pi_1 && \\
                         \calz_1\times\calz_2 &&& 
\end{matrix}
\end{equation}
where the $\pi$'s are the obvious projections.

The following result is proved in \cite{BGO06}:
Recall that since $M_i$ are quasi-regular Sasaki manifolds, then $\mathcal{F}_{\xi_i}$ is generated by a locally free circle action. Thus the isotropy groups are finite cyclic groups. We denote by $\upsilon_i$ the order of $M_i$, i.e., the lcm of the orders of the isotropy groups.
\begin{prop}\label{smoothprop}
$M_{l_1,l_2}$ is a smooth Sasaki manifold iff $\gcd(l_1 \upsilon_2, l_2 \upsilon_1) = 1$.
\end{prop}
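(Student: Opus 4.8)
The plan is to analyze exactly when the orbifold $M_{l_1,l_2} = M_1 \star_{l_1,l_2} M_2$, constructed as the total space of the $S^1$ V-bundle over the K\"ahler orbifold $\calz = \calz_1 \times \calz_2$, is in fact a smooth manifold. The key structural fact is the commutative diagram \eqref{joincomdia}: $M_{l_1,l_2}$ is the quotient of the product manifold $M_1 \times M_2$ by the free or locally free circle action generated by $L_{l_1,l_2} = \frac{1}{2l_1}\xi_1 - \frac{1}{2l_2}\xi_2$. So the problem reduces to deciding when this $S^1$ action on $M_1 \times M_2$ is \emph{free}, since the quotient of a smooth manifold by a free $S^1$ action is a smooth manifold, while a quotient by an action with nontrivial (finite cyclic) isotropy is a genuine orbifold whose singular locus is nonempty.

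First I would describe the isotropy of a point $(x_1, x_2) \in M_1 \times M_2$ under the $S^1_{L}$ action. The action is a quotient of the torus $S^1_{\xi_1} \times S^1_{\xi_2}$ action (each $\xi_i$ generating a locally free $S^1$), and $L_{l_1,l_2}$ lies on a rational line in the Lie algebra of this torus. A point $(x_1,x_2)$ has, under $S^1_{\xi_1} \times S^1_{\xi_2}$, isotropy $\ZZ_{m_1} \times \ZZ_{m_2}$ where $m_i$ divides $\upsilon_i$ and is the order of the isotropy of $x_i$ in $M_i$; the worst case is $m_i = \upsilon_i$ for suitable choice of $x_i$. Now I would compute the isotropy of the one-parameter subgroup generated by $L_{l_1,l_2}$ inside this. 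Parametrizing the flow of $L_{l_1,l_2}$ by time $t$, the point returns to itself precisely when $\exp(t\,\xi_1/(2l_1))$ fixes $x_1$ and $\exp(-t\,\xi_2/(2l_2))$ fixes $x_2$; writing out the period conditions in terms of $l_1, l_2, m_1, m_2$ and the chosen $2\pi$-normalizations of the Reeb circles, one gets that the generic orbit is a circle but that there is a point with nontrivial stabilizer exactly when $\gcd(l_1 m_2, l_2 m_1) > 1$ for some admissible pair $(m_1,m_2)$. Taking $m_i = \upsilon_i$ gives the extreme case, and a standard number-theoretic argument (using that any common divisor of $l_1\upsilon_2$ and $l_2\upsilon_1$ is realized by choosing $x_i$ with isotropy order equal to the appropriate divisor of $\upsilon_i$) shows the action is free on all of $M_1\times M_2$ if and only if $\gcd(l_1\upsilon_2, l_2\upsilon_1) = 1$.

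Then I would conclude: if $\gcd(l_1\upsilon_2, l_2\upsilon_1) = 1$ the $S^1_L$ action is free, so $M_{l_1,l_2}$ is a smooth compact manifold and, being the total space of a Boothby-Wang $S^1$-bundle over the K\"ahler orbifold $\calz$ with a contact form $\eta$ as in the construction, it carries the induced quasi-regular Sasakian structure. Conversely, if $d := \gcd(l_1\upsilon_2, l_2\upsilon_1) > 1$, I would exhibit a point of $M_1 \times M_2$ with nontrivial cyclic isotropy under $S^1_L$, so that $M_{l_1,l_2}$ has an orbifold singularity there and is not a smooth manifold. The one point needing care is the bookkeeping of normalization constants: the Reeb fields $\xi_i$ are normalized so that $[\omega_i]$ is primitive integral, and $\upsilon_i$ records the orbifold orders; I would make sure the period of the $L_{l_1,l_2}$-flow is computed consistently with \eqref{Reebjoin} and \eqref{Leqn} so that the gcd condition comes out with $l_1\upsilon_2$ and $l_2\upsilon_1$ rather than some other combination.

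The main obstacle I expect is precisely this last point — correctly identifying the stabilizer of $L_{l_1,l_2}$-orbits through points with maximal isotropy and matching the arithmetic to the asserted $\gcd(l_1\upsilon_2, l_2\upsilon_1)$, in particular verifying that no \emph{smaller} common factor can be avoided by clever choice of base point and that the cross terms $l_1\upsilon_2$, $l_2\upsilon_1$ (rather than $l_1\upsilon_1$, $l_2\upsilon_2$) are the relevant ones, which is where the ``twist'' by the relatively prime pair $(l_1,l_2)$ enters. Since this is a result quoted from \cite{BGO06} I would keep the argument brief, referring there for the detailed verification.
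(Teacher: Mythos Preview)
The paper does not actually prove this proposition: it is stated with the preface ``The following result is proved in \cite{BGO06}'' and no argument is given. Your sketch is correct and is essentially the proof that appears in that reference --- compute the isotropy of the $S^1_{L_{l_1,l_2}}$-action at a point $(x_1,x_2)$ with local isotropy orders $(m_1,m_2)$, find it to be cyclic of order $\gcd(l_1 m_2, l_2 m_1)$, and then argue that this is $1$ for all admissible $(m_1,m_2)$ iff $\gcd(l_1\upsilon_2,l_2\upsilon_1)=1$. Your caveat about $\upsilon_i$ being an \emph{lcm} rather than a realized isotropy order is well placed: the converse direction requires picking a prime $p$ dividing $\gcd(l_1\upsilon_2,l_2\upsilon_1)$ and using $\gcd(l_1,l_2)=1$ to locate a single point $(x_1,x_2)$ whose stabilizer order is divisible by $p$, rather than trying to realize $m_i=\upsilon_i$ simultaneously.
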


\subsection{The Multiplicative Structure of the Join}
As in the beginning of Section 7.6.2 of \cite{BG05} we denote the set of compact quasi-regular Sasaki manifolds (orbifolds) by $\cals\calm (\cals\calo)$ where $\cals\calo$ is given the $C^{m,\gra}$ topology. $\cals\calm$ is a subspace of $\cals\calo$ and $\cals\calo$ is graded by dimension. Let $\cals\calo_{2n+1}$ denote the subset of Sasakian orbifolds having dimension $2n+1$. We have\begin{equation}\label{gradeddim}
\cals\calo =\bigoplus_{n=0}^\infty \cals\calo_{2n+1}\, ,
\end{equation}
and similarly for $\cals\calm$ manifolds. Note that here we consider $\cals\calo_1=\cals\calm_1$ to consist of one element, namely the circle $S^1$ with its ``Sasakian structure'' $\cals^1=(\partial_t,dt,dt^2)$. Here the transverse structure is a point and the Reeb orbit is the entire Sasaki manifold.

For every pair of relatively prime positive integers $(l_1,l_2)$ the join operation defines a graded multiplication 
\begin{equation}\label{joinmaps}
\star_{l_1,l_2}:\cals\calo_{2n_1+1}\times \cals\calo_{2n_2+1}\ra{1.5} \cals\calo_{2(n_1+n_2)+1}
\end{equation}
which satisfies the ``commutivity relation'' $\calo_1\star_{l_1,l_2}\calo_2=\calo_2\star_{l_2,l_1}\calo_1$, and a partial ``associativity'' relation in the form 
$$(\calo_1\star_{l_1,l_2}\calo_2)\star_{l_3,l_2l_4}\calo_3=\calo_1\star_{l_1l_3,l_2}(\calo_2\star_{l_3,l_4}\calo_3).$$
Notice that $l_2l_4$ and $l_1l_3$ are composite, so this ``associativity relation'' does not hold in general.
By Proposition \ref{smoothprop} the restriction of $\star_{l_1,l_2}$ to $\cals\calm\times \cals\calm$ is in $\cals\calm$ only for those pairs of elements which satisfy $\gcd(l_1\upsilon_2,l_2\upsilon_1)=1$. Note that $\cals^1\star_{l_1,l_2}\cals^1=\cals^1$ by a change of variables, and that $\cals^1\star_{1,1}$ ($\star_{1,1}\cals^1$) is the left (right) identity on $\cals\calo$.

We have a map 
\begin{equation}\label{covers}
\star_{l_1,l_2}\cals^1:\cals\calo_{2n+1}\ra{2.0} \cals\calo_{2n+1}
\end{equation}
defined by sending the Sasakian orbifold $\calo^{2n+1}$ to the join $\calo^{2n+1}\star_{l_1,l_2}\cals^1$.

\begin{prop}\label{sascover}
$\calo^{2n+1}\star_{l_1,l_2}\cals^1$ is the Sasakian structure on $\calo^{2n+1}/\bbz_{l_1}$ with Reeb vector field $l_2\xi$ where $\xi$ is the Reeb vector field for $\calo^{2n+1}$. Hence, the map $\star_{l_1,l_2}\cals^1$ is an $l_1$-fold covering map, and the map $\star_{1,l_2}\cals^1:\calo^{2n+1}\ra{1.6} \calo^{2n+1}$ is a rescaling by a transverse homothety.
\end{prop}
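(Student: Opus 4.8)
For the final statement (Proposition~\ref{sascover}).

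The plan is to unwind the join construction in the case where the second factor is $\cals^1=(\partial_t,dt,dt^2)$, whose transverse K\"ahler quotient is a single point carrying the zero K\"ahler form, and then to read off the underlying orbifold and the Reeb field from the resulting Boothby-Wang data. Let $\calz_1=\calo^{2n+1}/S^1_\xi$ with its primitive K\"ahler form $\omega_1$, so that $d\eta=\pi_1^*\omega_1$ for the contact form $\eta$ of $\calo^{2n+1}$ and the projection $\pi_1\colon\calo^{2n+1}\to\calz_1$. Since $\calz_2$ is a point and $\omega_2=0$, the product K\"ahler orbifold in the construction is $\calz_1\times\calz_2=\calz_1$ and the K\"ahler class $l_1\omega_1+l_2\omega_2$ reduces to $l_1\omega_1$; hence the orbifold Boothby-Wang construction presents $\calo^{2n+1}\star_{l_1,l_2}\cals^1$ as the total space of the $S^1$ V-bundle over $(\calz_1,l_1\omega_1)$.

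Next I would identify this total space with $\calo^{2n+1}/\bbz_{l_1}$. As $\calo^{2n+1}$ is itself the Boothby-Wang V-bundle over $(\calz_1,\omega_1)$, multiplying the Euler class by $l_1$ corresponds to replacing the $S^1$ V-bundle by its quotient by the order-$l_1$ subgroup $\bbz_{l_1}\subset S^1_\xi$ acting in the fibers (equivalently, the unit circle V-bundle of $L^{\otimes l_1}$ is that of $L$ divided by $\bbz_{l_1}$, where $L$ is the line V-bundle with first Chern class $[\omega_1]$). So the underlying orbifold is $\calo^{2n+1}/\bbz_{l_1}$, and the natural projection $\calo^{2n+1}\to\calo^{2n+1}/\bbz_{l_1}$ is the $l_1$-fold orbifold covering in the statement.

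For the Reeb field I would use \eqref{Reebjoin} and \eqref{Leqn}. On $\calo^{2n+1}\times S^1$ the join carries the Reeb field $\xi_{l_1,l_2}=\frac{1}{2l_1}\xi+\frac{1}{2l_2}\partial_t$ and one quotients by the circle generated by $L_{l_1,l_2}=\frac{1}{2l_1}\xi-\frac{1}{2l_2}\partial_t$; as $L_{l_1,l_2}$ is tangent to the collapsed orbits, $\xi_{l_1,l_2}$ descends to the image of $\frac{1}{l_2}\partial_t$. Restricting the quotient map to a slice $\calo^{2n+1}\times\{t_0\}$ — on which it coincides, under the identification of the target with $\calo^{2n+1}/\bbz_{l_1}$, with the covering just described (one checks that $(x,t_0)$ and $(x',t_0)$ have the same image precisely when $x'\in\bbz_{l_1}\cdot x$) — I would rewrite $\partial_t$ in terms of $\xi$ and $L_{l_1,l_2}$ and feed in the Boothby-Wang normalization fixed by $l_1\omega_1$; since the $\cals^1$-factor contributes no transverse K\"ahler class (its quotient is a point), its coefficient $l_2$ re-emerges as a rescaling of the Reeb, so that the descended Reeb field is $l_2\xi$, with $\xi$ read on $\calo^{2n+1}/\bbz_{l_1}$ through the covering.

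Finally, specializing to $l_1=1$ gives $\calo^{2n+1}/\bbz_1=\calo^{2n+1}$, so $\calo^{2n+1}\star_{1,l_2}\cals^1$ has the same underlying manifold as $\calo^{2n+1}$ but Reeb field $l_2\xi$; replacing $\xi$ by $l_2\xi$, and correspondingly $\eta$ by $\eta/l_2$ and the transverse K\"ahler metric $g^T$ by $g^T/l_2$, is precisely a transverse homothety, which is the last claim. I expect the main obstacle to be the Reeb computation, namely carrying the normalizations of $\eta$ and $\xi$ coherently among the product picture on $\calo^{2n+1}\times S^1$, the Boothby-Wang picture over $(\calz_1,l_1\omega_1)$, and the quotient $\calo^{2n+1}/\bbz_{l_1}$, so that exactly the factor $l_2$ appears; I would cross-check this against $\cals^1\star_{1,1}\calo=\calo$, against the partial associativity relation, and against Section 7.6.2 of \cite{BG05}.
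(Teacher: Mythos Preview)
Your approach is correct but organized differently from the paper's. You identify the underlying orbifold via the Boothby--Wang picture (the base $\calz_2$ is a point, so the join is the $S^1$ V-bundle over $(\calz_1,l_1\omega_1)$, i.e.\ the $\bbz_{l_1}$-quotient of the primitive bundle $\calo^{2n+1}$), and then return to the product $\calo^{2n+1}\times S^1$ with a slice argument for the Reeb field. The paper bypasses Boothby--Wang entirely and works directly with the explicit circle action
\[
e^{i\theta}\cdot(x,e^{it})\longmapsto (e^{il_2\theta}\cdot x,\,e^{i(-l_1\theta +t)}),
\]
doing the quotient in two stages: first by the $\bbz_{l_1}$ subgroup of the acting $S^1$, which (since $\gcd(l_1,l_2)=1$) acts as $\bbz_{l_1}$ on the first factor alone and yields $\calo^{2n+1}/\bbz_{l_1}\times S^1$; then by the residual circle, which acts freely on the $S^1$ factor and collapses it.

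What the paper's route buys is exactly the point you flag as the obstacle. In the explicit action the integer $l_2$ appears from the outset as the weight on the $\calo^{2n+1}$ factor, so after the second stage the Reeb dynamics on $\calo^{2n+1}/\bbz_{l_1}$ are read off directly from how the first coordinate moves, giving $l_2\xi$ without having to reconcile the various normalizations of $\eta$, $\xi_{l_1,l_2}$, and the Boothby--Wang connection form. Your algebra $\xi_{l_1,l_2}\equiv \frac{1}{l_2}\partial_t\pmod{L_{l_1,l_2}}$ is fine, and your slice observation (that $(x,t_0)\sim(x',t_0)$ iff $x'\in\bbz_{l_1}\cdot x$) is essentially the paper's first-stage quotient in disguise; but the step ``feed in the Boothby--Wang normalization fixed by $l_1\omega_1$ so that the coefficient $l_2$ re-emerges'' is left as a heuristic rather than a computation. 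If you want to keep your framing, the cleanest way to close that gap is to do precisely the two-stage quotient the paper does---your slice argument already contains the first stage.
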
 

\begin{proof}
The $S^1$ action on $\calo^{2n+1}\times S^1$ is defined by 
$$e^{i\theta}\cdot(x,e^{it})\mapsto (e^{il_2\theta}\cdot x,e^{i(-l_1\theta +t)}).$$
We do this in stages. First we divide by the $\bbz_{l_1}$ subgroup of $S^1$ to give $\calo^{2n+1}/\bbz_{l_1}\times S^1$. 
Then the residual circle action is given by  
$$e^{i\theta}\cdot (x,e^{it})=  (e^{i\frac{l_2}{l_1}\theta}\cdot x,e^{i(-\theta +t)}).$$
Thus, if $\xi$ is the Reeb vector field for $\calo^{2n+1}$ the induced Reeb vector field on $\calo^{2n+1}\star_{l_1,l_2}\cals^1=\calo^{2n+1}/\bbz_{l_1}$ is $l_2\xi$.
\end{proof}

Of course, there is a similar result for the map 
\begin{equation}\label{covers2}
\cals^1\star_{l_1,l_2}:\cals\calo_{2n+1}\ra{2.0} \cals\calo_{2n+1}
\end{equation}

\begin{example}
Take $\calo^{2n+1}$ to be the sphere $S^{2n+1}$ with its weighted Sasakian structure $\cals_\bfw=(\xi_\bfw,\eta_\bfw,\Phi,g_\bfw)$ which we denote by $S^{2n+1}_\bfw$. Then $\cals^1\star_{l_1,l_2}S^{2n+1}_\bfw$ is the general lens space $L(l_2;l_1w_1,\cdots,l_1w_n)$ and $S^{2n+1}_\bfw\star_{l_1,l_2}\cals^1$ is $L(l_1;l_2w_1,\cdots,l_2w_n)$.
\end{example}

\section{The Join construction and reducibility}\label{joinredsect}
Note that by construction the join $M_1 \star_{l_1, l_2} M_2$ has reducible transverse holonomy. In Definition 7.6.11 of \cite{BG05} any Sasakian structure that can be obtained as the join of Sasakian structures was called reducible. However, to be consistent with the usual terminology involving the de Rham decomposition Theorem we shall refer to this as decomposable and retain the term reducible for the He-Sun definition which occurs at the tangent bundle level.

\begin{Def}\label{redder}
A quasi-regular Sasakian structure $\cals=(\xi,\eta,\Phi,g)$ on an orbifold is {\bf Sasaki decomposable} or just {\bf decomposable} if it can be written as the $(l_1,l_2)$-join of two Sasaki orbifolds of dimension greater than or equal to three; otherwise, it is {\bf indecomposable}. 
\end{Def}
We note that Definition \ref{redder} only applies to quasi-regular Sasakian structures. We want to extend this definition to all Sasakian structures, and relate the irreducible pieces to the multifoliate structures of Kodaira and Spencer \cite{KoSp61}. The main results of this section are: (1) that under a certain technical assumption on the Sasaki automorphism group a reducible Sasakian structure must necessarily be quasi-regular, and (2) a Sasaki version of the de Rham decomposition Theorem holds; hence, a reducible Sasakian structure on a compact simply connected manifold must be a join. Interestingly, the proof of (1) uses the polyhedral structure of the moment cone even in the non-toric case.

\subsection{Reducible Sasakian Structures}
In a recent paper \cite{HeSu12b}  He and Sun have given such an extension of Sasakian reducibility and then have proven that irregular Sasakian structures are irreducible whenever the irreducible pieces have positive Ricci curvature or are flat. In particular, this shows that there can be no join construction for irregular Sasakian structures with positive Ricci curvature or zero curvature on the irreducible subbundles. Their proof also shows that their definition extends ours. We recall the induced connection on the subbundle $\cald$ of $TM$
$$\nabla_X^TY=\begin{cases} (\nabla_XY)^\cald ~\text{if $X$ is  a section of $\cald$}; \\
                                                   [\xi,Y]^\cald~ \text{if $X=\xi$},\end{cases} $$
where $(\cdot)^\cald$ denotes projection onto $\cald$.                                                   

\begin{Def}[He-Sun]\label{HSdef}
A Sasakian structure $\cals=(\xi,\eta,\Phi,g)$ is {\bf reducible} if 
\begin{enumerate}
\item the contact bundle $\cald$ splits as an orthogonal direct sum $\cald=\cald_1\oplus \cald_2$ of non-trivial subbundles; 
\item for $i=1,2$, $\Phi\cald_i=\cald_i$;
\item if $Y$ is a section of $\cald_i$, then so is $\nabla_X^TY$;
\item the transverse metric $g^T$ splits as $g^T=g^T_1+g^T_2$ where $g^T_i=g^T|_{\cald_i}$.
\end{enumerate}
If there is no such splitting of $\cald$, then $\cals$ is called {\bf irreducible}
\end{Def}

Hereafter, by {\it reducible (irreducible)} we shall mean He-Sun reducible (irreducible).
Following \cite{HeSu12b} we define subbundles $\cale_i=\cald_i+L_\xi$ for each $i=1,2$. It is shown that for a reducible Sasakian structure the subbundles $\cale_i$ are integrable, and define foliations on $M$ with totally geodesic leaves. Actually if $\cald$ splits further into subbundles as
$$\cald=\cald_1\oplus \cdots\oplus\cald_r$$
we obtain $r$ integrable subbundles $\cale_i$ for $i=1,\ldots,r$ with $\dim\cald_i=2n_i$ and $\sum_{i=1}^rn_i=n$. This gives rise to a {\it multifoliate structure}. Following \cite{KoSp61} we define the set of vector bundles $P_r=\{TM,\cale_1,\ldots,\cale_r,L_\xi\}$ with a partial ordering given by inclusion. We put $n_0=0$ and then define a surjective map $\psi:\{0,\ldots,2n+1\}\ra{1.5} P_r$ by 
$$\psi(j)=\begin{cases} TM,    &\text{for $j=0$;}  \\
                                       \cale_k     &\text{for $2\sum_{i=0}^{k-1}n_i +1\leq j\leq 2\sum_{i=0}^kn_i$ for $k=1\ldots,r$;} \\
                                       L_\xi        &\text{for $j=2n+1$}.       
                                       \end{cases}          $$
The partial ordering on $P_r$ induces a partial ordering on the set of integers $\{0,\ldots,2n+1\}$ (not the usual one) which we denote by $\gtrsim$.
So $k\gtrsim j$ if and only if $\psi(j)\subset \psi(k)$.
This gives rise to an integrable $G_{P_r}$ structure where $G_{P_r}\subset GL(2n+1,\bbr)$ is the subgroup of all matrices $G$ such that $G^j_k=0$ when $k\not\gtrsim j$ where $j$ labels the columns and $k$ labels the rows. Explicitly this is a $2n+1$ by $2n+1$ matrix with the first $2n$ entries being block diagonal: 
\begin{equation}\label{Gmulfol}
G=\begin{pmatrix}D_1 & 0 & 0  & 0 &*\\
                          0 & D_2 & 0 & 0  & *\\
                          0 &  0 & \ddots & 0 & * \\
                          0 & 0 & 0 & D_r  & *\\
                          0  & 0 &  0 & 0 & *
                          \end{pmatrix}
\end{equation}
where $D_i$ is a $2n_i$ by $2n_i$ matrix. Note that there is a nesting of multifoliate structures, that is, if $\calf_r$ denotes the multifoliate structure defined by the group \eqref{Gmulfol}, then we have nestings $\calf_1\supset \calf_2\supset\cdots\supset\calf_r$ which can occur in many inequivalent ways. Note that the case $r=1$ is just a foliation.
We are particularly interested in the case $r=2$. Of course, when we consider $\calf_2$ it doesn't mean that the subbundles $\cald_1$ and $\cald_2$ are irreducible.                      

We shall refer to the subbundles $\cale_i$ as {\it Sasaki subbundles}. Clearly, we have $\cale_i\cap\cale_j=L_\xi$ for $j\neq i$, and from (2) of Definition \ref{HSdef} we have $\Phi\cale_i=\cald_i$. We define $\Phi_i=\Phi |_{\cale_i}$, $\eta_i=\eta|_{\cale_i}$, and $g_i= g|_{\cale_i\times \cale_i}$. So on the subbundle $\cale_i$ we have the {\it Sasakian structure} $\cals_i=(\xi,\eta_i,\Phi_i,g_i)$. 

Let us now consider the case $r=2$. We have 
\begin{equation}\label{emet}
g_1=g^T_1+\eta_1\otimes \eta_1,\qquad  g_2=g^T_2+\eta_2\otimes \eta_2.
\end{equation}
Here the transverse metric $g^T_i$ means transverse to the characteristic foliation $\calf_\xi$ on the leaves of the foliation $\calf_{\cale_i}$. So it is important to note that the transverse metric to $\cale_1$ is $g^T_2$, a metric on $\cald_2$, and that transverse to $\cale_2$ is $g^T_1$, a metric on $\cald_1$. With this in mind we have the decompositions $TM=\cale_1\oplus \cald_2=\cale_2\oplus\cald_1$, so $\cale_1^\perp=\cald_2$ and $\cale_2^\perp=\cald_1$. 
Now generally leaves of foliations are immersed submanifolds, and  we have

\begin{prop}\label{immsub}
Each leaf $L_i$ of $\cale_i$ is a totally geodesic immersed Sasakian submanifold with the Sasakian structure $\cals_i$.
\end{prop}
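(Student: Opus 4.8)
The plan is to realise the candidate Sasakian structure $\cals_i=(\xi,\eta_i,\Phi_i,g_i)$ on a leaf $L_i$ of $\cale_i$ by cutting down the K\"ahler cone of $M$, so that the Sasakian property is inherited rather than verified axiom by axiom. The only nontrivial input is the He--Sun total geodesy of the leaves, which the excerpt already records; the rest is a careful bookkeeping of restrictions.

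\emph{Step 1 (the structure tensors restrict).} Since $L_\xi\subset\cale_i$, the Reeb field $\xi$ is everywhere tangent to $\cale_i$; hence its integral curves lie inside leaves of $\cale_i$, the flow of $\xi$ maps each leaf $L_i$ to itself, and $\xi$ restricts to a unit (as $g(\xi,\xi)=1$) Killing field on $(L_i,g_i)$. Together with $\Phi\cale_i=\cald_i\subset\cale_i$ (condition (2) of Definition \ref{HSdef}) and $\eta(\xi)=1$, this shows that $(\xi,\eta,\Phi,g)$ restricts to $(\xi,\eta_i,\Phi_i,g_i)$ on $L_i$, i.e. $L_i$ is an \emph{invariant} submanifold of $M$ tangent to the Reeb field. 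Integrability of $\cale_i$ (from \cite{HeSu12b}) makes $L_i$ an immersed submanifold, and \cite{HeSu12b} also shows the leaves are totally geodesic, so the Levi-Civita connection of $(L_i,g_i)$ is the restriction of $\nabla$.

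\emph{Step 2 (the cone of $L_i$ is K\"ahler).} Consider $C(L_i)=L_i\times\RR^+$, immersed in $C(M)$ with tangent space $\cale_i\oplus\RR\tfrac{\partial}{\partial r}$ at each point. I would check that $\tilde J$ preserves this subspace: $\tilde J\bigl(r\tfrac{\partial}{\partial r}\bigr)=\xi\in L_\xi\subset\cale_i$, $\tilde J\xi=-r\tfrac{\partial}{\partial r}$, and on $\cald_i$ the operator $\tilde J$ coincides with $\Phi$, which preserves $\cald_i$ by condition (2). Hence $C(L_i)$ is an immersed complex submanifold of the K\"ahler manifold $(C(M),\tilde\omega,\tilde J)$, so it is K\"ahler for the induced metric; and because $\tfrac{\partial}{\partial r}$ is $\tilde g$-orthogonal to $TM$ while $\tilde g|_{\cale_i}=r^2 g_i$, the induced metric is exactly $r^2 g_i+dr^2$, the Riemannian cone of $(L_i,g_i)$. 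Therefore $(L_i,g_i)$ is Sasakian, and reading off the induced data ($\tilde J(r\partial_r)|_{r=1}=\xi$, contact form $\eta|_{L_i}=\eta_i$, $(1,1)$-tensor $\Phi|_{L_i}=\Phi_i$) identifies its Sasakian structure as $\cals_i$; total geodesy was already noted in Step 1.

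\emph{Alternative route and main subtlety.} If one prefers to stay on $M$, the same conclusion follows by restricting the defining identities: $\xi$ is unit Killing for $g_i$ (Step 1); $\Phi_i=-\nabla^{L_i}\xi$ because $\nabla_X\xi=-\Phi X\in\cald_i$ is already tangent to $L_i$; and $g_i=d\eta_i\circ(\BOne\otimes\Phi_i)\oplus\eta_i\otimes\eta_i$ together with the normality identity $(\nabla^{L_i}_X\Phi_i)Y=g_i(X,Y)\xi-\eta_i(Y)X$ follow by restricting the ambient versions, using total geodesy to replace $\nabla^{L_i}$ by $\nabla$ on tangent vectors. The one point that is not pure bookkeeping is that $\eta_i$ must still be a contact form on $L_i$, i.e. $d\eta_i$ must be nondegenerate on $\cald_i$; this is exactly where the splitting $g^T=g^T_1+g^T_2$ of condition (4) is used, since $g^T_i(\cdot,\cdot)=d\eta(\cdot,\Phi\cdot)|_{\cald_i}$ is then a genuine metric. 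On the cone side the analogous delicate point is the $\tilde J$-invariance of $TC(L_i)$, which uses the full force of conditions (1) and (2); everything else is restriction.
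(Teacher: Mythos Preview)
Your argument is correct, but the paper's proof takes a shorter route: it simply invokes He--Sun for total geodesy and then cites Okumura's Theorem (Theorem 7.6.2 of \cite{BG05}), which says precisely that an invariant submanifold of a Sasakian manifold---one tangent to the Reeb field and preserved by $\Phi$---inherits a Sasakian structure from the ambient one. Your Step~1 is exactly the verification that the leaves are invariant submanifolds in this sense, so at that point the paper is already done.

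What you do instead is essentially \emph{prove} Okumura's Theorem in this setting, via two independent mechanisms. Your Step~2 cone argument is particularly clean: it reduces the Sasakian property to the standard fact that a complex submanifold of a K\"ahler manifold is K\"ahler for the induced metric, and the $\tilde J$-invariance check for $TC(L_i)$ is exactly the content of ``invariance'' in Okumura's sense, rephrased upstairs. Your alternative route is a direct axiom-by-axiom verification using total geodesy, which is more or less how Okumura's Theorem is proved in \cite{BG05} anyway. So both of your approaches are self-contained substitutes for the black-box citation; the paper buys brevity, you buy transparency. The observation in your final paragraph---that nondegeneracy of $d\eta_i$ on $\cald_i$ really uses condition (4) via $g^T_i=d\eta(\cdot,\Phi\cdot)|_{\cald_i}$---is a nice point that the paper's citation leaves implicit.
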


\begin{proof}
That the leaves of $\cale_i$ are totally geodesic was proved by He and Sun \cite{HeSu12b}. That the leaves are Sasakian follows from Definition \ref{HSdef} and Okumura's Theorem (\cite{BG05}, Theorem 7.6.2). 
\end{proof}

Proposition \ref{immsub} implies

\begin{cor}\label{efolcomp}
The leaves of the foliation $\cale_i$ are complete with respect to the metric $g_i$.
\end{cor}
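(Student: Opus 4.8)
The plan is to derive completeness directly from Proposition~\ref{immsub}. By that proposition each leaf $L_i$ of the foliation $\cale_i$ is a totally geodesic immersed submanifold of $(M,g)$, and since $TL_i=\cale_i$ its intrinsically induced metric is exactly $g_i=g|_{\cale_i\times\cale_i}$. As $M$ is compact, $(M,g)$ is geodesically complete, so the strategy is to transfer this completeness to $(L_i,g_i)$ and then invoke Hopf--Rinow.

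The first step is the standard observation that a curve in $L_i$ is a $g_i$-geodesic if and only if it is a $g$-geodesic: because $L_i$ is totally geodesic its second fundamental form vanishes, so $\nabla^{g_i}_{\dot\gamma}\dot\gamma=\bigl(\nabla^{g}_{\dot\gamma}\dot\gamma\bigr)^{\cale_i}$ for any curve $\gamma$ in $L_i$, and the left-hand side vanishes precisely when $\nabla^{g}_{\dot\gamma}\dot\gamma=0$. The second step is that a $g$-geodesic $\gamma$ issuing from a point of $L_i$ with $\dot\gamma(0)\in\cale_i$ stays in $L_i$ for all time it is defined: its velocity remains a section of the integrable subbundle $\cale_i$ along it, so locally $\gamma$ is confined to a single plaque of the foliation, and since two leaves of $\cale_i$ are either equal or disjoint the curve cannot leave $L_i$. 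Combining these two steps: given $p\in L_i$ and $v\in\cale_i(p)$, the $g$-geodesic with these initial data is defined on all of $\bbr$ by completeness of $M$, hence so is the corresponding $g_i$-geodesic of $L_i$.

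Therefore $(L_i,g_i)$ is geodesically complete, and by the Hopf--Rinow theorem, which applies to any connected Riemannian manifold and in particular to the abstract manifold underlying the immersed leaf $L_i$, it is metrically complete. I expect the only point needing care to be the claim in the second step that a geodesic tangent to a leaf remains in that leaf despite $L_i$ being merely immersed; this is handled by the local plaque argument together with the disjointness of distinct leaves, and all the remaining ingredients are Riemannian generalities.
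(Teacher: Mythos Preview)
Your proof is correct and is precisely the standard argument behind the paper's one-line ``Proposition~\ref{immsub} implies'': a totally geodesic immersed submanifold of a complete Riemannian manifold is itself complete, and since $M$ is compact (by the paper's standing assumption) the ambient metric $g$ is complete. You have simply spelled out the details the paper leaves implicit, including the care needed for immersed leaves.
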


As a Sasakian structure has much more than a foliate structure, so then a reducible Sasakian structure has more than a multifoliate structure.
Recall from Proposition 6.4.8 in \cite{BG05} that  the characteristic foliation $\calf_\xi$ of a K-contact (hence a Sasaki manifold) $M^{2n+1}$ is an oriented Riemannian foliation, that is it is a transverse G-structure with the transverse frame group $SO(2n,\bbr)$. For transverse G-structures see Chapter 2 of \cite{Mol88}. But a Sasakian structure also has transverse holomorphic structure, so a Sasaki manifold has a transverse K\"ahlerian structure, that is the transverse G-structure has the transverse frame group $U(n)$. In the case of a reducible Sasakian structure we have

\begin{prop}\label{riemfol}
The foliations $\cale_i$ are K\"ahlerian with respect to the Sasaki metric $g$, that is, $g$ is bundle-like for the foliation $\cale_i$ whose transverse metrics $g^T_{i+1}$ are K\"ahler where $i+1$ is understood to be mod 2. 
\end{prop}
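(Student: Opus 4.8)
The plan is to verify the two assertions in the statement --- that $g$ is bundle-like for $\cale_i$ with transverse metric $g^T_{i+1}$, and that this transverse metric is K\"ahler --- directly from the torsion-freeness of the Levi-Civita connection $\nabla$ of $g$, the He--Sun conditions of Definition \ref{HSdef}, and the standard fact that the transverse connection $\nabla^T$ on $\cald$ parallelizes both $g^T$ and $\Phi|_\cald$ (\cite{BG05}). The only analytic input needed is a one-line identity, which I would establish first. Since $i_\xi d\eta=0$, for $X\in\Gamma(\cald)$ one gets $\eta([\xi,X])=-d\eta(\xi,X)=0$, hence $[\xi,X]\in\Gamma(\cald)$ and $\nabla^T_\xi X=[\xi,X]^\cald=[\xi,X]$; condition (3) then forces $[\xi,X]\in\Gamma(\cald_k)$ whenever $X\in\Gamma(\cald_k)$, so the flow of $\xi$ preserves $\cald_1$ and $\cald_2$. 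Moreover, for $Y\in\Gamma(\cald_i)$ and $X\in\Gamma(\cald_{i+1})$, torsion-freeness gives $\nabla^T_YX-\nabla^T_XY=[Y,X]^\cald$, while condition (3) puts $\nabla^T_YX\in\Gamma(\cald_{i+1})$ and $\nabla^T_XY\in\Gamma(\cald_i)$; comparing $\cald_{i+1}$-components yields
$$[Y,X]^{\cald_{i+1}}=\nabla^T_YX.$$

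For the bundle-like assertion I would invoke the criterion that $g$ is bundle-like for $\cale_i$ precisely when the induced metric on the normal bundle $Q_i\cong\cale_i^\perp=\cald_{i+1}$ is parallel for the Bott connection along $\cale_i$; as this condition is $C^\infty(M)$-linear in the leafwise direction it suffices to test it on $\xi$ and on sections $Y$ of $\cald_i$. For $\xi$ it follows from $\xi$ being Killing together with the $\xi$-invariance of $\cald_{i+1}$. For $Y$ it is exactly the metric-compatibility of $\nabla^T$ combined with the displayed identity: for $X,Z\in\Gamma(\cald_{i+1})$,
$$Y\bigl(g^T_{i+1}(X,Z)\bigr)=g^T(\nabla^T_YX,Z)+g^T(X,\nabla^T_YZ)=g^T_{i+1}([Y,X]^{\cald_{i+1}},Z)+g^T_{i+1}(X,[Y,Z]^{\cald_{i+1}}).$$
Thus $g$ is bundle-like for $\cale_i$ with transverse metric $g^T_{i+1}=g|_{\cald_{i+1}}$.

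With $\cale_i$ now known to be Riemannian, I would analyze its transverse Levi-Civita connection $\nabla^{T,i}$ on $Q_i\cong\cald_{i+1}$, characterized by $\nabla^{T,i}_VW=(\nabla_VW)^{\cald_{i+1}}$ for $V\in\Gamma(\cald_{i+1})$ and $\nabla^{T,i}_VW=[V,W]^{\cald_{i+1}}$ for $V\in\Gamma(\cale_i)$. Using condition (3) one identifies $\nabla^{T,i}=\nabla^T$ on $\cald_{i+1}$-directions, and on $\cale_i$-directions the displayed identity gives $\nabla^{T,i}_{a\xi+Y}W=a[\xi,W]+\nabla^T_YW$. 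The transverse complex structure $\bar J_{i+1}:=\Phi|_{\cald_{i+1}}$ (well-defined since $\Phi\cald_{i+1}=\cald_{i+1}$) is then $\nabla^{T,i}$-parallel: on $\cald_{i+1}$-directions because $\nabla^T\Phi=0$; on the $\xi$-direction because $\pounds_\xi\Phi=0$; on $\cald_i$-directions because $[Y,\Phi W]^{\cald_{i+1}}=\nabla^T_Y(\Phi W)=\Phi\nabla^T_YW=\Phi[Y,W]^{\cald_{i+1}}$. Since $g^T_{i+1}$ is $\bar J_{i+1}$-Hermitian (a restriction of the $\Phi$-Hermitian metric $g^T$) and $\nabla^{T,i}$ is a torsion-free metric connection preserving $\bar J_{i+1}$, the transverse geometry of $\cale_i$ is K\"ahler, and the proposition follows.

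The step I expect to require the most care is not a computation but the transverse bookkeeping: making sure $g^T_{i+1}$ and $\bar J_{i+1}$ are genuinely basic tensors for the foliation $\cale_i$, and that its transverse Levi-Civita connection is correctly matched with the relevant restrictions of $\nabla$ and of $\nabla^T$; once that is set up, everything collapses to the identity $[Y,X]^{\cald_{i+1}}=\nabla^T_YX$ together with $\pounds_\xi\Phi=0$. As an alternative route one could argue locally: in a foliated chart for $\calf_\xi$ the local leaf space is K\"ahler and, by conditions (3)--(4) and the local de Rham decomposition (compare Lemma \ref{orbdeRh}), splits as a Riemannian product of K\"ahler manifolds with $\cale_i$ pulled back from one factor; but the connection argument above is self-contained and avoids invoking the decomposition theorem.
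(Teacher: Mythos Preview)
Your proof is correct and proceeds along a route close to, but organized differently from, the paper's. The paper checks bundle-likeness via the classical criterion $V\!\cdot g(X,Y)=0$ for $V\in\Gamma(\cale_1)$ and $X,Y$ horizontal \emph{foliate} sections of $\cald_2$: it writes $V=V^{\cald_1}+\eta(V)\xi$, uses the Sasaki identity $\nabla\xi=-\Phi$ together with condition~(3) to see that the $\cald_2$-component of $\nabla_VX$ is $\eta(V)\Phi X$, and then obtains the vanishing from $d\eta(\Phi X,\Phi Y)+d\eta(\Phi Y,\Phi X)=0$. You instead phrase bundle-likeness as parallelism of the induced normal metric under the Bott connection, and your engine is the identity $[Y,X]^{\cald_{i+1}}=\nabla^T_YX$ (from torsion-freeness plus condition~(3)) combined with $\nabla^Tg^T=0$; this avoids restricting to foliate sections and lets you treat $\xi$ and $Y\in\Gamma(\cald_i)$ uniformly.

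What your approach buys is a clean, explicit verification of the transverse \emph{K\"ahler} structure: you identify the transverse Levi-Civita connection $\nabla^{T,i}$ of $\cale_i$ with suitable pieces of $\nabla^T$ and the Bott connection, and then check $\nabla^{T,i}(\Phi|_{\cald_{i+1}})=0$ direction by direction using $\nabla^T\Phi=0$ and $\pounds_\xi\Phi=0$. The paper's written proof actually only spells out the bundle-like step and leaves the K\"ahlerian assertion implicit, so in that respect your argument is more complete. Conversely, the paper's computation is shorter for the Riemannian-foliation part, since the single cancellation $g(\Phi X,Y)+g(X,\Phi Y)=0$ does all the work once the foliate hypothesis has been used.
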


\begin{proof}
It is a well known result (cf. \cite{BG05}, Proposition 2.5.7) that a foliation $\cale$ on $M$ is Riemannian if and only if it admits a bundle-like metric, that is a Riemannian metric $g$ such that if $V$ is a vector field along the leaves of the foliation $\cale$ and $X,Y$ are horizontal foliate vector fields, then $Vg(X,Y)=0$. In our case horizontal foliate with respect to $\cale_1$ corresponds to sections of $\cald_2$ which are independent of the leaf variables of the foliation $\cale_1$, that is, a section $X$ of $\cald_2$ is foliate if $[V,X]\in\Gamma(\cale_1)$ whenever $V\in\Gamma(\cale_1)$. Observe that in that case, condition (3) of Definition \ref{HSdef} implies $$\nabla_VX= \eta(V)\Phi(X).$$ Indeed, $\nabla_VX- \nabla_X V = [V,X] \in \Gamma(\cale_1)$ but the only part of $\nabla_X V$ lying in $\cald_2$ is $\eta(V)\Phi(X)$. Hence, 
\begin{equation}\begin{split}
V\cdot g(X,Y) &= g(\nabla_VX,Y) + g(X,\nabla_VY) \\
&=\eta(V)(g(\Phi(X),Y)+ g(X,\Phi(Y))) \\
&=\eta(V)(g(\Phi(X),Y)+ g(\Phi(Y),X)) \\
&=\eta(V)(d\eta(\Phi(X),\Phi(Y))+ d\eta(\Phi(Y),\Phi(X)))=0.
\end{split}\end{equation}
\end{proof}

However, it follows from Proposition \ref{riemfol} and Definition \ref{HSdef} that more is true, namely
\begin{cor}\label{prodKah}
Let $(M,\cals)$ be a reducible Sasaki manifold with $\dim_\bbr\cald_i=2n_i$. Then $(M,\cals)$ has a transverse K\"ahlerian product structure with transverse frame group $U(n_1)\times U(n_2)$ where $n_1+n_2=n$ and the transverse holonomy group lies in $U(n_1)\times U(n_2)$.
\end{cor}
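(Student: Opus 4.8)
The plan is to upgrade the bundle-like/Riemannian conclusion of Proposition \ref{riemfol} to a genuine \emph{product} statement at the level of transverse $G$-structures, and then to identify the reduced transverse holonomy. First I would recall that a reducible Sasakian structure $\cals$ carries the transverse Kähler structure of Section \ref{subSECTbackground1}, i.e. the characteristic foliation $\calf_\xi$ is a transverse $U(n)$-structure with transverse metric $g^T$ and transverse complex structure $\bar J$. Conditions (1),(2),(4) of Definition \ref{HSdef} say precisely that, on the normal bundle $V(\calf_\xi)\cong\cald$, we have a $\bar J$-invariant, $g^T$-orthogonal splitting $\cald=\cald_1\oplus\cald_2$ into subbundles of real rank $2n_1,2n_2$, and that $g^T=g^T_1+g^T_2$ is the corresponding orthogonal sum. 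This alone reduces the transverse frame group pointwise from $U(n)$ to $U(n_1)\times U(n_2)$; the content is that this reduction is \emph{integrable}, i.e. flat for the transverse Levi-Civita connection $\nabla^T$, which is exactly what condition (3) encodes.

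The key step is therefore to show that the transverse Levi-Civita connection preserves each $\cald_i$, so that the transverse holonomy representation decomposes as a direct sum and hence lands in $U(n_1)\times U(n_2)$. Concretely, recall that the transverse Kähler connection on $\cald$ is induced by $\nabla^T_X Y$ as defined just before Definition \ref{HSdef}; condition (3) says $\nabla^T_X Y\in\Gamma(\cald_i)$ whenever $Y\in\Gamma(\cald_i)$, for \emph{every} $X$ (sections of $\cald$ or $\xi$). Thus parallel transport along any piecewise-smooth loop preserves the splitting $\cald_x=\cald_{1,x}\oplus\cald_{2,x}$, so the transverse holonomy group $\mathrm{Hol}(g^T)$ is contained in the subgroup of $U(n)$ stabilizing this decomposition, namely $U(n_1)\times U(n_2)$ (after choosing an adapted transverse frame, using that $g^T$ and $\bar J$ are themselves $\nabla^T$-parallel). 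Combined with Proposition \ref{riemfol}, which already shows $g$ is bundle-like for each $\cale_i$ with Kähler transverse metric $g^T_{i+1}$, this gives the transverse \emph{product} Kähler structure: locally, on a foliate chart, the transverse geometry splits as a Riemannian/Kähler product of a piece modeled on $(\cald_1,g^T_1,\bar J|_{\cald_1})$ and a piece modeled on $(\cald_2,g^T_2,\bar J|_{\cald_2})$, exactly as in the de Rham/de Rham–Wu splitting, but carried out transversally.

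The main obstacle, and the reason this is phrased as a corollary rather than folded into Proposition \ref{riemfol}, is being careful that condition (3) of Definition \ref{HSdef} controls $\nabla^T_\xi$ as well as $\nabla^T_X$ for $X$ horizontal — i.e. that the splitting is preserved not just along $\cald$-directions but also along the Reeb direction. This is handled by the second clause $\nabla^T_\xi Y=[\xi,Y]^{\cald}$ in the definition of $\nabla^T$ together with the invariance hypothesis, so the transverse connection on the normal bundle of $\calf_\xi$ genuinely has reduced holonomy; no completeness or compactness is needed here since we are only asserting a local product structure and a constraint on the (restricted) holonomy group. I would then simply record that $n_1+n_2=n$ and that the transverse holonomy group lies in $U(n_1)\times U(n_2)$, which is the assertion of Corollary \ref{prodKah}; the de Rham–type decomposition of the transverse geometry into an actual metric product of (local) Kähler factors then follows verbatim from the Riemannian de Rham local splitting theorem applied leafwise in Molino's sense.
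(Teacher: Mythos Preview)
Your argument is correct and follows essentially the same approach as the paper, which simply records the corollary as an immediate consequence of Proposition~\ref{riemfol} together with Definition~\ref{HSdef} without spelling out any further details. You have faithfully unpacked what the paper leaves implicit: conditions (1), (2), (4) of Definition~\ref{HSdef} reduce the transverse frame group to $U(n_1)\times U(n_2)$, and condition (3) ensures $\nabla^T$ preserves the splitting so that the transverse holonomy lies in this subgroup.
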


For a multifoliate Sasakian structure $\calf_r$ the transverse K\"ahler structure to $\cale_i$ is a product of $r-1$ K\"ahler structures with product metric $g^T_{\hat{i}}=g^T_1+\cdots +g^T_{i-1}+g^T_{i+1}+\cdots +g^T_r$, and the transverse holonomy group lies in $U(n_1)\times\cdots\times  U(n_{i-1})\times U(n_{i+1})\times\cdots\times U(n_r)$. 

It is important to realize that Sasakian structures come in rays obtained from a transverse homothety\cite{BG05}. So the induced Sasakian structure $\cals_i$ on the leaves $L_i$ of $\cale_i$ described above gives the ray $\gr_i=\{aS_i\}$ of Sasakian structures defined by the Sasakian structures $aS_i=(a^{-1}\xi,a\eta,\Phi,ag_i+(a^2-a)\eta_i\otimes\eta_i)$ for $a\in\bbr^+$. We now have

\begin{prop}\label{joinred}
The $(l_1,l_2)$-join $M_1\star_{l_1,l_2}M_2$ is He-Sun reducible. Equivalently, a decomposable Sasakian structure is reducible.
\end{prop}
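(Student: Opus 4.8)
The plan is to build the He--Sun splitting of the contact bundle of $M=M_1\star_{l_1,l_2}M_2$ directly out of the product K\"ahler orbifold underlying the join. By construction $M$ is quasi-regular with quotient the product K\"ahler orbifold $(\calz,\omega)=(\calz_1\times\calz_2,\,l_1\omega_1+l_2\omega_2)$, and the orbifold Boothby-Wang projection $\pi\colon M\to\calz$ satisfies $d\eta=\pi^*\omega$ and identifies $\cald=\ker\eta$ with $\pi^*T\calz$ via $d\pi$. Under this identification the transverse structure of $\cals$ is exactly the K\"ahler structure of $\calz$: $\Phi|_\cald$ corresponds to the product complex structure $J=J_1\oplus J_2$, the transverse metric $g^T$ to the product metric $h=l_1h_1+l_2h_2$ (with $h_i$ the K\"ahler metric of $\calz_i$), and the transverse connection $\nabla^T$ to the pullback of the Levi-Civita connection $\nabla^h$ of $h$; by the latter I mean that $\nabla^T$ is $C^\infty(M)$-linear in its differentiation argument, that it annihilates $\xi$-derivatives of $\pi$-basic sections of $\cald$, and that it projects to $\nabla^h$ on $\pi$-basic sections of $\cald$.

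Next I would invoke the elementary fact that the factor distributions of a Riemannian product are parallel: the sub-bundles $T\calz_1,T\calz_2\subset T\calz$ (the pullbacks of the tangent bundles of the two factors) give an $h$-orthogonal, $J$-invariant and $\nabla^h$-parallel splitting $T\calz=T\calz_1\oplus T\calz_2$. I then set $\cald_i:=(d\pi)^{-1}(T\calz_i)$ for $i=1,2$; these are non-trivial because $\dim M_i\ge 3$ forces $\dim_{\CC}\calz_i\ge 1$. It remains to verify the four conditions of Definition \ref{HSdef}. Conditions (1) and (4) --- that $\cald=\cald_1\oplus\cald_2$ is $g$-orthogonal and that $g^T=g^T|_{\cald_1}+g^T|_{\cald_2}$ --- are immediate from $g^T\leftrightarrow h=l_1h_1+l_2h_2$ together with the fact that a product metric is block-diagonal with respect to the factor distributions. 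Condition (2), $\Phi\cald_i=\cald_i$, is immediate from $\Phi|_\cald\leftrightarrow J_1\oplus J_2$.

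For condition (3), that $\nabla^T_X Y$ is a section of $\cald_i$ whenever $Y$ is, I would argue as follows. Locally $\cald_i$ is spanned over $C^\infty(M)$ by $\pi$-basic sections $Y_a$ (pullbacks of a local frame of $T\calz_i$), so since $\nabla^T$ is a connection it suffices to show $\nabla^T_X Y_a\in\Gamma(\cald_i)$; and since $\nabla^T$ is tensorial in $X$ and $X=X^{\cald}+\eta(X)\xi$, it suffices to treat $X=\xi$ and $X$ a $\pi$-basic section of $\cald$. When $X=\xi$ we get $\nabla^T_\xi Y_a=[\xi,Y_a]^{\cald}=0$ because $Y_a$ is basic; when $X$ is a $\pi$-basic section of $\cald$, $\nabla^T_X Y_a$ projects under $d\pi$ to $\nabla^h_{\bar X}\bar Y_a$, which lies in $\Gamma(T\calz_i)$ since $T\calz_i$ is $\nabla^h$-parallel, so $\nabla^T_X Y_a\in\Gamma(\cald_i)$. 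This establishes all four conditions of Definition \ref{HSdef}, hence $\cals$ is reducible in the He--Sun sense, which is the assertion of the proposition. The step I would treat with the most care is the precise dictionary between $\nabla^T$ on $M$ and $\nabla^h$ on $\calz$ in the quasi-regular (orbifold) setting; once that is in place, the rest of the argument is simply the transport of the product splitting of a K\"ahler orbifold along the Boothby-Wang fibration.
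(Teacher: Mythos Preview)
Your proof is correct and follows essentially the same approach as the paper's: both arguments pull back the product K\"ahler splitting of $\calz_1\times\calz_2$ through the orbifold Boothby--Wang fibration to obtain the He--Sun splitting of $\cald$, noting that $\Phi$, $g^T$, and $\nabla^T$ correspond to the product complex structure, product metric, and lifted Levi--Civita connection respectively. Your treatment is somewhat more explicit than the paper's---in particular you spell out the verification of condition~(3) via $\pi$-basic sections and the case split $X=\xi$ versus $X\in\Gamma(\cald)$, whereas the paper simply asserts that $\nabla^T$ is the lifted Levi--Civita connection and concludes---but the underlying idea is identical.
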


\begin{proof}
As usual we consider vector fields on $M_1\star_{l_1,l_2}M_2$ as vector fields on $M_1\times M_2$ $\mod L_{l_1,l_2}$ where $L_{l_1,l_2}$ is given by Equation \eqref{Leqn}. So by Equation \eqref{Reebjoin} the Reeb vector field on $M_1\star_{l_1,l_2}M_2$ is $\xi=\frac{1}{l_1}\xi_1+\frac{1}{l_2}\xi_2$. Now from the join commutative diagram \eqref{joincomdia} we have $\cald$ splits as $\cald=\cald_1\oplus \cald_2$. Moreover, since the transverse metric also splits as $g^T=g^T_1+g^T_2$, the splitting of $\cald$ is orthogonal. But the transverse complex structure $J=J_1+J_2$ also splits, so one easily sees that $\Phi\cald_i=\cald_i$. Since the transverse connection $\nabla^T$ on $\cald$ is just the Levi-Civita connection lifted from the orbifold $\calz_1\times \calz_2$ item (3) of Definition \ref{HSdef} holds as well. This completes the proof of the proposition.
\end{proof}

\begin{rem}\label{Reebjoinrem}
From the structure of the  Reeb vector field $\xi$ on the join we see that the induced Sasakian structure on the leaves $L_i$ of $\cale_i$ is not $\cals_i$ but the transverse homothety translated Sasakian structure $l_i\cals_i$.
\end{rem}

We have the following converse of Proposition \ref{joinred} under the added hypothesis of simple connectivity. First we state an orbifold version of the de Rham decomposition:
\begin{lemma}\label{orbdeRh}
Let $\calz$ be a compact K\"ahler orbifold of complex dimension $n$ with $\pi_1^{orb}(\calz)=\{id\}$. Suppose that $\calz$ has Riemannian holonomy contained in $U(n_1)\times U(n_2)$ where $n_1+n_2=n$, then $\calz$ is isometric to $\calz_1\times \calz_2$.
\end{lemma}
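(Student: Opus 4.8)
The plan is to carry the classical de Rham decomposition argument through in the orbifold category, using uniformizing charts in place of ordinary charts and $\pi_1^{orb}$ in place of $\pi_1$. Since the holonomy group of $\calz$ lies in $U(n_1)\times U(n_2)\subset U(n)\subset SO(2n)$, the holonomy representation on a tangent space $T_x\calz\cong\RR^{2n}=\RR^{2n_1}\oplus\RR^{2n_2}$ splits orthogonally, and each summand is $J$-invariant. Parallel transport of this splitting is path-independent (the holonomy preserves it, and in any case $\pi_1^{orb}(\calz)=\{id\}$), so we obtain two mutually orthogonal, $J$-invariant, parallel distributions $\cald_1,\cald_2$ on $\calz$ with $T\calz=\cald_1\oplus\cald_2$ and $\dim_\RR\cald_i=2n_i$. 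Being parallel, each $\cald_i$ is involutive; its leaves are totally geodesic complex, hence K\"ahler, suborbifolds, and since $\calz$ is compact and therefore geodesically complete, these leaves are complete as well.

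Next I would record the local product structure, including at the singular points. Around $q\in\calz$ take a uniformizing chart $\varphi\colon\tilde U/\Gamma_q\to U\subset\calz$, with $\tilde U\subset\CC^n$ a $\Gamma_q$-invariant geodesic ball about the point $\tilde q$ over $q$ and $\Gamma_q$ a finite group of holomorphic isometries fixing $\tilde q$. The pulled-back distributions $\varphi^*\cald_1,\varphi^*\cald_2$ are parallel, so by the local de Rham theorem $\tilde U$ carries a Riemannian (and K\"ahler) product structure $\tilde g=\tilde g_1+\tilde g_2$ adapted to this splitting. Because $\cald_1,\cald_2$ are globally defined on $\calz$, each $\gamma\in\Gamma_q$ preserves $\varphi^*\cald_i$, so the two factors are never interchanged (even when $n_1=n_2$), and by the rigidity of the de Rham splitting $\gamma$ acts as a product isometry; thus $\Gamma_q=\Gamma_q^1\times\Gamma_q^2$ acts factorwise. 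In particular every point of $\calz$ has a neighbourhood carrying a local product structure compatible with $(\cald_1,\cald_2)$, on which the holonomy of the $\cald_i$-foliations is trivial.

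The heart of the proof is then the globalization. Fix $p\in\calz$ and let $\calz_i$ be the leaf of $\cald_i$ through $p$, a connected complete K\"ahler suborbifold. I would define $F\colon\calz_1\times\calz_2\to\calz$ by transporting a point $y\in\calz_2$ along a path in $\calz_1$ from $p$ to $x$ using the holonomy of the $\cald_1$-foliation acting on the $\cald_2$-leaves, and setting $F(x,y)$ to be the resulting point; the classical arguments, adapted, then show this is well defined and an isometry. Well-definedness is the vanishing of this foliation holonomy around every $\cald_1$-loop based at $p$: by the previous paragraph it is locally constant, hence descends to a homomorphism out of $\pi_1^{orb}(\calz)$ — an orbifold null-homotopy can be subdivided so finely that it is covered by a chain of product neighbourhoods, along which the holonomy contracts to the identity — and $\pi_1^{orb}(\calz)=\{id\}$ makes it trivial. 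The same input shows each $\cald_1$-leaf is closed and meets each $\cald_2$-leaf in exactly one point, which produces the inverse $q\mapsto(\pi_1(q),\pi_2(q))$, where $\pi_1(q)$ is the unique point of $\calz_1$ on the $\cald_2$-leaf through $q$ and $\pi_2(q)$ is defined symmetrically. Being a bijective local isometry between complete K\"ahler orbifolds that intertwines the complex structures, $F$ is a biholomorphic isometry, so $\calz\cong\calz_1\times\calz_2$.

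The main obstacle is the globalization step: making precise, in the orbifold setting, the ``slide a path by the flat foliation holonomy'' construction and the attendant monodromy argument — in particular that every null-homotopic orbifold loop can be filled by an orbifold disc lying in a chain of product neighbourhoods, so that $\pi_1^{orb}(\calz)=\{id\}$ genuinely forces the holonomy to be trivial — together with the verification that the leaves $\calz_i$ are closed embedded suborbifolds. Everything else is a routine transcription of the classical proof of the de Rham decomposition theorem, with uniformizing charts and $\pi_1^{orb}$ replacing charts and $\pi_1$.
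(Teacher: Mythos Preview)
Your proposal is correct and takes essentially the same approach as the paper: the paper's proof is the single sentence ``This follows as in the proof of Theorem 6.1 of \cite{KoNo63} by working on local uniformizing neighborhoods,'' and what you have written is precisely a detailed sketch of that adaptation --- setting up the parallel distributions, verifying the local product structure (including the factorwise splitting of the local groups) on uniformizing charts, and then running the classical globalization with $\pi_1^{orb}$ in place of $\pi_1$. Your honest flagging of the globalization step as the point requiring care is appropriate, but this is exactly the part the paper is outsourcing to Kobayashi--Nomizu.
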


\begin{proof}
This follows as in the proof of Theorem 6.1 of \cite{KoNo63} by working on local uniformizing neighborhoods.
\end{proof}

\begin{prop}\label{redjoin}
Let $\cals=(\xi,\eta,\Phi,g)$ be a reducible Sasakian structure on a simply connected, compact, connected manifold $M$ such that its Reeb vector field $\xi$ is quasi-regular. Then there exist simply connected compact Sasaki manifolds $M_1,M_2$ and a pair of positive integers $(l_1,l_2)$ such that $M=M_1\star_{l_1,l_2}M_2$ is the join of $M_1$ and $M_2$. Equivalently, a reducible quasi-regular Sasakian structure on a simply connected compact manifold is decomposable.
\end{prop}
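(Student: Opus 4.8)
The plan is to combine the quasi-regularity of $\xi$ with the orbifold de Rham decomposition (Lemma \ref{orbdeRh}) and then reconstruct the join from the Boothby--Wang picture. First I would use quasi-regularity: since $\xi$ is quasi-regular on the compact manifold $M$, the characteristic foliation $\calf_\xi$ comes from a locally free $S^1$ action and the quotient $\calz = M/S^1_\xi$ is a compact K\"ahler orbifold with K\"ahler form $\gro$ satisfying $\pi^*\gro = d\eta$, where $\pi\colon M\to\calz$ is the Boothby--Wang projection. Because $M$ is simply connected, one has $\pi_1^{orb}(\calz)=\{id\}$ (the orbifold fundamental group of the base of a Boothby--Wang orbifold fibration over a simply connected total space is trivial; this follows from the orbifold homotopy exact sequence of the $S^1$ V-bundle, since $\pi_1(S^1)\to\pi_1(M)$ is surjective onto the trivial group). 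By Corollary \ref{prodKah}, the reducibility of $\cals$ forces the transverse holonomy of $g^T$ to lie in $U(n_1)\times U(n_2)$ with $n_1+n_2=n$, and this transverse holonomy is precisely the Riemannian holonomy of $(\calz,\gro)$.

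Next I would apply Lemma \ref{orbdeRh} to conclude that $\calz$ is isometric (and, tracking the complex structure through the argument, biholomorphically isometric as a K\"ahler orbifold) to a product $\calz_1\times\calz_2$ of compact K\"ahler orbifolds, where $\calz_i$ carries the induced K\"ahler form and $\dim_\bbc\calz_i = n_i$. Then I would decompose the cohomology class: $[\gro]\in H^2_{orb}(\calz,\bbr)$, being a product-compatible K\"ahler class, splits as $[\gro] = [\gro_1]\oplus[\gro_2]$ under $H^2_{orb}(\calz_1\times\calz_2)\cong H^2_{orb}(\calz_1)\oplus H^2_{orb}(\calz_2)$; moreover since $d\eta = \pi^*\gro$ represents (a multiple of) an integral orbifold class—indeed the Boothby--Wang construction requires $[\gro]$ to be a primitive integral orbifold cohomology class—each $[\gro_i]$ is a positive rational multiple of a primitive integral orbifold K\"ahler class $[\gro_i']$ on $\calz_i$. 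Writing $[\gro_i] = \tfrac{l_i}{m}[\gro_i']$ after clearing denominators and absorbing any common factor (using primitivity of $[\gro]$ to arrange $\gcd(l_1,l_2)=1$), we recover exactly the data $(\calz_i, [\gro_i'], l_i)$ needed for the join construction.

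Then I would invoke the orbifold Boothby--Wang construction: let $M_i$ be the total space of the $S^1$ V-bundle over $\calz_i$ with Euler class $[\gro_i']$; these carry quasi-regular Sasakian structures $(\xi_i,\eta_i,\Phi_i,g_i)$ with $d\eta_i = \pi_i^*\gro_i'$. By the very definition recalled in the excerpt, $M_1\star_{l_1,l_2}M_2$ is the total space of the $S^1$ V-bundle over $\calz_1\times\calz_2$ with Euler class $l_1[\gro_1']+l_2[\gro_2'] = m[\gro]$, which (after the innocuous transverse homothety rescaling encoded in $m$, cf. Proposition \ref{sascover} and the surrounding remarks about rays) is exactly $M$ with its Sasakian structure $\cals$, the Reeb field matching via \eqref{Reebjoin}. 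Finally, since $M$ is simply connected and the $M_i$ fiber over simply connected orbifolds $\calz_i$ with fiber $S^1$, the same orbifold homotopy sequence argument—or rather the fact that $M\to M_i$ factors appropriately through the commutative diagram \eqref{joincomdia}—shows each $M_i$ may be taken simply connected (if $\pi_1^{orb}(\calz_i)$ were nontrivial one could pass to its universal orbifold cover, but here it is trivial by the Künneth-type splitting $\pi_1^{orb}(\calz_1\times\calz_2)=\pi_1^{orb}(\calz_1)\times\pi_1^{orb}(\calz_2)$ applied to the trivial group). Also, smoothness of $M$ forces $M_i$ smooth, or at worst the orders $\upsilon_i$ to satisfy the coprimality of Proposition \ref{smoothprop}; one checks this is automatic given that $M$ itself is a manifold.

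The main obstacle I anticipate is \emph{bookkeeping the integrality and primitivity of the K\"ahler classes through the product splitting}: showing that the integral structure on $H^2_{orb}(\calz)$ is compatible with the Künneth decomposition, that the splitting $[\gro]=[\gro_1]\oplus[\gro_2]$ lands in the rational span of primitive integral orbifold classes on each factor, and that one can choose the rescaling $m$ and the integers $(l_1,l_2)$ so that $\gcd(l_1,l_2)=1$ and the resulting V-bundle total space is exactly the \emph{smooth} manifold $M$ rather than a quotient or cover of it. A secondary subtlety is verifying that the transverse complex structure $J=J_1+J_2$ and the CR/contact data assemble correctly so that the Sasakian structure produced by the join is genuinely $\cals$ and not merely one in $\cals(\xi,\bar J)$; this should follow from the uniqueness in the orbifold Boothby--Wang correspondence together with Corollary \ref{prodKah}, but it requires care.
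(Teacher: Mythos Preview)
Your proposal is correct and follows essentially the same route as the paper: pass to the quasi-regular quotient orbifold $\calz$, use simple connectivity to get $\pi_1^{orb}(\calz)=\{id\}$, invoke Corollary \ref{prodKah} and Lemma \ref{orbdeRh} to split $\calz=\calz_1\times\calz_2$, then reconstruct $M$ as a join via the orbifold Boothby--Wang correspondence. The paper's proof is slightly leaner on the integrality bookkeeping you flagged as an obstacle: since $M$ is simply connected, the Euler class $[\gro]$ of the $S^1$ orbibundle $M\to\calz$ is automatically primitive in $H^2_{orb}(\calz,\bbz)$ (otherwise $M$ would be a nontrivial cyclic quotient), so your auxiliary factor $m$ is forced to be $1$ and the K\"ahler form is directly $l_1\gro_1+l_2\gro_2$ with $\gro_i$ primitive on $\calz_i$; the paper also does not fuss over $\gcd(l_1,l_2)=1$, though your observation that primitivity of $[\gro]$ enforces this is the right one.
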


\begin{proof}
Since $\cals$ is quasi-regular and $M$ is compact, all leaves of the characteristic foliation $\calf_\xi$ are compact. Thus, by Theorem 7.1.3 of \cite{BG05} $M$ is the total space of an $S^1$ orbibundle over a compact projective algebraic orbifold $\calz$. Furthermore, since $M$ is simply connected $\pi_1^{orb}(\calz)=\{id\}$, and since $\cals$ is reducible, it follows from Corollary \ref{prodKah}  that the transverse holonomy group lies in $U(n_1)\times U(n_2)$ where $n_1+n_2=n$. So by Lemma \ref{orbdeRh} $\calz$ is isometric to a product $\calz_1\times \calz_2$ of projective algebraic orbifolds. Then by orbifold Boothby-Wang (cf. Theorem 7.1.3 of \cite{BG05}) there are primitive forms $\gro_1,\gro_2$ on $\calz_1,\calz_2$, and a pair of positive integers $l_1,l_2$, respectively such that the K\"ahler form on $\calz_1\times \calz_2$ can be written as $l_1\gro_1+l_2\gro_2$ and satisfies 
\begin{equation}\label{prodkahform}
d\eta=\pi^*(l_1\gro_1+l_2\gro_2),
\end{equation}
where $\pi:M\ra{1.6} \calz_1\times \calz_2$ is the $S^1$ orbibundle. Then again by the orbifold Boothby-Wang construction there are manifolds $M_1$ and $M_2$ with Sasakian structures $\cals_1=(\xi_1,\eta_1,\Phi_1,g_1)$ and $\cals_2=(\xi_2,\eta_2,\Phi_2,g_2)$ satisfying $\pi_i:M_i\ra{1.6} \calz_i$ with $d\eta_i=\pi^*\gro_i$. Since the K\"ahler classes $[\gro_i]$ are primitive and $\pi_1^{orb}$ is the identity, the manifolds $M_i$ are simply connected.  But also from Equation \eqref{prodkahform} we have that, up to a gauge transformation, $\eta=l_1\eta_1+l_2\eta_2$. But then the Reeb vector fields are related by Equation \eqref{Reebjoin} and the join construction follows.
\end{proof}

It should be clear that one can iterate this procedure.
\begin{rem}\label{rayjoinred}
In the case of an $S^3_\bfw$-join $M_{l_1,l_2,\bfw}=M\star_{l_1,l_2}S^3_\bfw$ described in \cite{BoTo14a}, we have the following uniqueness result. 
The admissible representatives of all rays in the $\bfw$-cone are `irreducible' except that coming from the join construction, namely the ray determined by $\bfv=\bfw$ (see Lemma 6.4 of \cite{BoTo14a}).
\end{rem}

Next we give examples of reducible Sasakian structures that are indecomposable.

\begin{example}\label{rulsurfex}
We begin by constructing well known ruled surfaces that are locally a product as K\"ahler manifolds, but not so globally. Consider $D\times\bbc\bbp^1$ where $D\subset\bbc$ is the unit disk, with the product K\"ahler form (metric) $(\gro_1, \gro_2)$ which are both Fubini-Study metrics. $\gro_1$ is hyperbolic with constant holomorphic sectional curvature $-1$, $\gro_2$ has holomorphic sectional curvature $+1$. Let $\grS_g$ be a Riemann surface of genus $g>1$ which we can represent as a quotient $D/\pi_1(\grS_g)$ where $\pi_1(\grS_g)$ acts on $D$ by deck transformations. Consider a projective unitary representation $\grr:\pi_1(\grS_g)\ra{1.6} PSU(2)$ and form the quotient 
$$\grS_g\times_\grr\bbc\bbp^1=(D\times\bbc\bbp^1)/(\pi_1(\grS_g),\grr(\pi_1(\grS_g)).$$
This has a local product structure as K\"ahler manifolds which is a global product as K\"ahler manifolds if only if $\grr$ is the identity representation. Nevertheless, the diffeomorphism type of $\grS_g\times_\grr\bbc\bbp^1$ is that of $S^2\times S^2$. In fact there is a two parameter family of integral K\"ahler structures on $\grS_g\times_\grr\bbc\bbp^1$ given by $l_1\gro_1+l_2\gro_2$ with $l_1,l_2\in \bbz^+$. The total space $M_{l_1,l_2,\grr}$ of the principal $S^1$ bundle over $\grS_g\times_\grr\bbc\bbp^1$ whose Euler class is $l_1[\gro_1]+l_2[\gro_2]$ has a natural Sasakian structure $\cals_{l_1,l_2}$ with constant scalar curvature. Now by \cite{GaRu85} the holomorphic tangent bundle to $\grS_g\times_\grr\bbc\bbp^1$ splits as a sum of holomorphic line bundles. This implies that the contact bundle $\cald$ on $M_{l_1,l_2,\grr}$ splits as $\cald=\cald_1\oplus\cald_2$. It is then straightforward to check that the Sasakian structure $\cals_{l_1,l_2}$ is reducible; however, it is decomposable only if $\grr$ 
is the identity representation. 

Note that the representation space, up to equivalence under conjugation by $PSU(2)\approx SO(3)$, is the character variety $\calr(\grS_g)$  which has real dimension $6g-6$. From \cite{NaSe65} we know that the smooth locus of $\calr(\grS_g)$, which is represented by the irreducible unitary representations of $\pi_1(\grS_g)$, is the moduli space of stable rank two holomorphic vector bundles on $\grS_g$; whereas, the singular locus consists of the reducible representations which are realized by the polystable, but not stable, rank two bundles.

\end{example}

Recall \cite{BG05} that a Sasakian structure $\cals$ is said to be of {\it positive (negative) type} if its basic first Chern class\footnote{Here to avoid future ambiguity we write the foliation as $\calf_\cals$ instead of $\calf_\xi$ as done in \cite{BG05}.} $c_1(\calf_\cals)$ can be represented by a positive (negative) definite $(1,1)$ form. It is {\it null} if $c_1(\calf_\cals)=0$ and {\it indefinite} otherwise. There are certain cases where quasi-regularity must hold.

\begin{lemma}\label{leavescomp}
Let $\cals=(\xi,\eta,\Phi,g)$ be a reducible Sasakian structure on a compact manifold $M$. Then $\cals$ is quasi-regular if any of the following conditions hold:
\begin{enumerate}
\item the leaves of both foliations $\cale_1$ and $\cale_2$ are compact;
\item the leaves of one foliation, say $\cale_1$, are compact and $\gA\gu\gt(\cals_1)$ has dimension one;
\item the leaves of one foliation, say $\cale_1$, are compact and $\cals_1$ is of negative or null type;
\item $\cals$ is of positive, negative or null type.
\end{enumerate}
\end{lemma}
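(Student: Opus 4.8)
The plan is to reduce all four cases to two elementary facts. The first: a Reeb orbit contained in two transverse compact leaves has compact, hence one-dimensional, closure. The second: \emph{a compact Sasaki manifold $N$ with $\dim\gA\gu\gt(\cals_N)=1$ has quasi-regular Reeb field} --- indeed the Reeb flow is a one-parameter subgroup of the compact group $\mathrm{Aut}(\cals_N)\subset\mathrm{Isom}(N)$, whose identity component has one-dimensional Lie algebra and so is a circle, forcing the flow to be periodic. Beyond the excerpt the one structural input is Corollary \ref{prodKah}: for a reducible $\cals$ the transverse K\"ahler structure is a product, so the basic first Chern class splits with no mixed term, $c_1(\calf_\cals)=c_1(\calf_1)\oplus c_1(\calf_2)$ (and iteratively over a maximal reduction $\cald=\cald_1\oplus\cdots\oplus\cald_r$); decomposing a definite representative block by block shows that if $\cals$ is of positive (negative, null) type, then so is every $\cals_i$ and every irreducible piece. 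Recall also that quasi-regularity of $\xi$ depends only on $\calf_\xi$, hence is unchanged by deformations inside $\mathcal{S}(\xi,\bar J)$.

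For (1), fix $p\in M$ with leaves $L_1,L_2$ of $\cale_1,\cale_2$ through $p$. As $\xi$ is a section of both $\cale_1$ and $\cale_2$, the Reeb orbit $O_p$ lies in $L_1\cap L_2$. The Reeb flow preserves each foliation and keeps each $L_i$ invariant; since $L_i$ is compact it is closed in $M$, so the closure $\bar T$ of the Reeb flow in $\mathrm{Isom}(M,g)$ is a torus acting smoothly on each compact leaf $L_i$. Hence $\bar O_p=\bar T\cdot p$ is tangent to $\cale_1$ and to $\cale_2$ at every point, so its tangent spaces lie in $\cale_1\cap\cale_2=L_\xi$; thus $\dim\bar O_p\le1$, $\bar O_p=O_p$ is a circle, and $\xi$ is quasi-regular. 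Case (2) follows at once: by Proposition \ref{immsub} the compact leaf $L_1$ carries the Sasaki structure $\cals_1$ with $\dim\gA\gu\gt(\cals_1)=1$, so $\xi|_{L_1}$ is quasi-regular by the second fact above, and $O_p\subset L_1$ is the same orbit in $M$.

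For (3) the leaf $L_1$ is compact, so it suffices to see that $\cals_1$ on $L_1$ has quasi-regular Reeb field; then $O_p\subset L_1$ transports this to $\xi$ on $M$. If $\cals_1$ is of negative type, one has $\gA\gu\gt(\cals_1)=\RR\xi$ (the transverse analogue of the vanishing of holomorphic vector fields in the negative case; see \cite{BG05}), and the second fact applies. If $\cals_1$ is of null type, one appeals to the known fact that a compact null Sasakian manifold is quasi-regular (\cite{BG05}). For (4), every irreducible piece inherits the type of $\cals$. If $\cals$ is negative, then $\gA\gu\gt(\cals)=\RR\xi$ and the second fact finishes; if $\cals$ is null, cite the null-type result. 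If $\cals$ is positive, then each piece is of positive type, so by the transverse Calabi--Yau theorem of El Kacimi-Alaoui one may prescribe, within $\mathcal{S}(\xi,\bar J)$, the transverse Ricci form to be $\rho=\bigoplus_i\rho_i$ with each $\rho_i>0$ a positive representative of $c_1(\calf_i)$; by uniqueness the resulting transverse metric is a K\"ahler product, so every irreducible piece then has positive transverse Ricci curvature and the deformed structure is still He-Sun reducible; the He-Sun theorem \cite{HeSu12b} then gives that $\xi$ is quasi-regular.

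The step I expect to be the main obstacle is this positive subcase of (4): checking that positivity of $c_1$ really descends to each irreducible piece, that a product Ricci form lies in the correct basic cohomology class and that the corresponding transverse Monge--Amp\`ere equation can be solved inside $\mathcal{S}(\xi,\bar J)$, and that the deformed structure still verifies conditions (1)--(4) of Definition \ref{HSdef} so that \cite{HeSu12b} applies. Cases (1) and (2) are essentially immediate once the compact-leaf/orbit-closure picture is in place, and (3) together with the negative and null parts of (4) are direct reductions to (2) or to cited structure results.
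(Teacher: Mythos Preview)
Your argument follows the same architecture as the paper's proof: case~(1) via the Reeb orbit lying in the intersection of compact leaves, case~(2) via the circle-closure of the Reeb flow when $\dim\ga\gu\gt=1$, case~(3) by reducing to~(2), and case~(4) by splitting $c_1(\calf_\cals)$ along the product and invoking \cite{HeSu12b} in the positive case. The minor variation is in the null subcases of~(3) and~(4): the paper asserts that null (or negative) type forces $\dim\ga\gu\gt=1$ and reduces to~(2), whereas you cite a direct quasi-regularity result for compact null Sasakian structures; both routes are standard.

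The one substantive difference is your treatment of the positive subcase of~(4). The paper simply notes that $c_1(\calf_\cals)=c_1(\calf_{\cals_1})+c_1(\calf_{\cals_2})$, so positivity descends to each piece, and then writes ``the result then follows from \cite{HeSu12b}''. You correctly observe that the He--Sun hypothesis is positive transverse \emph{Ricci curvature}, not merely positive type, and you insert a transverse Calabi--Yau step (prescribing a product Ricci form $\rho=\bigoplus_i\rho_i$ with $\rho_i>0$) to bridge the gap, together with the remark that quasi-regularity depends only on $\xi$ and survives such a deformation in $\mathcal{S}(\xi,\bar J)$. Your worry about whether the deformed metric remains a transverse product is resolvable by the uniqueness part of El~Kacimi-Alaoui's theorem: since the prescribed Ricci form and the original transverse K\"ahler form are both products, a product potential $\phi_1+\phi_2$ yields a product solution, and uniqueness forces the global solution to agree with it, so conditions (1)--(4) of Definition~\ref{HSdef} persist. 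In this respect your version is more complete than the paper's; the paper's one-line citation is arguably elliptical on precisely the point you flag.
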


\begin{proof}
For item (1) as noted in the proof of Lemma 2.2. of \cite{HeSu12b} the intersection of the leaf $L_1$ of $\cale_1$ and $L_2$ of $\cale_2$ through a point $p\in M$ is precisely the Reeb orbit $\calo_p$ through $p$. So when the leaves are compact, so is $\calo_p$. Thus, $\cals$ is quasi-regular. To prove (2) we simply note that since $\ga\gu\gt(\cals_1)$ is one dimensional and $\cals_1$ has compact leaves, it must be quasi-regular. For (3) we note that if $\cals_1$ is null or of negative type with compact leaves then $\ga\gu\gt(\cals_1)$ is one dimensional, so the result follows from (2). For (4) we first note that if $\cals$ is negative or null, then $\ga\gu\gt(\cals)$ is one dimensional, so it must be quasi-regular. For the positive case we note that $c_1(\calf_\cals)=c_1(\calf_{\cals_1})+c_1(\calf_{\cals_2})$, so $\cals$ is positive if and only if each component $\cals_i$ is positive. The result then follows from \cite{HeSu12b}. 
\end{proof}

We can now ask under what conditions could a reducible irregular Sasakian structure $\cals$ exist on a compact manifold. First $\cals$ must be indefinite.

\subsection{Sasaki Automorphisms}
Here we study the automorphisms group $\gA\gu\gt(\cals)$ of a reducible Sasakian structure $\cals=(\xi,\eta,\Phi,g)$ on $M$. The Lie algebra of $\gA\gu\gt(\cals)$ is denoted $\ga\gu\gt(\cals)$. For the next Lemma, we take the following point of view: $\gt^k$ is the Lie algebra of an abstract compact torus $\bbt_k$ of dimension $k$. That is, $\gt^k$ is simply a $k$--dimensionnal vector space and there is a faithful representation of $\phi : \bbt_k\hookrightarrow \gA\gu\gt(\cals)$ and thus an injective Lie algebra morphism $\phi_* : \gt^k\hookrightarrow \ga\gu\gt(\cals)$. We assume the image of $\phi$ is a maximal torus in $\gA\gu\gt(\cals)$ and the Reeb vector field $\xi$ corresponds to a unique vector, still denoted $\xi$, in $\gt^k$.  

\begin{lemma}\label{lemQUOTsplit} Let $\cals$ be a reducible Sasakian structure. Then there exist two subalgebras $\gg_1,\gg_2 \subset \gt_k/\R\xi$ such that \begin{equation}\label{tksplit0}
\gt_k/\bbr\xi = \gg_1\oplus \gg_2. 
\end{equation} Moreover, we have $\phi_*(p^{-1}(\gg_i))\subset \Gamma(\cale_i)$ where $p : \gt_k \rightarrow \gt_k/\R\xi$ is the quotient map.
\end{lemma}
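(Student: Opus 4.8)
The plan is to split $\gt^k/\R\xi$ along the two Sasaki foliations $\cale_1,\cale_2$, using the transverse K\"ahler product structure of Corollary~\ref{prodKah}, and then to recognize the two pieces as coming from subtori of $\bbt_k$ via the maximality hypothesis.

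\emph{Candidate subalgebras and the easy inclusion.} Identify $X\in\gt^k$ with the vector field $\phi_*(X)$ and write $X=A_1+A_2+\eta(X)\,\xi$, where $A_i\in\Gamma(\cald_i)$ is the component along the orthogonal summand $\cald_i\subset\cald$; replacing $X$ by $X+c\,\xi$ with $c\in\R$ leaves $A_1,A_2$ unchanged. Put
$$\gg_i:=p\bigl(\{\,X\in\gt^k\ :\ A_{i+1}\equiv 0\,\}\bigr)\qquad(\text{indices mod }2).$$
By construction $\phi_*(p^{-1}(\gg_i))\subset\Gamma(\cale_i)$, and since $\Gamma(\cale_i)$ is bracket closed (the $\cale_i$ are integrable) and $\gt^k$ is abelian, $\gg_1,\gg_2$ are subalgebras. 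Their sum is direct: if $p(X)\in\gg_1\cap\gg_2$ then $A_1\equiv A_2\equiv 0$, so $X=\eta(X)\,\xi$; as $X$ preserves $\eta$ one gets $0=\pounds_{\eta(X)\xi}\eta=d\bigl(\eta(X)\bigr)$, hence $\eta(X)$ is constant and $p(X)=0$. It remains to show $\gg_1+\gg_2=\gt^k/\R\xi$, i.e.\ that modulo $\R\xi$ every torus element is a sum of one tangent to $\cale_1$ and one tangent to $\cale_2$.

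\emph{Arranging a $\bbt_k$-invariant splitting, and decomposing $X$.} First I would reduce to the case where $\bbt_k$ preserves $\cald_1\oplus\cald_2$ (equivalently the foliations $\cale_i$). Applying the de~Rham decomposition to the transverse K\"ahler structure — on local leaf spaces, or globally via Molino's structure theory \cite{Mol88} — $\cald$ decomposes, $\nabla^T$-parallel and $\Phi$-invariantly, into its transversely flat part and its transversely irreducible non-flat parts; the latter are canonical, hence $\bbt_k$-invariant, while the compact torus $\bbt_k$ acts on the flat part through the transverse unitary group and so splits it into $\nabla^T$-parallel, $\Phi$-invariant weight subspaces. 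Since $\cals$ is reducible, regrouping these $\bbt_k$-invariant summands into two non-trivial blocks gives a $\bbt_k$-invariant reducible splitting, which I rename $\cald=\cald_1\oplus\cald_2$ (the $\gg_i$ above are unaffected). Then $\bbt_k$ preserves the transverse K\"ahler product, so on a local leaf space $\calz\cong\calz_1\times\calz_2$ it acts as a product action, and the transverse part of $X\in\gt^k$ is $\hat X_1+\hat X_2$ with $\hat X_i$ a transversely holomorphic Killing field of the factor $\calz_i$. Moreover, since $X$ preserves $\eta$ its contact moment map $\eta(X)$ is basic and descends to $\calz_1\times\calz_2$ with $d\eta(X)=-\iota_{\hat X_1}\gro_1-\iota_{\hat X_2}\gro_2$; separating differentials in the two factors shows $\iota_{\hat X_i}\gro_i$ is exact on $\calz_i$, so $\hat X_i$ is transversely Hamiltonian.

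\emph{Lifting, and conclusion by maximality.} Now $\hat X_1$ lifts — by prequantization of $\calz_1$ along a leaf of $\cale_1$ in the quasi-regular case, and in general by Molino's structure theory applied to the Riemannian foliation $\cale_1$ with its complete leaves (Corollary~\ref{efolcomp}) — to a vector field $X'$ on $M$ that is tangent to $\cale_1$, is an infinitesimal Sasaki automorphism, and induces $\hat X_1$ transversally. For $Z\in\gt^k$ the bracket $[X',Z]$ is an infinitesimal Sasaki automorphism with vanishing transverse part, hence a constant multiple of $\xi$; thus the subalgebra generated by $X'$ and $\gt^k$ is $2$-step nilpotent with centre containing its derived algebra, and since $\ga\gu\gt(\cals)$ is compact (so every nilpotent subalgebra is abelian) we get $[X',\gt^k]=0$. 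Hence the closure of the flow of $X'$ is a torus of Sasaki automorphisms containing $\bbt_k$, so $X'\in\gt^k$ by maximality; then $\phi_*(X')\in\Gamma(\cale_1)$, $\phi_*(X-X')\in\Gamma(\cale_2)$, and $p(X)=p(X')+p(X-X')\in\gg_1+\gg_2$. I expect the lift to be the main obstacle: producing $X'$ as a genuine Sasaki automorphism of $M$, not merely a transverse one, when $\xi$ is irregular — precisely the point for which Molino's theory of Riemannian foliations, together with the completeness of the leaves of $\cale_i$, is the natural tool.
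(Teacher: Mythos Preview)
Your route is genuinely different from the paper's, and considerably more elaborate than what is actually needed. The paper never touches lifting, Molino theory, or the maximality of $\bbt_k$. Instead it works entirely on local foliation charts: for each $U_\alpha$ the submersion $\pi_\alpha:U_\alpha\to V_\alpha\cong V_\alpha^1\times V_\alpha^2$ realizes the reducible transverse K\"ahler structure as a Riemannian product, and the image $\gh_\alpha:=\tilde\phi_\alpha(\gt^k/\R\xi)$ consists of Killing vector fields on $V_\alpha$. The key observation is that, because $(\pi_\alpha)_*\cald_1$ and $(\pi_\alpha)_*\cald_2$ are parallel for the Levi--Civita connection of the product metric, each $K_\alpha\in\gh_\alpha$ decomposes as $K_\alpha^1+K_\alpha^2$ with $K_\alpha^i$ individually Killing for the factor metric $h_\alpha^i$. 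This splits $\gh_\alpha$ as $\gh_\alpha^1\oplus\gh_\alpha^2$ and hence, via the isomorphism $\tilde\phi_\alpha$, splits $\gt^k/\R\xi$; chart--independence follows because the lifts to $U_\alpha\cap U_\beta$ differ by sections of $L_\xi$, and the subbundles $\cald_i$ are global.

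Two specific issues with your approach. First, the ``renaming'' step is not legitimate as written: the lemma is about the \emph{given} subbundles $\cale_i$, and replacing the splitting $\cald_1\oplus\cald_2$ by a $\bbt_k$--invariant regrouping of de~Rham pieces changes both the $\cale_i$ in the conclusion and the $\gg_i$ you defined from them. Your parenthetical ``the $\gg_i$ above are unaffected'' is precisely the nontrivial point and is not justified --- in particular, if the transverse structure has a flat factor split between $\cald_1$ and $\cald_2$, your weight--space regrouping can yield genuinely different subbundles. Second, the lifting problem you correctly flag as the main obstacle (producing a global Sasaki automorphism $X'$ with transverse part $\hat X_1$, and then forcing $X'\in\gt^k$ by maximality) is exactly what the paper's local argument sidesteps: one does not need $\hat X_1$ to come from an element of $\gt^k$ on the nose; it is enough that the assignment $K_\alpha\mapsto K_\alpha^1$ is a well--defined linear projection on $\gh_\alpha$, coherent across charts, so that $\gg_i:=\tilde\phi_\alpha^{-1}(\gh_\alpha^i)$ already lives in $\gt^k/\R\xi$.
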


\begin{proof}
Take a cover of $M$ by open subsets $U_\alpha$ and the submersions $\pi_\alpha : U_\alpha \rightarrow V_\alpha \subset \C^n$. If the Sasaki structure is reducible in the sense of He-Sun then we may assume that we get a product K\"ahler structure $h_\gra^1\oplus h_\gra^2$ on each $V_\alpha=V_\gra^1\times V_\gra^2$ with a local action of $\gt^k/\R\xi$ on it. More precisely, $\pi_\alpha$ is a quotient map with respect to the orbits of Reeb vector field $\xi$. Thus, $\phi_*$ descends as an injective Lie algebra morphism, say $\tilde{\phi}_\alpha$, from the vector space (or trivial Lie algebra) quotient $\gt^k/\R\xi$ with image in $\Gamma(V_\alpha, TV_\alpha)$. Because $\pi_\alpha$ is a Riemannian submersion the image of $\tilde{\phi}_\alpha$ is a space of Killing vector fields on $V_\alpha$, that we will call $\gh_\alpha$.    

The condition (3) of the definition~\ref{HSdef} implies that the decomposition $TV_\alpha= (\pi_\alpha)_*(\cald_1\oplus \cald_2) \simeq (\pi_\alpha)_*\cald_1\oplus (\pi_\alpha)_*\cald_2$ is integrable and closed for the Levi-Civita connection. Hence, any such Killing vector field $K_\alpha\in\gh_\gra$ on $V_\alpha$ can be written uniquely as $K_\alpha=K_\alpha^1+K_\alpha^2$ (with $K_i \in (\pi_\alpha)_*\cald_i$ a Killing vector field for $h_\gra^i$). The condition that $(\pi_\alpha)_*\cald_1$ and $(\pi_\alpha)_*\cald_2$ are closed for the Levi-Civita connection implies that $K$ is Killing if and only if $K^1_\alpha$ and $K^2_\alpha$ are both Killing vector fields. So $\gh_\gra$ splits as $\gh_\gra^1\oplus \gh_\gra^2$ which induces a splitting $$\tilde{\phi}_\alpha^{-1}(\gh_\gra^1\oplus \gh_\gra^2) = \gg_1\oplus \gg_2 = \gt^k/\R\xi.$$ This decomposition does not depend on $V_\alpha$ because the lifts of these vector fields on $U_\alpha\cap U_\beta$ satisfy $X^1_\gra+X^2_\gra=X^1_\grb+X^2_\grb +a_{\gra\grb}\xi$ for some function $a_{\gra\grb}\in C^{\infty}(U_\alpha\cap U_\beta,\bbr)$. This is equivalent to $X^1_\gra+X^2_\gra\equiv X^1_\grb+X^2_\grb\mod \grG_{U_\gra\cap U_\grb}(L_\xi)$. \end{proof}

\begin{cor} There are two Abelian Lie subalgebras $\ga_1,\ga_2 \subset \ga\gu\gt(\cals)$ such that $\ga_\epsilon$ are sections of $\cale_\epsilon$ respectively ($\epsilon =1,2$), $\ga_1\cap\ga_2$ is the $1$--dimensional Lie algebra induced by the Reeb vector field $\xi$ and $\ga_1+\ga_2$ is the Abelian Lie algebra of a maximal torus in $\gA\gu\gt(\cals)$.  
\end{cor}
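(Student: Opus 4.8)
The plan is to deduce the corollary directly from Lemma~\ref{lemQUOTsplit} by pulling the splitting $\gt_k/\R\xi = \gg_1\oplus\gg_2$ back along the quotient map $p:\gt_k\to\gt_k/\R\xi$ and transporting it into $\ga\gu\gt(\cals)$ via $\phi_*$. Concretely, I would set $\ga_\epsilon := \phi_*\bigl(p^{-1}(\gg_\epsilon)\bigr) \subset \ga\gu\gt(\cals)$ for $\epsilon = 1,2$, where $\gt_k$ denotes the Lie algebra of the fixed maximal torus $\bbt_k \subset \gA\gu\gt(\cals)$ and $\phi_* : \gt_k \hookrightarrow \ga\gu\gt(\cals)$ is the injective Lie algebra morphism fixed in the paragraph preceding Lemma~\ref{lemQUOTsplit}.

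Then I would check the asserted properties one by one, each being essentially formal once Lemma~\ref{lemQUOTsplit} is available. Since $\bbt_k$ is a torus, $\gt_k$ is abelian, so every linear subspace of $\gt_k$, in particular each $p^{-1}(\gg_\epsilon)$, is an abelian subalgebra; hence $\ga_\epsilon$ is an abelian Lie subalgebra of $\ga\gu\gt(\cals)$. Because $\phi_*$ is injective it suffices to compute inside $\gt_k$: using that $p$ is surjective and that $\gg_1\oplus\gg_2$ is a direct sum, $p^{-1}(\gg_1)\cap p^{-1}(\gg_2) = p^{-1}(\gg_1\cap\gg_2) = p^{-1}(0) = \ker p = \R\xi$, while $p^{-1}(\gg_1)+p^{-1}(\gg_2) = p^{-1}(\gg_1+\gg_2) = p^{-1}(\gt_k/\R\xi) = \gt_k$. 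Applying $\phi_*$ gives $\ga_1\cap\ga_2 = \R\,\phi_*(\xi)$, the $1$--dimensional Lie algebra induced by the Reeb vector field $\xi$, and $\ga_1 + \ga_2 = \phi_*(\gt_k)$, which by construction is the Abelian Lie algebra of a maximal torus in $\gA\gu\gt(\cals)$. Finally, the claim that $\ga_\epsilon$ consists of sections of $\cale_\epsilon$ is precisely the last statement of Lemma~\ref{lemQUOTsplit}, namely $\phi_*\bigl(p^{-1}(\gg_\epsilon)\bigr)\subset \Gamma(\cale_\epsilon)$; this is consistent with $\xi$ lying in both $\ga_1$ and $\ga_2$ because $L_\xi\subset\cale_1\cap\cale_2$.

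I do not expect any genuine obstacle here: all of the geometric content, i.e. producing a decomposition of $\gt_k/\R\xi$ compatible with the orthogonal splitting $\cald = \cald_1\oplus\cald_2$ and verifying that the Killing fields of the local K\"ahler factors glue coherently across the cover $\{U_\alpha\}$, has already been carried out in Lemma~\ref{lemQUOTsplit}. The only steps deserving a word of care are the elementary identities $p^{-1}(A\cap B) = p^{-1}(A)\cap p^{-1}(B)$ and, via surjectivity of $p$, $p^{-1}(A+B) = p^{-1}(A)+p^{-1}(B)$, together with the harmless identification of the abstract vector $\xi\in\gt_k$ with the Reeb vector field and of $\gt_k$ with its faithful image $\phi_*(\gt_k)\subset\ga\gu\gt(\cals)$.
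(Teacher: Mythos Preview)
Your proposal is correct and is exactly the argument the paper has in mind: the corollary is stated immediately after Lemma~\ref{lemQUOTsplit} with no proof, so the authors regard it as the formal consequence you spell out, namely setting $\ga_\epsilon=\phi_*(p^{-1}(\gg_\epsilon))$ and reading off the intersection, sum, and $\Gamma(\cale_\epsilon)$ properties from the lemma together with injectivity of $\phi_*$ and surjectivity of $p$.
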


The rest of this subsection is devoted to a more geometric point of view about the decomposition \eqref{tksplit0} and another way to derive it which emphasizes the multifoliate nature \cite{KoSp61} of reducible Sasakian structures. 

The Sasakian structure $\cals=(\xi,\eta,\Phi,g)$ on $M$ fixes an orthogonal splitting $TM=\cald \oplus L_\xi$. Let $\X(M)$ denote the Lie algebra of vector fields on $M$. We wish to consider certain vector fields on $M$ modulo the sections (perhaps local) of $L_\xi$. In particular we can restrict vector fields to any open subset $U\subset M$ to obtain $\X(U)$. We also consider the algebra of local sections $\grG_U(L_\xi)$. Now any element $X\in\X(M)$ can be written uniquely as $X=X^\cald+\eta(X)\xi$. 

Now on $M$ there is the characteristic foliation $\calf_\xi$ and we let $\gf\go\gl(\calf_\xi)$ denote the Lie algebra of foliate vector fields with respect to $\calf_\xi$ (that is, we recall, $X\in \gf\go\gl(\calf_\xi)$ if and only if $[X,Y] \in \grG(L_\xi)$ as soon as $Y\in \grG(L_\xi)$). Note that $\grG(L_\xi)$ is a subalgebra of $\gf\go\gl(\calf_\xi)$, in fact, by the definition of foliate it is an ideal. So we have a well-defined quotient Lie algebra $\gf\go\gl(\calf_\xi)/\grG(L_\xi)$. For any $X\in \gf\go\gl(\calf_\xi)$ we write $X=X^\cald+\eta(X)\xi$, and it is straightforward to check that both components are foliate. We let $\gf\go\gl(\calf_\xi)_\cald$ denote the foliate vector fields that are sections of $\cald$. So as vector spaces we have the identification $\gf\go\gl(\calf_\xi)_\cald\approx \gf\go\gl(\calf_\xi)/\grG(L_\xi)$. Thus, we can give $\gf\go\gl(\calf_\xi)_\cald$ a Lie algebra structure. If $X,Y\in\gf\go\gl(\calf_\xi)_\cald$ then $[X,Y]\in \gf\go\gl(\calf_\xi)_\cald \mod \grG(L_\xi)$.

Summarizing we have

\begin{lemma}\label{caldalg}
Let $\cals=(\xi,\eta,\Phi,g)$ be a Sasakian structure on $M$ with contact bundle $\cald=\ker\eta$. Then the set of foliate sections $\gf\go\gl(\calf_\xi)_\cald$ of $\cald$ can be given the structure of a Lie algebra.
\end{lemma}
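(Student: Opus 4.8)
The plan is to show directly that the bracket on $\gf\go\gl(\calf_\xi)_\cald$ obtained by transporting the Lie algebra structure across the vector space isomorphism $\gf\go\gl(\calf_\xi)_\cald \approx \gf\go\gl(\calf_\xi)/\grG(L_\xi)$ is well defined and satisfies the Jacobi identity. Concretely, for $X, Y \in \gf\go\gl(\calf_\xi)_\cald$ one first computes the ordinary Lie bracket $[X,Y]$ in $\X(M)$, then decomposes it as $[X,Y] = [X,Y]^\cald + \eta([X,Y])\xi$ and defines the new bracket by $[X,Y]_\cald := [X,Y]^\cald$. The content of the statement is that this operation is closed on $\gf\go\gl(\calf_\xi)_\cald$, bilinear and antisymmetric (both immediate), and satisfies Jacobi.

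First I would verify closure: since $\grG(L_\xi)$ is an ideal in $\gf\go\gl(\calf_\xi)$, if $X,Y$ are foliate then so is $[X,Y]$, and hence so are both of its components $[X,Y]^\cald$ and $\eta([X,Y])\xi$ (this is the observation already made in the excerpt just before the Lemma that ``both components are foliate''). Therefore $[X,Y]^\cald \in \gf\go\gl(\calf_\xi)_\cald$, establishing that the bracket takes values in the right space. Antisymmetry and $\bbr$-bilinearity are inherited verbatim from the ambient bracket, since projection onto $\cald$ along $L_\xi$ is $\bbr$-linear.

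The main point — and the only place where something must be checked rather than merely inherited — is the Jacobi identity for $[\cdot,\cdot]_\cald$. The cleanest route is to pass through the quotient: $\gf\go\gl(\calf_\xi)/\grG(L_\xi)$ is a genuine Lie algebra because $\grG(L_\xi)$ is an ideal, so the Jacobi identity holds there by general nonsense, and the composition $\gf\go\gl(\calf_\xi)_\cald \hookrightarrow \gf\go\gl(\calf_\xi) \twoheadrightarrow \gf\go\gl(\calf_\xi)/\grG(L_\xi)$ is a vector space isomorphism that intertwines $[\cdot,\cdot]_\cald$ with the quotient bracket. Indeed, if $q$ denotes the quotient map, then $q([X,Y]) = q([X,Y]^\cald + \eta([X,Y])\xi) = q([X,Y]^\cald) = q([X,Y]_\cald)$, because $\eta([X,Y])\xi \in \grG(L_\xi)$; so $q$ is a Lie algebra isomorphism from $(\gf\go\gl(\calf_\xi)_\cald, [\cdot,\cdot]_\cald)$ onto $(\gf\go\gl(\calf_\xi)/\grG(L_\xi), [\cdot,\cdot])$, and Jacobi descends automatically.

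I expect the only subtlety — hardly an obstacle — to be bookkeeping: making sure that ``foliate'' is used consistently (an $X$ with $[X,\grG(L_\xi)]\subset \grG(L_\xi)$), that the $\cald$-component of a foliate vector field is again foliate (which follows since $\xi$ itself is foliate, in fact $\pounds_\xi$ preserves $\cald$ and $L_\xi$ separately because $\cals$ is Sasakian, so $\eta(X)\xi$ is foliate whenever $X$ is, and hence $X^\cald = X - \eta(X)\xi$ is foliate too), and that the identification is as vector spaces so that transporting the bracket is legitimate. No curvature or metric input is needed beyond the splitting $TM = \cald\oplus L_\xi$ and the fact that $\pounds_\xi$ preserves it; the Sasakian (or even merely K-contact) hypothesis enters only to guarantee the latter. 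The argument is short and formal once these points are in place.
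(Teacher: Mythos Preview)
Your proposal is correct and follows essentially the same route as the paper: the argument there (given in the paragraph immediately preceding the lemma) is precisely that $\grG(L_\xi)$ is an ideal in $\gf\go\gl(\calf_\xi)$, so the quotient is a Lie algebra, and the vector space identification $\gf\go\gl(\calf_\xi)_\cald \approx \gf\go\gl(\calf_\xi)/\grG(L_\xi)$ transports this structure. You have simply unpacked this more carefully, making explicit the bracket $[X,Y]_\cald = [X,Y]^\cald$ and verifying that the identification intertwines the brackets, which is exactly what the paper summarizes in its final sentence ``If $X,Y\in\gf\go\gl(\calf_\xi)_\cald$ then $[X,Y]\in \gf\go\gl(\calf_\xi)_\cald \bmod \grG(L_\xi)$.''
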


Now we know that $\ga\gu\gt(\cals)$ is a Lie subalgebra of $\gf\go\gl(\calf_\xi)$. Let $\ga\gu\gt(\cals)_\cald$ denote the set of $\cald$ components of elements of $\ga\gu\gt(\cals)$. Note that $\ga\gu\gt(\cals)_\cald \not\subset \ga\gu\gt(\cals)$ in general. From Lemma \ref{caldalg} we have

\begin{lemma}\label{autD}
The set $\ga\gu\gt(\cals)_\cald$ can be given the structure of a Lie algebra isomorphic to $\ga\gu\gt(\cals)/\bbr\xi$. 
\end{lemma}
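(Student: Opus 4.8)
The plan is to exhibit an explicit linear isomorphism between $\ga\gu\gt(\cals)_\cald$ and the quotient vector space $\ga\gu\gt(\cals)/\bbr\xi$, and then to check that it intertwines the Lie bracket that Lemma \ref{caldalg} puts on the left-hand side with the quotient bracket on the right-hand side. First I would observe that since $\cals$ is a Sasakian structure, $\xi\in\ga\gu\gt(\cals)$ and $\xi$ is central in $\ga\gu\gt(\cals)$ (every Sasaki automorphism commutes with the Reeb flow), so $\bbr\xi$ is an ideal and the quotient Lie algebra $\ga\gu\gt(\cals)/\bbr\xi$ makes sense. Next, recall the canonical splitting $X = X^\cald + \eta(X)\xi$ for any $X\in\X(M)$, which is valid in particular for $X\in\ga\gu\gt(\cals)$; this defines a linear map $q:\ga\gu\gt(\cals)\to\ga\gu\gt(\cals)_\cald$, $X\mapsto X^\cald$, which is surjective by the very definition of $\ga\gu\gt(\cals)_\cald$. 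Its kernel consists of those $X\in\ga\gu\gt(\cals)$ with $X^\cald=0$, i.e. $X=\eta(X)\xi$; since $X$ is a global vector field and $\xi$ is nowhere zero, $\eta(X)$ is a smooth function, and the condition $[\xi,\eta(X)\xi]=0$ (which holds because $\xi$ is central) forces $\eta(X)$ to be basic, while the Killing condition forces it to be constant. Hence $\ker q = \bbr\xi$, and $q$ descends to a linear isomorphism $\bar q:\ga\gu\gt(\cals)/\bbr\xi\xrightarrow{\ \sim\ }\ga\gu\gt(\cals)_\cald$.

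The remaining point is that $\bar q$ is a Lie algebra homomorphism. Given $X,Y\in\ga\gu\gt(\cals)$, write $X=X^\cald+\eta(X)\xi$ and $Y=Y^\cald+\eta(Y)\xi$; expand $[X,Y]$ using bilinearity of the bracket and the fact that $\xi$ is central, so that all the terms involving $\xi$ collapse into a multiple of $\xi$ plus $\xi$-derivatives of the coefficient functions. One finds $[X,Y] = [X^\cald,Y^\cald] \bmod \grG(L_\xi)$, which is exactly the statement that the $\cald$-component of $[X,Y]$ agrees with the bracket $[X^\cald,Y^\cald]$ computed in the Lie algebra $\gf\go\gl(\calf_\xi)_\cald$ of Lemma \ref{caldalg} (that bracket is defined precisely as $[\,\cdot\,,\,\cdot\,]$ modulo $\grG(L_\xi)$, followed by projection back to the $\cald$-component). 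Since $X,Y\in\ga\gu\gt(\cals)\subset\gf\go\gl(\calf_\xi)$, this shows $q[X,Y] = [q X, q Y]$ in $\ga\gu\gt(\cals)_\cald$, i.e. $\bar q$ respects brackets. This, combined with the isomorphism of the preceding paragraph, gives the claim.

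The step I expect to require the most care is the bracket computation, specifically verifying that the $\xi$-component terms really do lie in $\grG(L_\xi)$ and contribute nothing after projecting to $\cald$: one has to use that $X^\cald,\eta(X)\xi$ and the analogous pieces of $Y$ are all foliate (as noted in the discussion preceding Lemma \ref{caldalg}), so that brackets like $[X^\cald,\eta(Y)\xi]$ are genuinely sections of $L_\xi$ rather than merely having a nonzero $\cald$-component, and that $\ga\gu\gt(\cals)_\cald$ is closed under the induced bracket. Everything else — surjectivity of $q$, identification of $\ker q$ with $\bbr\xi$ — is routine once one invokes that Killing basic functions on a compact Sasaki manifold are constant.
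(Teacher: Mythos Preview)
Your proof is correct and follows essentially the same approach as the paper's own argument: define the projection $X\mapsto X^\cald$, identify its kernel in $\ga\gu\gt(\cals)$ with $\bbr\xi$, and verify via the expansion of $[X,Y]$ in terms of $X^\cald,Y^\cald,\eta(X)\xi,\eta(Y)\xi$ that the bracket descends modulo $\grG(L_\xi)$. The paper is terser---it simply asserts that ``the only elements of $\grG(L_\xi)$ that are in $\ga\gu\gt(\cals)$ are those in $\bbr\xi$'' for the kernel step---whereas you spell this out via the basic/Killing argument, but the substance is the same.
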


\begin{proof}
Clearly $\ga\gu\gt(\cals)_\cald$ is a vector space since $(X+Y)^\cald=X^\cald+Y^\cald$.
If $X,Y\in \ga\gu\gt(\cals)$ and we write $X=X^\cald +\eta(X)\xi$ and $Y=Y^\cald +\eta(Y)\xi$ we have
$$[X^\cald,Y^\cald]=[X,Y]-[X,\eta(Y)\xi]+[Y,\eta(X)\xi]+[\eta(X)\xi,\eta(Y)\xi]$$
which since elements of $\ga\gu\gt(\cals)$ are foliate implies $[X^\cald,Y^\cald]\equiv [X,Y]\mod \grG(L_\xi)$. But the only elements of $\grG(L_\xi)$ that are in $\ga\gu\gt(\cals)$ are those in $\bbr\xi$. This implies the result.
\end{proof}

Let $\bbt^k$ be a maximal torus of $\gA\gu\gt(\cals)$, where $k$ is the dimension of $\bbt^k$, and let $\gt_k\subset \ga\gu\gt(\cals)$ denote its Lie algebra of vector fields. Here we note that the Reeb vector field $\xi$ is an element of the Lie algebra $\gt_k$, and that every $X\in \gt_k$ (in fact in $\ga\gu\gt(\cals)$) has a component of the form $\eta(X)\xi$ which, of course, is a section of $L_\xi$. Thus, we have

\begin{lemma}\label{abelD}
There is an Abelian Lie subalgebra $\ga\gb_\cald$ of $\ga\gu\gt(\cals)_\cald$ that is isomorphic to $\gt_k/\bbr\xi$.
\end{lemma}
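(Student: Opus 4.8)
The plan is to obtain Lemma~\ref{abelD} as an essentially immediate consequence of Lemma~\ref{autD} together with the fact that $\gt_k$ is an Abelian subalgebra of $\ga\gu\gt(\cals)$ containing $\xi$. First I would set $\ga\gb_\cald := \{X^\cald \mid X\in\gt_k\}$, the set of $\cald$--components of elements of the maximal torus algebra; since $\gt_k\subset\ga\gu\gt(\cals)$ this is a subset of $\ga\gu\gt(\cals)_\cald$, and it is a linear subspace because $(X+Y)^\cald = X^\cald+Y^\cald$ and $(cX)^\cald = cX^\cald$.

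Next I would analyze the linear map $\gt_k \to \ga\gu\gt(\cals)_\cald$ given by $X\mapsto X^\cald$. Its image is $\ga\gb_\cald$ by definition, and its kernel consists of those $X\in\gt_k$ with $X^\cald=0$, i.e.\ $X=\eta(X)\xi\in\grG(L_\xi)$. As noted in the proof of Lemma~\ref{autD}, the only elements of $\grG(L_\xi)$ lying in $\ga\gu\gt(\cals)$, a fortiori in $\gt_k$, are the real multiples of $\xi$; hence the kernel is exactly $\bbr\xi$, and the induced map $\gt_k/\bbr\xi \to \ga\gb_\cald$ is a linear isomorphism.

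Then I would check that $\ga\gb_\cald$ is an Abelian Lie subalgebra of $\ga\gu\gt(\cals)_\cald$. Recall from Lemma~\ref{autD} that the Lie bracket on $\ga\gu\gt(\cals)_\cald$ is the transported one, satisfying $[X^\cald,Y^\cald]\equiv [X,Y]\bmod\grG(L_\xi)$, equivalently $[X^\cald,Y^\cald]=[X,Y]^\cald$. Since $\gt_k$ is Abelian we have $[X,Y]=0$ for all $X,Y\in\gt_k$, whence $[X^\cald,Y^\cald]=0$. Thus $\ga\gb_\cald$ is (trivially) closed under the bracket and is Abelian, and the linear isomorphism $\gt_k/\bbr\xi\to\ga\gb_\cald$ is automatically a Lie algebra isomorphism, both sides being Abelian, $\gt_k/\bbr\xi$ being the quotient of the Abelian Lie algebra $\gt_k$.

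Since every step is short, I do not anticipate a genuine obstacle; the only points requiring care are that the bracket on $\ga\gu\gt(\cals)_\cald$ must be read as the transported bracket of Lemma~\ref{autD} (so that ``$[X^\cald,Y^\cald]=[X,Y]^\cald$'' is the correct statement rather than a claim about ambient Lie brackets of vector fields, which need not even be foliate or lie in $\cald$), and that $\gt_k\cap\grG(L_\xi)=\bbr\xi$, which is precisely the fact already invoked in establishing Lemma~\ref{autD}.
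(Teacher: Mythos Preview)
Your argument is correct and is precisely the approach the paper intends: Lemma~\ref{abelD} is stated immediately after Lemma~\ref{autD} with only the words ``Thus, we have,'' and your proposal simply spells out that deduction by taking $\ga\gb_\cald$ to be the image of $\gt_k$ under $X\mapsto X^\cald$, identifying the kernel as $\bbr\xi$, and reading off Abelianness from the transported bracket. Your explicit caveat about interpreting the bracket as the transported one from Lemma~\ref{autD} is exactly the right point of care.
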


Let $\cals=(\xi,\eta,\Phi,g)$ be a reducible Sasakian structure on a compact (connected) $2n+1$ dimensional manifold $M$ with an underlying multifoliate structure $\calf_r$. We now consider the automorphism group $\gA\gu\gt(\cals)$ (and its Lie algebra $\ga\gu\gt(\cals)$) of the Sasakian structure $\cals=(\xi,\eta,\Phi,g)$ on the compact manifold $M$. In fact, we work mainly with the Lie algebra $\ga\gu\gt(\cals)$. Now any element of $\gA\gu\gt(\cals)$ must leave the multifoliate structure invariant. Infinitesimally, this means that any element $X\in \ga\gu\gt(\cals)$ is a multifoliate vector field, that is, if $V$ is a vector tangent to the leaves of $\cale_i$ then $[X,V]$ is also tangent to the leaves of $\cale_i$.  In local coordinates $(x^1,\cdots,x^{2n+1})$ a multifoliate vector field $X\in \ga\gu\gt(\cals)$ takes the form (\cite{KoSp61} Definition 3.1)
\begin{equation}\label{mfolvec}
X= \sum_{j=1}^{2n+1}X^j\frac{\partial}{\partial x^j} ~\text{with $\frac{\partial X^j}{\partial x^k}=0$ when $k\not\gtrsim j$}.
\end{equation}
In the case of the multifoliate structure $\calf_2$ the coordinates along the leaves of $\cale_1$ are $(x^1,\cdots,x^{2n_1},x^{2n+1})$ and those along the leaves of $\cale_2$ are $(x^{2n_1+1},\cdots,x^{2n_1+2n_2},x^{2n+1})$. So our multifoliate vector fields satisfy
\begin{equation}\label{mfolvec2}
\frac{\partial X^i}{\partial x^j}=0
\end{equation}
when $2n_1+1\leq i\leq 2n_1+2n_2$ and $j\leq 2n_1$, or $i\leq 2n_1$ and  $2n_1+1\leq j\leq 2n_1+2n_2$, or $i\leq 2n_1+2n_2$ and $j=2n+1$. So any multifoliate section of $\cale_1$ takes the form $X_1=\sum_{i=1}^{2n_1}X^i\partial_{x^i} +X^{2n+1}\partial_{x^{2n+1}}$ where $X^i$ depends only on the variables $(x^1,\cdots,x^{2n_1})$ and $X^{2n+1}$ can depend on all variables. Similarly, any multifoliate section of $\cale_2$ takes the form $X_2=\sum_{i=2n_1+1}^{2n_1+2n_2}X^i\partial_{x^i} +X^{2n+1}\partial_{x^{2n+1}}$ where $X^i$ depends only on the variables $(x^{2n_1+1},\cdots,x^{2n_1+2n_2})$ and again $X^{2n+1}$ can depend on all variables. Consequently, any section of $\cale_1\cap\cale_2$, and hence $\xi$ takes the form $\xi=f\partial_{x^{2n+1}}$ where $f$ can be a function of all variables. We have

\begin{lemma}\label{multiaut}
Let $\cals=(\xi,\eta,\Phi,g)$ be a reducible Sasakian structure and let $X\in \ga\gu\gt(\cals)$. Then on a local multifoliate coordinate chart $U$ we have
\begin{enumerate}
\item $X$ is the sum $X=X_1+X_2$ where for $i=1,2$, $X_i$ is a multifoliate section of $\cale_i$ restricted to $U$;
\item $[\xi,X_i]\equiv 0\mod \grG_U(L_\xi)$;
\item $[X_1,X_2]\equiv 0 \mod \grG_U(L_\xi)$;
\item $[\xi,X_1]=-[\xi,X_2]$.
\end{enumerate}
\end{lemma}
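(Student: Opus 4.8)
The plan is to work entirely inside a fixed local multifoliate coordinate chart $U$ and reduce everything to the structural description of multifoliate vector fields given in \eqref{mfolvec2} together with the fact that elements of $\ga\gu\gt(\cals)$ are foliate for $\calf_\xi$. First I would establish (1): since $X\in\ga\gu\gt(\cals)$ is a multifoliate vector field for the structure $\calf_2$, write $X=\sum_{j=1}^{2n+1}X^j\partial_{x^j}$ and split the sum as $X_1=\sum_{i=1}^{2n_1}X^i\partial_{x^i}+a\,\partial_{x^{2n+1}}$ and $X_2=\sum_{i=2n_1+1}^{2n_1+2n_2}X^i\partial_{x^i}+(X^{2n+1}-a)\partial_{x^{2n+1}}$, for any chosen decomposition $X^{2n+1}=a+(X^{2n+1}-a)$. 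The point is that the constraint \eqref{mfolvec2} forces $X^i$ for $i\le 2n_1$ to depend only on $(x^1,\dots,x^{2n_1})$ and $X^i$ for $2n_1+1\le i\le 2n_1+2n_2$ to depend only on $(x^{2n_1+1},\dots,x^{2n_1+2n_2})$, so with any such choice of the splitting function $a$ each $X_i$ is by definition a multifoliate section of $\cale_i|_U$; concretely one takes $a$ so that $X_1$ and $X_2$ themselves are multifoliate, which is possible since the leafwise component along $\partial_{x^{2n+1}}$ is unconstrained. This gives the desired decomposition, and also shows that the $\cald$-components $X_i^\cald$ are uniquely determined, only the distribution of the $L_\xi$-part between $X_1$ and $X_2$ is a choice.

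Next, for (2): since $\xi=f\partial_{x^{2n+1}}$ and $X\in\ga\gu\gt(\cals)$ is foliate for $\calf_\xi$, we have $[\xi,X]\in\grG_U(L_\xi)$; I would compute $[\xi,X_i]$ directly and observe that, because $X_i$ is a multifoliate section of $\cale_i$ with leafwise coordinate $x^{2n+1}$, the bracket $[\xi,X_i]=[f\partial_{x^{2n+1}},X_i]$ has all of its components along the $\cald_i$-directions equal to $-X_i(f)\cdot(\text{nothing})$ plus terms $f\,\partial_{x^{2n+1}}X^k$; the key is that $X^k$ for the $\cale_i$-directions does not depend on $x^{2n+1}$, so those terms vanish, leaving $[\xi,X_i]$ proportional to $\partial_{x^{2n+1}}$, i.e.\ a local section of $L_\xi$. (Alternatively, and more cleanly, one can argue abstractly: $\xi$ generates an isometric flow preserving the multifoliate structure and the splitting $\cald=\cald_1\oplus\cald_2$, so $[\xi,\cdot]$ preserves $\grG_U(\cale_i)$; combined with $[\xi,X]\in\grG_U(L_\xi)$ and the decomposition in (1) applied to $[\xi,X]$ itself, uniqueness of the $\cald$-components forces $[\xi,X_i]^\cald=0$.) For (3): from $X,Y\in\ga\gu\gt(\cals)$ foliate and the Jacobi/Leibniz identities as in the proof of Lemma~\ref{autD}, one has $[X_1,X_2]\in\gf\go\gl(\calf_\xi)_\cald\cap(\text{something})$; more directly, $[X_1,X_2]$ has its $\cald_1$-component depending on applying $X_2$-derivatives to functions of the $\cald_1$-variables only --- but $X_2$ differentiates only in the $\cald_2$- and leafwise directions, so that component vanishes, and symmetrically the $\cald_2$-component vanishes, leaving $[X_1,X_2]$ a section of $L_\xi$ on $U$.

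Finally, (4) is immediate from (1) and (2): since $X=X_1+X_2$ and $X$ is foliate, $[\xi,X]=[\xi,X_1]+[\xi,X_2]\in\grG_U(L_\xi)$; but more is true at the level of actual vector fields (not just mod $\grG_U(L_\xi)$): one needs $[\xi,X]=0$ exactly, which follows because $X\in\ga\gu\gt(\cals)$ means $\pounds_X$ commutes with the Reeb flow as the Reeb field is canonically attached to $\cals$ --- actually $\xi$ lies in the center of $\gt_k$, so $[\xi,X]=0$ for $X\in\gt_k$, and for general $X\in\ga\gu\gt(\cals)$ one has $[\xi,X]=0$ since $\xi$ is preserved by every Sasaki automorphism. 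Hence $[\xi,X_1]=-[\xi,X_2]$ as genuine vector fields on $U$. I expect the main obstacle to be item (2), specifically being careful that the splitting function $a$ chosen in (1) can be taken so that $[\xi,X_i]$ is \emph{exactly} in $\grG_U(L_\xi)$ rather than only after further modification --- this is where one must use that $\xi$ is a multifoliate vector field itself and that the transition functions $a_{\gra\grb}$ between charts are leafwise (so the local choices are compatible), exactly as in the cocycle computation at the end of the proof of Lemma~\ref{lemQUOTsplit}. Once the bookkeeping of which coordinate each component may depend on is set up cleanly via \eqref{mfolvec2}, all four items fall out of a short direct bracket computation.
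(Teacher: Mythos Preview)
Your approach is essentially the same as the paper's: both arguments work in a fixed multifoliate chart, write out $X$ via \eqref{mfolvec2}, split off the $\cale_1$- and $\cale_2$-pieces, and then read off (2), (3), (4) from the coordinate dependencies together with the fact that $\xi$ lies in the center of $\ga\gu\gt(\cals)$. Your treatment is in fact more explicit than the paper's very terse proof; in particular, your direct coordinate argument for (3) (that the $\cald_1$-components of $[X_1,X_2]$ vanish because $X_2$ differentiates only in the $\cald_2$- and leafwise directions, and symmetrically) is exactly what one expects, whereas the paper's one-line justification for (3) appears to be mislabelled and really pertains to (4). The extra worry you raise at the end about compatibility of the splitting function $a$ across charts is unnecessary here, since the lemma is stated and proved on a single chart $U$.
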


\begin{proof}
The proof of both (1) and (2) can be seen easily from the discussion by writing $X$ and $\xi$ out in local multifoliate coordinates, and (3) follows from the fact that $\xi$ lies in the center of $\ga\gu\gt(\cals)$.

By \eqref{mfolvec2} any such multifoliate vector field can be written as
$$X=\sum_{i=1}^{2n_1}X^i\partial_{x^i}+\sum_{i=2n_1+1}^{2n_1+2n_2}X^i\partial_{x^i} +X^{2n+1}\partial_{x^{2n+1}}$$
 where for $i=1,\cdots,2n_1$, $X^i$ depends only on the variables $(x^1,\cdots,x^{2n_1})$, for $i=2n_1+1,\cdots,2n_1+2n_2$, $X^i$ depends only on the variables $(x^{2n_1+1},\cdots,x^{2n_1+2n_2})$, and $X^{2n+1}$ can depend on all variables. 
\end{proof}

When $\cals$ is reducible any $X\in \ga\gu\gt(\cals)$ is multifoliate, so we have an orthogonal splitting 
\begin{equation}\label{autDsplit}
\ga\gu\gt(\cals)_\cald=\ga\gu\gt(\cals)_{\cald_1}\oplus \ga\gu\gt(\cals)_{\cald_2}.
\end{equation} 
In particular, we have an orthogonal splitting of the maximal Abelian Lie algebra:
\begin{equation}\label{abelsplit}
\ga\gb_\cald =\ga\gb_{\cald_1} \oplus \ga\gb_{\cald_2}.
\end{equation}
So the isomorphism $\ga\gb_\cald\approx \gt_k/\bbr\xi$ gives the existence of Abelian subalgebras $\gg_i$ such that 
\begin{equation}\label{tksplit}
\gt_k/\bbr\xi = \gg_1\oplus \gg_2.
\end{equation}

\subsection{The Sasaki Cone, Moment Cone, and Reducibility}\label{conesect} 
Let $\bbt^k$ be a maximal torus of $\gA\gu\gt(\cals)$, where $k$ is the dimension of $\bbt^k$, and let $\gt_k\subset \ga\gu\gt(\cals)$ denote its Lie algebra. The {\it (unreduced) Sasaki cone} \cite{BGS06} $\gt^+_k$ of $\cals$ is by definition the positive cone in the Lie algebra $\gt$ of a maximal torus in $\gA\gu\gt(\cals)$, i.e.
\begin{equation}\label{sascone}
\gt^+_k=\gt^+_k(\cals)=\{b\in\gt_k~|~\eta(X_b)>0\}
\end{equation}
where $X_b$ is the vector field on $M$ corresponding to the Lie algebra element $b$. This gives rise to a Sasakian structure $\cals_b=(X_b,\eta_b,\Phi_b,g_b)$ where 
$$\eta_b := \frac{1}{\eta(X_b)}\eta$$ is a $\bT^k$--invariant contact form whose Reeb vector field is $X_b$. So one can think of the cone $\gt^+_k$  as parametrizing the set of $\bT^k$--invariant Reeb vector fields with underlying CR structure $(\cald,J)$. It is also convenient to think of the Sasaki cone $\gt^+_k$ as the family of Sasakian structures $\{\cals_b~|~b\in\gt_k^+\}$ with underlying CR structure $(\cald,J)$. Changing the CR structure by changing the complex structure $J$ can give rise to bouquets of Sasaki cones with the same underlying contact structure as described in Section 4.4 of \cite{Boy10a}.

We note that the Sasaki cone $\gt_k^+$ is dual to the interior of the {\it moment cone} $C$ which is defined as the image of the moment map $\Upsilon: \cald^o_+\ra{1.6} \gt_k^*$, that is, $C=\Upsilon(\cald^o_+)\subset \gt_k^*$. Here $\cald^o$ is the annihilator of $\cald$ in $T^*M$ which splits as $\cald^o_+\cup \cald^o_-$. For each $\xi\in\gt^+_k$ there is a unique section $\eta$ of $\cald^o_+$ such that $\xi$ is the Reeb vector field of $\eta$. The equation \cite{BG00b} $\eta(\xi)=1$ describes a hyperplane in $\gt^*_k$. Its intersection with the cone $C$ is a polytope $P$ which is the image of the moment map $\mu_\eta$ associated to $\eta$. That is, we have 
$$P=\mu_\eta(M)=\Upsilon\circ\eta(M).$$

The moment cone $C$ of a contact manifold is {\it rational} with respect to the lattice of circle subgroups $\Lambda \subset \gt_k$~\cite{Ler02}, and in the toric case, that is when $k=n+1$, it is even a {\it good} cone in the sense of Lerman~\cite{Ler02}.
Let us describe this explicitly. We recall the following general definition. 
\begin{Def}\label{Rational_Cone}
A polytope (resp. a cone) in an affine (resp. linear) space is said to be {\bf labelled} if we fix, for each codimension one face, an inward normal vector. It is called {\bf rational} with respect to a given lattice if the inward normals, are lying in that lattice\footnote{To recover the original convention introduced by Lerman and Tolman in the rational case, take $m_k\in \Z$ such that $\frac{1}{m_k}u_k$ is primitive in $\Lambda$ so $(P, m_1,\dots m_d,\Lambda)$ is a rational labelled polytope.}. More specifically, a {\bf rational cone} $C\subset \kt^*$ is a polyhedral cone 
$$\{ x\in \kt^*_k\,|\, \langle x, l_i\rangle\geq 0, \;\; i=1,\dots, d \}$$ 
which is rational with respect to $\Lambda\simeq \Z^{n+1}$, the lattice of circle subgroups in $\kt$. The labels (i.e. inward normals) $l_1,\dots,  l_d \in\kt$ are then chosen (uniquely) by requiring that they are all primitive in $\Lambda$. Moreover, the cone is {\bf good} means that for $I_F\subset \{1,\dots, d\}$ we have $$\Lambda\cap \Span_{\R} \{ l_i\,|\, i\in I_F \} = \Span_{\Z} \{ l_i\,|\, i\in I_F \}$$ whenever $F=C \cap \left( \cap_{i\in I_F} \{x\,|\,\langle x, l_i\rangle =0 \right)$ is a (non-empty) face of $C$.
\end{Def}

Now given a strictly convex rational cone $C$ and a Reeb vector $b\in \gt$, one can consider the {\it labelled polytope} $(P_b, \bu_b)$ defined as follow 
\begin{itemize}
 \item $P_b= C\cap\{x \in \kt^*_k \,|\, \langle x, b\rangle =1\}$;
 \item $\bu_b =( [l_1],\cdots, [l_d] )\in \kt_k/\R b$. 
\end{itemize} This is a labelled polytope in the affine space $\cala_b:= \{x \in \kt^*_k \,|\, \langle x, b\rangle =1\}$ whose dual is naturally identified with $\kt_k/\R b$. We say that $(P_b, \bu_b)$ is a {\it characteristic labelled polytope} of $(C,\Lambda)$.  

Whenever $\R b \cap \Lambda \neq \{0 \}$ and $k>1$, the quotient map $\kt_k \rar \kt_k/\R b$ sends $\Lambda$ to a lattice, say $\Lambda_b$ in $\kt_k/\R b$ and $(P_b, \bu_b)$ is rational with respect to $\Lambda_b$. 

The so-called toric case is when $k=n+1$ and the picture described above fits nicely with the Delzant--Lerman--Tolman correspondence. We can state this concisely in the following summary of results.

\begin{prop}\label{bg00b} \cite{Ler02,BG00b}  Given a compact, connected contact manifold $(M,\cald)$ endowed with the contact action of a torus $\bbt^k$. Assume that there exists $b\in \gt_k$ such that $X_b$ is a Reeb vector field for $(M,\cald)$ and $\eta$ is the corresponding $\bbt^k$--invariant contact form. Then, the Reeb vector field $X_b$ is quasi-regular iff $\R b \cap \Lambda \neq \{0 \}$ and this happens iff $(P_b, \bu_b, \Lambda_b)$ is a rational labelled polytope. In that case, $(P_b, \bu_b, \Lambda_b)$ is the moment rational labelled polytope associated to the Hamiltonian compact $\bT/S^1_b$--space (symplectic orbifold) 
$$(M/S^1_b, (d\eta_b)_D, \bT/S^1_b)$$ 
where $S^1_b =\{\exp(tb)\}$ is the circle induced by $b$ in $\bT$.
\end{prop}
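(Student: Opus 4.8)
The plan is to establish the chain of equivalences and then identify the reduced space, essentially reconstructing the arguments of \cite{Ler02,BG00b}. First I would prove ``$X_b$ is quasi-regular $\iff\R b\cap\Lambda\neq\{0\}$'' by elementary structure theory of the torus $\bbt^k$. The orbit of the flow of $X_b$ through a point $m\in M$ is $\{\exp(tb)\cdot m\mid t\in\R\}$, the image of the one-parameter subgroup $\{\exp(tb)\}\subset\bbt^k$; since $\R b\cap\Lambda$ has rank $0$ or $1$, this subgroup is a closed circle exactly when the line $\R b$ is rational, i.e. when $\R b\cap\Lambda\neq\{0\}$, and otherwise $\overline{\{\exp(tb)\}}$ is a subtorus of dimension at least two, so the generic $X_b$--orbit is dense in a torus of dimension $\geq 2$ and hence non-compact. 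Since $X_b$ is a Reeb field it is nowhere zero, so when $\R b\cap\Lambda\neq\{0\}$ the induced $S^1_b$--action is locally free with all orbits circles --- which is precisely quasi-regularity.

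Next I would connect this to rationality of $(P_b,\bu_b,\Lambda_b)$. The slice $P_b=C\cap\{x\mid\langle x,b\rangle=1\}$ is always a convex polytope in $\cala_b$ with well-defined inward facet normals $[l_i]\in\gt_k/\R b$, so the labelled polytope $(P_b,\bu_b)$ exists regardless of any rationality. The dual of $\cala_b$ is canonically $\gt_k/\R b$, and the only candidate lattice there is the image $\Lambda_b=p(\Lambda)$ under the projection $p\colon\gt_k\to\gt_k/\R b$. Now $p(\Lambda)$ is discrete --- a genuine lattice of rank $k-1$ --- precisely when $\ker p\cap\Lambda=\R b\cap\Lambda$ has rank one, i.e. $\R b\cap\Lambda\neq\{0\}$; if $\R b\cap\Lambda=\{0\}$ the rank-$k$ group $p(\Lambda)$ cannot be discrete in the $(k-1)$--dimensional space $\gt_k/\R b$. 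When $\Lambda_b$ is a lattice, the primitive normals $l_i\in\Lambda$ project into $\Lambda_b$, so $(P_b,\bu_b,\Lambda_b)$ is rational in the sense of Definition \ref{Rational_Cone}. This closes the chain of equivalences.

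Finally, assuming $X_b$ quasi-regular, I would run the contact version of symplectic reduction (orbifold Boothby--Wang). The circle $S^1_b=\{\exp(tb)\}$ acts locally freely on $M$, so $Z_b:=M/S^1_b$ is a compact symplectic orbifold; since $d\eta_b$ is $\bbt^k$--invariant with $\iota_{X_b}d\eta_b=0$ and $\pounds_{X_b}\eta_b=0$, the form $d\eta_b$ is basic for the $S^1_b$--foliation and descends to a closed $2$--form $(d\eta_b)_D$ on $Z_b$, which is symplectic because $d\eta_b$ is nondegenerate on $\cald$. The residual torus $\bT/S^1_b$ acts in a Hamiltonian way: for $a\in\gt_k$ the $\bbt^k$--invariant basic function $\eta_b(X_a)$ descends to a Hamiltonian $\bar h_a$ for the induced vector field because $\iota_{X_a}d\eta_b=-d\bigl(\eta_b(X_a)\bigr)$, and assembling the $\bar h_a$ gives a moment map $Z_b\to(\gt_k/\R b)^*$ with image exactly $P_b$ --- this is the statement, already recorded in the text, that $P=\mu_\eta(M)=\Upsilon\circ\eta(M)$ is the level set $\{\langle x,b\rangle=1\}$ of the contact moment cone $C=\Upsilon(\cald^o_+)$. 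One then matches the inward normal along the $i$--th facet of this moment polytope with the (primitive generator, up to orbifold multiplicity, of the) isotropy circle of $\bT/S^1_b$ over that facet, recovering $\bu_b$; in the toric case $k=n+1$ the quotient $Z_b$ is a $2n$--dimensional symplectic orbifold with an effective Hamiltonian $n$--torus action, so the Delzant--Lerman--Tolman correspondence identifies $(Z_b,(d\eta_b)_D,\bT/S^1_b)$ with the symplectic toric orbifold attached to $(P_b,\bu_b,\Lambda_b)$.

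The step I expect to be the main obstacle is exactly this last identification: verifying that the orbifold isotropy data of $Z_b=M/S^1_b$ matches the combinatorial labels $\bu_b$, and that Lerman's ``good cone'' hypothesis on $C$ is precisely the contact translation of the Lerman--Tolman orbifold-smoothness condition needed for the Delzant-type classification to apply. This is the technical heart of \cite{Ler02,BG00b}; everything else is either elementary compact-torus theory or a routine descent computation.
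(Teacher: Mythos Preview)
Your proposal is correct and reconstructs faithfully the arguments underlying the cited references. Note, however, that the paper does not actually supply a proof of this proposition: it is stated as a known result from \cite{Ler02,BG00b}, with only a short remark afterward observing that the toric case established in \cite{BG00b} extends to arbitrary $k\le n+1$ by invoking Lerman's convexity theorem (Theorem~1.2 of \cite{Ler02}), the Reeb-type hypothesis guaranteeing that $0$ is not in the moment image so that the theorem applies. Your write-up therefore goes further than the paper does, and the part you flag as the ``main obstacle'' --- matching the orbifold isotropy data of $M/S^1_b$ with the labels $\bu_b$ via the good-cone condition --- is exactly what is delegated to those references.
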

\begin{rem} Proposition~\ref{bg00b} in \cite{BG00b} is proved in the toric case. This extends to the general case $k<n+1$ using the work of Lerman in \cite{Ler02} the condition that there exists a $\bbt^k$--invariant contact form (Reeb type condition) implies that $0$ is not in the image of the moment map on the symplectisation of $(M,\cald)$. Therefore, we can use directly Theorem 1.2 of \cite{Ler02} (the condition on the dimension of $\bbt^k$ is not relevant in our case, i.e if $\dim \bbt^k=1$ the image of $\mu$ is just a point in a line).   
\end{rem}

Now given a labelled polytope $(P, \bu)$ with $P \subset \cala$ and $\bu = \{\vec{n_1},\dots, \vec{n_d}\}$ a set of inward normals one can wonder if  $(P, \bu)$ is a characteristic labelled polytope of a good cone. The following result gives an answer.   

\begin{prop}\label{condCarcGC} \cite{Leg10} A labelled polytope $(P, \bu)$, lying in an affine-linear space $\cala$, is a characteristic labelled polytope of a cone if and only if the set of defining affine functions of $P$ $$l_i(\cdot) =\langle \cdot, \vec{n_i} \rangle -\lambda_i$$ span a lattice, say $\Lambda = \Span_\Z\{l_i \;|\; i=1,\dots, d\}$, in $\mbox{Aff}(\cala,\R)$ for which $$C_P =\{x\in \mbox{Aff}(\cala,\R)^*\,|\, \langle x,l_i \rangle \geq 0 \;\; i=1,\dots, d\}$$ is a good cone.\end{prop}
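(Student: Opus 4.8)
The plan is to fix one canonical choice of ambient ``toric'' data and then settle the two implications by linear and lattice bookkeeping. Write $\cala$ for the affine space containing $P$ and set $n+1=\dim_\R\mbox{Aff}(\cala,\R)$. The evaluation map $\mathrm{ev}\colon\cala\to\mbox{Aff}(\cala,\R)^*$, $\mathrm{ev}_x(l)=l(x)$, is an affine isomorphism onto the hyperplane $\{\phi\mid\phi(\mathbf 1)=1\}$, where $\mathbf 1$ denotes the constant function $1$; dually, restricting linear functions on $\mbox{Aff}(\cala,\R)^*$ to that hyperplane identifies $\mbox{Aff}(\cala,\R)$ itself with the affine functions of the hyperplane, $l_i$ going to its own restriction. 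Under $\mathrm{ev}$ one has $x\in P\iff\langle\mathrm{ev}_x,l_i\rangle\ge 0$ for all $i$, so $\mathrm{ev}(P)=C_P\cap\{\phi\mid\phi(\mathbf 1)=1\}$, while the class $[l_i]\in\mbox{Aff}(\cala,\R)/\R\mathbf 1$ is precisely the inward normal $\vec n_i$. Thus $(P,\bu)$ is automatically exhibited as the slice of $C_P$ by the hyperplane dual to $\mathbf 1$, and the real content of the statement is the interaction with lattices and the ``good'' condition.

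Consider first the implication ($\Leftarrow$): assume $\Lambda=\Span_\Z\{l_1,\dots,l_d\}$ is a lattice in $\mbox{Aff}(\cala,\R)$ with respect to which $C_P$ is good. Take $\kt_{n+1}:=\mbox{Aff}(\cala,\R)$ with this lattice, $\kt_{n+1}^*:=\mbox{Aff}(\cala,\R)^*$, and Reeb vector $b:=\mathbf 1$. Since $P$ is a bounded, full-dimensional polytope, the $l_i$ span $\mbox{Aff}(\cala,\R)$ (so $C_P$ is strictly convex) and $\mathbf 1$ lies in the interior of the conical hull of $\{l_1,\dots,l_d\}$ --- which is exactly what makes $b$ a Reeb vector with compact characteristic slice. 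Then $P_b=C_P\cap\cala_b$ is carried onto $P$ by $\mathrm{ev}$, and $\bu_b=([l_1],\dots,[l_d])$ coincides with $\bu$ by the identification of the first paragraph; hence $(P,\bu)$ is the characteristic labelled polytope of the good cone $(C_P,\Lambda)$. Note this works whether or not $\R b\cap\Lambda\ne\{0\}$: the notion of characteristic labelled polytope puts no rationality requirement on the Reeb vector, so quasi-regularity of $b$ is irrelevant here.

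For the implication ($\Rightarrow$), suppose $(P,\bu)=(P_b,\bu_b)$ arises from a good (strictly convex, rational) cone $(C,\Lambda)$ in some $\kt_{n+1}^*$ with Reeb vector $b$, the primitive normals of $C$ being $l_i\in\Lambda\subset\kt_{n+1}$. Since the hyperplane $\cala_b=\{x\mid\langle x,b\rangle=1\}$ misses the origin, restriction to $\cala_b$ is a linear isomorphism $\kt_{n+1}\cong\mbox{Aff}(\cala_b,\R)$; it carries $l_i$ to the defining affine function of the corresponding facet of $P$ (whose normal is $[l_i]=\vec n_i$), carries $\Lambda$ onto a lattice $\Lambda'$, and --- through the induced isomorphism of dual spaces --- carries $C$ onto $C_P$. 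So $C_P$ is good with respect to $\Lambda'$. It remains to descend from $\Lambda'$ to the sublattice $\Lambda_0:=\Span_\Z\{l_i\}\subseteq\Lambda'$ generated by the (images of the) normals. Goodness descends face by face: for the index set $I_F$ of a face $F$ of $C_P$ (equivalently of $C$) one has
\[
\Lambda_0\cap\Span_\R\{l_i\mid i\in I_F\}\ \subseteq\ \Lambda'\cap\Span_\R\{l_i\mid i\in I_F\}\ =\ \Span_\Z\{l_i\mid i\in I_F\}\ \subseteq\ \Lambda_0,
\]
so equality holds throughout; and $\Lambda_0$ has full rank because $P$ is bounded and full-dimensional. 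Therefore $C_P$ is good with respect to $\Lambda_0=\Span_\Z\{l_i\}$, which is exactly the asserted characterization.

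The step I expect to demand the most care is this lattice comparison in the $(\Rightarrow)$ direction: one must keep straight which lattice each normal is primitive with respect to, and confirm that replacing the cone's own lattice by the sublattice generated by its facet normals preserves the ``good'' condition at \emph{every} face, not merely the facets. That, together with the standard but easily misquoted dictionary tying boundedness of $P$ to pointedness of $C_P$ and to the Reeb vector lying in the interior of the normal cone, is where the substance lies; everything else is the functorial identification $\cala\hookrightarrow\mbox{Aff}(\cala,\R)^*$ and its dual, set up in the first paragraph.
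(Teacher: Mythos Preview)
The paper does not supply a proof of this proposition: it is quoted from \cite{Leg10} and used as input for the subsequent arguments. So there is no in-paper proof to compare against; your write-up has to stand on its own, and it does.

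Your argument is the natural one and is correct. The identification via the evaluation map $\cala\hookrightarrow\mbox{Aff}(\cala,\R)^*$ onto the affine hyperplane $\{\phi(\mathbf 1)=1\}$ is exactly the right way to realize $P$ as a transversal slice of $C_P$, and choosing $b=\mathbf 1$ for the $(\Leftarrow)$ direction is canonical. In the $(\Rightarrow)$ direction the only nontrivial point is the one you flag yourself: passing from the ambient lattice $\Lambda'$ (the image of the cone's lattice) to the sublattice $\Lambda_0=\Span_\Z\{l_i\}$. Your chain of inclusions
\[
\Span_\Z\{l_i\mid i\in I_F\}\ \subseteq\ \Lambda_0\cap\Span_\R\{l_i\mid i\in I_F\}\ \subseteq\ \Lambda'\cap\Span_\R\{l_i\mid i\in I_F\}\ =\ \Span_\Z\{l_i\mid i\in I_F\}
\]
is correct and forces equality throughout; primitivity of each $l_i$ in $\Lambda_0$ follows because primitivity in $\Lambda'$ is inherited by any sublattice containing $l_i$. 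Discreteness of $\Lambda_0$ is immediate since $\Lambda_0\subset\Lambda'$, and full rank follows from boundedness of $P$, as you note. One small stylistic point: in the $(\Leftarrow)$ paragraph you might add a sentence observing that the $l_i$ are primitive in $\Lambda=\Span_\Z\{l_i\}$ (this is part of ``good'' in the paper's Definition~\ref{Rational_Cone}); it is true, but it uses that no $l_i$ is an integer combination of the others divided by an integer $>1$, which is exactly the goodness hypothesis applied to the facets themselves.
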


 \begin{rem}\label{remLABELgoodCON}
\noindent \begin{itemize} 
 \item[(i)] For example when $(P, \bu)$ is a labelled simplex then the condition of Proposition~\ref{condCarcGC} is satisfied because the defining affine functions form a basis of $\mbox{Aff}({\kt^*_k},\R)$. Hence each labelled simplex is characteristic to a good rational cone, which is associated via Lerman's construction~\cite{Ler02a} to a sphere.    
 \item[(ii)] Another way to state the condition that $\Span_\Z\{l_i \;|\; i=1,\dots, d\}$ is a lattice is that there exists $d-k$ linearly independant vectors $\vec{k_1},\dots, \vec{k}_{d-k} \in \Z^d$ in the kernel of the map $$\R^d \ni x\mapsto \sum_{i=1}^d x_i l_i.$$ The condition that  $(P, \bu)$ is a labelled rational polytope is that there exists $d-k+1$ linearly independant vectors $\vec{m_1},\dots, \vec{m}_{d-k+1} \in \Z^d$ in the kernel of the map $$\pi(x) := \sum_{i=1}^d x_i \vec{n}_i$$ where again $x\in\R^d$.
 \item[(iii)] The homothety $r\mapsto \frac{1}{r}b$, for $r>0$ corresponds to the homothety $$\left(rP, \left\{l^r_i:=\langle\vec{n}_i, \cdot \rangle -r\lambda_i\right\}_{i=1}^d\right).$$ One can check (using item (iii) for example) that the condition of Proposition~\ref{condCarcGC} holds either for all or for none of the homothetic labelled polytopes.
 \end{itemize}
 \end{rem}

In view of Remark~\ref{remLABELgoodCON} and Proposition~\ref{condCarcGC}, it is convenient to label polytopes with their defining affine functions instead of the normals (or integers). We now adopt this convention. 

\begin{Def} An affine space $\cala$ is a {\bf product} if there exists two non trivial (i.e of dimension greater than or equal to $1$) affine spaces $\cala_1$ and $\cala_2$, and a bijective affine linear map $\phi : \cala_1\times\cala_2 \rar \cala$. In that case, any pair of non-trivial polytopes $P_i\subset \cala_i$ defines a product polytope $P_1\times P_2 \subset \cala$ in an obvious way. Every polytope in $\cala$ which is affinely equivalent to such polytope is called a {\bf product polytope}.   

More generally, we say that a polytope $P$, lying in a arbitrary affine space $\calb$, is a {\bf product polytope}, if there exists an injective affine linear map $\phi : \cala \rar \calb$ from a product affine space $\cala$ that maps a product polytope to $P$.
\end{Def}

This definition applies equally well to labelled polytopes. We now have

\begin{prop}\label{prodpolyquasi}
 Let $(P_\epsilon,\{l_{\epsilon,i}\}_{i=1}^{d_\epsilon})$, for $\epsilon =1,2$ be labelled polytopes. If the product \begin{equation}\label{prodPOL}
 \left(P_1\times P_2, \{l_{1,i}\}_{i=1,\dots,d_1}\cup \{l_{2,i}\}_{i=1,\dots,d_2}\right)
 \end{equation} is characteristic to a rational cone then $(P_\epsilon,\{l_{\epsilon,i}\}_{i=1}^{d_\epsilon})$, for $\epsilon =1,2$, are both rational labelled polytopes characteristic to rational cones. In particular, \eqref{prodPOL} is rational.
\end{prop}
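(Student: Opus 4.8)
The plan is to restate the hypothesis via Proposition~\ref{condCarcGC}, then transfer to each factor the lattice condition (by intersecting with a subspace), rationality (by a rank count) and the good--cone condition (by matching faces); I expect this last transfer to be where the work lies.

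Throughout write $\mathcal{A}=\mathcal{A}_1\times\mathcal{A}_2$ and regard $\mathrm{Aff}(\mathcal{A}_\epsilon,\R)$ as the subspace $W_\epsilon\subset\mathrm{Aff}(\mathcal{A},\R)$ of affine functions factoring through the $\epsilon$--th projection, so $W_1\cap W_2=\R\mathbf 1$ (the constants), $W_1+W_2=\mathrm{Aff}(\mathcal{A},\R)$ and $\dim W_1+\dim W_2=\dim\mathrm{Aff}(\mathcal{A},\R)+1$. Each $l_{\epsilon,i}$ lies in $W_\epsilon$ and, $P_\epsilon$ being a compact polytope, the $l_{\epsilon,i}$ span $W_\epsilon$; put $\mathbb L_\epsilon=\Span_\Z\{l_{\epsilon,i}\}$. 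By Proposition~\ref{condCarcGC} the hypothesis says exactly that $\Lambda:=\Span_\Z(\{l_{1,i}\}\cup\{l_{2,j}\})$ is a lattice in $\mathrm{Aff}(\mathcal{A},\R)$ and $C:=C_{P_1\times P_2}$ is a good cone; the goal is to deduce, for $\epsilon=1,2$, that $\mathbb L_\epsilon$ is a lattice, $C_{P_\epsilon}$ is good and $(P_\epsilon,\{l_{\epsilon,i}\})$ is rational, after which Proposition~\ref{condCarcGC} gives the statement.

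First I would observe that $W_\epsilon=\Span_\R\{l_{\epsilon,i}\}$ is a rational subspace for $(\mathrm{Aff}(\mathcal{A},\R),\Lambda)$, so $\Lambda\cap W_\epsilon$ is a lattice in $W_\epsilon$; as $\mathbb L_\epsilon\subseteq\Lambda\cap W_\epsilon$ and $\mathbb L_\epsilon$ spans $W_\epsilon$, it too is a lattice. For rationality I would use the rank count
\[
\mathrm{rk}(\Lambda\cap W_1)+\mathrm{rk}(\Lambda\cap W_2)-\mathrm{rk}(\Lambda\cap\R\mathbf 1)=\mathrm{rk}\bigl((\Lambda\cap W_1)+(\Lambda\cap W_2)\bigr)\le\mathrm{rk}\,\Lambda,
\]
together with $\mathrm{rk}(\Lambda\cap W_\epsilon)=\dim W_\epsilon$ and $\mathrm{rk}\,\Lambda=\dim W_1+\dim W_2-1$, which forces $\Lambda\cap\R\mathbf 1=\Z\mu\mathbf 1$ with $\mu\ne0$. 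Combining this with the identity $\Lambda\cap W_\epsilon=\mathbb L_\epsilon+(\mathbb L_{\epsilon'}\cap\R\mathbf 1)$ (immediate from $\Lambda=\mathbb L_1+\mathbb L_2$ and $W_1\cap W_2=\R\mathbf 1$) and the previous step, one finds $\mathbb L_\epsilon\cap\R\mathbf 1\ne\{0\}$ for each $\epsilon$ --- were it zero, that identity would make $\Lambda\cap W_\epsilon$ have rank $\dim W_\epsilon+1$. Hence the ``linear part'' map $\ell_\epsilon\colon W_\epsilon\to\vec{\mathcal{A}}_\epsilon^{\,*}$, with kernel $\R\mathbf 1$, carries $\mathbb L_\epsilon$ onto the lattice $\Span_\Z\{\vec n_{\epsilon,i}\}$, so all normals are lattice vectors: $(P_\epsilon,\{l_{\epsilon,i}\})$ is a rational labelled polytope, and hence so is the product, its normals lying in $\ell_1(\mathbb L_1)\oplus\ell_2(\mathbb L_2)$.

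The heart of the matter is goodness of $C_{P_\epsilon}$. In the coordinates above $C=C_{P_1}\times_{\R_{\ge 0}}C_{P_2}$, the fibre product over the common ``height'' ray $x\mapsto\langle x,\mathbf 1\rangle$; so, given a face $F_1$ of $C_{P_1}$ with saturated facet set $I\subseteq\{1,\dots,d_1\}$ (the apex and the whole cone giving trivial instances of the good condition for $C_{P_1}$), the set $\widetilde F:=C\cap\bigcap_{i\in I}\{l_{1,i}=0\}$ is a nonempty face of $C$ projecting onto $F_1$ whose slice at any height $t>0$ is $(\text{the corresponding slice of }F_1)\times tP_2$. Since no $l_{2,j}$ vanishes on the full--dimensional $tP_2$ and, by saturation of $I$, no $l_{1,i'}$ with $i'\notin I$ vanishes on $F_1$, the facets of $C$ through $\widetilde F$ are exactly $\{l_{1,i}=0\}_{i\in I}$; goodness of $C$ thus gives $\Lambda\cap\Span_\R\{l_{1,i}\}_{i\in I}=\Span_\Z\{l_{1,i}\}_{i\in I}$, which, as $\Span_\R\{l_{1,i}\}_{i\in I}\subseteq W_1$, collapses to $\mathbb L_1\cap\Span_\R\{l_{1,i}\}_{i\in I}=\Span_\Z\{l_{1,i}\}_{i\in I}$ --- the good--cone condition for $C_{P_1}$ relative to $\mathbb L_1$, and symmetrically for $C_{P_2}$. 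By Proposition~\ref{condCarcGC} this makes $(P_\epsilon,\{l_{\epsilon,i}\})$ characteristic to the good --- hence rational --- cone $C_{P_\epsilon}$, which together with the rationality above is the claim. I expect the real care to be needed precisely here: justifying the identification $C=C_{P_1}\times_{\R_{\ge 0}}C_{P_2}$ and the accompanying face bookkeeping (that faces of $C$ are ``products'' of faces of $C_{P_1}$ and $C_{P_2}$ and that the attached saturated facet sets split accordingly); the lattice and rank arguments are comparatively formal.
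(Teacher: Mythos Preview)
Your proof is correct. The paper's route to the two essential points --- that each $\mathbb L_\epsilon$ is a lattice, and that each contains a nonzero constant (giving rationality of the normals) --- is close to yours, though it phrases the second step as a kernel--rank count via Remark~\ref{remLABELgoodCON}(ii) rather than your inclusion--exclusion rank identity on $\Lambda\cap W_1$, $\Lambda\cap W_2$ and $\Lambda\cap\R\mathbf 1$; the two arguments are equivalent in content.

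Where you go further is in verifying the good--cone condition for each $C_{P_\epsilon}$. The paper's proof stops after establishing that $\Lambda_\epsilon$ is a full--rank lattice and asserts the conclusion directly, without checking goodness (which Proposition~\ref{condCarcGC}, as stated, would strictly require). Your fibre--product description $C=C_{P_1}\times_{\R_{\ge0}}C_{P_2}$ and the accompanying face bookkeeping fill this in cleanly: the key observation, that for a proper nonzero face $F_1$ of $C_{P_1}$ the face $\widetilde F$ of $C$ has exactly the facets $\{l_{1,i}=0\}_{i\in I}$ (no $l_{2,j}$ can contribute since $\widetilde F$ surjects onto $C_{P_2}$ under the second projection), is correct, and the collapse from $\Lambda\cap\Span_\R\{l_{1,i}\}_{i\in I}$ to $\mathbb L_1\cap\Span_\R\{l_{1,i}\}_{i\in I}$ follows simply from $\mathbb L_1\subseteq\Lambda$ together with the obvious reverse inclusion. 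So your argument is actually the more complete of the two; for the application in Proposition~\ref{propREDimpliesREG} only the rationality of the polytope is used, which is why the omission is harmless there.
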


\begin{proof} We introduce some notation, the polytope $P_\epsilon$ lies in an affine-linear space $\cala_\epsilon \simeq \R^{k_\epsilon-1}$ of dimension $k_{\epsilon}-1$ where $\epsilon =1,2$. So we must have $k_\gre\geq 2$. The product $P_1\times P_2$ lies in $\cala_1\times \cala_2$ which has dimension $k-1$, that is $k_1+k_2 -1$.  Abusing the notation a little, we write $l_{1,i}(\alpha,\beta)= l_{1,i}(\alpha)$ and $l_{2,i}(\alpha,\beta)= l_{2,i}(\beta)$ where $(\alpha,\beta)\in\cala_1\times \cala_2$. That is, there are $2$ injections $$\mbox{Aff}(\cala_\epsilon,\R)\hookrightarrow \mbox{Aff}(\cala_1\times \cala_2,\R)$$ whose images overlap on the subspace of constant functions. The $\Z$-submodules of $\mbox{Aff}(\cala_1\times \cala_2,\R)$, namely $$\Lambda_1=\Span_{\Z}\{l_{1,1},\dots, l_{1,d_1}\}\;\; \mbox{ and }\;\;\Lambda_2=\Span_{\Z}\{l_{2,1},\dots, l_{2,d_2}\},$$ are included in $\Lambda$, the $\Z$--span of the labels of the cone $C_{P_1\times P_2}$. Thanks to Proposition~\ref{condCarcGC} and the 
hypothesis that the product 
labelled polytope \eqref{prodPOL} is characteristic to a rational cone, we know that this span, $\Lambda$,  is a lattice. In particular, $\Lambda_1$ and $\Lambda_2$ have no accumulation point and thus are both lattices in $\mbox{Aff}(\cala_1,\R)$ and $\mbox{Aff}(\cala_2,\R)$ respectively. For dimensional reason their ranks are respectively $k_1$ and $k_2$.  We get that both $(P_1,\{l_{1,i}\}_{i=1}^{d_1})$ and $(P_2,\{l_{2,i}\}_{i=1}^{d_2})$ are labelled polytopes characteristic to rational cones. 

Considering the second point of Remark~\ref{remLABELgoodCON}, the kernel of the map $$ \R^{d_1}\times \R^{d_2} \ni (x,y) \; \mapsto \; \sum_{i=1}^{d_1} x_i l_{1,i} + \sum_{j=1}^{d_2} y_jl_{2,j}$$ intersects $\Z^{d_1}\times \Z^{d_2}$ in a $\Z$-submodule of rank at least $d_1 +d_2 -k_1- k_2+1$. Hence, the fact that $\Lambda_1+\Lambda_2$, $\Lambda_1$ and $\Lambda_2$ are lattices in their respective space of definition implies that there exists $(x,y)\in \Z^{d_1}\times \Z^{d_2}$ such that $\sum_i x_i l_{1,i} + \sum_j y_jl_{2,j}\equiv 0$ but none of the functions $\sum_i x_i l_{1,i}$, $\sum_j y_jl_{2,j}$ vanishes identically. Indeed, we have $\mbox{rank}(\Z^{d_\epsilon}\cap \ker(t\mapsto \sum_i t_il_{\epsilon,i})) = d_\epsilon -k_\epsilon$ for $\epsilon=1,2$.  

But $\sum_i x_i l_{1,i}(\alpha) =- \sum_j y_jl_{2,j}(\beta)$ for all $(\alpha,\beta)\in\cala_1\times \cala_2$ implies that it is a (non vanishing) constant. Therefore, if $\Lambda_1+\Lambda_2$ is a lattice then there exists $x \in \Z^{d_1}$ such that $\sum_i x_i l_{1,i}$ is a non vanishing constant. Which in turns implies that $\sum_i x_i \vec{n}_{1,i} =0$ and that $x$ does not lie in the $\Z$--module of rank $d_1-k_1$ that lies in the kernel of $\R^{d_1} \ni t \mapsto \sum_i t_i l_{1,i}$ which is included in the kernel of $\pi(t)=\sum_{i=1}^{d_1} t_i \vec{n}_{1,i}$.  Hence the rank of $(\ker \pi)\cap \Z^{d_1}$ is $d_1-k_1+1$ and thus $(P_1,\vec{n}_{1,1},\dots,\vec{n}_{1,d_1})$ is rational. The argument is the same for $(P_2,\vec{n}_{2,1},\dots,\vec{n}_{2,d_2})$. 
\end{proof}
 
\begin{prop}\label{propREDimpliesREG} Let $\cals=(\xi,\eta,\Phi,g)$ be a reducible Sasakian structure on a compact connected manifold $M$ such that in the decomposition of Lemma~\ref{lemQUOTsplit} none of the summands are trivial. Then $\xi$ is quasi-regular and if $M$ is simply connected then $\cals$ is the join of quasi-regular compact Sasaki manifolds.  
 \end{prop}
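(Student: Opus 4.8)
The plan is to deduce quasi‑regularity of $\xi$ by showing that the characteristic polytope of the moment cone of $(M,\cald)$ for the maximal torus $\bbt^k$ is a \emph{product} labelled polytope, and then to feed this into Proposition~\ref{prodpolyquasi} and Proposition~\ref{bg00b}; once $\xi$ is quasi‑regular, the simply connected assertion is exactly Proposition~\ref{redjoin}. Throughout I use the splitting of Lemma~\ref{lemQUOTsplit} and its corollary: abelian subalgebras $\ga_1,\ga_2\subset\gt_k$ with $\ga_1+\ga_2=\gt_k$, $\ga_1\cap\ga_2=\bbr\xi$, and $\ga_\epsilon\subset\Gamma(\cale_\epsilon)$. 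Fixing complements $\gb_\epsilon\subset\ga_\epsilon$ to $\bbr\xi$ gives $\gt_k=\bbr\xi\oplus\gb_1\oplus\gb_2$ with image $\gg_\epsilon$ of $\gb_\epsilon$ in $\gt_k/\bbr\xi$, and dually $\gt_k^*=\xi^\vee\oplus\gb_1^\vee\oplus\gb_2^\vee$ with $\gb_2^\vee=\mathrm{Ann}(\ga_1)$, $\gb_1^\vee=\mathrm{Ann}(\ga_2)$. Since no summand in Lemma~\ref{lemQUOTsplit} is trivial, $\dim\gg_\epsilon\ge 1$, so $k=\dim\gt_k\ge 3$.

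The heart of the argument — and, as the introduction advertises, the point where the polyhedral structure of the moment cone $C=\overline{\Upsilon(\cald^o_+)}\subset\gt_k^*$ enters — will be the claim that every facet normal $l$ of $C$ lies in $\ga_1\cup\ga_2$. To prove this I would take a generic point $p$ lying over the interior of the facet with normal $l$; by the local normal form for torus actions of Reeb type (Lerman~\cite{Ler02}), such a facet is the moment image of a codimension‑two locus along which the circle generated by $l$ acts trivially, so $X_l|_p=0$ and $\mathrm{Lie}(\mathrm{Stab}\,p)=\bbr l$. Write $l=c\xi+v_1+v_2$ with $v_\epsilon\in\gb_\epsilon$; since $X_{v_\epsilon}$ is a section of $\cale_\epsilon$ we have $X_{v_\epsilon}=X_{v_\epsilon}^{\cald_\epsilon}+h_\epsilon\xi$ with $h_\epsilon=\eta(X_{v_\epsilon})\in C^\infty(M)$, and then the relation $X_l|_p=0$ forces $X_{v_1}^{\cald_1}|_p=X_{v_2}^{\cald_2}|_p=0$ and $c+h_1(p)+h_2(p)=0$, because $\xi_p,\cald_1|_p,\cald_2|_p$ are independent. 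Hence $w_\epsilon:=v_\epsilon-h_\epsilon(p)\,\xi\in\ga_\epsilon$ satisfies $X_{w_\epsilon}|_p=0$ and $w_1+w_2=l$; therefore $w_1,w_2\in\mathrm{Lie}(\mathrm{Stab}\,p)=\bbr l$, and as $w_1+w_2=l\neq 0$ at least one $w_\epsilon$ is a nonzero multiple of $l$, giving $l\in\ga_\epsilon$.

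Granting this, the facet normals split as $\{l_{1,i}\}_i\subset\ga_1$ and $\{l_{2,j}\}_j\subset\ga_2$. Since $\gb_2^\vee=\mathrm{Ann}(\ga_1)$ and $\gb_1^\vee=\mathrm{Ann}(\ga_2)$, restricting the defining inequalities $\langle\cdot,l_{\epsilon,\cdot}\rangle\ge 0$ of $C$ to the characteristic hyperplane $\{\langle\cdot,\xi\rangle=1\}$ turns the $l_{1,i}$ into affine functions of the $\gb_1^\vee$‑coordinate alone and the $l_{2,j}$ into affine functions of the $\gb_2^\vee$‑coordinate alone. Thus the characteristic labelled polytope at $\xi$ has the form $(P_\xi,\bu_\xi)=(Q_1\times Q_2,\{l_{1,i}\}_i\cup\{l_{2,j}\}_j)$ with $Q_\epsilon\subset\gb_\epsilon^\vee$ and the labels of $Q_\epsilon$ lying in $\gg_\epsilon$, i.e. a product labelled polytope in the sense of the paper. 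As $C$ is rational with respect to $\Lambda$ (\cite{Ler02}), $(P_\xi,\bu_\xi)$ is characteristic to a rational cone, so Proposition~\ref{prodpolyquasi} shows that $(P_\xi,\bu_\xi)$ is a rational labelled polytope. Its labels then span a lattice in $\gt_k/\bbr\xi$; since they already lie in the rank‑$k$ lattice $\Lambda$, this forces $\bbr\xi\cap\Lambda\neq\{0\}$, and hence $\xi$ is quasi‑regular by Proposition~\ref{bg00b} (here $k\ge 3>1$, so the remark on one‑dimensional tori does not intervene). If moreover $M$ is simply connected, Proposition~\ref{redjoin} then gives $\cals=M_1\star_{l_1,l_2}M_2$, a join of quasi‑regular compact Sasaki manifolds.

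I expect the main obstacle to be the facet claim, and in particular the clean statement that a generic point over a facet of $C$ has circle stabiliser generated by the facet normal; this is standard in the toric case and follows in the general case from Lerman's description of contact manifolds of Reeb type, but it is the one ingredient that genuinely needs the ``polyhedral'' input rather than purely formal manipulation of the splittings. It also quietly uses that $C$ is a full‑dimensional strictly convex polyhedral cone, so that its facet normals span $\gt_k$; this in turn rests on the Reeb‑type hypothesis (the existence of $\xi$ with $\eta(X_\xi)>0$) guaranteeing that the Sasaki cone $\gt^+_k$ is a nonempty open pointed cone.
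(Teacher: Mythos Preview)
Your argument is correct and reaches the same endpoint as the paper (product labelled characteristic polytope $\Rightarrow$ Proposition~\ref{prodpolyquasi} $\Rightarrow$ Proposition~\ref{bg00b} $\Rightarrow$ quasi-regularity, then Proposition~\ref{redjoin}), but the route to ``$P_\xi$ is a product'' is genuinely different. The paper never analyses stabilisers or facet normals: it works directly with the $\eta$-moment map, sets $\mu_o=\mu_\eta-\alpha$, restricts to the subalgebras $\gh_i=p^{-1}(\gg_i)$ to get maps $\mu_i:M\to\gg_i^*$, and observes that for $a\in\gg_1$, $b\in\gg_2$ the gradients of $\langle\mu_1,a\rangle$ and $\langle\mu_2,b\rangle$ lie in $\cald_1$ and $\cald_2$ respectively, hence are $g$-orthogonal. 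This immediately forces $\mathrm{im}(\mu_1,\mu_2)=P_1\times P_2$, and a two-line computation shows $\mu_o=(\mu_1,\mu_2)$ up to a constant, so $P_\xi$ is a product. By contrast you prove the product structure combinatorially, by showing each facet normal of $C$ lies in one of the $\ga_\epsilon$ and then reading off that the defining affine functions of $P_\xi$ separate variables.

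What each buys: the paper's orthogonality argument is shorter and uses only the definition of reducibility ($\cald_1\perp\cald_2$), with no appeal to orbit-type stratification. Your approach is more structural---it explains \emph{why} the polytope splits in terms of isotropy, and is closer in spirit to Lemma~\ref{lemmaSPLITTINGlabelling} later in the paper---but it leans on the stabiliser claim you flag. That claim is fine even in the non-toric case: for $p$ with $\mu_\eta(p)\in\mathrm{relint}\,F$ one has $X_l|_p=0$ since $\langle\Upsilon,l\rangle$ attains its minimum $0$ along the facet of $C$; and if $\dim\mathrm{Lie}(\mathrm{Stab}\,p)\ge 2$ then $\mu_\eta(p)\in\mathrm{Ann}(\mathrm{Lie}(\mathrm{Stab}\,p))$ lies in an affine subspace of dimension $\le k-3$, so the finitely many such strata cannot cover the $(k-2)$-dimensional facet. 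Either proof is acceptable; the paper's is the more economical one to present.
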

 
 \begin{proof} Denote as before $\bbt^k$, a maximal torus of the automorphisms of $\cals$ and $\gt_k$ its Lie algebra in which lies $\xi$. Let $\mu : M \rightarrow \gt^*_k$ be the usual $\eta$--momentum map defined by $\langle\mu, a\rangle = \eta(X_a)$. We will show that the image of $\mu$, say $P$, is a product polytope. We already know by Proposition~\ref{bg00b} that $P$ is a compact polytope, characteristic to a rational cone.

 Pick $\alpha\in \gt^*_k$ such that $\langle\alpha, \xi\rangle =1$ and put $\mu_o= \mu-\alpha$. Note that $\im \mu_o = P-\alpha$, in particular if $\im \mu_o$ is a product, $P$ is a product. Observe also that $\im \mu_o\subset \xi^0$, the annihilator of $\R\xi$ in $\gt^*_k$. In general if $H$ is a vector subspace of another $E$ and $H^0$ is the annihilator of $H$ in $E^*$ then $(E/H)^*=H^0$. Using this fact, on the decomposition of Lemma~\ref{lemQUOTsplit}, we get the identification 
 \begin{equation}\label{decompQUOT}\xi^0 =( \gt^k /\R\xi)^* = \gg_1^* \oplus\gg_2^*.
 \end{equation}

For $i=1,2$, denote $\gh_i =p^{-1}(\gg_i)$ so that $$\gh_1\cap \gh_2 =\R\xi,\; \gt_k = \gh_1+ \gh_2,$$ $\gg_i = \gh_i/\R\xi$ and $\gg_i^*$ is the annihilator of $\R\xi$ in $\gh^*_i$. In particular, $\gg^*_i\subset \gh^*_i$. Denote the inclusions $\iota_i : \gh_i\rar \gt_k$ and consider the momentum maps $\mu_i \rar \gh_i^*$ of the local action of $\gh_i$ defined by $$\mu_i:=\iota^*_i \circ \mu_o : M \rar \gh_i^*.$$ The image $P_i:=\im \mu_i$ is a compact polytope, say $P_i$, because\footnote{One can also prove that the image of $\mu_i$ is a compact polytope using the classical theory, see \cite{At82,Ler02a}.} it is the image of the compact polytope $P-\alpha$ by a linear map $\iota^*_i$.  Moreover, by construction, the polytope $P_i$ lies in the annihilator of $\R\xi$ in $\gh^*_i$, that is, in $\gg_i^*$. 

Up to an additive constant, $\mu_i$ coincides with the restriction of $\mu$ to $\gh_i$. Indeed, if $a_i\in\gh_i\subset \gt_k$, we have  $$\langle\mu, \iota_i(a_i)\rangle  + \langle\alpha, \iota_i(a_i)\rangle  = \langle\mu_i, a_i\rangle.$$ Using this, and the fact that $\cald_1$ and $\cald_2$ are $g$-orthogonal, we get that $\forall (a,b)\in \gg_1\times\gg_2$, the gradients of the functions $\langle \mu_1, a\rangle$ and $\langle \mu_2, b\rangle$ are $g$--orthogonal. In particular, the image of $(\mu_1,\mu_2) : M\rar \gg_1^* \oplus\gg_2^*$ is a product, namely $P_1\times P_2$.

Now, for any $a\in \gt_k$ we can write $a= a_1+a_2$ for $a_i\in\gh_i$. We have 
\begin{equation}\label{splitofMU}\begin{split}
d\langle \mu_o - (\mu_1,\mu_2), a\rangle &= d\langle\mu_o, a\rangle - d\langle\mu_1, a_1\rangle - d\langle\mu_1,a_2\rangle\\
&= d\eta(X_a,\cdot) - d\eta(X_{a_1},\cdot)- d\eta(X_{a_2},\cdot)\\
&= d\eta(X_{a-a_1-a_2},\cdot)\equiv 0
\end{split}\end{equation}
The decomposition $a= a_1+a_2$ is not unique but the values above are independant of the decomposition since $\langle\mu_o, \xi\rangle =0$, $\langle\mu_1, \xi\rangle =0$, $\langle\mu_2, \xi\rangle =0$ and $d\eta(\xi,\cdot)=0$. From~\eqref{splitofMU}, we get that up to an additive constant $\mu_o= (\mu_1,\mu_2)$, where the equality makes sense thanks to the decomposition~\eqref{decompQUOT}. It follows that $\im \mu_o$ is a product polytope and thus $\im \mu = \im \mu_o +\alpha$ is a product polytope characteristic to a rational cone. By Proposition~\ref{prodpolyquasi} both polytopes $P_i$ are rational; hence, the product is rational as well and, using Proposition~\ref{bg00b}, we get that $\xi$ is quasi-regular. Thus, when $M$ is simply connected Proposition \ref{redjoin} implies that $M$ is decomposable. 
\end{proof}

Theorem \ref{theoREDimpliesREG} is an immediate consequence of Proposition \ref{propREDimpliesREG}.

\subsection{Molino Theory and Sasaki Geometry}
In this subsection we briefly review some important invariants of Riemannian foliations due to Molino \cite{Mol79,Mol81,Mol82,Mol84,Mol88} and apply them to Sasakian structures. Recall that a transverse oriented Riemannian structure is a reduction of the transverse frame group to the special orthogonal group. It is well known that the characteristic foliation $\calf_\xi$ of a compact connected Sasaki manifold $M$ is a one dimensional Riemannian foliation, that is, a Riemannian flow. Actually in this case the transverse geometry is K\"ahler. Moreover, as seen from Proposition \ref{riemfol} the foliations $\cale_i$ are also transversely K\"ahlerian, hence, transversely Riemannian. Molino describes two invariants associated to a Riemannian foliation $(M,F)$. The first is called the {\it structural Lie algebra} \cite{Mol81} and denoted $\gg(M,F)$. Its definition requires a transverse parallelism. It is well known that frame bundles are parallelizable, so we can lift a Riemannian foliation $F$ to a foliation $F^1$ on the transverse orthonormal frame bundle\footnote{In the case that $(M,F)$ has a transverse orientation (which happens in our case), we need to choose a connected component of $E^1_T$.} $E^1_T$ of $(M,F)$. Now the closure of the leaves of the lifted foliation $(E^1_T,F^1)$ are the fibers of a locally trivial fibration $\pi^1_T:E^1_T\ra{1.5} W^1_T$, called the {\it basic fibration}, such that $F^1$ induces a foliation on the fibers $N\approx(\pi^1_T)^{-1}(w)$ with $w\in W^1_T$ whose leaves are spanned by a finite dimensional Lie algebra $\gg(E^1_T,F^1)$. Note that the leaves of this induced foliation are dense in $(\pi^1_T)^{-1}(w)$ and the only basic functions of this foliation are the constants. Moreover, Molino shows that $\gg(E^1_T,F^1)$ is independent of the transverse Riemannian structure, so it only depends on the original foliation $(M,F)$. Hence, we define $\gg(M,F)=\gg(E^1_T,F^1)$ and note that it depends only on the foliation $(M,F)$ independent of the transverse Riemannian metric.
 Applying this to the characteristic foliation $\calf_\xi$ of a Sasakian structure, we see that it only depends on the space $\mathcal{S}(\calf_\xi)$ of Sasakian structures, not on a particular Sasakian structure $\cals\in\mathcal{S}(\calf_\xi)$. In particular, $\gg(M,\calf_\xi)$ is independent of the transverse Riemannian metric and the transverse holomorphic structure. Now the closure $\overline{\calf_\xi}$ of leaves of $\calf_\xi$ is a singular Riemannian foliation whose leaves are tori $T^k$ with $1\leq k\leq n+1$ where the dimension of $M$ is $2n+1$. For the rest of this section $T^k$ is the torus generated by the Reeb vector field, and not necessarily a maximal torus in $\gA\gu\gt(\cals)$ as previously.

\begin{lemma}\label{xiLie}
The structural Lie algebra $\gg(M,\calf_\xi)$ of the characteristic foliation $\calf_\xi$ of a Sasaki manifold $M$ is an Abelian Lie algebra of dimension $k-1$ where $k$ is the dimension of the closure of a generic Reeb orbit.
\end{lemma}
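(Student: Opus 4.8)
The plan is to compute $\gg(M,\calf_\xi)$ locally on a single generic leaf closure, where the foliation becomes completely explicit. Since $\xi$ is a Killing vector field of constant length one on the compact manifold $M$, its flow $\{\varphi_t\}$ is a one--parameter subgroup of the compact Lie group $\mathrm{Isom}(M,g)$, and $T^k$ --- the torus generated by $\xi$ --- is by definition its closure there; it acts effectively on $M$ by isometries with the leaf closures of $\calf_\xi$ as orbits. For $x$ in a principal orbit one has, as recalled just above, a leaf closure $\bar L = T^k\cdot x\cong T^k$ of dimension $k$ equal to that of the closure of a generic Reeb orbit, and on it the restricted foliation $\calf_\xi|_{\bar L}$ is the orbit foliation of the one--parameter subgroup $\{\varphi_t\}\subset T^k$, i.e. a linear (Kronecker) flow on the torus $T^k$ whose leaves are dense by the very definition of leaf closure.

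I would then invoke Molino's structure theory for Riemannian foliations \cite{Mol81,Mol88}: the structural Lie algebra is intrinsic to the foliation, and for every leaf closure $\bar L$ the closure foliation equips $(\bar L,\calf_\xi|_{\bar L})$ with the structure of a Lie $\gg(M,\calf_\xi)$--foliation with dense leaves, so that $\dim\bar L=1+\dim\gg(M,\calf_\xi)$. Applied to a generic leaf closure $\bar L\cong T^k$ this already yields $\dim\gg(M,\calf_\xi)=k-1$.

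It remains to see that $\gg(M,\calf_\xi)$ is abelian, which I would check by computing the structural algebra of the linear flow on $T^k$ directly. Writing $T^k=\RR^k/\ZZ^k$ and letting $v\in\RR^k$ generate $\{\varphi_t\}$, the lift of $\calf_\xi|_{\bar L}$ to $\RR^k$ is the foliation by translates of the line $\RR v$; projecting $\RR^k\to W$ along $\RR v$ onto a complementary $(k-1)$--plane $W$ exhibits $\calf_\xi|_{\bar L}$ as a Lie foliation modeled on the abelian Lie group $W\cong\RR^{k-1}$, with holonomy homomorphism $\ZZ^k\to W$ the restriction of that projection, which has dense image precisely because the flow is dense in $\bar L$. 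Since the structural algebra of a Lie $G$--foliation with dense holonomy is the Lie algebra of $G$, we conclude $\gg(M,\calf_\xi)\cong\RR^{k-1}$, abelian of dimension $k-1$.

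The point needing the most care is the identification of Molino's frame--bundle structural algebra $\gg(M,\calf_\xi)=\gg(E^1_T,F^1)$ with the structural algebra of the Lie foliation induced on a generic leaf closure of $\calf_\xi$ \emph{in $M$} itself; this uses that the Reeb flow is isometric, so that the leaf closures of the lifted foliation $F^1$ on $E^1_T$ project submersively onto those of $\calf_\xi$ and carry the same Lie--foliation model. Once this is granted the torus model above supplies both the dimension count and the vanishing of the bracket; alternatively one may simply cite that an isometric Riemannian flow has abelian structural algebra.
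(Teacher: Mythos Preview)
Your proposal is correct and follows essentially the same route as the paper: both reduce the computation to the induced Lie foliation on a generic leaf closure $\bar L\cong T^k$, where the dense linear (Kronecker) flow has abelian transverse model $\RR^{k-1}$, and both invoke Molino's identification of the structural algebra with the transverse foliate vector fields on such a closure. Your version is in fact more explicit than the paper's---you spell out the Fedida-style description of the Lie foliation on $T^k$ and flag the frame-bundle-versus-base identification that the paper leaves implicit---but the underlying argument is the same.
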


\begin{proof}
As discussed by Molino and mentioned above, by passing to a component of the transverse orthonormal frame bundle of $\calf_\xi$ we can assume the foliation is transversely parallelizable. Then by Lemma 4.2 of \cite{Mol88} the induced foliation $(N,\calf_\xi)$ is transversely parallelizable. Now the fibers $N$ of the basic fibration are the leaves of $\overline{\calf_\xi}$, that is, the tori $T^k$, and since a Reeb orbit is dense in $T^k$ the structure Lie algebra $\gg(M,\calf_\xi)$, as Molino shows, is precisely the transverse foliate vector fields which is, in this case, Abelian of dimension $k-1$.
\end{proof}

\begin{rem}\label{Molrem}
Molino describes another invariant which is called the `faisceau transverse central' in \cite{Mol79,Mol82} and the `commuting sheaf' in \cite{Mol88} and denoted by $C(M,F)$. It is a locally constant sheaf of local transverse Killing vector fields which is universal in the sense that it is independent of the transverse Riemannian metric, depending only on the foliation $F$. However, in the case of the characteristic foliation $\calf_\xi$ of a Sasaki manifold $M$, the leaves are geodesics, and it follows from \cite{MoSe85} that $C(M,\calf_\xi)$ is the constant sheaf. In this case elements of $C(M,\calf_\xi)$ are central elements of the Lie algebra of transverse foliate vector fields on $(M,\calf_\xi)$. So, as Molino shows, $C(M,\calf_\xi)$ actually coincides with the structural Lie algebra $\gg(M,\calf_\xi)$. Thus, we can represent elements of $\gg(M,\calf_\xi)$ by transverse Killing vector fields.
\end{rem}

For a reducible Sasakian structure we consider the lattice of vector bundles $P_2=(TM,\cale_1,\cale_2,L_\xi)$ with multifoliate structure $\calf_2$. We are interested in invariants of this multifoliate structure. First we consider the Lie algebras $\gg(M,\cale_i)$ which are invariants of the foliations $\calf_{\cale_i}$. We also have the locally constant Lie algebra sheaves $C(M,\cale_i)$. When $C(M,\cale_i)$ is a constant sheaf it is Abelian and coincides with the structural Lie algebra $\gg(M,\cale_i)$. We have

\begin{prop}\label{multifolinv}
Let $\cals=(\xi,\eta,\Phi,g)$ be a reducible Sasakian structure on the compact manifold $M$ with multifoliate structure $\calf_2$ defined by the subbundles $(\cale_1,\cale_2,L_\xi)$ of $TM$. Then the triple of Abelian Lie algebras $\bigl(\gg(M,\cale_1), \gg(M,\cale_2),\gg(M,\calf_\xi)\bigr)$ satisfying 
$$\gg(M,\calf_\xi)=\gg(M,\cale_1)\oplus \gg(M,\cale_2)$$
is an invariant depending only on the multifoliate structure $\calf_2$. Moreover, $\gg(M,\cale_i)\subset \gg_{i+1}$ where $\gg_i$ are the Abelian Lie algebras of Equation \eqref{tksplit} and ${i+1}$ is understood to be taken mod~2.
\end{prop}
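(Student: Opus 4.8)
The plan is to realize each of the three structural Lie algebras by genuine transverse Killing vector fields on $M$ and then to read off the decomposition from the transverse K\"ahler \emph{product} structure provided by Corollary~\ref{prodKah}. The ``invariance'' half is the easy one: by definition $\calf_2$ is the triple of Riemannian foliations $\calf_{\cale_1},\calf_{\cale_2},\calf_\xi$, each of which is transversely K\"ahler (for the $\cale_i$ by Proposition~\ref{riemfol}), and Molino's theory gives that $\gg(M,F)$ depends only on the foliation $F$, not on the transverse metric; hence the triple $\bigl(\gg(M,\cale_1),\gg(M,\cale_2),\gg(M,\calf_\xi)\bigr)$ is determined by $\calf_2$. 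So everything reduces to the identity $\gg(M,\calf_\xi)=\gg(M,\cale_1)\oplus\gg(M,\cale_2)$ together with $\gg(M,\cale_i)\subset\gg_{i+1}$.

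First I would fix the representation of $\gg(M,\calf_\xi)$ coming from Remark~\ref{Molrem}: $C(M,\calf_\xi)$ is a constant sheaf, it coincides with $\gg(M,\calf_\xi)$, and its sections are globally defined transverse Killing vector fields for $\calf_\xi$; by Lemma~\ref{xiLie} these, together with $\xi$, span the Lie algebra of the torus $\overline{S^1_\xi}$, which sits inside $\gA\gu\gt(\cals)$ and hence inside a maximal torus $\bbt^k$. Thus $\gg(M,\calf_\xi)$ is identified with a subspace of $\gt_k/\R\xi=\gg_1\oplus\gg_2$. Next I would prove the analogous statement for $\cale_i$: because $L_\xi\subset\cale_i$, the characteristic foliation $\calf_\xi$ refines $\cale_i$, and Molino's construction applied to this pair of nested foliations (using that the leaves of $\cale_i$ are totally geodesic, Proposition~\ref{immsub}) yields an embedding $\gg(M,\cale_i)\hookrightarrow\gg(M,\calf_\xi)$; in particular each $\gg(M,\cale_i)$ is abelian and is represented by global transverse Killing vector fields which, being pieces of the generators of $\overline{S^1_\xi}$, actually preserve the whole Sasakian structure $\cals$.

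With these representations in place the decomposition is essentially forced by Corollary~\ref{prodKah}. The transverse metric of $\calf_\xi$ is the Riemannian product $g^T=g_1^T+g_2^T$ on $\cald=\cald_1\oplus\cald_2$, so each transverse Killing field $K$ for $\calf_\xi$ splits uniquely as $K=K^{(1)}+K^{(2)}$ with $K^{(i)}$ a Killing field of $g_i^T$ along $\cald_i$ (Killing fields of a Riemannian product split). Since $g_i^T$ is precisely the transverse metric of $\cale_{i+1}$ (recall $\cale_{i+1}^\perp=\cald_i$), $K^{(i)}$ is a central transverse Killing field for $\cale_{i+1}$, i.e.\ $K^{(1)}\in\gg(M,\cale_2)$ and $K^{(2)}\in\gg(M,\cale_1)$; conversely any transverse Killing field for $\cale_i$, extended by zero along $\cald_i$, is a transverse Killing field for $\calf_\xi$, so $\gg(M,\cale_i)\subset\gg(M,\calf_\xi)$. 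This proves $\gg(M,\calf_\xi)=\gg(M,\cale_1)+\gg(M,\cale_2)$. Finally, by the second paragraph a transverse Killing field for $\cale_i$ lies along $\cald_{i+1}$ and preserves $\cals$, hence belongs to the maximal torus $\bbt^k$; by the orthogonal splittings \eqref{autDsplit}--\eqref{abelsplit} of $\ga\gu\gt(\cals)_\cald$ its class then lies in $\ga\gb_{\cald_{i+1}}\cong\gg_{i+1}$, giving $\gg(M,\cale_i)\subset\gg_{i+1}$; since $\gg_1\cap\gg_2=0$ the sum is direct. I expect the real obstacle to be the second paragraph --- the $\cale_i$--analogue of Remark~\ref{Molrem}, namely that $C(M,\cale_i)$ is constant and that $\gg(M,\cale_i)$ embeds compatibly in $\gg(M,\calf_\xi)$ --- since the leaves of $\cale_i$ are higher dimensional and the Molino--Sergiescu argument used for $\calf_\xi$ does not apply verbatim; one must instead exploit the total geodesy of the $\cale_i$--leaves together with the fact that $\calf_\xi$ refines $\cale_i$. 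Once that step is secured, the transverse product splitting of Killing fields and the bookkeeping with \eqref{autDsplit}--\eqref{abelsplit} finish the argument.
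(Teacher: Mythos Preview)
Your proposal is correct and follows essentially the same line as the paper's proof: both realize the structural Lie algebras by transverse Killing fields, use the transverse product $g^T=g^T_1+g^T_2$ to split them, and conclude $\gg(M,\cale_i)\subset\gg_{i+1}$ via the decompositions \eqref{autDsplit}--\eqref{tksplit}. The paper carries out the key inclusion $\gg(M,\cale_i)\subset\gg(M,\calf_\xi)$ by a direct computation in local multifoliate coordinates (showing that a transverse Killing field for $\cale_1$ is a section of $\cald_2$ and hence Killing for $g^T$) rather than by an abstract nested-foliation argument, and---regarding your flagged obstacle---it does not prove that $C(M,\cale_i)$ is constant but simply works with its global (constant) sections, which it identifies with $\gg(M,\cale_i)$.
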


\begin{proof}
The invariants of the multifoliate structure $\calf_2$ are the invariants of the three foliations $\calf_\xi,\calf_{\cale_1},\calf_{\cale_2}$ together with any relations among them. We compute these structural Lie algebras $\gg(M,\cale_i)$ by considering their lifts to $E^1_T$ of the singular foliations $\overline{\calf_{\cale_i}}$ on $M$ defined by the leaf closures of $\cale_i$. We denote the lifted foliation to $E^1_T$ by $\calf^1_{\cale_i}$. The leaves of $\calf^1_{\cale_i}$ are the fibers $N$ of the basic fibration and the foliation on $N$ induced by $\calf^1_{\cale_i}$ is spanned by the structural Lie algebra $\gg(M,\cale_i)$. Now following Molino we know (cf. Theorem 5.2 in \cite{Mol88} and its proof) that the closures of the leaves of $\calf_{\cale_i}$ are the orbits of the locally constant sheaf $C(M,\cale_i)$. The subsheaf of constant elements of $C(M,\cale_i)$  coincide with $\gg(M,\cale_i)$ and consists of transversal Killing vector fields which lie in the center of the Lie algebra of transverse 
foliate vector fields. It follows that $\gg(M,\cale_i)$ are Abelian Lie algebras. 

Now the transverse metric of $\cale_1$ is $g^T_2$ that of $\cale_2$ is $g^T_1$. Consider $C(M,\cale_1)$ and recall that the elements of $C(M,\cale_1)$ are the transverse Killing fields of every transverse metric $g^T_2$ of $\cale_1$. By reducibility we have $g^T=g^T_1\oplus g^T_2$ where $g^T$ is the transverse metric for $\calf_\xi$, and there exist local multifoliate coordinates $(x^1,\cdots,x^{2n_1},x^{2n_1+1},\cdots,x^{2n_1+2n_2},x^{2n+1})$ such that $g^T_1$ only depends on the coordinates $(x^1,\cdots,x^{2n_1})$ and $g^T_2$ on the coordinates $(x^{2n_1+1},\cdots,x^{2n_1+2n_2})$. Moreover, a Killing vector field that is transverse to the leaves of $\cale_1$ takes the form $\sum_{j=2n_1+1}^{2n_1+2n_2}X^j\partial_{x^j}$, that is, it is a section of $\cald_2$ and so if it is a Killing vector field for $g^T_2$ it is also a Killing vector field for $g^T$. Similarly, a transverse Killing vector field for $g^T_1$ is a section of $\cald_1$ and is a Killing vector field for $g^T$ as well. This shows that $\gg(M,\cale_i)\subset \gg(M,\calf_\xi)$ for $i=1,2$. Furthermore, any transverse Killing vector field for the transverse metric $g^T$ of the Sasakian structure $\cals$ splits according to Equation \eqref{autDsplit}. Since the Lie algebra $\gg(M,\calf_\xi)$ is Abelian it must lie in a maximal Abelian Lie algebra of $\ga\gu\gt_\cald(\cals)$ which splits according to Equation \eqref{tksplit}. Furthermore, since the elements of $\gg(M,\cale_i)$ are sections of $\cald_{i+1}$  we have $\gg(M,\cale_i)\subset \gg_{i+1}$. This proves the splitting and the proposition.
\end{proof}

\begin{cor}\label{auts1e2comp}
Let $\cals=(\xi,\eta,\Phi,g)$ be a reducible Sasakian structure on a compact manifold $M$ with foliations $\cale_1$ and $\cale_2$. Suppose further that $\ga\gu\gt(\cals_1)$ is one dimensional. Then the leaves of $\cale_2$ are all compact.
\end{cor}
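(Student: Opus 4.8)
The plan is to recast the conclusion as the vanishing of a single Molino invariant of the foliation $\cale_2$ and then rule that invariant out using the hypothesis on $\cals_1$. Concretely, I would prove that the structural Lie algebra $\gg(M,\cale_2)$ is trivial; by Molino theory this forces every leaf of $\cale_2$ to be closed, and a closed leaf in the compact manifold $M$ is compact, which is exactly the assertion.

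For the Molino step, pass to (a component of) the transverse orthonormal frame bundle $E^1_T$ of $(M,\cale_2)$, which is compact since $M$ is. The lifted foliation $\calf^1_{\cale_2}$ on $E^1_T$ is transversely parallelizable; the closures of its leaves are the fibres $N$ of the basic fibration $\pi^1_T:E^1_T\to W^1_T$, and $\calf^1_{\cale_2}$ restricts on each $N$ to a Lie foliation whose structural Lie algebra is $\gg(M,\cale_2)$ and whose codimension in $N$ equals $\dim\gg(M,\cale_2)$. If $\gg(M,\cale_2)=0$, then each $N$ is a single leaf of $\calf^1_{\cale_2}$, so the leaves of $\calf^1_{\cale_2}$ are closed, hence compact; their images under the proper projection $E^1_T\to M$ are precisely the leaves of $\cale_2$, which are therefore compact as well. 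So it suffices to prove $\gg(M,\cale_2)=0$.

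To get $\gg(M,\cale_2)=0$ I would combine Proposition~\ref{multifolinv} with the assumption that $\cals_1$ has a one-dimensional automorphism algebra. Proposition~\ref{multifolinv} gives $\gg(M,\cale_2)\subset\gg_1$, where $\gt_k/\R\xi=\gg_1\oplus\gg_2$ is the splitting~\eqref{tksplit}; hence it is enough to show $\gg_1=0$. Let $\ga_1:=p^{-1}(\gg_1)\subset\gt_k\subset\ga\gu\gt(\cals)$, with $p:\gt_k\to\gt_k/\R\xi$ the quotient map; by the corollary to Lemma~\ref{lemQUOTsplit} this is an Abelian subalgebra of dimension $\dim\gg_1+1$ all of whose elements are sections of $\cale_1$. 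Being tangent to the integrable distribution $\cale_1$, the vector fields in $\ga_1$ have flows that map each leaf $L_1$ of $\cale_1$ to itself, and since the maximal torus $\bbt^k$ preserves $\cals$, these flows restrict on each $L_1$ to automorphisms of the induced Sasakian structure $\cals_1$ (Proposition~\ref{immsub}); this defines a Lie algebra homomorphism $\ga_1\to\ga\gu\gt(\cals_1)$ by restriction to a leaf. For a generic leaf $L_1$ this homomorphism is injective: a nonzero element of $\ga_1$ vanishing identically on each leaf in a set of positive measure would vanish on an open subset of $M$, hence everywhere since it is a Killing field, contradicting effectiveness of the $\bbt^k$-action. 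Therefore $1=\dim\ga\gu\gt(\cals_1)\ge\dim\ga_1=\dim\gg_1+1$, forcing $\gg_1=0$ and thus $\gg(M,\cale_2)=0$.

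The step I expect to be the main obstacle is the injectivity, on a generic leaf, of the restriction map $\ga_1\to\ga\gu\gt(\cals_1)$: this requires controlling the interaction between the global torus $\bbt^k$ and the leaves of $\cale_1$, which in general are only immersed and need not be closed, and making precise what ``$\ga\gu\gt(\cals_1)$ is one-dimensional'' means uniformly across the generic leaves. By contrast, the Molino input of the second paragraph is standard, and the inclusion $\gg(M,\cale_2)\subset\gg_1$ is already supplied by Proposition~\ref{multifolinv}.
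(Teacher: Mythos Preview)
Your proposal is correct and follows essentially the same route as the paper: show $\gg_1=0$ from the one-dimensionality of $\ga\gu\gt(\cals_1)$, invoke Proposition~\ref{multifolinv} to get $\gg(M,\cale_2)=0$, and then appeal to Molino (Proposition~5.4 of \cite{Mol88}) to conclude that the leaves of $\cale_2$ are compact. The paper's own proof is two sentences and simply asserts the first implication without argument; your expansion of that step via the restriction map $\ga_1\to\ga\gu\gt(\cals_1)$ and the effectiveness of the $\bbt^k$-action is exactly the content the paper leaves implicit, and your concern about injectivity on a generic leaf is a genuine detail that the paper does not address explicitly either.
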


\begin{proof}
Since $\ga\gu\gt(\cals_1)$ is one dimensional, the subalgebra $\gg_1$ of Equation \eqref{tksplit} vanishes which by Proposition \ref{multifolinv} implies that $\gg(M,\cale_2)=0$. But this implies that the leaves of $\cale_2$ are compact (cf. Proposition 5.4 of \cite{Mol88}).
\end{proof}

Corollary \ref{auts1e2comp} allows us to handle the following cases:
\begin{prop}\label{poss1}
Let $\cals=(\xi,\eta,\Phi,g)$ be a reducible Sasakian structure on a compact manifold $M$ and suppose that $\ga\gu\gt(\cals_1)$ is one dimensional. Then $\cals$ is quasi-regular if either of the following two cases hold
\begin{enumerate}
\item The Sasakian structure $\cals_1$ has positive transverse Ricci curvature.
\item The transverse structure of $\cals_1$ is flat.
\end{enumerate}
\end{prop}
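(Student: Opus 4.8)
The plan is to reduce everything to showing that the leaves of $\cale_1$ are compact, and then to quote Lemma~\ref{leavescomp}. Since $\ga\gu\gt(\cals_1)$ is one-dimensional, Corollary~\ref{auts1e2comp} already gives that the leaves of $\cale_2$ are compact; hence, by item~(1) of Lemma~\ref{leavescomp}, it suffices to prove that the leaves of $\cale_1$ are compact as well. By Proposition~\ref{immsub} and Corollary~\ref{efolcomp}, every leaf $L_1$ of $\cale_1$ is a complete immersed Sasakian submanifold carrying the structure $\cals_1$, with the metric $g_1$ induced from $g$; so it is enough to show that a complete Sasakian manifold whose transverse K\"ahler metric $g^T_1$ has positive Ricci curvature (resp. is flat) is compact. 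The key point is that $g^T_1$ is a smooth metric on the subbundle $\cald_1$ over the \emph{compact} manifold $M$, so the curvature hypothesis on $\cals_1$ holds \emph{uniformly}.

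In Case~(1) this gives a uniform bound $\Ric_{g^T_1}\ge\delta\, g^T_1$ with $\delta>0$. A transverse homothety of $\cals_1$ rescales $g^T_1$ by a positive constant, leaves its Ricci tensor unchanged (Ricci is scale invariant), and replaces $g_1$ by a uniformly equivalent --- hence still complete --- metric; so after one such homothety we may assume $\Ric_{g^T_1}\ge 3\, g^T_1$. Using the standard Sasakian identities on $L_1$ (of dimension $2n_1+1$), namely $\Ric_{g_1}(\xi,\xi)=2n_1$, $\Ric_{g_1}(\xi,X)=0$ for $X$ a section of $\cald_1$, and $\Ric_{g_1}|_{\cald_1}=\Ric_{g^T_1}-2\,g_1|_{\cald_1}$, one gets $\Ric_{g_1}\ge c\, g_1$ for some $c>0$; since $(L_1,g_1)$ is complete, Bonnet--Myers forces $L_1$ to be compact (and compactness of $L_1$, being topological, is unaffected by the homothety). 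This settles item~(1).

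In Case~(2), transverse flatness of $\cals_1$ means that it has constant $\phi$-sectional curvature $-3$, so $L_1$ is a complete Sasakian space form of that type: its universal cover is the standard Heisenberg model $\mathbb{H}_{n_1}$, and $L_1=\mathbb{H}_{n_1}/\Gamma$ for a discrete group $\Gamma$ of Sasakian automorphisms acting freely and properly. A Bieberbach-type analysis of such quotients shows that if $L_1$ is non-compact then $\gA\gu\gt(\cals_1)$ has dimension at least two: beyond the central (``Reeb'') one-parameter group, further one-parameter families of automorphisms are produced by the directions in which $\Gamma$ fails to be cocompact, i.e. in which the complete flat transverse structure is non-compact. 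This contradicts the one-dimensionality of $\ga\gu\gt(\cals_1)$, so $L_1$ is compact, which settles item~(2). (Once $L_1$ is compact one can conclude either via Lemma~\ref{leavescomp}(2), or by noting that the closure of a generic Reeb orbit in the compact $L_1$ is a torus acting by automorphisms of $\cals_1$, hence one-dimensional, so that $\cals_1$ --- and therefore $\cals$ --- is quasi-regular.)

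The main obstacle is Case~(2): pinning down the correct normal form for complete transversely flat Sasakian manifolds and establishing the ``compact, or $\dim\gA\gu\gt\ge 2$'' dichotomy in this Heisenberg setting. Case~(1) is then a routine Bonnet--Myers argument, the only subtlety being to use compactness of $M$ to obtain a uniform transverse Ricci bound that a transverse homothety can amplify past the threshold $2\,g^T_1$.
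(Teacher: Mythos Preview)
Your overall strategy matches the paper's: use Corollary~\ref{auts1e2comp} to get compactness of the leaves of $\cale_2$, reduce to showing the leaves of $\cale_1$ are compact, and apply Lemma~\ref{leavescomp}. Case~(1) is also handled the same way, via Myers' theorem; the paper simply cites \cite{HeSu12b} for this step rather than spelling out the homothety and the Sasakian Ricci identities as you do.

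The difference is in Case~(2), and here the gap you flag is real. You pass to the Heisenberg model $\mathbb{H}_{n_1}$ and assert a Bieberbach-type dichotomy for discrete groups of Sasakian automorphisms, but establishing that a non-cocompact $\Gamma$ always admits a centralizing one-parameter subgroup beyond the Reeb direction in the Sasakian automorphism group of $\mathbb{H}_{n_1}$ is not immediate: the Heisenberg group is nilpotent rather than abelian, so translations in the ``non-cocompact directions'' need not commute with $\Gamma$ without further argument. The paper sidesteps this entirely by working one level down. It passes to the transverse quotient $\call/\calf_\xi$, which (by the argument in \cite{HeSu12b}) is Hausdorff and a quotient of $\bbc^{n_1}$ by K\"ahler isometries; if the leaf $\call$ is noncompact then so is $\call/\calf_\xi$ (Lemma~2.3 of \cite{HeSu12b}), and a noncompact complete flat K\"ahler quotient carries a translation $z\mapsto z+c$ in its noncompact factor, which is Hamiltonian. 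The key step is then that Hamiltonian holomorphic Killing fields on the transverse space lift to elements of $\ga\gu\gt(\cals_1)$ via Corollary~8.1.9 of \cite{BG05}, producing a second independent Sasakian automorphism and the desired contradiction. This transverse-plus-lifting route is shorter and replaces the Heisenberg Bieberbach analysis you identified as the main obstacle with a clean lifting criterion that is already available.
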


\begin{proof}
If the metric $g^T_1$ that is transverse to the characteristic foliation $\calf_\xi$ of $\cals_1$ has positive Ricci curvature, the leaves of $\cale_1$ are compact by Myers Theorem as pointed out in \cite{HeSu12b}. But by Corollary \ref{auts1e2comp} the leaves of $\cale_2$ are also compact, so the result follows from Lemma \ref{leavescomp}. This proves case (1).

For (2) suppose that the transverse K\"ahlerian structure to $\cals_1$ is flat and that there is a noncompact leaf $\call$ of $\cale_1$. Then the argument in \cite{HeSu12b} shows that $\call/\calf_\xi$ is Hausdorff and a quotient of $\bbc^{n_1}$ by a K\"ahler isometry. By Lemma 2.3 of \cite{HeSu12b} $\call/\calf_\xi$ is noncompact. Then as the argument in the proof of Theorem 2.1 in \cite{HeSu12b} the automorphism group of $\call/\calf_\xi$ contains an element of the form $z\mapsto z+c$ in its noncompact factor which is Hamiltonian. But by Corollary 8.1.9 of \cite{BG05} this would then lift to an element of $\ga\gu\gt(\cals_1)$ contradicting the one dimensionality of $\ga\gu\gt(\cals_1)$.
\end{proof}

\section{Cone reducibility}\label{coneredsect}

In this section we generalize the notion of reducibility to that where there is some Sasakian structure in the Sasaki cone that is reducible.

\begin{Def}\label{conered}
A Sasakian structure $\cals$ is called {\bf cone reducible (decomposable)} if there is a reducible (decomposable) Sasakian structure in its Sasaki cone $\gt^+(\cals)$. If there is no reducible (decomposable) element in $\gt^+(\cals)$ it is called {\bf cone irreducible (indecomposable)}. We also say that the underlying CR structure $(\cald,J)$ is {\bf cone reducible (decomposable)}, etc.
\end{Def}

By Proposition \ref{joinred} a cone decomposable Sasakian structure is cone reducible, and a cone irreducible Sasakian structure is cone indecomposable. Reducibility is a property of a ray of Sasakian structures, whereas, cone reducibility is a property of the family of Sasakian structures belonging to the same Sasaki cone. Of course, a reducible Sasakian structure is cone reducible and if a Sasaki cone is one dimensional, cone reducibility coincides with reducibility of the ray.  An example of a cone irreducible Sasakian structure with a large automorphism group, hence a large Sasaki cone, is the standard Sasakian structure on the sphere $S^{2n+1}$ which clearly is a toric contact structure of Reeb type.  

There are some easy topological consequences of cone reducibility. Since the second Betti number of any compact quasi-regular Sasaki manifold $M$ is one less than the second Betti number of its base orbifold we have

\begin{prop}\label{topconered}
If a compact manifold $M$ admits a cone decomposable Sasakian structure, then $b_2(M)\geq 1$.
\end{prop}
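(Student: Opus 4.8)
The plan is to reduce this to the structure of the join construction and its relation to the second Betti numbers of the factors. Suppose $M$ admits a cone decomposable Sasakian structure $\cals$. By Definition~\ref{conered}, there is a decomposable Sasakian structure $\cals'$ in the Sasaki cone $\gt^+(\cals)$; since deforming the Reeb vector field within the Sasaki cone does not change the underlying manifold $M$, it suffices to work with $\cals'$, so we may as well assume $\cals$ itself is decomposable. By Definition~\ref{redder}, $\cals$ is the $(l_1,l_2)$-join $M=M_1\star_{l_1,l_2}M_2$ of two quasi-regular Sasaki orbifolds $M_1,M_2$ of dimension $\geq 3$, with associated K\"ahler quotient orbifolds $\calz_1,\calz_2$.

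The key point is the observation recalled just before the statement: for a compact quasi-regular Sasaki manifold the second Betti number is one less than that of its base orbifold, i.e.\ $b_2(M)=b_2^{orb}(\calz)-1$ where $\calz=\calz_1\times\calz_2$ is the base K\"ahler orbifold of the join (this follows from the orbifold Gysin sequence for the $S^1$ orbibundle $M\to\calz$, whose Euler class $l_1\gro_1+l_2\gro_2$ is a nonzero element of $H^2_{orb}(\calz,\bbr)$). By the K\"unneth formula for orbifold cohomology, $b_2^{orb}(\calz_1\times\calz_2)=b_2^{orb}(\calz_1)+b_2^{orb}(\calz_2)+b_0^{orb}(\calz_1)b_0^{orb}(\calz_2)\geq 2$, because each $\calz_i$ is a compact connected K\"ahler orbifold and so has a nonzero K\"ahler class, giving $b_2^{orb}(\calz_i)\geq 1$. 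Hence $b_2(M)=b_2^{orb}(\calz)-1\geq 1$, as claimed.

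The only mild subtlety is making the appeal to ``$b_2(M)=b_2^{orb}(\calz)-1$'' precise when $\calz$ is an orbifold rather than a manifold; but this is standard and is exactly the statement invoked in the sentence preceding the proposition, and it follows from the rational orbifold Gysin sequence together with the fact that cup product with the (real, nonzero) Euler class is injective in the relevant degree. Alternatively, one can argue directly with the commutative diagram~\eqref{joincomdia}: $M$ fibers over $\calz_1\times\calz_2$ with fiber $S^1$, so its rational cohomology in degree two is controlled by $H^2(\calz_1\times\calz_2;\bbq)$ modulo the one-dimensional piece killed by the Euler class, and $H^2(\calz_1\times\calz_2;\bbq)$ has dimension at least two by K\"unneth. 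I expect no real obstacle here; the main point is simply to assemble the pieces correctly.
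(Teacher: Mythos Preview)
Your argument is correct and is exactly the approach the paper takes: the paper's ``proof'' is just the sentence preceding the proposition, and you have simply fleshed it out. One small slip: in your K\"unneth computation you wrote $b_0^{orb}(\calz_1)b_0^{orb}(\calz_2)$ where you meant $b_1^{orb}(\calz_1)b_1^{orb}(\calz_2)$; this is harmless since your conclusion only uses $b_2^{orb}(\calz_1)+b_2^{orb}(\calz_2)\geq 2$. Note also that for the bound $b_2(M)\geq b_2^{orb}(\calz)-1$ you only need the Gysin map $\pi^*:H^2_{orb}(\calz)\to H^2(M)$ to have one-dimensional kernel, which follows immediately from the Euler class being nonzero---so you do not need to worry about injectivity of $\cup e$ on $H^1$.
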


Dimension five is of particular interest. First, the following result which is a special case of Theorem \ref{theoSPLITTINGtoricSimplex} also follows from Lemmas 2.2 and 2.5 of \cite{BoPa10}:   

\begin{prop}\label{s2s3conered}
Every toric contact structure on an $S^3$-bundle over $S^2$ is cone decomposable. 
\end{prop}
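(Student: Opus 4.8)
The plan is to deduce this from Theorem~\ref{theoSPLITTINGtoricSimplex} and then to promote the reducibility it yields to decomposability, using that every $S^3$-bundle over $S^2$ is simply connected. The first step is to identify the moment cone. By the classification of compact toric contact $5$-manifolds of Reeb type (Lerman~\cite{Ler02}; see also Lemmas 2.2 and 2.5 of \cite{BoPa10}), a toric contact structure on an $S^3$-bundle over $S^2$ has as its moment cone a good cone $C\subset\gt_3^*\cong\RR^3$ over a convex quadrilateral. The only combinatorial remark then needed is that a convex quadrilateral is affinely a square, that is, the product $\Delta^1\times\Delta^1$ of two $1$-simplices, so $C$ has the combinatorial type of a product of simplices. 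Hence Theorem~\ref{theoSPLITTINGtoricSimplex} applies: $(M,\cald)$ is cone reducible, so there is a reducible Sasakian structure $\cals'$ in the relevant Sasaki cone.

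The second step is to see that $\cals'$ is in fact decomposable. The structure $\cals'$ lives on a $5$-manifold and retains the $\bbt^3$-action in its automorphism group, hence is toric; being also reducible, it is quasi-regular by Corollary~\ref{toricred} (equivalently, by Theorem~\ref{theoREDimpliesREG}). Since $M$ is simply connected, Proposition~\ref{redjoin} then identifies the quasi-regular reducible structure $\cals'$ with a join $M_1\star_{l_1,l_2}M_2$ of compact Sasaki manifolds. Thus the Sasaki cone of the original toric contact structure contains a decomposable element, which is exactly the assertion that $(M,\cald)$ is cone decomposable; and since the toric contact structure was arbitrary, every such one is cone decomposable.

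I expect the only genuine obstacle to be the input to Theorem~\ref{theoSPLITTINGtoricSimplex}, namely verifying that the moment cone of a toric contact structure on an $S^3$-bundle over $S^2$ is combinatorially a product of simplices; this rests on the classification of~\cite{Ler02,BoPa10} identifying these bundles with the four-edge (quadrilateral) case, together with the elementary fact that a quadrilateral is combinatorially a square. Everything else is assembly of results already available: Theorem~\ref{theoSPLITTINGtoricSimplex} does the substantive work of producing the transverse K\"ahler product splitting, quasi-regularity comes for free in the toric setting by Corollary~\ref{toricred}, and simple connectivity lets Proposition~\ref{redjoin} turn a reducible Sasakian structure into a genuine join. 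As an alternative one can sidestep Theorem~\ref{theoSPLITTINGtoricSimplex} and argue directly from Lemmas 2.2 and 2.5 of \cite{BoPa10}, which already exhibit these toric contact structures via an $S^3_{\bfw}$-join construction; the route above keeps everything inside the framework of the present paper.
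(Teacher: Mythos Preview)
Your argument is correct and follows essentially the paper's own route: the paper simply states that the proposition is a special case of Theorem~\ref{theoSPLITTINGtoricSimplex} (and alternatively follows from \cite{BoPa10}), and you have supplied the missing step of promoting ``cone reducible'' to ``cone decomposable'' via simple connectivity and Proposition~\ref{redjoin}, exactly as the paper indicates is possible in the remark following Theorem~\ref{s3riemcor}. One small wording slip: a general convex quadrilateral is \emph{not} affinely equivalent to a square (a trapezoid is not), only combinatorially so; since Theorem~\ref{theoSPLITTINGtoricSimplex} requires only the combinatorial type, your conclusion stands, but you should replace ``affinely'' with ``combinatorially'' there.
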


Furthermore, since the orbifold base of the reducible ray is a product of weighted projective $\bbc\bbp^1$s, the standard Sasakian structure on the join is extremal. In the next section we show that for toric contact structures of Reeb type the converse is also true. Here is an interesting example:

\begin{example}\label{coneredex}
In \cite{GMSW04a} the physicists introduced a sequence of Sasaki-Einstein manifolds $Y^{p,q}$ depending on a pair of relatively prime positive integers $p,q$ satisfying $1\leq q<p$ which are diffeomorphic to $S^2\times S^3$. Moreover, these structures are toric and their geometry was studied further in \cite{MaSp06}. The Sasaki cone for the case $Y^{2,1}$ has a regular Reeb vector field that fibres over $\bbc\bbp^2\#\overline{\bbc\bbp^2}$, that is $\bbc\bbp^2$ blown-up at a point. This is an irreducible Sasakian structure. However, as seen in \cite{BoPa10,BoTo14a} it is an element of the Sasaki cone of the join $S^3\star_{1,2}S^3_{3,1}$ where here $S^3_\bfw$ is the weighted 3-sphere \cite{BG05} with $\bfw=(3,1)$. Thus, $Y^{2,1}$ is cone decomposable. This is the only $Y^{p,q}$ with a regular Reeb vector field in its Sasaki cone.  
\end{example}

Low dimensions put a constraint on decomposability. For example, in dimension five we have
\begin{prop}\label{irred5man}
Let $M$ be a 5-dimensional compact simply connected Sasaki manifold with $H^2(M,\bbq)\geq 2$. Then $M$ is necessarily cone indecomposable.
\end{prop}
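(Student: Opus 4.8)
The plan is to argue by contradiction: I will show that a simply connected, $5$--dimensional, cone decomposable Sasaki manifold must satisfy $b_2(M)=1$. So suppose $M$ is cone decomposable. By Definition~\ref{conered} there is a decomposable Sasakian structure $\cals'$ in the Sasaki cone $\gt^+(\cals)$, and since Definition~\ref{redder} of decomposability is phrased for quasi-regular structures, $\cals'$ is an $(l_1,l_2)$--join $\cals_1\star_{l_1,l_2}\cals_2$ of two compact quasi-regular Sasaki orbifolds $M_1,M_2$ of dimension $\geq 3$. Since all Sasakian structures in one Sasaki cone live on the same underlying manifold, $M$ is identified with the total space of $M_1\star_{l_1,l_2}M_2$. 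From $\dim(M_1\star_{l_1,l_2}M_2)=\dim M_1+\dim M_2-1=5$ and $\dim M_i\geq 3$ we get $\dim M_1=\dim M_2=3$, so each $\calz_i:=M_i/S^1_{\xi_i}$ is a compact complex curve orbifold, i.e.\ a closed oriented $2$-orbifold (with cone singularities, no reflectors) carrying a Kähler class $[\gro_i]$.

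Next I would use simple connectivity through the commutative diagram~\eqref{joincomdia}, which realizes $M$ as an $S^1$ orbibundle $\pi:M\to\calz_1\times\calz_2$ with $d\eta=\pi^*(l_1\gro_1+l_2\gro_2)$. The orbifold homotopy exact sequence of this Seifert bundle produces a surjection $\pi_1(M)\twoheadrightarrow\pi_1^{orb}(\calz_1\times\calz_2)\cong\pi_1^{orb}(\calz_1)\times\pi_1^{orb}(\calz_2)$, so $\pi_1(M)=\{1\}$ forces $\pi_1^{orb}(\calz_i)=\{1\}$ for $i=1,2$. For a closed $2$-orbifold this is strong: the canonical surjection $\pi_1^{orb}(\calz_i)\twoheadrightarrow\pi_1(|\calz_i|)$ onto the fundamental group of the underlying surface forces $|\calz_i|\cong S^2$, hence $H^\bullet(\calz_i;\QQ)\cong H^\bullet(S^2;\QQ)$; in particular $b_1(\calz_i)=0$.

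Then I would compute $b_2(M)$. By Künneth, $H^1(\calz_1\times\calz_2;\QQ)=0$ and $H^2(\calz_1\times\calz_2;\QQ)\cong\QQ^2$. The Euler class $e=l_1[\gro_1]+l_2[\gro_2]$ is nonzero: $e^2=2l_1l_2\,[\gro_1]\smile[\gro_2]\neq 0$, since $[\gro_i]^2=0$ (each $[\gro_i]$ is pulled back from a curve) while $[\gro_1]\smile[\gro_2]$ generates $H^4$. The Gysin sequence of the Seifert $S^1$-bundle (which in rational cohomology behaves like that of an ordinary circle bundle, orbifold rational cohomology agreeing with the rational cohomology of the underlying space) gives
\begin{equation*}
H^0(\calz_1\times\calz_2;\QQ)\xrightarrow{\ \smile e\ }H^2(\calz_1\times\calz_2;\QQ)\xrightarrow{\ \pi^*\ }H^2(M;\QQ)\longrightarrow H^1(\calz_1\times\calz_2;\QQ)=0,
\end{equation*}
and since $\smile e$ is injective we obtain $\dim_\QQ H^2(M;\QQ)=2-1=1$. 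This contradicts $\dim_\QQ H^2(M;\QQ)\geq 2$, so $M$ is cone indecomposable.

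The only genuinely delicate ingredients are the two orbifold facts invoked: exactness and product-compatibility of the orbifold homotopy sequence of a Seifert $S^1$-bundle (so that $\pi_1^{orb}$ of a product orbifold splits), and the existence of a rational Gysin sequence for such bundles with Euler class the rational Kähler class; both are standard in Sasakian geometry (cf.\ \cite{BG05}). If one prefers to bypass the Gysin sequence entirely, the same conclusion follows from the bookkeeping identity $b_2(M_1\star_{l_1,l_2}M_2)=b_2(\calz_1)+b_2(\calz_2)+b_1(\calz_1)b_1(\calz_2)-1$, which shows that $b_2(M)\geq 2$ would force $b_1(\calz_i)\geq 1$ for both $i$, i.e.\ $|\calz_i|$ of genus $\geq 1$, again contradicting $\pi_1^{orb}(\calz_i)=\{1\}$.
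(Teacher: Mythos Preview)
Your proof is correct and follows essentially the same approach as the paper: argue by contradiction, observe that decomposability in dimension five forces the base to be a product of one-dimensional complex orbifolds with trivial orbifold fundamental group, then use K\"unneth and the circle-bundle (Gysin) relation to conclude $b_2(M)=1$. Your write-up is simply more explicit than the paper's, spelling out the dimension count, why $\pi_1^{orb}(\calz_i)=\{1\}$ forces $|\calz_i|\cong S^2$, and the Gysin-sequence bookkeeping that the paper compresses into the single clause ``which implies $H_2(M^5,\bbq)\approx\bbq$.''
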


\begin{proof}
Suppose to the contrary that there is a Sasakian structure in the Sasaki cone of $M$ that is decomposable. Then it is necessarily quasi-regular and an $S^1$ orbibundle over a product of one dimensional algebraic orbifolds $\calo_1\times \calo_2$. But since $M$ is simply connected, $\pi_1^{orb}(\calo_1\times \calo_2)=\{id\}$. So by Kunneth $H^2(\calo_1\times \calo_2,\bbq)\approx \bbq^2$ which implies $H_2(M^5,\bbq)\approx \bbq$ which is a contradiction.
\end{proof}

\subsection{$S^3$-bundles over Compact Hodge Manifolds}
We now want to consider contact manifolds $M$ that are $S^3$-bundles over a compact smooth projective algebraic variety $N$. A choice of integral K\"ahler form $\gro_N$ on $N$ is then called a {\it Hodge manifold}. Let $\cald$ be a contact structure on $M$ and let $\gC\go\gn(M,\cald)$ denote the group of contactomorphisms. If $G$ is a Lie subgroup of $\gC\go\gn(M,\cald)$, we say \cite{BG05} that an action $\cala:G\ra{1.6} \gC\go\gn(M,\cald)$ of $G$ is of {\it Reeb type} if there is a contact 1-form $\eta$ such that $\cald=\ker\eta$ and an element $\varsigma$ in the Lie algebra $\gg$ of $G$ such that the corresponding vector field $X^\varsigma$ is the Reeb vector field of $\eta$. Hereafter, we often identify such a vector field with the corresponding element of the Lie algebra. We begin the proof of Theorem \ref{s3riemcor} with several lemmas.

Now as usual we let $\calo (\calo^*)$ denote the sheaf of germs of holomorphic functions (no where vanishing holomorphic functions) on $N$, then the short exact exponential sequence gives the exact cohomology sequence
$$0\ra{2.0} H^1(N,\calo^*)/H^1(N,\calo)\fract{c_1}{\ra{2.0}} H^2(N,\bbz)\ra{2.0}H^2(N,\calo)\ra{2.0}\cdots.$$
The group $H^1(N,\calo^*)$ is also written as ${\rm Pic}(N)$ and is called the {\it Picard group} of holomorphic line bundles on $N$.
The image of $c_1$ in $H^2(N,\bbz)$ is called the {\it Neron-Severi group} $NS(N)$ and its rank is called the {\it Picard number} $\grr(N)$ of $N$. It follows from the Lefschetz theorem on $(1,1)$ classes that any integral K\"ahler class lies in $NS(N)$. Thus, for any integral K\"ahler form $\gro_N$ there exists a complex line bundle $L_1\in {\rm Pic}(N)$ such that $c_1(L_1)=[\gro_N]$. The kernel of $c_1$ is the Picard variety ${\rm Pic}^0(N)$ which is a complex torus of real dimension $b_1(N)$.

There is another complex torus associated to $N$, namely the {\it Albanese variety} of $N$ defined by
$$A(N)=H^0(N,\grO^1)^*/H_1(N,\bbz).$$ 
It is the dual torus to ${\rm Pic}^0(N)$. Moreover, there is a holomorphic map $A(N)\ra{1.6} N$ which induces an isomorphism 
$$H^0(A(N),\grO^1_{A(N)})\approx H^0(N,\grO^1_N).$$

\begin{lemma}\label{Alblem}
If $N=N'\times A(N)$ where $N'$ is a compact connected Hodge manifold with finite automorphism group and $M$ is an $S^3$-bundle over $N$ with an effective $\bbt^2$ action of Reeb type, then $\bbt^2$ acts trivially on $N$.
\end{lemma}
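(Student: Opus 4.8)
The plan is to reduce to a $\bbt^2$--invariant contact form, show that the $\bbt^2$--action descends to $N$ (as an action by biholomorphic isometries of the Hodge structure), and then rule out any nontrivial such action; the descent is the main obstacle.

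\textbf{Step 1 (Invariant contact form).} Since $\bbt^2$ is compact and acts of Reeb type, averaging a defining contact form over $\bbt^2$ produces a $\bbt^2$--invariant $\eta$ with $\cald=\ker\eta$ whose Reeb field is the fundamental vector field $X_\varsigma$ of a fixed $\varsigma\in\gt_2$; averaging a metric gives a $\bbt^2$--invariant Riemannian metric. Then every fundamental vector field $X_b$, $b\in\gt_2$, is a contact vector field for $\eta$, the assignment $b\mapsto\pi_*X_b$ (with $\pi:M\to N$ the bundle projection) is linear in $b$ at each point, and the Reeb cone $\gt^+_2$ is a nonempty open cone, hence spans $\gt_2$. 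So it suffices to prove $\pi_*X_b=0$ for $b\in\gt^+_2$: by linearity this forces $\pi_*X_b\equiv 0$ for every $b\in\gt_2$, so the flows of the fundamental fields preserve each fiber of $\pi$ and $\bbt^2$ covers $\mathrm{id}_N$.

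\textbf{Step 2 (Descent --- the main obstacle).} One must show each $a_t$ maps $\pi$--fibers to $\pi$--fibers, so that the action descends to $N$. As $\bbt^2$ is connected this concerns $\gD\gi\gf\gf^0(M)$ only; the ingredients are that the fibers $S^3$ are simply connected with $\pi_*$ inducing an isomorphism $\pi_1(M)\cong\pi_1(N)$, that $N=N'\times A(N)$ is projective with $N'$ of finite automorphism group, that the $\bbt^2$--orbits are compact tori of dimension $\le 2$, and that $\bbt^2$ acts through contactomorphisms of $(M,\eta)$; these together constrain the images $\pi(\bbt^2\!\cdot p)\subset N$ enough to force fibers into fibers. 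Once the action descends, it preserves the complex structure of $N$ (this compatibility with the bundle structure is exactly where the hypotheses on $N$ enter), so it factors through $\gA\gu\gt^0(N)$; by the product formula for holomorphic vector fields and finiteness of $\gA\gu\gt(N')$ we get $\gA\gu\gt^0(N)=\gA\gu\gt^0(N')\times\gA\gu\gt^0(A(N))=\{\mathrm{id}\}\times A(N)$, so the descended $\bbt^2$--action is through a homomorphism $\bbt^2\to A(N)$ by translations of the Albanese factor.

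\textbf{Step 3 (Triviality).} Let $\alpha_N:N\to A(N)$ be the Albanese map, $\theta_1,\dots,\theta_m$ a basis of the translation--invariant $1$--forms on $A(N)$, and $\beta_i=(\alpha_N\circ\pi)^*\theta_i$ the corresponding closed $1$--forms on $M$. Because the descended action consists of translations of $A(N)$, each $\beta_i$ is $\bbt^2$--invariant, so for $b\in\gt^+_2$ the function $\iota_{X_b}\beta_i$ is constant ($\beta_i$ closed gives $\pounds_{X_b}\beta_i=d\,\iota_{X_b}\beta_i=0$). Write $\dim M=2n+1$ and $\eta_b=\eta/\eta(X_b)$, a $\bbt^2$--invariant contact form with Reeb field $X_b$. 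Decomposing $\beta_i=(\iota_{X_b}\beta_i)\,\eta_b+\tilde\beta_i$ with $\iota_{X_b}\tilde\beta_i=0$ and using $\iota_{X_b}d\eta_b=0$, one finds that $\tilde\beta_i\wedge(d\eta_b)^n$ is a top form annihilated by $\iota_{X_b}$ with $X_b$ nowhere zero, hence zero; thus $\beta_i\wedge(d\eta_b)^n=(\iota_{X_b}\beta_i)\,\eta_b\wedge(d\eta_b)^n$. On the other hand $\beta_i$ and $d\eta_b$ are closed, so $\beta_i\wedge(d\eta_b)^n=-d\bigl(\beta_i\wedge\eta_b\wedge(d\eta_b)^{n-1}\bigr)$ and its integral over the closed manifold $M$ vanishes; since $\eta_b\wedge(d\eta_b)^n$ is a volume form, $\iota_{X_b}\beta_i=0$ for every $i$. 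Hence the image of $b$ under $\bbt^2\to A(N)$ is a translation field killed by all $\theta_i$, so it is $0$. This holds for all $b\in\gt^+_2$, hence by Step 1 for all $b\in\gt_2$, and therefore $\bbt^2$ acts trivially on $N$.

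I expect Step 2 --- passing from ``$\bbt^2$ preserves the $S^3$--fibration'' to an honest descent of the action to $N$ --- to be the genuinely delicate point, where the projectivity of $N$, the splitting $N=N'\times A(N)$ with $N'$ of finite automorphism group, and the Reeb--type hypothesis must all be used in an essential way; Steps 1 and 3 are then formal.
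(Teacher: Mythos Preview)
Your Step 2 is indeed a genuine gap, and you are right to flag it: you list ingredients but give no argument that the $\bbt^2$ action sends $\pi$-fibers to $\pi$-fibers, and the further claim that the descended action is holomorphic (needed to invoke finiteness of $\gA\gu\gt(N')$, which is a statement about biholomorphisms, not diffeomorphisms) is also unjustified. In fact the paper's own proof does not fill this gap either: it opens with ``Assume the action of $\bbt^2$ on $N$ is non-trivial'' and proceeds as though descent were given, so either the intended hypothesis is that $\bbt^2$ acts by bundle automorphisms, or the authors regard descent as obvious in their setting. Once descent is granted, the paper uses Bredon's theorem (the only effective compact connected Lie group action on a torus is a free torus action by translations) to reduce to translations on $A(N)$, which bypasses your holomorphicity issue for that factor.

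Granting Step 2, your Step 3 is correct and is a genuine streamlining of the paper's endgame. The paper passes to a quasi-regular Reeb field, forms the K\"ahler orbifold quotient $S$, and splits into cases according to whether the full $\bbt^2$ or only an $S^1$ quotient acts freely on $A(N)$: in the first case it invokes the principle (Corollary 8.1.9 of \cite{BG05}) that only Hamiltonian vector fields on the base lift to $\ga\gu\gt(\cals)$, while translations on $A(N)$ are not Hamiltonian; in the second it uses a $\pi_1$ argument on the exact sequence $\bbz\to\pi_1(M)\to\pi_1^{orb}(S)$ to show the quotient could not be K\"ahler. Your Stokes computation $\int_M\beta_i\wedge(d\eta_b)^n=0$ forcing $\iota_{X_b}\beta_i=0$ is exactly the ``non-Hamiltonian fields do not lift'' obstruction, carried out directly on $M$ without the quasi-regular detour, and it handles both of the paper's cases uniformly. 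So Step 3 buys you a cleaner argument; but the substantive content of the lemma, as you suspected, sits in Step 2, which remains unproved in both treatments.
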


\begin{proof}
Assume the action of $\bbt^2$ on $N$ is non-trivial, then it is non-trivial on $A(N)$ which is a torus. But by Theorem 9.3 in chapter IV of \cite{Bre72} the only effective action of a Lie group on a torus is a free action of a torus. Thus, there are two cases to consider: 
\begin{enumerate}
\item $\bbt^2$ acts freely on $A(N)$, or 
\item an $\bbs^1$ quotient acts freely on $A(N)$. 
\end{enumerate}
In either case since the action is of Reeb type, by Proposition 8.4.30 of \cite{BG05} there is a $\bbt^2$ invariant K-contact structure $\cals=(\xi,\eta,\Phi,g)$ which must be Sasakian since as in the case discussed above the tranverse almost couplex structure is integrable. Moreover, by Theorem 7.1.10 of \cite{BG05} we can take $\cals$ to be quasi-regular, so that the quotient generated by the Reeb vector field is a compact projective algebraic orbifold $S$. Now from the fibration $S^3\ra{1.5}M\ra{1.5} N'\times A(N)$ and the fact that $A(N)=T^{2k}$ is an Abelian variety, we see that $\pi_1(M)\approx \pi_1(N')\times \bbz^{2k}$ which contains $\bbz^{2k}$ as a direct summand. But then since $S$ is projective $\pi_1^{orb}(S)$ must contain $\bbz^{2k}$ as a direct summand. This implies that $H^1(S,\bbr)$ also contains $\bbr^{2k}$ as a direct summand.

Now consider case (1). By a change of coordinates if necessary we can take the $\bbt^2$ action on $A(N)=T^{2k}$ to be translation in the first two coordinates of $T^{2k}$. The corresponding vector field $\check{X}$ is not Hamiltonian with respect to the K\"ahler form $\gro_N$ on $N$. So by Corollary 8.1.9 of \cite{BG05} it does not lift to an infinitesimal automorphism in $\ga\gu\gt(\cals)$. So it cannot give rise to an element in the Lie algebra $\gt_2$ of $\bbt^2$, which is a contradiction.

In case (2) there are two sub cases to consider whether or not the infinitesimal generator $\check{X}$ of the $\bbs^1$-action is a projection of the Reeb vector field or not. If $\check{X}$ is not a projection of the Reeb field, then a similar argument as in case (1) leads to a contradiction. On the other hand if $\check{X}$ were the projection of the Reeb vector field $\xi$, then since this action is free on $A(N)$, the map $\psi$ in the exact sequence
$$\ra{2.0}\bbz\fract{\psi}{\ra{2.0}}\pi_1(N')\times\bbz^{2k}\ra{2.0}S\ra{2.0}\{id\}$$
would inject into $\bbz^{2k}$ in which case $S$ would not be K\"ahler giving a contradiction.
\end{proof}

Next we have

\begin{lemma}\label{contactsub}
Let $M$ be an $S^3$-bundle over a smooth compact algebraic variety $N$, and let $\cald$ be a co-oriented contact structure on $M$ with an effective $\bbt^2$ action of Reeb type that acts trivially on $N$. Then 
\begin{enumerate}
\item each fiber $F_x=S^3$ is a contact submanifold.
\item The contact manifold $(M,\cald)$ is of Sasaki type and any quasi-regular Reeb vector field associated with the $\bbt^2$ action has an orbifold quotient of the form $(S_L,\grD)$ where $S_L=\bbp(\BOne\oplus L)$ where $L$ is a holomorphic line bundle on $N$ and $\grD$ is a branch divisor (possibly empty).
\item $(S_L,\grD)$ admits a holomorphic Hamiltonian $S^1$ action on its fibers.
\end{enumerate}
\end{lemma}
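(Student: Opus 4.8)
The plan is to establish (2) first, deduce (1) as a corollary of it, and then read off (3). I begin by recording the basic structure. Since $\bbt^2$ acts trivially on $N$ it preserves each fibre $F_x\cong S^3$, and the induced action there is effective: its kernel is a closed subgroup of $\bbt^2$, independent of $x\in N$ by connectedness, hence acting trivially on all of $M$ and therefore trivial. An effective action of a $2$-torus on $S^3$ is linearisable, so after a $GL(2,\bbz)$ change of basis it is the standard action on $S^3\subset\bbc^2$; in particular its orbit space is an interval with exactly two exceptional circle orbits, the Hopf circles $\{z_1=0\}$ and $\{z_2=0\}$, which sweep out two subbundles $E_0,E_\infty\subset M$. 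Averaging over $\bbt^2$ I may take the contact form $\eta$ (with $\cald=\ker\eta$ and Reeb vector field $\xi$ supplied by the Reeb-type hypothesis) to be $\bbt^2$-invariant, so $\xi\in\gt_2$ and $\xi$ is tangent to the fibres.

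For (2): as in the proof of Lemma~\ref{Alblem}, Proposition 8.4.30 of \cite{BG05} furnishes a $\bbt^2$-invariant K-contact structure with contact structure $\cald$, whose transverse almost complex structure is integrable for the reason recalled there; hence $(M,\cald)$ is of Sasaki type, and I fix such a $\bbt^2$-invariant Sasakian structure $\cals$. Next I would choose a quasi-regular Reeb vector field $\xi_0\in\gt^+_2$ --- these are dense, as the rays meeting the lattice $\Lambda$ are (cf. Proposition~\ref{bg00b} and Theorem 7.1.10 of \cite{BG05}) --- and pass to $\cals_{\xi_0}$ in the Sasaki cone, so that $\cald=\ker\eta_{\xi_0}$ and $d\eta_{\xi_0}=\pi^*\omega_S$ for the quotient map $\pi:M\to S:=M/S^1_{\xi_0}$ onto the compact K\"ahler orbifold quotient and its transverse K\"ahler form $\omega_S$. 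Because $S^1_{\xi_0}\subset\bbt^2$ acts fibrewise on $M\to N$, $S$ fibres over $N$ with fibre $F_x/S^1_{\xi_0}$; as $S^1_{\xi_0}$ acts linearly and locally freely on $F_x\cong S^3$, this fibre is a weighted projective line $\bbc\bbp^1[m_1,m_2]$, a $2$-orbifold with underlying space $S^2$ and at most two orbifold points, lying on the two sections $\Sigma_0:=E_0/S^1_{\xi_0}$ and $\Sigma_\infty:=E_\infty/S^1_{\xi_0}$. The residual circle $S^1:=\bbt^2/S^1_{\xi_0}$ acts on $S$ holomorphically (it preserves the complex structure descended from the $\bbt^2$-invariant transverse complex structure of $\cals$), fibrewise over $N$, and with fixed-point set exactly $\Sigma_0\cup\Sigma_\infty$; hence $S\setminus(\Sigma_0\cup\Sigma_\infty)\to N$ is a holomorphic principal $\bbc^*$-orbibundle, i.e. the complement of the zero section in the total space of a holomorphic line bundle $L$ on $N$, and $S$ is its fibrewise $\bbc\bbp^1$-compactification: $S\cong S_L=\bbp(\BOne\oplus L)$ as complex orbifolds, with branch divisor $\grD$ supported on $\Sigma_0\cup\Sigma_\infty$ with multiplicities $m_1,m_2$ (empty precisely when $\xi_0$ is regular). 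This proves (2).

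Part (1) is then a corollary: $F_x=\pi^{-1}(S_x)$ with $S_x\cong\bbc\bbp^1[m_1,m_2]$ the fibre of $S_L\to N$, and $d\eta_{\xi_0}|_{F_x}=\pi^*(\omega_S|_{S_x})$, where $\omega_S|_{S_x}$ is the restriction of a K\"ahler form to a compact complex suborbifold of dimension one, hence an area form; so $F_x\to S_x$ is an orbifold $S^1$-Boothby-Wang bundle over a symplectic base, $\eta_{\xi_0}|_{F_x}$ is a contact form, and $\cald\cap TF_x=\ker(\eta_{\xi_0}|_{F_x})$ is a contact structure on $F_x\cong S^3$, that is, $F_x$ is a contact submanifold. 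For (3): by the above the residual circle $S^1=\bbt^2/S^1_{\xi_0}$ acts on $S_L$ holomorphically and fibrewise over $N$, and Hamiltonianly since a holomorphic circle action on a compact K\"ahler orbifold with non-empty fixed-point set is Hamiltonian; this is exactly the assertion.

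I expect the technical heart to be the identification in (2) of the K\"ahler orbifold $S$ with $\bbp(\BOne\oplus L)$ carrying the stated branch divisor: one must check that the rank-two orbibundle whose projectivisation is $S$ splits holomorphically --- precisely where the fibrewise $\bbc^*$-action is used --- and that $L$ and the orbifold multiplicities along the two fixed sections come out as claimed, in agreement with the conventions of \cite{BoTo14a}. A smaller but genuine point is (1): being a contact submanifold is not automatic, since a priori $\cald\cap TF_x$ could be $d\eta$-isotropic, which is why I would extract it from the Boothby-Wang picture rather than by restricting $\eta$ directly.
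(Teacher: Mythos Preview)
Your route is genuinely different from the paper's. The paper proves (1) first by a direct computation: with $X\in\gt_2$ independent of $\xi$, both $\xi$ and $X$ are tangent to $F_x$, and $d\eta(\Phi X,X)=g(X,X)\neq 0$ forces $\eta|_{F_x}$ to be contact. Then Eliashberg's uniqueness theorem identifies the fibre contact structure as the standard one, so $M\to N$ is a contact fibre bundle with fat connection $\calh$ and $\cald=\calh\oplus\cald_{st}$; the integrable transverse complex structure is built explicitly as the complex structure of $N$ lifted to $\calh$ together with the standard transverse complex structure on the fibres. The structure group then reduces to $S^1$, the sphere bundle extends to a rank-two holomorphic bundle $E$, and the quasi-regular $\xi$ splits $E=L_1\oplus L_2$ into eigenbundles, whence $S_L=\bbp(\BOne\oplus L)$. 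You instead use smooth linearisability of effective $\bbt^2$-actions on $S^3$ (transformation-group theory rather than contact topology) to get the fibrewise linear picture and the circle subbundles $E_0,E_\infty$, pass to the quasi-regular quotient, and read off $\bbp(\BOne\oplus L)$ from the residual $S^1$-action and its fixed sections; you then recover (1) from the Boothby--Wang description of $F_x\to S_x$. Where it works this avoids Eliashberg altogether.

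The gap is the step ``whose transverse almost complex structure is integrable for the reason recalled there''. The proof of Lemma~\ref{Alblem} does not supply an independent reason---it only asserts integrability ``as in the case discussed above'', which is precisely the construction carried out in the present lemma---so the reference is circular. Everything downstream in your argument rests on this: that $S$ is a K\"ahler (not merely symplectic) orbifold, that the residual $S^1$ acts holomorphically, that $S\setminus(\Sigma_0\cup\Sigma_\infty)\to N$ is a \emph{holomorphic} $\bbc^*$-bundle, and your proof of (1), which needs $\omega_S|_{S_x}$ to be the restriction of a K\"ahler form to a complex curve in order to conclude it is an area form (a $2$-dimensional submanifold of a merely symplectic orbifold need not be symplectic). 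The paper's mechanism for producing integrability---the splitting $\cald=\calh\oplus\cald_{st}$---comes from (1) and Eliashberg, which you have deferred. Your approach is repairable: the reduction of structure group to $\bbt^2\subset U(2)$ coming from your linearisation already extends the sphere bundle to a sum of complex line bundles over $N$, from which one can construct an integrable transverse complex structure compatible with $\cald$ much as the paper does; but this step has to be written out rather than cited.
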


\begin{proof}
Since the action of $\bbt^2$ is of Reeb type and $\cald$ is co-oriented, there is a contact 1-form $\eta$ such that $\cald=\ker\eta$ and whose Reeb vector field $\xi$ lies in the Lie algebra $\gt_2$ of $\bbt^2$. Moreover, since $\bbt^2$ acts non-trivially only on the fiber $F_x\approx S^3$, by identifying elements of $\gt_2$ with the vector fields on $M$ induced by the $\bbt^2$ action, we have a basis $\{\xi,X\}$ for $\gt_2$ that restricted to a fiber $F_x$ is tangent at each point to $F_x$. By Proposition 8.4.30 of \cite{BG05} we can assume that the contact structure is a K-contact structure $(\xi,\eta,\Phi,g)$ and that the $\bbt^2$ action leaves both the endomorphism $\Phi$ and the metric $g$ invariant. But then the vector $\Phi X_x$ belongs to a transversely holomorphic section of $\cald |_F$ and satisfies $d\eta(\Phi X_x,X_x)=g(X_x,X_x)\neq 0$ everywhere. Thus, the restriction $\eta_F$ of the 1-form $\eta$  to $F_x$ satisfies $\eta_F\wedge d\eta_F\neq 0$ for all points of $F_x$. Thus, $\eta_F$ defines a 
contact structure on $F_x$ for every $x\in N$. This proves (1).

Since $\bbt^2$ acts non-trivially only on the fibers, by (1) the contact structure $\cald$ restricts to a contact structure with a $\bbt^2$ action of Reeb type on each fiber $F_x\approx S^3$ which is toric. By a theorem of Eliashberg \cite{Eli92} any contact structure on $S^3$ is either overtwisted or tight and there is a unique, up to oriented isotopy, tight contact structure on $S^3$, namely the standard contact structure $\cald_{st}$ and only the latter is of Reeb type \cite{Ler02a,Boy10a}. Hence, the contact structure on $S^3$ is $\cald_{st}$ and $S^3\ra{1.5} M\fract{\pi}{\ra{1.5}} N$ is a contact fiber bundle \cite{Ler04b} which has a natural fat connection $\calh$ whose curvature is $d\eta |_{\calh\times\calh}$. Now denoting the vertical bundle of the fibration by $\calv$ we have the following decompositions of vector bundles on $M$:
\begin{equation}\label{tmsplit}
TM=\calh+\calv, \qquad \calv=\cald_{st}+L_\xi,\qquad \cald=\calh+\cald_{st}.
\end{equation}
Now since the set of compatible almost complex structures on $\cald$ is contractible, we can choose the transverse almost complex structure to be an integrable transverse complex structure consisting of the complex structure on $N$ lifted to $\calh$ then twisted with the standard $\bbt^2$ invariant transverse complex structure on each fiber $F_x=S^3$. Hence, we can take the K-contact structure $\cals=(\xi,\eta,\Phi,g)$ to be Sasakian, and since $\bbt^2\subset \gA\gu\gt(\cals)$, there is a two-dimensional subcone $\gt^+_2$ of the unreduced Sasaki cone of $M$ giving a two-dimensional family of Sasakian structures on $M$. Since the fibers $F_x$ are contact manifolds the structure group of the $S^3$ bundle reduces to $S^1\subset SO(3)$, so the $\bbt^2$ action commutes with the transition functions of the $S^3$ bundle. Furthermore, since the structure group is linear, the sphere bundle $S^3\ra{1.5} M\fract{\pi}{\ra{1.5}}N$ extends to a rank two holomorphic vector bundle $E$ over $N$ and the $\bbt^2$ action on $M$ extends to a complex linear action on the rank two holomorphic vector bundle $E$. Choosing a quasi-regular Reeb vector field $\xi$ in the Sasaki cone $\gt^+_2$, provides a splitting of $E$ into a sum of eigenbundles $E\approx L_1\oplus L_2$ for some $L_1,L_2\in{\rm Pic}(N)$, and taking an $S^1$ quotient of $M$ is equivalent to projectivizing the bundle $E$. The $S^1$ quotient that it produces is a $\bbc\bbp^1$ orbibundle $S_L$ over $N$ which we can write in terms of a log pair $(S_L,\grD)$. This orbibundle $S_L$ can be realized set theoretically as the projectivization
\begin{equation}\label{projbun}
M/S^1_\xi=E/\bbc^*_\xi=\bbp(\BOne\oplus L)= S_L
\end{equation}
where $L=L_2\otimes L_1^{-1}\in{\rm Pic}(N)$. Since the orbifold structure is on $\bbc\bbp^1$ it can be written as a log pair $(S_L,\grD)$ where $\grD$ is a branch divisor which is possibly empty. This proves (2).

The $\bbt^2$ action on $M$ gives a residual $S^1$ action on the orbifold $(S_L,\grD)$ which acts only on the $\bbc\bbp^1$ fibers over $N$ and which is automatically Hamiltonian proving (3).
\end{proof}

Now let $\gro_N$ be a K\"ahler form on $N$ such that its cohomology class $[\gro_N]$ is primitive in $H^{1,1}(N,\bbz)=H^2(N,\bbz)\cap H^{1,1}(N,\bbr)$. Define 
\begin{equation}\label{picsub}
{\rm Pic}(N,[\gro_N])=\{L\in {\rm Pic}(N)~|~c_1(L)=k[\gro_N] ~\text{for some $k\in\bbz$}\}.
\end{equation}
One easily checks that ${\rm Pic}(N,[\gro_N])$ is a subgroup of ${\rm Pic}(N)$. We have

\begin{thm}\label{mains3bunthm}
Let $M$ be an $S^3$-bundle over a smooth compact algebraic variety $N$, and let $\cald$ be a co-oriented contact structure on $M$ with an effective $\bbt^2$ action of Reeb type that acts trivially on $N$. Suppose also that the holomorphic line bundle $L$ of Lemma \ref{contactsub} lies in the subgroup ${\rm Pic}(N,[\gro_N])$ where the K\"ahler form $\gro_N$ is chosen to satisfy $\pi^*\gro_N=ad\eta |_{\calh\times\calh}$ for some $a\in\bbr^+$. Then $(M,\cald)$ has an almost regular Sasaki structure whose K\"ahler quotient is an orbifold represented as the log pair $(S_L,\grD)$ where $\grD$ is a branch divisor of the form
$$\grD=(1-\frac{1}{m})(D_0+D_\infty)$$
with ramification index $m$ where the divisors $D_0(D_\infty)$ denote the $0$ and infinity sections of $L$, respectively. Moreover, there is a choice of underlying CR structure $(\cald,J)$ which is cone decomposable.  In particular, there are positive integers $l_1,l_2,w_1,w_2$ with $\gcd(l_2,l_1w_i)=\gcd(w_1,w_2)=1$ such that $M$ is diffeomorphic to the smooth manifold arising from the join $M'\star_{l_1,l_2}S^3_\bfw$ where $\bfw=(w_1,w_2)$ and $M'$ is the Sasaki manifold corresponding to the principal $S^1$ bundle over $N$ with K\"ahler form $\gro_N$.
\end{thm}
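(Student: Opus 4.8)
The plan is to read off everything from the two-dimensional Sasaki subcone $\gt^+_2$ produced by Lemma~\ref{contactsub}, and then to recognize its quasi-regular members inside the $S^3_\bfw$-join construction of \cite{BoTo14a}.

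First I would collect what Lemma~\ref{contactsub} already gives: $(M,\cald)$ is of Sasaki type, there is a $\bbt^2$-invariant CR structure $(\cald,J)$ (the complex structure of $N$ lifted to $\calh$, twisted with the standard transverse structure on the $S^3$ fibres), and every quasi-regular Reeb vector field in $\gt^+_2$ has K\"ahler quotient a log pair $(S_L,\grD)$ with $S_L=\bbp(\BOne\oplus L)\to N$ and a residual fibrewise Hamiltonian circle action. Writing the rank-two holomorphic bundle $E$ of Lemma~\ref{contactsub} as $E=L_1\oplus L_2$, the decomposition into the two weight line bundles of the complex-linear $\bbt^2$-action, one has $L=L_2\otimes L_1^{-1}$, and by hypothesis $c_1(L)=n[\gro_N]$ for some $n\in\bbz$; since $\bbp(\BOne\oplus L)\cong\bbp(\BOne\oplus L^{-1})$ I may assume $n\ge 0$. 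The normalization $\pi^*\gro_N=a\,d\eta|_{\calh\times\calh}$ pins the base K\"ahler class to the primitive class $[\gro_N]$, and $\gt^+_2$ is identified with the cone of Reeb vectors in $\gt_2$, parametrized by the positive weight pairs $(v_1,v_2)$ on $(L_1,L_2)$.

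Next I would work through the quasi-regular members of $\gt^+_2$ one ray at a time. For a primitive rational ray $(v_1,v_2)$ the quotient is the weighted projectivization of $L_1\oplus L_2$, which as a log pair is $(S_L,\grD_{v_1,v_2})$ with branch divisor supported on the zero and infinity sections $D_0,D_\infty$ of $L$ and transverse K\"ahler class determined by $a$ and $(v_1,v_2)$; there is a distinguished ray, the \emph{almost regular} one, for which the two ramification indices agree, and its quotient is precisely the log pair $(S_L,(1-\tfrac1m)(D_0+D_\infty))$ for the resulting integer $m$. This proves the first assertion, and it is the step I expect to be the main obstacle: the bookkeeping that produces the almost regular ray, the integer $m$, and the branch divisor (a moment-polytope computation for the $\bbt^2$-action, carried out fibrewise over $N$ and globalized using that $[\gro_N]$ is primitive) is where the real content lies; alternatively one may quote the explicit description of the $\bfw$-cone and its admissible quotients from \cite{BoTo14a} and match parameters, which trades computation for a careful comparison of normalizations.

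Finally, for cone decomposability and the diffeomorphism statement, I would match this picture to the $S^3_\bfw$-join. Let $M'$ be the regular Sasaki manifold which is the Boothby-Wang circle bundle over $(N,\gro_N)$ with Euler class $[\gro_N]$. From $n$ together with the transverse K\"ahler/branch-divisor data one reads off relatively prime positive integers $l_1,l_2$ and a weight vector $\bfw=(w_1,w_2)$ with $\gcd(w_1,w_2)=1$ and $\gcd(l_2,l_1w_i)=1$ (the last condition being exactly the smoothness criterion $\gcd(l_1\upsilon_2,l_2\upsilon_1)=1$ of Proposition~\ref{smoothprop} for $M_{l_1,l_2,\bfw}:=M'\star_{l_1,l_2}S^3_\bfw$, whose order is $\upsilon_2=\mathrm{lcm}(w_1,w_2)$), such that the $\bfw$-cone of $M_{l_1,l_2,\bfw}$, with its transverse complex structure, reproduces $(\cald,J)$ up to CR isomorphism; by Remark~\ref{rayjoinred} the ray $\bfv=\bfw$ in that cone is the decomposable (indeed join) one, so $(\cald,J')$ with $J'$ the join's transverse complex structure — the ``choice of underlying CR structure'' in the statement — is cone decomposable. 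The diffeomorphism $M\cong M_{l_1,l_2,\bfw}$ then follows because both are the total space of the $S^3$-bundle associated to the same rank-two complex bundle over $N$ (equivalently, the relevant characteristic class, detected by the parity of $n$, matches), using Eliashberg's theorem (as in Lemma~\ref{contactsub}) to identify the contact structure on each $S^3$ fibre with the standard tight one so that the two contact fibre bundles coincide.
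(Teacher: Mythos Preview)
Your overall architecture matches the paper's: use Lemma~\ref{contactsub} to get the $\bbt^2$-invariant CR structure and the log-pair quotients $(S_L,\grD_{v_1,v_2})$, pick out the almost regular ray, and then match parameters with the $S^3_\bfw$-join of \cite{BoTo14a}. The almost regular identification is also right in spirit, though the paper is more concrete than you are: it simply takes $\bfv=(1,1)$, notes that this Reeb field is regular on each $S^3$ fibre, and uses the evident $D_0\leftrightarrow D_\infty$ symmetry of that quotient to force $m_0=m_\infty=:m$.

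There is, however, a genuine gap. You assert that ``from $n$ together with the transverse K\"ahler/branch-divisor data one reads off'' integers $l_1,l_2,w_1,w_2$ with $\gcd(l_2,l_1w_i)=1$, i.e.\ that the join is smooth, as if this were automatic. It is not: this is precisely the place where the hypothesis that $M$ is an $S^3$-bundle (rather than a lens-space bundle) over $N$ does work, and you never use that hypothesis. The paper proves the arithmetic constraint $\gcd(n,m)=1$ by a covering argument: if $k=\gcd(n,m)>1$, choose $\call$ with $\call^k=L$ and observe that $(S_\call,\grD_{m/k})$ is a $k$-fold orbifold cover of $(S_L,\grD_m)$, so the primitive $S^1$-orbibundle over the latter has nontrivial $k$-fold cover fibrewise, contradicting that the fibres of $M\to N$ are $S^3$. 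This condition $\gcd(n,m)=1$ is exactly the hypothesis of Proposition~\ref{harderreverse} (with $m_1=m_2=m$), which is what actually guarantees that the parameter-matching lands you on a genuine, and smooth, $S^3_\bfw$-join. Without it the ``reading off'' step fails (cf.\ the remark after Proposition~\ref{harderreverse} and Remark~\ref{lensrem}).

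A second, smaller point: your diffeomorphism argument (``both are the total space of the $S^3$-bundle associated to the same rank-two complex bundle, detected by the parity of $n$'') is specific to the Riemann-surface base and does not generalize. The paper instead identifies $M$ and $M'\star_{l_1,l_2}S^3_\bfw$ as principal $S^1$-orbibundles over the \emph{same} log pair $(S_L,\grD_m)$ with the \emph{same} primitive K\"ahler class (via the explicit parameter dictionary $k_1=ml_1w_2$, $k_2=l_2$, $n=l_1(w_1-w_2)$, $m=\gcd(k_1,k_2)$), and then invokes principal bundle theory and orbifold Boothby--Wang to conclude. That route works over any $N$.
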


\begin{proof}
Now we know by Lemma  \ref{contactsub} that $M$ is a non-trivial principal circle orbibundle over the ruled orbifold $(S_L,\grD)$. By hypothesis the K\"ahler class $[\gro_N]$ is fixed and $c_1(L)=n[\gro_N]$ for some $n\in\bbz$. As in Section 2.3 of \cite{BoTo14a} if $n>0$ we have a K\"ahler structure given by $n\gro_N$, whereas, if $n<0$ the K\"ahler structure is $-n\gro_N$. If $n=0$ we can choose the complex structure in the proof of Lemma \ref{contactsub} such that the quotient $S_L$ is the product $N\times \bbc\bbp^1$ in which case the Sasakian structure is decomposable. So we can restrict ourselves to the case that $n>0$. If we let $D_0$ denote the zero section of the line bundle $L$, then by the Leray-Hirsch Theorem the cohomology class corresponding to the $S^1$-bundle $M$ must take the form  
$\alpha+k_2h$, where $\alpha$ is a K\"ahler class pulled back from $N$ and $h$ is the Poincar\'e dual to $D$. Since the K\"ahler class $\gro_N$ on $N$ is chosen such that $\pi^*\gro_N=a d\eta |_{\calh\times\calh}$, we realize that $\alpha$ must be a multiple of the pullback of $[\omega_N]$ and so the cohomology class corresponding to the $S^1$-bundle $M$ takes the form  $k_1\pi^*_S[\gro_N]+k_2h$ with $k_1,k_2\in \bbz^+$ and we have a commutative diagram
\begin{equation}\label{s3s1comdia}
\begin{matrix}  M &&& \\
                          &\searrow{\pi_L} && \\
                          \decdnar{}\pi && S_L. &\\
                          &\swarrow{\pi_S} && \\
                         N &&& 
\end{matrix}
\end{equation}
Now since the $\bbt^2$ action on $M$ is the standard action on the fibers $F_x=S^3$, we see that writing $S^3$ as $|z_1|^2+|z_2|^2=1$ the action on the dense subset defined by $z_1z_2\neq 0$ is free for all $x\in N$. Consider the endpoints $z_2=0$ and $z_1=0$ which correspond to the divisors $D_0$ and $D_\infty$, respectively. The isotropy subgroups are denoted by $G_0$ and $G_\infty$, respectively. Note that $G_0$ and $G_\infty$ both contain an $S^1$ that is complementary to $S^1_\xi\subset \bbt^2$, and they may also contain  a finite cyclic subgroup of $S^1_\xi$. So generally we have $G_0\approx S^1\times \bbz_{m_0}$ and $G_\infty\approx S^1\times \bbz_{m_\infty}$ where $m_0,m_\infty\in\bbz^+$. However, if we choose the Reeb vector field $\xi_\bfv$ defined by $\bfv=(1,1)$ which is regular on each fiber, the quotient is invariant under the interchange of $D_0$ and $D_\infty$. It follows that the ramification indices are equal, that is $m_0=m_\infty=:m$. But this is the definition of an almost regular Reeb vector field \cite{BoTo14a}. In this case the fibers of $S_L$ are developable orbifolds of the form $\bbc\bbp^1/\bbz_m$. Now we claim that since $M$ is an $S^3$ bundle over $N$ the constraint $\gcd(n,m)=1$ holds.
To see this suppose $n$ and $m$ have a greatest common factor $k>1$. Then there exists a holomorphic line bundle $\call$ on $N$ such that $\call^k=L$. Setting $n=kn'$ and $m=km'$, we have the projective orbifold $(S_\call,\grD_{m'})$ where $S_\call=\bbp(\BOne\oplus \call)$ and $c_1(\call)=n'[\gro_N]$. The log pair $(S_\call,\grD_{m'})$ is a $k$-fold cover of the log pair $(S_L,\grD_{m})$. The corresponding primitive $S^1$ bundle over $(S_\call,\grD_{m'})$ is thus a $k$-fold cover of the primitive $S^1$ bundle over $(S_L,\grD_{m})$. This holds fiber wise and since $\gcd(n',m')=1$ we have the universal cover $S^3$ in this case. It follows that when $M$ is an $S^3$ bundle over $N$ the integers $n$ and $m$ must be relatively prime.
 
Now consider the join $M'\star_{l_1,l_2}S^3_\bfw$ as constructed in Section 3.2 of \cite{BoTo13,BoTo14a} where $M'$ is the unique positive primitive $S^1$ bundle over the Hodge manifold $(N,\gro_N)$. So by principal bundle theory there is a choice of the relatively prime pair $(l_1,l_2)$ and weight vector $\bfw$ such that $M$ and $M'\star_{l_1,l_2}S^3_\bfw$ are isomorphic as principal $S^1$-bundles over $S_L$ as long as the Reeb vector field on the join $M'\star_{l_1,l_2}S^3_\bfw$ giving rise to the quotient structure on $S_L$ is almost regular. So if we can identify the $S^1$ orbibundle from the construction in Diagram \eqref{s3s1comdia} with an appropriate $S^1$ orbibundle in the join, the corresponding orbifold Boothby-Wang constructions will identify the Sasakian structures up to a gauge transformation. The join depends on parameters $l_1,l_2,w_1,w_2$, whereas $M$ depends on parameters $k_1,k_2,m,n$. So we need to describe the relation between the two sets of parameters. This was essentially done in \cite{BoTo14a} at the end of Section 3. Specializing to the almost regular case we have $k_1=ml_1w_2$ and $k_2=l_2$ with $n=l_1(w_1-w_2)$ and $m=\gcd(k_1,k_2)$. As in \cite{BoTo14a} this determines $l_1,l_2,w_1,w_2$ uniquely. Moreover, since we know that $\gcd(n,m)=1$, Proposition \ref{harderreverse} below guarantees us that 
the identification of the parameters truly corresponds to identifying $M$ (with the chosen almost regular Reeb vector field) with the join $M'\star_{l_1,l_2}S^3_\bfw$ (with its almost regular Reeb vector field). This completes the proof of the theorem.
\end{proof}

\begin{rem} We chose to work with the almost regular Reeb vector field in the proof above, but we could have used any quasi-regular Reeb vector field (from the effective $\bbt^2$-action).
\end{rem}

\begin{rem}\label{nneq0rem}
In analogy to the choice of complex structure for the Sasakian structure in the case of $n=0$ in the above proof, in general there is also a choice of Sasaki CR structure in the $n \neq 0$ case that gives rise exactly to the join in its Sasaki cone and not a potentially twisted (in the complex structure sense) version of the join (making it only cone reducible). This twisted version arises from projective unitary reducible representations of $\pi_1(N)$, since ${\rm Pic}^0(N)$ acts on the set of line bundles of degree $n$.  In the case where $M$ is simply connected this is a non-issue since the long exact homotopy sequence implies that $N$ is also simply connected. 
\end{rem}

\begin{proof}[Proof of Theorem \ref{s3riemcor}]
If the Picard number $\grr(N)=1$, the subgroup ${\rm Pic}(N,[\gro_N])$ is all of ${\rm Pic}(N)$, so Theorem \ref{s3riemcor} follows directly from Lemma \ref{Alblem} and Theorem \ref{mains3bunthm}.
\end{proof}

\begin{proof}[Proof of Theorem \ref{Riesurfcor}]
By Theorem \ref{mains3bunthm} there is an almost regular Sasakian structure such that its $S^1$ quotient is the log pair $(S_L,\grD)$ with the branch divisor $\grD$ given in Theorem \ref{mains3bunthm} and with a holomorphic Hamiltonian action of $S^1$ for some line bundle $L\in {\rm Pic}(\grS_g)$. As in the beginning of the proof of Theorem \ref{mains3bunthm} there are essentially two cases $n=0$ and $n>0$. If the degree $n$ of the line bundle is zero, for each fixed complex structure $\grt$ on $\grS_g$ there is the Jacobian ${\rm Pic}^0(\grS_g)=T^{2g}$'s worth of complex structures. For $g>1$ these are parameterized by the singular part $\calr(\grS_g)^{sing}$ of the character variety $\calr(\grS_g)$, that is the reducible projective unitary representations $\grr$ of $\pi_1(\grS_g)$ in which case $S_L=\grS_g\times_\grr\bbc\bbp^1$ is a local product structure. Thus, the corresponding Sasakian structure on $M$ is reducible as described in Example \ref{rulsurfex}. 

If $n\neq 0$ by Theorem \ref{mains3bunthm} for each complex structure $\grt$ on $\grS_g$ there is a choice of CR structure $(\cald,J)$ on $M$ such that $(\cald,J)$ is cone decomposable. This corresponds to a choice of line bundle $L\in {\rm Pic}(\grS_g)$ of degree $n$, and a choice of splitting of the exact sequence 
$$0\ra{2.0}{\rm Pic}^0(\grS_g)\ra{2.0} {\rm Pic}(\grS_g)\fract{c_1}{\ra{2.0}}\bbz\ra{2.0} 0.$$
Since ${\rm Pic}^0(\grS_g)$ acts transitively on the set of line bundles of degree $n$, any other line bundle $L'\in {\rm Pic}(\grS_g)$ of degree $n$ is obtained from $L$ by an element $\grr\in {\rm Pic}^0(\grS_g)$. Now we know from the proof of Theorem \ref{mains3bunthm} that with $n=b_2-b_1$ our choice of line bundle gives $\bfb=l_1(w_1-w_2)$ which corresponds to the decomposable Sasakian structure over the quotient $\grS_g\times \bbc\bbp^1(\bfw)$. Moreover, since changing $n$ to $-n$ does not change the quotient orbifold nor the Sasaki CR structure, we restrict ourselves to the case $n>0$. Thus, when $g>1$ for any $\grr\in {\rm Pic}^0(\grS_g)\approx T^{2g}$ as in Example \ref{rulsurfex} we can construct the twisted quotients $\grS_g\times_\grr \bbc\bbp^1(\bfw)$. Since these are local products, the orbifold Boothby-Wang construction gives reducible Sasakian structures on the corresponding $S^3$ bundle over $\grS_g$ that are decomposable only if $\grr=\{id\}$.

In the $g=1$ case we use the results of Suwa \cite{Suw69,BoTo11}. When $n=0$ the discussion on pages 294-295 of \cite{Suw69} implies that there are precisely a $\bbc\bbp^1$'s worth of complex structures for each complex structure on $\grS_g$. Moreover, as the $g>1$ case these give rise to local product structures, so the Sasaki CR structure is cone reducible with the one corresponding to the trivial line bundle being decomposable. However, when $n\neq 0$ it follows from Lemma 1 of \cite{Suw69} that any reducible Sasakian structure is equivalent to a decomposable Sasakian structure, since the Jacobian of a $\grS_1$ is equivalent to $\grS_1$ itself .
\end{proof}

\begin{rem}\label{lensrem}
The general case of the $S^3_\bfw$-join treated in \cite{BoTo14a} gives 3-dimensional lens space bundles over $N$. So it is natural to ask whether Theorem \ref{s3riemcor} would hold in this more general setting. Unfortunately, there is no analogue of Eliashberg's uniqueness of tight contact structure for general lens spaces. In fact, there are many tight contact structures on lens spaces even those that lift to a tight contact structure on the universal cover $S^3$ \cite{Hon00a,Gir00} and they can have a toric structure \cite{Ler02a}. There is, however, a unique tight contact structure on the lens space $L(2,q,1)$ \cite{Etn00}, which is diffeomorphic to $\bbr\bbp^3$, and it must be the standard structure, so Theorem \ref{s3riemcor} will hold for this case which corresponds to the join $M\star_{1,2}S^3_{q,1}$ with $q$ odd.
\end{rem}

\begin{rem}\label{Kahrem}
The fact that $\gro_S$ is a K\"ahler form can put restrictions on the integers $n,l_1,w_1,w_2$ as we shall see with the example below.
\end{rem}

The above proof shows that in the genus one Riemann surface case the hypothesis that the toral action be of Reeb type is needed. It is not needed in the genus zero case since it is known that all toric contact structures on an $S^3$ bundle over $S^2$ are of Reeb type \cite{BoPa10}. However, in the case of a $\bbt^2$ action on $S^3$ bundles over Riemann surfaces of genus greater than one, it is not known. For example, there are the well known overtwisted contact structures $\cald_{ot}$ on $S^3$ due to Eliashberg  \cite{Eli89}. There is a unique overtwisted contact structure $\cald_{ot}$ on $S^3$ with vanishing Hopf invariant. Using contact cuts Lerman \cite{Ler01} constructed an infinite sequence of toric overtwisted contact structures on $S^3$ with vanishing Hopf invariant that are $\bbt^2$ equivariantly inequivalent which by Eliashberg are contact equivalent. It follows that $(S^3,\cald_{ot})$ has an infinite number of toric contact structures and that $\gC\go\gn(S^3,\cald_{ot})$ has an infinite number of 
conjugacy classes of maximal tori (see also Example 7.14 in \cite{Boy10a}). However, we do not know whether these contact structures on the fiber $S^3$ can extend to a contact structure on the whole $S^3$ bundle over $\grS_g$ that is $\bbt^2$ invariant. It is known from Lerman's classification \cite{Ler02a} that this type of extension cannot happen in the toric case.

In \cite{BoTo13} it is shown that there are a countable infinity of contact structures $\cald_k$ with $k\in\bbz^+$ of Sasaki type on both $\grS_g\times S^3$ and $\grS_g\tilde{\times}S^3$ for $g>0$ essentially labeled by the first Chern class of the contact bundle. For each such $k$ there are $k$ two dimensional Sasaki cones with a unique ray of constant scalar curvature Sasaki metrics. Moreover, in the case of the trivial bundle 
$\grS_g\times S^3$ it is shown using the work of Bu{\c{s}}e \cite{Bus10} on equivariant Gromov-Witten invariants that the $k$ Sasaki cones belong to inequivalent $\bbt^2$ equivariant contact structures that are contact equivalent, and so the Sasaki cones form a bouquet of Sasaki cones \cite{Boy10a}. Moreover,  for $g\geq 2$ one can twist the transverse complex structures with reducible representations of the fundamental group $\pi_1(\grS_g)$ giving the $k+1$-bouquet $\gB_{k+1}(\cald_k)$ described by Theorem 4.5 of \cite{BoTo13}. Theorem \ref{s3riemcor} implies that $\gB_{k+1}(\cald_k)$ is a complete bouquet for $\cald_k$. We note that the topology of this bouquet is non-Hausdorff.

\subsection{Examples of Cone Indecomposability}
Here we give some examples of cone indecomposable Sasakian structures using a construction of Yamazaki \cite{Yam99}. We have

\begin{prop}\label{yamexprop}
Let $N$ be a smooth compact projective algebraic variety with Picard number $\grr(N)>1$. There is an $S^3$-bundle $M$ over $N$ with a co-oriented contact structure $\cald$ and an effective $\bbt^2$ action of Reeb type that acts trivially on $N$ and with an induced Sasakian structure on $M$ that is cone indecomposable. 
\end{prop}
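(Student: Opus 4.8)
The plan is to construct the example explicitly by mimicking the proof of Lemma~\ref{contactsub} and Theorem~\ref{mains3bunthm}, but choosing the underlying rank-two holomorphic vector bundle $E$ over $N$ so that the resulting $\bbt^2$-invariant Sasakian structure cannot be deformed inside its Sasaki cone to a decomposable one. Since the Picard number $\grr(N)>1$, I would first pick two holomorphic line bundles $L_1,L_2\in{\rm Pic}(N)$ whose first Chern classes $c_1(L_1),c_1(L_2)$ are \emph{linearly independent} ample classes in $NS(N)\otimes\bbr$ — this is possible precisely because $\grr(N)>1$ and the ample cone is open of full dimension. Set $E=L_1\oplus L_2$ and let $M=S(E)$ be the associated $S^3$-bundle (the unit sphere bundle for a Hermitian metric). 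The diagonal $\bbt^2=S^1\times S^1$ acts fiberwise on $E$ by scaling the two summands, hence acts on $M$, trivially on $N$. Following Yamazaki~\cite{Yam99} and the argument in Lemma~\ref{contactsub}, this $\bbt^2$ action is of Reeb type: a suitable positive combination $\xi=a_1\xi_1+a_2\xi_2$ of the two circle generators is a Reeb vector field for a $\bbt^2$-invariant contact form $\eta$ obtained by the Boothby--Wang/contact-bundle construction over the $\bbc\bbp^1$-bundle $\bbp(E)$, and one chooses a compatible transverse complex structure exactly as in the proof of Lemma~\ref{contactsub} to get a $\bbt^2$-invariant Sasakian structure $\cals$ on $M$.

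Next I would identify the Sasaki cone. Because $\bbt^2$ acts trivially on $N$ and only the fiber directions are moved, the maximal torus of $\gA\gu\gt(\cals)$ containing $\bbt^2$ is exactly two-dimensional (there are no extra automorphisms since $N$ is an arbitrary variety with $\grr(N)>1$, but in any case the reduced Sasaki cone relevant to reducibility questions is the one generated by $\xi_1,\xi_2$). So $\gt^+_2$ consists of the rays $[a_1\xi_1+a_2\xi_2]$ with $a_1,a_2>0$. For each such choice, the quasi-regular ones have quotient orbifold $\bbp(E)=\bbp(L_1\oplus L_2)\cong\bbp(\BOne\oplus L)$ with $L=L_2\otimes L_1^{-1}$, possibly with a branch divisor along the two sections, exactly as in Lemma~\ref{contactsub}(2). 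The key point: for \emph{every} $b\in\gt^+_2$, the corresponding transverse K\"ahler orbifold is (a Boothby--Wang quotient of) this $\bbp^1$-bundle, whose transverse holonomy splits only if the $\bbp^1$-bundle is a product — and $\bbp(\BOne\oplus L)$ is a product over $N$ iff $L$ is trivial, i.e. iff $c_1(L_1)=c_1(L_2)$, which we have arranged to fail.

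To finish, I would invoke Proposition~\ref{redjoin}/\ref{joinred}: a decomposable Sasakian structure in $\gt^+_2$ would have to be quasi-regular with quotient a product $\calz_1\times\calz_2$ of projective orbifolds, and by Corollary~\ref{prodKah} its transverse holonomy would lie in a product $U(n_1)\times U(n_2)$. But the transverse K\"ahler structure of any $\cals_b$, $b\in\gt^+_2$, has base the non-product ruled orbifold $\bbp(\BOne\oplus L)$ over $N$ with $c_1(L)\ne 0$; its de Rham decomposition (Lemma~\ref{orbdeRh}, applied on the universal orbifold cover, or directly via the argument of \cite{GaRu85} as in Example~\ref{rulsurfex}) does not refine through a global product splitting because the Albanese/Picard obstruction — here encoded in the nontriviality of $L$ in $NS(N)$ — is precisely the failure of such a splitting. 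Hence no element of $\gt^+_2$ is decomposable, i.e. $\cals$ is cone indecomposable. The main obstacle will be the last step: one must argue carefully that \emph{no} reparametrization of the Reeb field (including irregular ones, though by Proposition~\ref{propREDimpliesREG} those are automatically quasi-regular once the relevant summands are nontrivial) can produce a decomposable quotient — this reduces to showing that the ruled orbifold $\bbp(\BOne\oplus L)$ over $N$ is indecomposable as a K\"ahler orbifold whenever $c_1(L)$ is a nonzero class not expressible compatibly with a product, which follows by comparing second Betti numbers / Picard numbers exactly as in the proof of Proposition~\ref{irred5man}, using $\grr(N)>1$ to rule out the product case.
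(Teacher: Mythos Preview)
Your construction via Yamazaki's fiber join $M=S(L_1\oplus L_2)$ with $c_1(L_1),c_1(L_2)$ linearly independent ample classes is exactly the paper's, but the obstruction you extract is the wrong one and the argument does not close.

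The central gap is the claim that the transverse quotient ``is a product iff $L$ is trivial,'' and hence that $c_1(L)\neq 0$ suffices for cone indecomposability. It does not. Theorem~\ref{mains3bunthm} shows explicitly that whenever $c_1(L)$ is a nonzero integer multiple of a K\"ahler class $[\gro_N]$ the $S^3$-bundle \emph{is} cone decomposable: the decomposable ray in $\gt^+_2$ has quotient the genuine product $N\times\bbc\bbp^1[\bfw]$, while \emph{other} quasi-regular rays in the same cone have as quotient the non-product ruled orbifold $(\bbp(\BOne\oplus L),\grD)$ with $L$ nontrivial. Relatedly, your assertion that every quasi-regular Reeb yields the same underlying $\bbp(\BOne\oplus L)$ with $L=L_2\otimes L_1^{-1}$ is false: the Reeb $a_1\xi_1+a_2\xi_2$ gives the weighted projectivization, whose underlying variety is $\bbp(\BOne\oplus L_2^{a_1}L_1^{-a_2})$, so the line bundle genuinely moves through the cone. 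Arranging merely $c_1(L_1)\neq c_1(L_2)$ therefore proves nothing.

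The paper's actual obstruction is stronger: one chooses $b_1,b_2$ so that $c_1(L)=b_2[\gro_2]-b_1[\gro_1]$ is not $\pm$(K\"ahler class). If $M$ were equivalent to a join $M'\star_{l_1,l_2}S^3_\bfw$ with $M'$ the regular Sasaki $S^1$-bundle over $N$ determined by a K\"ahler class $[\gro_N]$, then the structure theory of the $S^3_\bfw$-join (Equation~(33) of \cite{BoTo14a}) forces the ruled quotient $S_L$ to be an associated bundle to $M'$, whence $c_1(L)=n[\gro_N]$ for some integer $n$ --- the contradiction. Your closing appeal to a Betti/Picard-number count in the style of Proposition~\ref{irred5man} does not substitute for this step: that proposition is a low-dimensional topological fact about simply connected $5$-manifolds with $b_2\geq 2$ and has no analogue here.
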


\begin{proof}
First there is a construction due to Yamazaki \cite{Yam99} for constructing such Sasaki manifolds. We begin with the projective algebraic variety\footnote{Yamazaki actually works in the symplectic and K-contact categories, but it is straightforward to see that under the correct circumstances his construction easily adabts to the Sasaki category.} $N$. Since $\grr(N)>1$, the K\"ahler cone has dimension at least $2$. Let $[\gro_j]$ be two primitive integral K\"ahler classes with $j=1,2$ which are not multiples of each other. Let $M_j$ be the principal $S^1$ bundles over $N$ whose Euler class is $b_j[\gro_j]$ for some $b_j\in\bbz^+$. Then it is straightforward to see that $M_j$ are regular Sasaki manifolds. Now let $L_j$ be holomorphic lines bundles associated to $M_j$. Then Yamazaki's `fiber join' $M_1*_fM_2$ is the unit sphere bundle $M=S(L_1\oplus L_2)$ which as Yamazaki shows has a co-oriented contact structure $\cald$ with an effective $\bbt^2$ action of Reeb type that acts trivially on $N$. Thus, we have an $S^3$ bundle over $N$ with a two dimensional Sasaki cone $\gt^+_2$. Now choose the unique almost regular Reeb vector field in $\gt^+_2$ and consider the quotient $S_L$. It must take the form $S_L=\bbp(L_1\oplus L_2)=\bbp(\BOne\oplus L)$ with $L=L_2L_1^{-1}$. We have 
$$c_1(L)=c_1(L_2)-c_1(L_1)=b_2[\gro_2]-b_1[\gro_1]$$
which cannot be $\pm$(K\"ahler class) for all $b_1,b_2\in\bbz^+$. Such a choice gives $M$.

We now show that $M$ is cone indecomposable.
Assume to the contrary that $M$ is equivalent to a join $M'\star_{l_1,l_2}S^3_\bfw$ for some choice of quasi-regular Reeb vector field. First we note that an equivalence of Sasakian structures implies, in the (quasi)-regular case, an equivalence of the projective algebraic quotients. Then since $M$ is the join $M'\star_{l_1,l_2}S^3_\bfw$, Equation (33) of \cite{BoTo14a} (with the order of $z_1,z_2$ reversed) implies that the $\bbc\bbp^1$ bundle $S_L$ is an associated bundle to the principal $S^1$ bundle $M'$ over $N$. Thus, $c_1(L^*)=n[\gro_N]$ for some $n\in\bbz\setminus \{0\}$. This gives a contradiction.
\end{proof}

\begin{example}\label{yamex}
As an explicit example of Proposition \ref{yamexprop} we take $N$ to be a product of Riemann surfaces with genera $g_1,g_2$, respectively, i.e. $N=\grS_{g_1}\times \grS_{g_2}$. Let $\gro_i$ be the standard K\"ahler form on $\grS_{g_i}$, and consider the K\"ahler forms on $\grS_{g_1}\times \grS_{g_2}$ given by $c_1\gro_1+\gro_2$ and $\gro_1+c_2\gro_2$ with $c_1,c_2\in\bbz^+$. Then the principal $S^1$ bundles $M_1$ and $M_2$ over $\grS_{g_1}\times \grS_{g_2}$ with K\"ahler forms $c_1\gro_1+\gro_2$ and $\gro_1+c_2\gro_2$, respectively, have distinct natural Sasakian structures. Let $L_1,L_2$ denote the associated complex line bundles to the principal bundles $M_1,M_2$, respectively, and set $L=L_2\otimes L_1^{-1}$. Then Proposition \ref{yamexprop} says that the fiber join $M_1*_fM_2$ has a Sasakian structure with $c_1(L^*)=-(c_1-1)[\gro_1]+(c_2-1)[\gro_2]$ which is not a K\"ahler class for any pair $(c_1,c_2)\in(\bbz^+)^2\setminus\{(1,1)\}$. So this gives infinitely 
many cone irreducible Sasakian structures. The pair $(c_1,c_2)=(1,1)$ is cone reducible. In particular, taking $g_1=g_2=0$ we get infinitely many cone irreducible toric contact structures on $S^3$ bundles over $S^2\times S^2$.
\end{example}

The next example shows Theorem \ref{Riesurfcor} does not hold in the genus $g=0$ case. In fact there are infinitely many inequivalent contact structures\footnote{The fact that the contact structures $L(2,2,2,2k)$ are inequivalent for different $k$ has recently been proven by Uebele \cite{Ueb15} using the plus part of non-equivariant symplectic homology on a convenient filling which Uebele shows is a contact invariant in this case.  Interestingly these contact structures cannot be distinguished by their mean Euler characteristic nor the plus part of the $S^1$-equivariant symplectic homology which are known contact invariants \cite{KwvKo13,BMvK15}.} of Reeb type with a two dimensional Sasaki cone on $S^2\times S^3$  which we show are cone irreducible. 

\begin{example}\label{BPlink}
Consider the Brieskorn-Pham link $L(2,2,2,2k)$ of degree $2k$ defined by 
$$L(2,2,2,2k)=\{z_0^2+z_1^2+z_2^2+z_3^{2k}=0\}\cap S^7.$$
For all positive integers $k$ the link $L(2,2,2,2k)$ is diffeomorphic to $S^2\times S^3$.  Concerning reducibility we have

\begin{prop}\label{222coneirr}
The link $L(2,2,2,2k)$ is cone irreducible for all $k\geq 2$.
\end{prop}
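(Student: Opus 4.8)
The plan is to argue by contradiction, exploiting that $L(2,2,2,2k)$ is diffeomorphic to $S^2\times S^3$, hence simply connected, and that (as recorded in the discussion around Example~\ref{BPlink}) the maximal torus of its Sasaki/CR automorphism group is only $2$-dimensional for $k\geq 2$. Suppose the natural Sasakian structure $\cals$ on $M:=L(2,2,2,2k)$ were cone reducible. Since $M$ is simply connected, the orbifold de Rham decomposition (Lemma~\ref{orbdeRh}), together with the quasi-regularity results of Section~\ref{joinredsect} (Lemma~\ref{leavescomp}, Corollary~\ref{auts1e2comp}, Theorem~\ref{theoREDimpliesREG}) and Proposition~\ref{redjoin}, shows that $\cals$ is in fact cone decomposable: there is a quasi-regular decomposable Sasakian structure $\cals'\in\gt^+(\cals)$, so $\cals'=M_1\star_{l_1,l_2}M_2$ for two compact $3$-dimensional Sasaki manifolds $M_i$ with quotients $\calz_i=M_i/S^1_{\xi_i}$ that are orbifold Riemann surfaces, and whose product $\calz_1\times\calz_2$ is the quotient orbifold of $M$ under the associated quasi-regular Reeb flow.

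Next I would identify the factors. Since $M=M_1\star_{l_1,l_2}M_2$ is simply connected, $\pi_1^{orb}(\calz_1\times\calz_2)=\pi_1^{orb}(\calz_1)\times\pi_1^{orb}(\calz_2)$ is trivial, so each $\calz_i$ is either $\bbc\bbp^1$ or a coprime spindle $\bbc\bbp^1_{p_i,q_i}$ with $\gcd(p_i,q_i)=1$; in every case $M_i$ is a $3$-dimensional toric contact manifold (a lens space or $S^3$ with an effective $\bbt^2$-action of Reeb type), so $\cals'$ is a toric Sasakian structure and $M$ carries an effective $\bbt^3$-action of Reeb type preserving the transverse holomorphic structure. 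Concretely, the holomorphic $S^1\times S^1$-action rotating the two genus-zero factors $\calz_i$ lifts to the Boothby--Wang orbibundle $M\to\calz_1\times\calz_2$ and, together with the Boothby--Wang circle, gives an effective $\bbt^3$ of CR automorphisms of the underlying CR structure of $\cals'$. But $\cals'$ lies in the Sasaki cone of $\cals$, so this CR structure is exactly $(\cald,J)$, the CR structure of the Brieskorn link; hence $\gA\gu\gt_{CR}(\cald,J)$ would contain a maximal torus of dimension at least $3$.

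It remains to rule this out, which is the crux. I would compute the automorphisms of the affine cone $V=\{z_0^2+z_1^2+z_2^2+z_3^{2k}=0\}\subset\bbc^4$ compatible with its weighted $\bbc^*$-action of weights $(k,k,k,1)$. For $k\geq 2$ the coordinate $z_3$ is the unique coordinate of weight $1$, and the only weight-$k$ monomials are $z_0,z_1,z_2,z_3^k$; requiring $f=z_0^2+z_1^2+z_2^2+z_3^{2k}$ to be preserved up to a nonzero scalar forces such an automorphism to have the form $(z_0,z_1,z_2)\mapsto A(z_0,z_1,z_2)$ with $A\in\bbc^*\cdot O(3,\bbc)$ and $z_3\mapsto\lambda z_3$, subject to $A^{T}A=\lambda^{2k}I$. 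Thus the connected graded automorphism group is $SO(3,\bbc)\times\bbc^*$, with maximal compact subgroup $SO(3)\times S^1$ of rank $2$. Since $L(2,2,2,2k)$ is not CR-equivalent to a sphere, $\gA\gu\gt_{CR}(\cald,J)$ is compact, and by uniqueness up to conjugacy of the good $\bbc^*$-action on a weighted-homogeneous isolated hypersurface singularity every CR automorphism is conjugate into this graded group; so a maximal torus of $\gA\gu\gt_{CR}(\cald,J)$ is exactly $\bbt^2$, contradicting the previous paragraph. (For contrast, when $k=1$ all four weights are equal, the graded automorphism group enlarges to one of rank $3$, and indeed $L(2,2,2,2)\cong T^{1,1}$ is toric and cone decomposable; this is precisely why the hypothesis $k\geq 2$ is essential.)

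The main obstacle I expect is this last rank computation: one must be careful that all CR automorphisms of the link — not only the obvious isometries of the weighted Sasaki metric — are accounted for, which relies on the compactness of $\gA\gu\gt_{CR}$ for non-spherical links together with the rigidity of the good $\bbc^*$-action. Everything else is a direct application of results already established: the passage from ``cone reducible'' to ``decomposable join of two $3$-manifolds over genus-zero orbifolds'' uses only Sections~\ref{joinredsect} and \ref{coneredsect}, and producing the extra circle of symmetry is elementary once the factors $\calz_i$ are pinned down.
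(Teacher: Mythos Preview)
Your argument is correct and follows essentially the same route as the paper's: assume cone reducibility, use simple connectedness together with Lemma~\ref{orbdeRh} to pass to a decomposable structure whose quotient is a product of weighted projective lines, and then derive a $\bbt^3$ of CR symmetries contradicting the $2$-dimensional maximal torus. The only real difference is that the paper simply takes the rank-$2$ fact as input from the surrounding text of Example~\ref{BPlink} (where the connected Sasaki automorphism group is stated to be $SO(3)\times U(1)$), whereas you supply an explicit computation of the graded automorphisms of the affine cone to justify it.
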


\begin{proof}
First we note that the link $L(2,2,2,2k)$ has a two dimensional Sasaki cone $\gt^+_2$. So suppose that $M=L(2,2,2,2k)$ is cone reducible. Then there is a quasi-regular Reeb vector field $\xi_1$ in $\gt^+_2$ whose quotient $\calz$ is a local product of one dimensional complex orbifolds. However, since $L(2,2,2,2k)$ is simply connected we must have $\pi_1^{orb}(\calz)=\{id\}$ by the long exact homotopy sequence. Thus, by Lemma \ref{orbdeRh} the orbifold $\calz$ is a product $\calz=\calo_1\times \calo_2$ of one dimensional complex orbifolds $\calo_i$ for $i=1,2$ with $\pi_1^{orb}(\calo_i)=\{id\}$; hence, $M$ is cone decomposable. Since $\pi_1^{orb}(\calo_i)=\{id\}$, the orbifold $\calo_i$ is not developable (good in Thurston's terminology \cite{Thu79}). It follows that $\calz$ is a product of weighted projective $\bbc\bbp^1$'s, i.e. $\calz=\bbc\bbp^1(\bfu)\times \bbc\bbp^1(\bfv)$ with $\bfu\neq (1,1), \bfv\neq(1,1)$. But then both orbifolds $\calo_i$ have an $S^1$ Hamiltonian symmetry, so the Sasaki cone of $M$ would have dimension $3$, not $2$. This gives a contradiction.
\end{proof}

Let us examine a bit closer what the quotient orbifolds might look like.
It is important here that reducibility means that one has a product of one dimensional complex {\it orbifolds}. The link $L(2,2,2,2k)$ has degree $2k$ with weight vector $\bfw=(k,k,k,1)$. So as projective algebraic varieties we have an embedding of the zero locus of the weighted homogeneous polynomial $z_0^2 +z_1^2+z_2^2+z_3^{2k}$ in the non well-formed weighted projective space $\bbc\bbp^3[k,k,k,1]$. So the divisor $z_3=0$ is a branch divisor with ramification index $k$. Furthermore, the map $(z_0,z_1,z_2,z_3)\mapsto (z_0,z_1,z_2,z_3^k)$ gives an isomorphism of projective algebraic varieties $\bbc\bbp^3[k,k,k,1]\approx \bbc\bbp^3$ and the zero locus of  $z_0^2 +z_1^2+z_2^2+z_3^{2k}$ with the zero locus of the quadric $z_0^2 +z_1^2+z_2^2+z_3^{2}$. The quadric in $\bbc\bbp^3$ is isomorphic to $\bbc\bbp^1\times \bbc\bbp^1$ by the well known Segre embedding 
$$([x_0,x_1],[y_0,y_1])\mapsto [x_0y_0,x_0y_1,x_1y_0,x_1y_1]=[u_0,u_1,u_2,u_3].$$
The image of $\bbc\bbp^1\times \bbc\bbp^1$ in $\bbc\bbp^3$ is given by the quadric $u_0u_3=u_1u_2$. So we make the change of variables
$$u_0=z_0+iz_1,~u_3=z_0-iz_1,~u_1=iz_2+z_3,~u_2=iz_2-z_3$$
so that $u_0u_3-u_1u_2=z_0^2+z_1^2+z_2^2+z_3^2$ and the divisor $z_3=0$ becomes $u_1=u_2$. The latter is equivalent to $[y_0,y_1]=[x_0,x_1]$. Thus, the quotient of the link $L(2,2,2,2k)$ is isomorphic to the log pair 
\begin{equation}\label{logpair}
(\bbc\bbp^1\times \bbc\bbp^1,(1-\frac{1}{k})\grD)
\end{equation} 
where the divisor $\grD$ is the diagonal embedding $\bbc\bbp^1\ra{1.6} \bbc\bbp^1\times \bbc\bbp^1$. This shows that although the quotient of $L(2,2,2,2k)$ by the $S^1$ action generated by the standard Reeb vector field $\xi_\bfw$ is a product of algebraic varieties, it is not a product of orbifolds if $k\geq 2$, so it cannot arise from a join. 

The connected component of the Sasaki automorphism group is $SO(3)\times U(1)$ where the $U(1)$ is generated by the Reeb vector field. So we can choose a $\bbt^2$ subgroup as the $U(1)\times SO(2)$ where the $SO(2)$ can be taken to be real rotations in the $z_0,z_1$-plane, that is by the matrix 
$$\begin{pmatrix} \cos\theta & \sin\theta \\
                            -\sin\theta & \cos\theta
                            \end{pmatrix}. $$
The $SO(2)$ action on $\bbc\bbp^1\times \bbc\bbp^1$ is then given by
$$([x_0,x_1],[y_0,y_1])\mapsto ([e^{-i\frac{\theta}{2}}x_0,e^{i\frac{\theta}{2}}x_1],[e^{-i\frac{\theta}{2}}y_0,e^{i\frac{\theta}{2}}y_1]).$$
Thus, $\bbt^2$ acts on both the base and the fiber.

We remark that in the case $k=1$ the orbifold structure is trivial and we obtain the well known homogeneous Sasaki-Einstein structure on $S^2\times S^3$ which is indeed decomposable and toric. 
\end{example}

Finally we mention that for $k\geq 2$ it is well know that the link  $L(2,2,2,2k)$ with its standard Reeb vector field $\xi_\bfw$ where $\bfw=(k,k,k,1)$ does not admit a Sasaki-Einstein metric by the Lichnerowicz obstruction \cite{GMSY06,BG05}.  But since in this case the Lie algebra $\gh_0$ (see \cite{BGS06} for the definition) is the simple Lie algebra $\gs\go(3,\bbc)$, the Sasaki-Futaki invariant $\gF$ vanishes identically. So this link does not admit any extremal Sasaki metric as well. It is still an open question\footnote{Note added: this question has been recently answered in the negative in \cite{BovCo16}.} whether there are extremal Sasaki metrics in the Sasaki cone $\gt^+_2$, but there are no Sasaki metrics of constant scalar curvature in $\gt^+_2$ by \cite{MaSpYau06} in the quasi-regular case and \cite{He14} in the irregular case.

\subsection{Proof of Theorem~\ref{theoSPLITTINGtoricSimplex}}

There is one-to-one correspondence between toric contact manifolds $(M^{2n+1},\cald,\bT)$ and good rational cones $C$ in $\kt^*=(\mbox{Lie} \bT)^*\simeq \R^{n+1}$ as established in~\cite{BM93,BG00b, Ler02a}. Recall from Subsection \ref{conesect} that the moment cone $C$ is the image of the (order $2$ homogenous) moment map $\Upsilon: \cald^o_+ \rightarrow \kt^*$ of the symplectisation of $(M^{2n+1},\cald, \bT)$ where the action of $\bT$ is the natural lift by pull-back and commutes with the $\R_+$ action of the cone. {\it Note that the cone $C$ does not contain the origin but its closure does.} 

The existence of a compatible toric Sasaki structure, and more particularly the existence of a Reeb vector field commuting with the action of $\bT$, implies that the moment cone is {\it strictly convex}, equivalently its dual cone $$C^* = \{ y \in \R^{n+1}\,|\, \langle x,y \rangle >0 \;\; \forall x \in C\}$$ has a non empty interior, namely the Sasaki cone $\gt^+$. This is very explicit, the cone $C^*$ parametrizes the set of toric (i.e $\bT$--invariant) Reeb vector fields. Indeed, recall that each $b\in\gt^+\subset C^* \subset \kt =\mbox{Lie} \bT $, induces a vector field $X_b$ on $M$ via the action and given any $\bT$--invariant contact form $\eta$ on $(M,\cald)$, the function $\eta(X_b) =\langle \mu_\eta, b\rangle >0$ is positive and the $1$--form $$\eta_b := \frac{1}{\eta(X_b)}\eta$$ is a $\bT$--invariant contact form whose Reeb vector field is $X_b$.    

In view of Remark~\ref{remLABELgoodCON} and Proposition~\ref{condCarcGC}, it is convenient to label polytopes with their defining affine functions instead of the normals (or integers). We adopt this convention in this section.

The standard $n$-simplex is $\Delta_n:=\{ x=(x_1,\dots, x_n)\in \R^n\,|\, x_i\geq 0, \sum_{i=1}^n x_i\leq 1\}$ and a $n$--simplex is any polytope affinely equivalent to $\Delta_n$. Two polytopes have the same combinatorial type if there is a bijection between their faces that preserves the relation of inclusion. Observe that this is the case of any two compact polytopes characteristic (or transversal) to the same polyhedral cone. In particular it makes sense to speak about the combinatorial type of a cone (without vertex) and compare it to that of a polytope.   

\begin{lemma}\label{lemmaSPLITTINGtoricSimplex} Let $(C^{n_1+n_2+1},\Lambda)$ be a strictly convex good polyhedral cone such that its characteristic polytopes has the combinatorial type of $\Delta_{n_1}\times \Delta_{n_2}$. Then there exists $b\in \Lambda\cap C^*$ such that $C\cap \{x \,|\, \langle x, b \rangle =1\}$ is a product of two simplices $P_1\times P_2$.    
\end{lemma}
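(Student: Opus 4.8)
The plan is to construct $b$ explicitly from the \emph{unique} linear relation among the inward facet normals of $C$, and then read off the product structure of the slice by a direct change of coordinates. Since $\Delta_{n_1}\times\Delta_{n_2}$ has $(n_1+1)+(n_2+1)$ facets, $C$ has $d=n_1+n_2+2$ facets; let $l_1,\dots,l_d\in\Lambda$ be the primitive inward normals, so $C=\{x\in\kt^*\mid\langle x,l_i\rangle\ge 0\}$. The combinatorial type of $\Delta_{n_1}\times\Delta_{n_2}$ picks out a canonical partition of the facets into two blocks $A$, $B$ with $|A|=n_1+1$, $|B|=n_2+1$ — namely $A$ and $B$ are the two \emph{minimal} subsets $I$ of facets with $C\cap\bigcap_{i\in I}\{\langle\cdot,l_i\rangle=0\}=\{0\}$ — and a subset $I$ indexes a nontrivial face of $C$ exactly when $I$ contains neither $A$ nor $B$. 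First I would observe that, $C$ being strictly convex, $\overline C$ contains no line, so the $l_i$ span $\kt$; as $d=\dim\kt+1$, the space of linear relations $\{c\in\R^d\mid\sum_i c_il_i=0\}$ is one-dimensional and rational, hence spanned by a primitive integral vector $c=(c_i)$.

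Next I would fix the signs of the $c_i$. Each ray of $C$ corresponds to a vertex of a characteristic polytope, hence (by hypothesis) to a pair $(p,q)\in A\times B$, the ray being generated by some $x\ne 0$ with $\langle x,l_i\rangle=0$ for $i\notin\{p,q\}$ and $\langle x,l_p\rangle,\langle x,l_q\rangle>0$ (neither pairing can vanish, since then $x$ would lie in a face whose index set contains $A$ or $B$, which is trivial). Pairing $\sum_i c_il_i=0$ with $x$ gives $c_p\langle x,l_p\rangle+c_q\langle x,l_q\rangle=0$, so $c_p,c_q$ are nonzero of opposite sign; letting $(p,q)$ range over $A\times B$ forces $c$ to have constant sign on $A$ and the opposite constant sign on $B$. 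Replacing $c$ by $-c$ if needed, assume $c_i>0$ for $i\in A$ and $c_i<0$ for $i\in B$, and set $b:=\sum_{i\in A}c_il_i\in\Lambda$ (equivalently $b=\sum_{i\in B}|c_i|l_i$). Evaluating the first expression on a ray generator $x$ for $(p,q)$ gives $\langle x,b\rangle=c_p\langle x,l_p\rangle>0$; since the ray generators of $C$ are precisely the inward normals of the facets of $C^*$, this shows $b$ lies in the interior of $C^*$, so $b\in\Lambda\cap C^*$.

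Finally I would identify the slice. Consider the injective linear map $L\colon\kt^*\to\R^d$, $x\mapsto(\langle x,l_i\rangle)_i$; its image is the hyperplane $\{s\mid\sum_i c_is_i=0\}$, and it carries $C$ bijectively onto that hyperplane intersected with $\R^d_{\geq 0}$. On $\{\langle\cdot,b\rangle=1\}$ the extra constraint is $\sum_{i\in A}c_is_i=1$, and combined with $\sum_i c_is_i=0$ it forces $\sum_{i\in B}|c_i|s_i=1$ as well; thus $L$ restricts to an affine isomorphism of $\{x\mid\langle x,b\rangle=1\}$ onto the product affine space
\[
\{\,s_A\in\R^A : \textstyle\sum_{i\in A}c_i(s_A)_i=1\,\}\times\{\,s_B\in\R^B : \textstyle\sum_{i\in B}|c_i|(s_B)_i=1\,\},
\]
under which $P_b=C\cap\{\langle\cdot,b\rangle=1\}$ maps onto $\Delta_A\times\Delta_B$, where $\Delta_A=\{s_A\in\R^A_{\geq 0}\mid\sum_{i\in A}c_i(s_A)_i=1\}$ and $\Delta_B$ is defined analogously. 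Since all coefficients are strictly positive, $\Delta_A$ is the convex hull of the $n_1+1$ affinely independent points $e_i/c_i$, an $n_1$-simplex, and likewise $\Delta_B$ is an $n_2$-simplex. Hence $P_b$ is affinely a product of two simplices, which is the claim.

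I expect the \textbf{main obstacle} to be the second paragraph: faithfully converting the hypothesis ``the characteristic polytopes have the combinatorial type of $\Delta_{n_1}\times\Delta_{n_2}$'' into the combinatorial facts actually used — that the facet set splits canonically into blocks of sizes $n_1+1$ and $n_2+1$, that a subset indexes a face iff it omits a facet from each block, and that for a ray generator the two surviving pairings are not merely nonzero but positive, so that the relation genuinely splits the coefficients by block. The remaining ingredients (one-dimensionality of the relation space, membership $b\in C^*$, and the coordinate computation) are then routine. One should also check that no finer use of goodness of $C$ is needed here: the statement only asks for $b\in\Lambda\cap C^*$ with the slice a product of simplices, and goodness enters only through strict convexity and the fact that $C$ is a genuine moment cone.
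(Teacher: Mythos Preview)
Your proof is correct and the core idea coincides with the paper's: both recognize that the $n_1+n_2+2$ facet normals in a space of dimension $n_1+n_2+1$ admit an essentially unique linear relation, that evaluating on ray (edge) generators of $C$ forces the coefficients to split by sign according to the two blocks $A,B$, and that $b$ should be the common value $\sum_{i\in A}c_il_i=\sum_{i\in B}|c_i|l_i$. The paper phrases the first step as the nontrivial intersection of the positive spans $C_1^*=\Span_{\R_{>0}}\{l_i:i\in A\}$ and $C_2^*=\Span_{\R_{>0}}\{l_i:i\in B\}$ and then appeals to Lemma~\ref{lemmaSPLITTINGlabelling} for the product structure of the slice, whereas you exhibit the product directly via the map $x\mapsto(\langle x,l_i\rangle)_i$. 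One genuine simplification on your side: you get $c\in\Z^d$ (hence $b\in\Lambda$) immediately from the fact that the one-dimensional kernel of an integer matrix is rational, while the paper invokes goodness of $(C,\Lambda)$ to locate lattice points on each edge $E_{(i,j)}$ and deduce the rationality of the ratios $c_p/c_q$ that way; so your closing remark that only rationality of the cone, not full goodness, is needed here is on point.
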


From this Lemma we easily get the following statement from which Theorem~\ref{theoSPLITTINGtoricSimplex} is extracted.   
\begin{cor}\label{coroSPLITTINGtoricSimplex} Let $(M,\cald,T)$ be a toric contact manifold of Reeb type whose moment cone has the combinatorial type of a product of $n_1$ and $n_2$ dimensional simplices. Then $(M,\cald)$ is cone reducible and there exists a Reeb vector field $X$ for which $(M,\cald, X)$ is obtained as the join construction of two weighted projective spaces of complex dimension $n_1$ and $n_2$, respectively.  
\end{cor}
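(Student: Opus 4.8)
The plan is to deduce Corollary~\ref{coroSPLITTINGtoricSimplex} from Lemma~\ref{lemmaSPLITTINGtoricSimplex} together with the Delzant--Lerman--Tolman dictionary, and to prove the lemma itself by a combinatorial analysis of the labelled moment cone. The corollary first, since it is short modulo the lemma. Given $(M^{2n+1},\cald,\bT)$ toric of Reeb type with good moment cone $(C,\Lambda)$, $C\subset\kt^*\simeq\R^{n+1}$ ($n=n_1+n_2$), of the combinatorial type of $\Delta_{n_1}\times\Delta_{n_2}$, the lemma produces $b\in\Lambda\cap C^*$ with $P_b:=C\cap\{\langle\cdot,b\rangle=1\}=P_1\times P_2$ a product of an $n_1$-simplex and an $n_2$-simplex. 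Since $\R b\cap\Lambda\neq\{0\}$, Proposition~\ref{bg00b} says $X_b$ is a quasi-regular Reeb field and $(M/S^1_b,(d\eta_b)_\cald,\bT/S^1_b)$ is the symplectic toric orbifold with rational labelled polytope $(P_b,\bu_b,\Lambda_b)=(P_1\times P_2,\dots)$; Proposition~\ref{prodpolyquasi} then gives that each labelled factor $(P_\epsilon,\{l_{\epsilon,i}\})$ is rational and characteristic to a (good, being a labelled simplex — Remark~\ref{remLABELgoodCON}(i)) rational cone, hence is the moment polytope of a weighted projective space $\bbc\PP^{n_\epsilon}(\bfw_\epsilon)$. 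Because the labelled polytope and its lattice split as a product, $M/S^1_b\cong\bbc\PP^{n_1}(\bfw_1)\times\bbc\PP^{n_2}(\bfw_2)$ with a product orbifold K\"ahler form $l_1\gro_1+l_2\gro_2$, so the orbifold Boothby--Wang construction exhibits $M$, with the chosen Reeb field, as the $(l_1,l_2)$-join of the weighted Sasakian spheres $S^{2n_1+1}_{\bfw_1}$ and $S^{2n_2+1}_{\bfw_2}$. Thus $\cals_b$ is decomposable, hence reducible by Proposition~\ref{joinred}, so $(M,\cald)$ is cone reducible (Definition~\ref{conered}); Theorem~\ref{theoSPLITTINGtoricSimplex} follows, its ``in particular'' cases being the toric $S^{2k+1}$-bundles over $\bbc\PP^m$ and $\bbc\PP^m$-bundles over $S^{2k+1}$, whose moment cones are exactly those of the combinatorial type of a product of two simplices.

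The substance is Lemma~\ref{lemmaSPLITTINGtoricSimplex}, which I would prove as follows. Let $l_1,\dots,l_d\in\Lambda$ be the primitive inward normals of $C$; the combinatorial type $\Delta_{n_1}\times\Delta_{n_2}$ forces $d=n_1+n_2+2$ and a partition into blocks $B_1=\{l_{1,0},\dots,l_{1,n_1}\}$, $B_2=\{l_{2,0},\dots,l_{2,n_2}\}$ such that a set of normals indexes a nonempty proper face of $C$ iff it omits at least one index from each block; moreover every characteristic polytope is simple, so each vertex lies on exactly $n$ facets, all but one of $B_1$ and all but one of $B_2$. First, $B_1$ is linearly independent: given $\sum_i a_i l_{1,i}=0$, pair it with the generator $x_v$ of the extreme ray of $C$ over a vertex $v$ lying on every $B_1$-facet except $\hat F_{1,i_0}$; since only the $i_0$-term survives and $\langle x_v,l_{1,i_0}\rangle>0$, we get $a_{i_0}=0$, for every $i_0$; symmetrically for $B_2$. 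As the $l_i$ span $\kt$ (strict convexity) and $d=\dim\kt+1$, it follows that $\dim\Span B_1=n_1+1$, $\dim\Span B_2=n_2+1$, and $\Span B_1\cap\Span B_2$ is a line; being $\Q$-rational it meets $\Lambda$ in a rank-one lattice, and I take $b_0$ a primitive generator. Writing $b_0=\sum_i\alpha_i l_{1,i}=\sum_j\beta_j l_{2,j}$ and pairing with the extreme ray $x_v$ over the vertex missing $\hat F_{1,a},\hat F_{2,c}$ gives $\langle x_v,b_0\rangle=\alpha_a\langle x_v,l_{1,a}\rangle=\beta_c\langle x_v,l_{2,c}\rangle$ with both $\langle x_v,l_{1,a}\rangle,\langle x_v,l_{2,c}\rangle>0$; letting $v$ range shows all $\alpha_i$ share one sign and all $\beta_j$ share it, so after replacing $b_0$ by $-b_0$ if needed, $\alpha_i,\beta_j>0$ and $\langle x_v,b_0\rangle>0$ on every extreme ray, i.e. $b_0\in C^*=\gt^+$.

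It remains to see that $P_{b_0}=C\cap\{\langle\cdot,b_0\rangle=1\}$ is a product of simplices. Fix $x_*\in P_{b_0}$ and decompose the direction space $b_0^\perp=\Span(B_2)^\perp\oplus\Span(B_1)^\perp$: the summands have dimensions $n_1,n_2$, meet only in $0$ because $\Span B_1+\Span B_2=\kt$, and sum to $b_0^\perp$ because $\Span(B_1)^\perp+\Span(B_2)^\perp=(\Span B_1\cap\Span B_2)^\perp=b_0^\perp$. For $x=x_*+u+u'$ with $u\in\Span(B_2)^\perp,\ u'\in\Span(B_1)^\perp$, each $B_1$-function $\langle\cdot,l_{1,i}\rangle$ is independent of $u'$ and each $B_2$-function is independent of $u$; hence $P_{b_0}=x_*+P_1''\times P_2''$, where $P_1''\subset\Span(B_2)^\perp$ is an $n_1$-dimensional polytope cut out by the $n_1+1$ functions from $B_1$, necessarily a simplex, and symmetrically $P_2''$ an $n_2$-simplex. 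This proves the lemma.

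The step I expect to be the main obstacle is the positivity claim $b_0\in C^*$: one must convert the purely combinatorial hypothesis (the characteristic polytope is simple of product type) into the convex-geometric fact that $\langle\cdot,b_0\rangle$ is strictly positive on every extreme ray of $C$, since this is exactly what makes $X_{b_0}$ an honest (quasi-regular) Reeb vector field and lets Proposition~\ref{bg00b} apply; the argument above handles it via the vertex-by-vertex pairings, but one must be careful that such vertices (and hence extreme rays of $C$) exist for every choice of dropped facet, which is where simplicity of $\Delta_{n_1}\times\Delta_{n_2}$ enters. A secondary, more routine but fiddly point is tracking the lattice data through the Delzant--Lerman--Tolman correspondence so that the product of labelled simplices is recognized as a genuine product of weighted projective spaces, and then matched with the specific $S^3_\bfw$-type join of \cite{BoTo14a}.
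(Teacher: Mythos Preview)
Your proof is correct and follows essentially the same route as the paper: find $b$ in the one-dimensional intersection $\Span B_1\cap\Span B_2$, use the vertex/edge pairings to force all coefficients to have the same sign (so $\pm b\in C^*$), and use rationality of the normals to land $b$ in $\Lambda$. The only cosmetic difference is that you verify the product structure of $P_{b_0}$ by directly splitting the affine hyperplane $b_0^\perp=\Span(B_2)^0\oplus\Span(B_1)^0$, whereas the paper packages this step into Lemma~\ref{lemmaSPLITTINGlabelling} (a labelled polytope is a product iff its normals split into two blocks with independent linear relations); your explicit linear-independence argument for each $B_\epsilon$ also fills in a step the paper asserts by dimension count.
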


We will need the following lemma the proof of which is nearly trivial. 
\begin{lemma}\label{lemmaSPLITTINGlabelling} A labelled polytope $(P,u)$ is a product if and only if one can split the set of normals in two disjoint subsets $u= \{u_i\}_{i=1}^{d_1}\cup \{u_i\}_{i=d_1+1}^{d=d_1+d_2}$ such that $$\sum_{i=1}^d x_iu_i = 0\implies \sum_{i=1}^{d_1} x_iu_i =0  \;\;\mbox{ and }\;\;\sum_{i=d_1+1}^{d_1+d_2} x_iu_i=0.$$    
\end{lemma}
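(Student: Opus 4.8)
The plan is to prove the equivalence purely by linear algebra, relating the product structure on $(P,u)$ to a direct–sum decomposition of the ambient dual vector space induced by the two blocks of normals. Throughout, let $V$ denote the vector space underlying the affine hull $\cala$ of $P$, so that the normals $u_i$ live in $V^*$ and $P=\{x\in\cala\mid l_i(x)\geq 0,\ i=1,\dots,d\}$ for the facet–defining affine functions $l_i$ with linear part $u_i$.

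For the easy implication, suppose $(P,u)$ is a product: after an affine isomorphism we may assume $\cala=\cala_1\times\cala_2$ with $\cala_j$ non-trivial, $P=P_1\times P_2$, and $u=\{u_i\}_{i=1}^{d_1}\cup\{u_i\}_{i=d_1+1}^{d}$ the normals of $P_1$ and of $P_2$ respectively. Writing $V=V_1\oplus V_2$ for the underlying vector spaces, the facets of $P_1\times P_2$ are $F\times P_2$ (for $F$ a facet of $P_1$) and $P_1\times G$ (for $G$ a facet of $P_2$), so the normals in the first block lie in $V_1^*\subseteq V_1^*\oplus V_2^*=V^*$ and those in the second block lie in $V_2^*$. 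Projecting any relation $\sum_i x_i u_i=0$ onto the two summands then yields $\sum_{i\le d_1}x_i u_i=0$ and $\sum_{i>d_1}x_i u_i=0$, as required.

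For the converse, I would start from a splitting $u=\{u_i\}_{i\le d_1}\cup\{u_i\}_{i>d_1}$ into two non-empty subsets with the stated property and set $W_j=\Span_\R\{u_i\mid i\ \text{in block}\ j\}\subseteq V^*$. Since $P$ is a bounded polytope its facet normals span $V^*$, hence $W_1+W_2=V^*$; and if $w\in W_1\cap W_2$ then writing $w=\sum_{i\le d_1}a_iu_i=\sum_{i>d_1}b_iu_i$ gives a relation $\sum_{i\le d_1}a_iu_i-\sum_{i>d_1}b_iu_i=0$, so the hypothesis forces $w=\sum_{i\le d_1}a_iu_i=0$; thus $V^*=W_1\oplus W_2$. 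Dualizing, $V=V_1\oplus V_2$ with $V_1=W_2^{0}$ and $V_2=W_1^{0}$ the annihilators, restriction identifying $W_j$ with $V_j^*$, and every $u_i$ in block $j$ vanishing on $V_{3-j}$. Fixing a base point $o\in\cala$ gives an affine isomorphism $\cala\cong\cala_1\times\cala_2$ with $\cala_j=o+V_j$, and since the linear part $u_i$ of $l_i$ kills $V_{3-j}$ for $i$ in block $j$, the function $l_i$ depends only on the $\cala_j$–coordinate. Hence $P=P_1\times P_2$ with $P_j=\{y\in\cala_j\mid l_i(y)\geq 0\ \text{for}\ i\ \text{in block}\ j\}$; these are non-empty (as $P\neq\emptyset$), bounded and of non-empty relative interior (as $P$ is), with $\dim\cala_j=\dim W_j\geq 1$ since each block is non-empty with non-zero normals. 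Finally, because the facets of $P_1\times P_2$ split as above, the $l_i$ with $i\le d_1$ are exactly the facet–defining functions of $P_1$ with inward normals $u_i\in W_1=V_1^*$, and symmetrically for $P_2$; so $(P,u)=(P_1\times P_2,\{u_i\}_{i\le d_1}\cup\{u_i\}_{i>d_1})$ is a product labelled polytope.

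The argument is routine; the only point that needs a moment's thought is the identification $V^*=W_1\oplus W_2$, where directness of the sum is precisely the content of the hypothesis and the spanning uses boundedness of $P$ — together with the harmless but necessary convention that both blocks of normals are non-empty, so that the two factors come out non-trivial.
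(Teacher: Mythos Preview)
Your proof is correct; the key step---that the hypothesis forces $W_1\cap W_2=0$ while boundedness of $P$ gives $W_1+W_2=V^*$, hence $V^*=W_1\oplus W_2$ and $P$ factors accordingly---is exactly the linear algebra the statement encodes. The paper itself omits the proof entirely, remarking only that it is ``nearly trivial,'' so your argument is precisely the routine verification the authors leave to the reader.
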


\begin{proof}[Proof of Lemma~\ref{lemmaSPLITTINGtoricSimplex}]
The hypothesis implies that we can split the set of normal inward vectors of $C$ in two groups \begin{equation}\label{normalsSimplProd}
l_0^1,\dots, l_{n_1}^1 \;\;\mbox{ and }\;\; l_0^2,\dots, l_{n_2}^2
\end{equation}
such that there is no edge of $C$ on which every vector of one group vanishes (i.e it would correspond to a vertex lying in every facet of a simplex). 

To prove the lemma it suffices to find a Reeb vector $b$ lying in $\Lambda$ and that is a linear combination of the $\{l_i^1\}_{i=0}^{n_1}$ and a linear combination of the $\{l_i^2\}_{i=0}^{n_2}$. Indeed, in the quotient space $\kt /\R b$ the set of vectors $\{[l_0^1],\dots, [l_{n_1}^1]\}$ is linearly dependant as well as the set $\{[l_0^2],\dots, [l_{n_2}^2]\}$. Consequently, by a dimensional argument (this is where the simplices assumption comes in) the characteristic labelled polytope of $(C,\Lambda)$ at $b$ satisfies Lemma~\ref{lemmaSPLITTINGlabelling} and is thus a product.       

We introduce some notation. Let $F_i^\epsilon$ denote the facet of $C$ corresponding to the zero locus (in $C$) of $l_i^\epsilon$ for $\epsilon \in \{1,2\}$ and $i\in I_\epsilon=\{0,\dots, n_\epsilon\}$. Edges of $C$ are parametrized by $(i,j) \in I_1\times I_2$ so that the corresponding edge is $$E_{(i,j)} =\left(\bigcap_{k \in I_1\backslash \{ i\}} F_k^1\right)\cap \left(\bigcap_{k \in I_2\backslash \{ j\}} F_k^2\right).$$
First, we will prove that the open positive cones generated by each set of normals~\eqref{normalsSimplProd} have to meet in the cone $C^*$. Indeed, these cones $$C_1^*=\Span_{\R_{>0}}\{l_0^1,\dots, l_{n_1}^1\}\;\; \mbox{ and }\;\; C_2^*=\Span_{\R_{>0}}\{l_0^2,\dots, l_{n_2}^2\}$$ are respectively of dimension $n_1+1$ and $n_2+1$ in a space of dimension $n_1+n_2+1$. Hence the linear subpaces they generate meet in a line (at least) which contain a non trivial vector, say $b$. That is there exists for each $\epsilon \in \{1,2\}$ a vector $(a_0^\epsilon, \dots a_{n_\epsilon}^\epsilon)\in \R^{n_\epsilon+1} \backslash \{0\}$ such that $$b= \sum_{i=0}^{n_\epsilon} a_i^\epsilon l_i^\epsilon.$$  Now pick a point $x$ lying in the edge $E_{(i,j)}$ and evaluate $b$ on it. We have $$\langle b,x\rangle = a_i^1 \langle l_i^1, x \rangle =  a_j^2 \langle l_j^2, x \rangle$$ but $\langle l_i^1, x \rangle$ and $\langle l_j^2, x \rangle$ are both positive so that $a_i^1$ and $a_j^2$ have the same sign or both vanish. Since 
we can do the same argument for all $(i,j) \in I_1\times I_2$ then $b$ or $-b$ lies in $C_1^*\cap C_2^* \subset C^*$. 

To conclude the proof we need to prove that we can pick $b\in \Lambda \cap C_1^*\cap C_2^*$, so that the characteristic labelled polytope associated to $b$ is rational. The vectors above $(a_0^\epsilon, \dots a_{n_\epsilon}^\epsilon)\in \R^{n_\epsilon+1}$ satisfy $a^1_i/a^2_j \in \Q$ for each pair $(i,j) \in I_1\times I_2$. This relation is a consequence of the hypothesis that $(C,\Lambda)$ is good. Indeed, $E_{(i,j)}\cap \Lambda$ is then non-empty and for $x\in E_{(i,j)}\cap \Lambda$ we have $a_i^1/a_j^2 =\langle l_j^2, x \rangle/\langle l_i^1, x \rangle\in \Q.$ Hence, $a_i^1/a_j^1 \in \Q$ for $i,j\in I_1$ and then, up to an overall factor $(a_0^\epsilon, \dots a_{n_\epsilon}^\epsilon)\in \Z^{n_\epsilon+1}$.   \end{proof}

\subsection{Reversing the quotient of a Join}\label{join reverse}
It was shown in \cite{BoTo14a} how one begins with a certain product of projective algebraic orbifolds and constructs cone reducible Sasakian structures such that any quasi-regular Sasaki structure in the $\bfw$-cone is an orbibundle over a log pair $(S_n,\grD)$ consisting of a ruled manifold $S_n=\bbp(\BOne\oplus L_n)$ together with a certain branch divisor $\grD$. It is the purpose of this section to invert this procedure. 
In the following we assume that a K\"ahler form $\omega_N$ with primitive K\"ahler class $[\omega_N]$ has been fixed on a compact K\"ahlerian manifold $N$. Then by $S_n$ we mean the total space of $\bbp(\BOne\oplus L_n)$, where $L_n \rightarrow N$ is a holomorphic vector bundle such that $c_1(L_n) = n[\omega_N]$. Likewise, an
$S^3_\bfw$-join $M_{l_1,l_2,\bfw}$ is assumed to use the chosen $\omega_N$.

For the special case where $l_2=1$, Theorem 3.8 in \cite{BoTo14a} tells us that the quotient of the $S^3_\bfw$-join $M_{l_1,1,\bfw}$ by the flow of the Reeb vector field $\xi_\bfv$, determined by co-prime $v_1,v_2 \in \bbz^+$, in the $\bfw$-cone is the log pair $(S_n,\grD)$
where $\Delta$ denotes the branch divisor
$$\grD= (1-1/v_1)D_1+(1-1/v_2)D_2,$$
where $D_1,D_2$ are the zero, infinity sections of $S_n$, respectively and $n=l_1(w_1 v_2 - w_2 v_1)$. 
For convenience we will introduce the notation $(S_n,\grD_{v_1,v_2})$
It is natural to ask if all such log pairs may arise as such a quotient.
In the following we will not make the assumption $w_1\geq w_2$. This is a practical assumption made in \cite{BoTo14a}, but it is not being used in the arguments leading up to Theorem 3.8 in \cite{BoTo14a}.  The only reason for making this assumption in \cite{BoTo14a} was to avoid redundancy and it just parallels the fact that
$(S_n,\grD_{v_1,v_2}) \cong (S_{-n},\grD_{v_2,v_1})$.

\begin{prop}\label{easyreverse}
For any choice of $n\in \bbz$ and co-prime $v_1,v_2 \in \bbz^+$, there is a choice of co-prime $w_1,w_2\in \bbz^+$ such that
the quotient of  $M_{|n|,1,\bfw}$ by the flow of the Reeb vector field $\xi_\bfv$, determined by co-prime $(v_1,v_2)$, in the $\bfw$-cone is the log pair 
$(S_n,\grD_{v_1,v_2})$.
\end{prop}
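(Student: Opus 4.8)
The plan is to reduce the statement to the numerical identity $n = l_1(w_1 v_2 - w_2 v_1)$ recorded above (Theorem 3.8 of \cite{BoTo14a}) and then to solve that identity by an elementary argument. By that theorem, the quotient of $M_{l_1,1,\bfw}$ by the flow of the Reeb vector field $\xi_\bfv$ in the $\bfw$-cone is exactly the log pair $(S_m,\grD_{v_1,v_2})$ with $m = l_1(w_1 v_2 - w_2 v_1)$. Hence, given $n\in\bbz$ and coprime $v_1,v_2\in\bbz^+$, it suffices to produce $l_1\in\bbz^+$ and coprime $w_1,w_2\in\bbz^+$ with $l_1(w_1 v_2 - w_2 v_1) = n$.

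Suppose first $n\neq 0$. I would set $l_1 = |n|$ and $\epsilon = n/|n| \in \{+1,-1\}$, so that the requirement becomes $w_1 v_2 - w_2 v_1 = \epsilon$. Since $\gcd(v_1,v_2) = 1$, B\'ezout's identity supplies $a,b\in\bbz$ with $a v_2 - b v_1 = \epsilon$, and the full integer solution set of $w_1 v_2 - w_2 v_1 = \epsilon$ is $\{(a + t v_1, b + t v_2) : t\in\bbz\}$. Because $v_1,v_2 \geq 1$, choosing $t$ large enough makes both entries positive; I fix such a $t$ and set $\bfw = (w_1,w_2) = (a + t v_1, b + t v_2)$. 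Coprimality is then automatic, since any common divisor of $w_1$ and $w_2$ divides $w_1 v_2 - w_2 v_1 = \epsilon = \pm 1$. It remains to verify that $M_{|n|,1,\bfw}$ is an honest smooth Sasaki manifold, so that Theorem 3.8 of \cite{BoTo14a} applies: $M'$ is regular, so its order is $\upsilon_1 = 1$, and the criterion of Proposition \ref{smoothprop} reduces to $\gcd(|n|\,\upsilon_2, 1) = 1$, which always holds. Inserting this $(l_1,\bfw)$ into Theorem 3.8 of \cite{BoTo14a} then gives that the quotient of $M_{|n|,1,\bfw}$ by the flow of $\xi_\bfv$ in the $\bfw$-cone is $(S_n,\grD_{v_1,v_2})$, as desired. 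The degenerate case $n = 0$ is immediate: $S_0 = N\times\bbc\bbp^1$, and taking $\bfw = \bfv$ (so that $w_1 v_2 - w_2 v_1 = 0$, with $l_1 = 1$), Theorem 3.8 of \cite{BoTo14a} identifies the quotient of $M_{1,1,\bfv}$ with $(S_0,\grD_{v_1,v_2})$.

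I do not foresee a real obstacle here; the single point that deserves attention is the coprimality of $w_1$ and $w_2$, and it is precisely to secure it that one takes $l_1 = |n|$ exactly (rather than a proper divisor of $n$): this forces the offset $w_1 v_2 - w_2 v_1$ to be $\pm 1$, whence $\gcd(w_1,w_2) = 1$ for free. The rest is bookkeeping --- citing Theorem 3.8 of \cite{BoTo14a} and checking the smoothness condition of Proposition \ref{smoothprop}.
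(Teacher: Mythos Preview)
Your proof is correct and follows essentially the same approach as the paper: both set $l_1=|n|$, invoke B\'ezout to solve $w_1v_2-w_2v_1=\pm 1$, and shift by multiples of $(v_1,v_2)$ to force positivity. Your version is slightly more careful in explicitly deriving $\gcd(w_1,w_2)=1$ from the B\'ezout relation and in checking the smoothness criterion of Proposition~\ref{smoothprop}, both of which the paper leaves implicit.
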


\begin{proof}
The case where $n=0$ is trivial, so we assume that $n\in \bbz\setminus\{0\}$.
The proof is simply using the fact (following from B\'ezout's Identity) that for co-prime $v_1,v_2\in\bbz^+$ we can always find co-prime $a,b \in \bbz^+$ such that
$$av_2+ bv_1 = \frac{n}{|n|}$$
and since $v_1,v_2 \in \bbz^+$, we realize that the integers $a$ and $b$ must have opposite signs. 
Indeed we may assume that $b<0$ (by adding $kv_1$ to $a$ and $-kv_2$ to b for a sufficiently large $k\in \bbz$).  If we now let  $w_1=a$, and $w_2=-b$, then with
$l_1=|n|$, we have $l_1(w_1v_2-w_2v_1) =n$ and thus the result follows.
\end{proof}

\begin{rem}
As is clear from the proof above, $\bfw=(w_1,w_2)$ are by no means unique. Indeed we have (at least) a countable infinite set of choices
$(w_1^i,w_2^i)$, $i=1,2,\dots$, where $\displaystyle\lim_{i \rightarrow +\infty} w_2^i = +\infty$. From Lemma 3.11 of \cite{BoTo14a} (with $k_2=1$, $m_1=v_1$, and $l_1=|n|$) we
have the corresponding primitive
K\"ahler classes induced on $(S_n,\grD_{v_1,v_2})$:
$$\Omega = |n| v_1 w_2^i p_{\bfv}^*[\omega_N] + PD(D_1)$$
Here $p_{\bfv}$ denotes the projection from $(S_n,\grD_{v_1,v_2})$ to $N$ and $PD$ denotes the Poincar\'e dual.

Clearly no two distinct viable choices of $(w_1,w_2)$ in Proposition \ref{easyreverse} will result in the same K\"ahler class.
\end{rem}

More generally, let 
$$\Omega = k_1p_{\bfv}^*[\omega_N] + k_2PD(D_1)$$
denote a specific (so-called {\em admissible}) primitive K\"ahler class on $(S_n,\grD_{m_1,m_2})$, where
\begin{equation}\label{mbranch}
\grD_{m_1,m_2}= (1-1/m_1)D_1+(1-1/m_2)D_2
\end{equation}
(with $\gcd(m_1,m_2)=m$ not necessarily equal to one),
and $k_1,k_2 \in \bbz^+$ such that $k_1/k_2 >-n$. 
Then we have a natural Sasaki structure given by the orbifold Boothby-Wang construction. This Sasaki manifold  is a $S^1$-orbibundle over $(S_n,\grD_{m_1,m_2})$ which may or may not be a smooth manifold.
A more subtle question to consider is the following: When does such a  Sasaki structure correspond to a ray in the $\bfw$-cone of a $S^3_\bfw$-join?
In other words, when do we have a $S^3_\bfw$-join which can be obtained from the orbifold Boothby-Wang construction using 
$(S_n,\grD_{m_1,m_2})$ and $\Omega = k_1p_{\bfv}^*[\omega_\Sigma] + k_2PD(D_1)$? As we will see below, given a natural assumption, the answer is {\it always}.

Using Theorem 3.8  and Lemma 3.11 in \cite{BoTo14a} we have the following algorithm for determining the necessary values of  $(w_1,w_2,l_1,l_2)$:

\bigskip

\begin{enumerate}
\item Let $r$ be such that $\frac{n(1-r)}{2r} = \frac{k_1}{k_2}$. Note $0<|r|<1$ and $r$ has the same sign as $n$.
Now $\bfw = (w_1,w_2)$ is the unique positive, integer, and co-prime solution of
$$
r=\frac{w_1m_2-w_2m_1}{w_1m_2+w_2m_1}.
$$
\item Using these $w_1$ and $w_2$, the pair $(l_1,l_2)$ has to be the unique positive integers, and co-prime solution of
$$
\l_2 n = l_1(w_1m_2-w_2m_1)
$$
\end{enumerate}

\bigskip

Using $w_1,w_2,l_1,l_2$ from this  algorithm , we then have that
the Sasaki structure corresponds to a ray in the $S_\bfw$-cone of the (possibly non-smooth) $S^3_\bfw$-join $M_{l_1,l_2,\bfw}$  
if and only if 
\begin{equation}\label{joincond}
l_2 = \gcd(ml_2, |w_1m_2-w_2m_1|).
\end{equation}
In that case, $M_{l_1,l_2,\bfw}$ is a smooth manifold if and only if 
\begin{equation}\label{smoothjoincond}
\gcd (w_1,l_2)=\gcd(w_2,l_2) = 1.
\end{equation}

\begin{prop}\label{harderreverse}
For any choice of $n\in \bbz$, $m_1,m_2 \in \bbz^+$ such that  $n= 0$ or $\gcd(m_1,m_2,n)=1$, and primitive K\"ahler class on $(S_n,\grD_{m_1,m_2})$ of the form
$$\Omega = k_1p_{\bfv}^*[\omega_N] + k_2PD(D_1),$$ 
there is a unique choice of co-prime $w_1,w_2\in \bbz^+$ and co-prime $l_1,l_2\in \bbz^+$ such that,
when we form the $S^3_\bfw$-join $M_{l_1,l_2,\bfw}$,
the quotient of  $M_{l_1,l_2,\bfw}$ by the flow of the Reeb vector field $\xi_\bfv$, determined by $(\frac{m_1}{\gcd(m_1,m_2)},\frac{m_2}{\gcd(m_1,m_2)})$ in the $\bfw$-cone, is the log pair 
$(S_n,\grD_{m_1,m_2})$ with induced K\"ahler class $\Omega$.
This join is smooth if and only if \eqref{smoothjoincond} holds. In particular, the join is smooth if $m_1=m_2=1$.
\end{prop}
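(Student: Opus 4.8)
The plan is to run the displayed two-step algorithm in reverse and then verify that the resulting data $(w_1,w_2,l_1,l_2)$ genuinely produces the desired log pair with the prescribed K\"ahler class. The case $n=0$ is degenerate (the bundle is trivial, $S_0=N\times\bbcbbp^1$) and can be disposed of directly exactly as in the proof of Proposition \ref{easyreverse}; so assume $n\neq 0$ and, since $\gcd(m_1,m_2,n)=1$, write $m=\gcd(m_1,m_2)$ and $v_i=m_i/m$, which are coprime. First I would solve for $r$ from $\frac{n(1-r)}{2r}=\frac{k_1}{k_2}$, i.e. $r=\frac{nk_2}{2k_1+nk_2}$; the hypothesis $k_1/k_2>-n$ forces $2k_1+nk_2$ and $nk_2$ to have the same sign and $|2k_1+nk_2|>|nk_2|$, so $0<|r|<1$ and $\mathrm{sign}(r)=\mathrm{sign}(n)$, as claimed in step (1). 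Next I would show the equation $r=\frac{w_1m_2-w_2m_1}{w_1m_2+w_2m_1}$ has a unique positive coprime integer solution $(w_1,w_2)$: writing $r=p/q$ in lowest terms, this amounts to $w_1m_2(q-p)=w_2m_1(q+p)$, and a short gcd-bookkeeping argument (using $\gcd(p,q)=1$ hence $\gcd(q-p,q+p)\mid 2$) pins down $w_1,w_2$ uniquely after clearing the common factor. Then step (2) solves $l_2n=l_1(w_1m_2-w_2m_1)$ for the unique positive coprime pair $(l_1,l_2)$, which exists because by construction $w_1m_2-w_2m_1$ and $n$ have the same sign.

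Having produced $(w_1,w_2,l_1,l_2)$, the next block of the argument is to check that this data lands inside the regime where Theorem 3.8 of \cite{BoTo14a} applies, i.e. that condition \eqref{joincond}, $l_2=\gcd(ml_2,|w_1m_2-w_2m_1|)$, holds. This is where I expect the main work to lie. From step (2), $l_1$ divides $l_2 n$ and $\gcd(l_1,l_2)=1$, so $l_1\mid n$; writing $n=l_1 n'$ gives $w_1m_2-w_2m_1=l_2 n'$, and I must show $\gcd(ml_2,l_2n')=l_2$, equivalently $\gcd(m,n')=1$. This is exactly the place where the hypothesis $\gcd(m_1,m_2,n)=1$ is used: since $m=\gcd(m_1,m_2)$ and $n=l_1 n'$, a common prime factor of $m$ and $n'$ would divide $m_1$, $m_2$, and $n$, contradicting the hypothesis. (One should also confirm $n'$ has been extracted so that $\gcd(l_1 n', \text{stuff})$ behaves; the coprimality $\gcd(l_1,l_2)=1$ keeps the two factors separated.) Once \eqref{joincond} is verified, Theorem 3.8 and Lemma 3.11 of \cite{BoTo14a} say precisely that the quotient of $M_{l_1,l_2,\bfw}$ by the flow of $\xi_\bfv$ with $\bfv=(v_1,v_2)=(m_1/m,m_2/m)$ is the log pair $(S_n,\grD_{m_1,m_2})$, because the exponent computation $n=l_1(w_1v_2-w_2v_1)=l_1(w_1m_2-w_2m_1)/m=l_2 n'$... here I must double-check the bookkeeping between $v_i$ and $m_i$ so that the branch divisor comes out with ramification indices $m_1,m_2$ rather than $v_1,v_2$; the point is that Theorem 3.8 is stated for coprime $\bfv$ but one reads off the orbifold structure including the factor $m$ from $\gcd(k_1,k_2)$-type data, and the chosen $k_1,k_2$ feed back the extra ramification $m$.

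The final two points are quick. For the K\"ahler class: Lemma 3.11 of \cite{BoTo14a} gives the induced admissible class on $(S_n,\grD_{m_1,m_2})$ as an explicit combination of $p_\bfv^*[\omega_N]$ and $PD(D_1)$ with coefficients determined by $l_1,l_2,\bfw,m_1,m_2$; since our $(l_1,l_2,\bfw)$ were solved for precisely so that $r$—hence the ratio of those coefficients, hence the class up to scale—matches $\Omega=k_1 p_\bfv^*[\omega_N]+k_2 PD(D_1)$, and since primitivity fixes the overall normalization, the induced class equals $\Omega$. For smoothness: Proposition \ref{smoothprop} (equivalently the criterion \eqref{smoothjoincond} recalled above) says $M_{l_1,l_2,\bfw}$ is a smooth manifold iff $\gcd(w_1,l_2)=\gcd(w_2,l_2)=1$; this is stated as is, and when $m_1=m_2=1$ one has $m=1$, the branch divisor is empty, $v_i=m_i=1$, and tracing through step (1)–(2) shows $l_2\mid$ something coprime to the $w_i$ forcing $l_2=1$ (or more directly $w_1-w_2=l_1 n'$ with $\gcd(l_1,l_2)=1$ and the $r$-equation), so \eqref{smoothjoincond} holds automatically. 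Uniqueness throughout is inherited from the uniqueness at each algorithm step, which I established above. I expect the genuine obstacle to be the verification of \eqref{joincond} together with carefully matching the ramification indices $m_1,m_2$ (as opposed to the reduced $v_1,v_2$) in the output of Theorem 3.8; everything else is an exercise in B\'ezout-style integer arithmetic and citing \cite{BoTo14a}.
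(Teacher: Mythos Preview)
Your proposal is correct and follows essentially the same approach as the paper: dispose of $n=0$, run the two-step algorithm to produce $(w_1,w_2,l_1,l_2)$, and then verify \eqref{joincond} using the hypothesis $\gcd(m,n)=1$ (equivalently $\gcd(m_1,m_2,n)=1$). Your verification of \eqref{joincond} is in fact slightly more streamlined than the paper's: you observe directly that $\gcd(l_1,l_2)=1$ forces $l_1\mid n$, write $n=l_1 n'$ so that $w_1m_2-w_2m_1=l_2 n'$, and reduce \eqref{joincond} to $\gcd(m,n')=1$, which is immediate from $n'\mid n$ and $\gcd(m,n)=1$. The paper instead introduces $p=\gcd(n,|w_1v_2-w_2v_1|)$ and $q$ with $w_1v_2-w_2v_1=pq$, deduces $l_1=|n|/p$ and $l_2=m|q|$, and computes $\gcd(ml_2,|w_1m_2-w_2m_1|)=\gcd(m^2|q|,mp|q|)=m|q|$ using $\gcd(p,m)=1$; this is the same factorization (your $|n'|$ is their $p$) reached by a longer route. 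Your concern about matching the ramification indices $m_1,m_2$ versus $v_1,v_2$ is legitimate but the paper handles it exactly as you propose, by citing Theorem~3.8 and Lemma~3.11 of \cite{BoTo14a}; and the $m_1=m_2=1$ smoothness claim, which both you and the paper call straightforward, is indeed so: from $l_2\mid (w_1-w_2)$ any common prime of $l_2$ and $w_1$ would also divide $w_2$, contradicting $\gcd(w_1,w_2)=1$.
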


\begin{proof}
The case where $n=0$ is trivial, so 
let $n\in \bbz\setminus\{0\}$ and $m_1,m_2,k_1,k_2 \in \bbz^+$ be given so that $k_1/k_2 >-n$ and $\gcd(n,m)=1$, where $m=\gcd(m_1,m_2)$.
We define $v_i=m_i/m$ for $i=1,2$. 
Suppose we have determined $w_1,w_2,l_1,l_2$ following the algorithm above. We want to prove that \eqref{joincond} holds.

Let $p = \gcd(n,|w_1v_2-w_2v_1|)$. Since $\gcd(n,m)=1$ we know that $\gcd(p,m)=1$. Further
$w_1v_2-w_2v_1= p q$ for some $q \in \bbz$ satisfying that $\gcd(|q|,n)=\gcd(|q|,\frac{n}{p})=1$.

From step (2) in the algorithm we know that co-prime $l_1$ and $l_2$ satisfy that
$$
\l_2n = l_1m p q
$$
and hence
$$
l_2\frac{n}{p} = l_1m q
$$
Using the observations above, this tells us that $l_1=\frac{|n|}{p}$ and $l_2=m \,|q|$.

Now, since $\gcd(p,m) = 1$,
$$gcd(ml_2,|w_1m_2-w_2m_1|) = \gcd(m^2\,|q|, mp |q|) =m |q|,$$
and so \eqref{joincond} is satisfied.

The smooth case of $m_1=m_2=1$ is straightforward to verify.
\end{proof}

\begin{rem}
To see that the assumption  $\gcd(m_1,m_2,n)=1$ in the case where $n\neq 0$ is not trivial one may for instance work out the algorithm for
$n=m_1=m_2=2$, $k_1=4$, and $k_2=1$. This gives us $r=1/5$, $w_1=3$, $w_2=2$, and $l_1=l_2=1$. It is easy to see that this does not
satisfy \eqref{joincond}. 
\end{rem}

\begin{rem}
Recall that Proposition \ref{yamexprop} and Example \ref{yamex} show that there are ruled manifolds $S_L=\bbp(\BOne\oplus L)$ with Sasakian $S^1$ bundles that do not arise from a join. 
\end{rem}

\section{A Splitting Theorem for Extremal Sasakian Structures}\label{extsplitsect}
As in the K\"ahler case, the problem of finding an extremal {\it toric} Sasakian structure can be translated to finding an {\it extremal symplectic potential} which is a convex function on a certain polytope satisfying some boundary condition and a 4th order non-linear PDE. We now recall briefly the details of this correspondence.

\subsection{Extremal symplectic potential}
Recall from \S~\ref{conesect} that to any compact connected contact manifolds $(M^{2n+1},\cald)$ endowed with the contact action of a torus $\hat{\bbt}=\bbt^k$ and a fixed Reeb vector field $b \in \hat{\gt}= \mbox{Lie }\hat{\bbt}$, is associated a labelled polytope $$(P,\bu) = (P_b, \bu_b).$$  The toric case, as we assume it is for the rest of this secton, is when $\dim \hat{\bbt} = n+1$ and, in that case, the $\eta$-momentum map $\mu_\eta : M \rightarrow P$ is a quotient map. Consequently any $\hat{\bbt}$-invariant tensor on $M$ can be read off a corresponding tensor on $P$. This is explicit and very well understood for toric Sasaki metrics so that they correspond to {\it symplectic potentials} via the Hessian of the latter, see \cite{MaSp06} and also~\cite{Leg10}. This correspondence has first been developped in the context of K\"ahler geometry, by the work of Guillemin\cite{Gui94b}, Abreu\cite{Abr01}, Apostolov, Calderbank, Gauduchon, T\o nnesen-Friedman \cite{ACGT04}. To give more details, recall that we 
denote $P=\{x \in \mathcal{A}\, | \, l_i(x)\geq 0 \}$ where $\mathcal{A}$ is an $n$--dimensional affine space often identified with $\R^n$ and the defining affine functions, uniquely determined by $P$ and $\bu$, are $$l_i(\cdot) =\langle \cdot, \vec{n_i} \rangle -\lambda_i$$ for $i=1,\dots,d$ where $d$ is the number of facets of $P$. A symplectic potential on $(P,\bu)$ can be written $u_f = u_0 + f \in C^0(\bar{P})$ where $f \in C^\infty(\bar{P})$ and $$u_0 =\frac{1}{2}\sum_{i=1}^d l_i \log l_i,$$
satisfy the following Guillemin boundary conditions :

\begin{itemize}

\item $u_f$ is a smooth, strictly convex function on the interior of $P$;

\item when restricting to the interior of each face of $P$, $u_f$ is also a smooth, strictly convex function.

\end{itemize}

We denote the set of all relative symplectic potentials as
\begin{align*}
\mathcal{H}_S = \{ f \in C^\infty(\bar{P}) ~|~ & u_f = u_0 + f ~\mathrm{and} ~ u_f \mathrm{~satisfies ~the}\\
&  \mathrm{~Guillemin ~boundary ~conditions.} \}
\end{align*}

As recalled in Proposition \ref{bg00b}, whenever the Reeb vector field is quasi-regular then $(P,\bu)$ is rational, see definition \ref{Rational_Cone}, and is associated to a toric symplectic orbifold 
$$(N,\omega, \bbt) = (M/S_b, d\eta_b,  \hat{\bbt}/S_b)$$
via the Delzant--Lerman--Tolman \cite{LeTo97} correspondence which happens to be the quotient by $S_b\simeq S^1$ induced by the Reeb vector field. The set of compatible toric K\"ahler metrics on this symplectic orbifold is parametrized by $\mathcal{H}_S$ as well.       

On the other hand, suppose $\omega$ is a K\"ahler form invariant under the torus action $\TT^n$. Then the set of all $\TT^n$ invariant relative K\"ahler potentials is
$$
\mathcal{H}_K = \{ \varphi \in C_T^\infty(N) ~|~ \omega_\varphi = \omega + \sqrt{-1} \partial \bar{\partial} \varphi  > 0. \},
$$
where $C_T^\infty(N)$ is the set of all smooth, $\TT^n$ invariant functions on $N$. Thus $\mathcal{H}_S$ is in one-to-one correspondence to $\mathcal{H}_K$ through the Legendre transformation.

Let us consider $N^0 = \mu^{-1}(P)$. In fact, $N^0 = (\CC^*)^n = \RR^n \times \TT^n$. For any $\TT^n$ invariant K\"ahler metric $\omega$, we can express $\omega$ on $N^0$ as
$$
\omega = \sqrt{-1} \psi_{ij} d z^i \wedge d \bar{z}^j,
$$
where $z_i = \xi_i + t_i, ~ \xi_i \in \RR^n$ and $t_i \in \TT^n$; $\psi$ is a smooth, strictly convex function on $\RR^n$ and $\psi_{ij} = \frac{\partial^2 \psi}{\partial \xi_i \xi_j}$.
The scalar curvature $R_\omega$ on $N^0$ can be expressed as
$$
R_\omega(z) = - \psi^{ij} (\log \det (D^2 \psi))_{ij} (\xi).
$$
The Legendre transformation gives the dual coordinate $x = \nabla \psi(\xi)$ on $P$ and the symplectic potential
$$
u(x) = \sum_{i=1}^n x_i \xi_i - \psi(\xi).
$$
By Abreu's formula, we have
$$
R_\omega = R_u = - \sum_{i j} u^{ij}_{~ij}. 
$$
By definition $\omega$ is an extremal (see Section \ref{extsassect}) K\"ahler metric if $\nabla R_\omega$ is a holomorphic vector field. It implies that $R_u$ is an affine function on $P$. In such a case, we call $u$ is an {\it extremal symplectic potential}.

There is an important integral formula discovered by Donaldson \cite{Don02} on toric manifolds, which is a particular case of an observation by Futaki and Mabuchi \cite{FuMa95}, which can be extended to labelled polytopes.

\begin{prop}
\label{integral}
Let $u \in \mathcal{H}_S$ and $f \in C^\infty(\bar{P})$, then
$$
\int_P R_u f ~ d \mu = 2 \int_{\partial P} f ~ d \sigma - \int_P u^{ij} f_{ij} ~ d \mu,
$$
where $d \mu$ is the standard Lebesgue measure on $\RR^n$ and $d \sigma$ is a multiple of the standard Lebesgue measure on each facet $P_i$ such that $\vec{n}_i \wedge d \sigma = - d \mu$.
\end{prop}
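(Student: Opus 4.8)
I would prove Proposition~\ref{integral} exactly as in Donaldson's toric calculation (and its orbifold precursor of Futaki--Mabuchi), namely by feeding Abreu's formula $R_u = -\sum_{ij} u^{ij}_{,ij}$ into two successive integrations by parts against $f$, and then observing that the only ingredient ever used is the local structure of $u_0 = \tfrac12\sum_i l_i\log l_i$ near the faces of $P$, which is completely insensitive to whether the normals $\vec n_i$ lie in a lattice; hence the identity persists for an arbitrary labelled polytope and for arbitrary $f\in C^\infty(\bar P)$ (no convexity of $f$ is needed). Throughout, $H=(u^{ij})$ denotes the inverse of $\mathrm{Hess}\,u$, which is bounded on $\bar P$ since $u=u_0+f$ with $u_0$ blowing up only logarithmically.

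\textbf{Key lemma: boundary behaviour of $H$.} The heart of the matter is to extract from the Guillemin boundary conditions the asymptotics of the degenerate matrix $H$ near $\partial P$. Near the relative interior of a facet $P_k$ only $l_k$ vanishes, so $u = \tfrac12 l_k\log l_k + (\text{a function smooth up to }P_k)$; choosing affine coordinates $(t,y)$ with $t=l_k$ (so that $\vec n_k$ corresponds to $e_t$), one finds $(\mathrm{Hess}\,u)_{tt} = \tfrac1{2t}+O(1)$ and all other entries $O(1)$, whence $H$ extends smoothly up to $P_k$ with $H^{tt}=2t+O(t^2)$ and $H^{tj}=O(t)$ for $j\neq t$. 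Two consequences I would record: (i) $\sum_i H^{ij}(\vec n_k)_i \to 0$ on $P_k$ for every $j$ (the facet-normal direction is a null direction for $H$ at the facet); and (ii) $\sum_{ij}\partial_i\!\big(H^{ij}(\vec n_k)_j\big) \to 2$ on $P_k$, since only the normal derivative $\partial_t H^{tt}\to 2$ survives, the others being tangential derivatives of quantities vanishing on $P_k$ by (i). A similar but slightly longer normal-form analysis near the lower-dimensional faces shows $H$ and $u^{ij}_{,i}$ are regular enough up to all of $\partial P$ to legitimate the integrations by parts below.

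\textbf{The two integrations by parts.} With the stated convention $\vec n_i\wedge d\sigma = -d\mu$ (and $\vec n_i$ the inward conormals), the divergence theorem reads $\int_P \mathrm{div}\,W\,d\mu = -\sum_i\int_{P_i}\langle W,\vec n_i\rangle\,d\sigma$. Applying it to the vector field with components $W_j=\sum_i u^{ij}_{,i}\,f$ gives
$$\int_P\Big(\sum_{ij}u^{ij}_{,ij}\Big)f\,d\mu + \int_P\sum_{ij}u^{ij}_{,i}f_{,j}\,d\mu = -\sum_k\int_{P_k}\Big(\sum_{ij}u^{ij}_{,i}(\vec n_k)_j\Big)f\,d\sigma = -2\int_{\partial P}f\,d\sigma,$$
using (ii) to evaluate the boundary integrand. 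By Abreu's formula this says $\int_P R_u f\,d\mu = 2\int_{\partial P}f\,d\sigma + \int_P\sum_{ij}u^{ij}_{,i}f_{,j}\,d\mu$. A second application of the divergence theorem, now to the field with components $W'_i=\sum_j u^{ij}f_{,j}$, produces the boundary term $-\sum_k\int_{P_k}\big(\sum_{ij}u^{ij}(\vec n_k)_i f_{,j}\big)\,d\sigma$, which vanishes by (i); hence $\int_P\sum_{ij}u^{ij}_{,i}f_{,j}\,d\mu = -\int_P\sum_{ij}u^{ij}f_{,ij}\,d\mu$. Combining the two displays yields precisely
$$\int_P R_u f\,d\mu = 2\int_{\partial P}f\,d\sigma - \int_P u^{ij}f_{ij}\,d\mu.$$

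\textbf{The main obstacle.} Everything after the key lemma is bookkeeping; the real work is the lemma itself — pinning the universal constant $2$ and verifying that the facet-normal direction is null for $H$ at each facet — together with the regularity of $H$ and of $u^{ij}_{,i}$ near the codimension-$\geq 2$ faces needed to ensure the divergence theorem produces no spurious contribution from the lower strata (handled, if one wants to be careful, by an exhaustion/cutoff argument removing small neighbourhoods of those faces and passing to the limit, the lower faces having $d\sigma$-measure zero). Since the relevant normal-form computation for $u_0=\tfrac12\sum_i l_i\log l_i$ makes no reference to a lattice, the formula holds for every labelled polytope, as asserted.
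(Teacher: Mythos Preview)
Your argument is correct and is precisely the classical Donaldson integration-by-parts computation: Abreu's formula, two applications of the divergence theorem, and the boundary analysis of $H=(u^{ij})$ near the facets producing the constant $2$ and killing the other boundary term. The paper does not supply its own proof of this proposition; it simply attributes the formula to Donaldson \cite{Don02} (as a special case of an observation of Futaki--Mabuchi \cite{FuMa95}) and notes that it extends to labelled polytopes, so your approach is exactly the one being cited.
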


For a synplectic toric orbifold $(N, \omega, \TT^n)$, we can define the {\it extremal affine function} $R_E$ on $P$ as follows: for any affine function $f$ on $P$, we have
$$
\int_P f R_E ~ d \mu = 2 \int_{\partial P} f ~ d \sigma.
$$

By Proposition (\ref{integral}), it is easy to verify that $u_f$ is associated to an extremal K\"ahler metric on $N$ if and only if \begin{equation}\label{extremalEQ}
R_f \equiv R_E. 
\end{equation}

Then, an {\it extremal symplectic potential} is a symplectic potential $u_f$ with $f\in \mathcal{H}_S$ solving the $4$--th order PDE \eqref{extremalEQ}. Note that everything said in this subsection makes sense for transversal K\"ahler geometry with labelled polytopes, therefore it  extends to irregular toric Sasaki manifolds, see~\cite{Leg10}.

\subsection{Splitting}
Let $(N_1, [\omega_1], \TT^{n_1})$ and $(N_2, [\omega_2], \TT^{n_2})$ be two toric orbifolds. Let $P_i \subset \RR^{n_i}, ~ i = 1,2$ be the rational Delzant polytope of the moment map $\mu_i$. Then the product $(N=N_1 \times N_2, [\omega = \omega_1 + \omega_2], \TT^n = \TT^{n_1} \times \TT^{n_2})$ is also a toric orbifold whose rational Delzant polytope is $P = P_1 \times P_2 \subset \RR^n = \RR^{n_1 + n_2}$. And the new moment map on $N$ is $\mu = \mu_1 \times \mu_2$.  Let $u_1, u_2$ be symplectic potentials on $P_1, P_2$ respectively. Then $u = u_1 + u_2$ is a symplectic potential on $P$. We denote the set of relative symplectic potentials of $N_1, N_2, N$ by $\mathcal{H}_S(N_1), \mathcal{H}_S (N_2), \mathcal{H}_S (N)$ respectively: 
\begin{align*}
\mathcal{H}_S(N_1) = \{ f \in C^\infty(\bar{P_1}) ~|~ & u_f = u_1 + f ~\mathrm{and} ~ u_f \mathrm{~satisfies ~the}\\
&  \mathrm{~Guillemin ~boundary ~conditions.} \}
\end{align*}

\begin{align*}
\mathcal{H}_S(N_2) = \{ f \in C^\infty(\bar{P_2}) ~|~ & u_f = u_2 + f ~\mathrm{and} ~ u_f \mathrm{~satisfies ~the}\\
&  \mathrm{~Guillemin ~boundary ~conditions.} \}
\end{align*}

\begin{align*}
\mathcal{H}_S(N) = \{ f \in C^\infty(\bar{P}) ~|~ & u_f = u + f ~\mathrm{and} ~ u_f \mathrm{~satisfies ~the}\\
&  \mathrm{~Guillemin ~boundary ~conditions.} \}
\end{align*}

\begin{lemma}
\label{split}
For any $f \in \mathcal{H}_S(N)$. Let $x = (x_1, \ldots, x_{n_1})$ be a coordinate system on $P_1$ and $y = (y_1, \ldots, y_{n_2})$ be a coordinate system on $P_2$. Then
$$
f_1(x) = \frac{1}{Vol(P_2)} \int_{P_2} f(x,y) ~ dy, \quad f_2(y) = \frac{1}{Vol(P_1)} \int_{P_1} f(x,y) ~ dx
$$
are relative symplectic potentials on $P_1, P_2$ respectively.
\end{lemma}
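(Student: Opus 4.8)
The plan is to verify directly that $f_1$ (and, by the symmetric argument, $f_2$) lies in $\mathcal{H}_S(N_1)$, i.e. that $f_1\in C^\infty(\bar{P_1})$ and that $u_{f_1}:=u_1+f_1$ satisfies the two Guillemin boundary conditions on $P_1$. The observation that makes this work is that, since $P=P_1\times P_2$, the defining affine functions of $P$ are exactly the $l_{1,i}(x)$ pulled back from $P_1$ together with the $l_{2,j}(y)$ pulled back from $P_2$; hence the canonical potential of $P$ splits as $u=u_1(x)+u_2(y)$, consistent with the definition of $u$ above. Therefore, for $f\in\mathcal{H}_S(N)$ and any $x\in\bar{P_1}$,
$$\frac{1}{Vol(P_2)}\int_{P_2}u_f(x,y)\,dy=u_1(x)+\frac{1}{Vol(P_2)}\int_{P_2}u_2(y)\,dy+f_1(x)=u_{f_1}(x)+c,$$
where $c=\frac{1}{Vol(P_2)}\int_{P_2}u_2(y)\,dy$ is a finite constant (each term $l_{2,j}\log l_{2,j}$ of $u_2$ extends continuously, hence boundedly, to $\bar{P_2}$). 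So up to an additive constant $u_{f_1}$ is the $P_2$-average of $u_f$, and this identity is what I would exploit throughout.

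First I would treat the interior of $P_1$. Since $f\in C^\infty(\bar P)$ and $\bar P=\bar{P_1}\times\bar{P_2}$ is compact, differentiation under the integral sign gives $f_1\in C^\infty(\bar{P_1})$, so $u_{f_1}$ is smooth on the interior of $P_1$. For strict convexity, recall that the Guillemin condition on $u_f$ means $\mathrm{Hess}\,u_f$ is positive definite on $\mathrm{int}\,P$, so its $(x,x)$-block $\mathrm{Hess}_x u_f(x,y)$, being a principal submatrix of a positive definite matrix, is positive definite at every $(x,y)\in\mathrm{int}\,P$. Differentiating the averaging identity twice in $x$ and commuting the derivatives with the integral yields $\mathrm{Hess}\,u_{f_1}(x)=\frac{1}{Vol(P_2)}\int_{P_2}\mathrm{Hess}_x u_f(x,y)\,dy$, an integral of positive definite matrices over a set of positive measure, hence positive definite. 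Thus $u_{f_1}$ is smooth and strictly convex on $\mathrm{int}\,P_1$, which is the first Guillemin condition.

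Next I would handle the faces. The combinatorial point is that for any face $F_1$ of $P_1$ the set $F=F_1\times P_2$ is a face of $P$ — it is cut from $P$ by a supporting hyperplane of $P_1$ — and its relative interior is $\mathrm{relint}(F_1)\times\mathrm{int}(P_2)$. Choosing affine coordinates $x'$ on $\mathrm{aff}(F_1)$ while keeping $y$ on $\mathbb{R}^{n_2}$, the Guillemin condition applied to $u_f\in\mathcal{H}_S(N)$ says $u_f|_F$ is smooth and strictly convex on $\mathrm{relint}(F)$. Restricting the averaging identity to $x\in F_1$ (valid since both sides are continuous on $\bar{P_1}$) gives $u_{f_1}|_{F_1}(x')+c=\frac{1}{Vol(P_2)}\int_{P_2}u_f|_F(x',y)\,dy$, and since $u_f|_F(x',y)=u_1|_{F_1}(x')+u_2(y)+f|_F(x',y)$, differentiating twice in $x'$ under the integral again expresses $\mathrm{Hess}_{x'}(u_{f_1}|_{F_1})$ as the $P_2$-average of the positive definite $(x',x')$-blocks of $\mathrm{Hess}(u_f|_F)$; hence $u_{f_1}|_{F_1}$ is smooth and strictly convex on $\mathrm{relint}(F_1)$, the second Guillemin condition. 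The statement for $f_2$ follows by interchanging the roles of $P_1$ and $P_2$.

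I do not expect a serious obstacle here: the only points needing a little care are the identification of $F_1\times P_2$ as a genuine face of $P$ (so that the hypothesis on $u_f$ can be invoked there) and the routine justification of differentiation under the integral sign together with the interchange of the Hessian and the integral — all of which are legitimate because $f\in C^\infty(\bar P)$ on a compact set and $u_1,u_2$ are the standard Guillemin potentials of $P_1,P_2$. The conceptual content is entirely the splitting $u=u_1+u_2$ combined with the fact that averaging of a uniformly positive definite family of Hessians over a positive-measure set stays positive definite.
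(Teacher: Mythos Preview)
Your argument is correct and follows essentially the same approach as the paper's own proof: both reduce to showing that the $P_2$-average of the positive definite $x$-blocks of $\mathrm{Hess}\,u_f$ remains positive definite, using the splitting $u=u_1+u_2$. Your version is more careful in that you spell out the face case via $F_1\times P_2$ being a face of $P$ and justify differentiation under the integral sign, whereas the paper treats only the interior case and leaves the face case as an implicit ``without loss of generality''.
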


\begin{proof}
Without loss of generality, we only need to show that $f_1(x) \in \mathcal{H}_S(N_1)$, i.e., $u_{f_1} = u_1 + f_1$ satisfies:

\begin{itemize}
\item $u_{f_1}$ is a smooth, strictly convex function on $P_1$.

\item When restricting to each face of $P_1$, $u_{f_1}$ is still a smooth, strictly convex function.
\end{itemize}
Without loss of generality, we only show that $u_{f_1}$ is a smooth, strictly convex function on $P_1$. It is easy to see that $u_{f_1}$ is a smooth function. Also for any $x \in P$ and any nonzero vector $\vec{v}$, 
$$
(D^2 u_{f_1}(x))(\vec{v}, \vec{v}) =  \frac{1}{Vol(P_2)} \int_{P_2} D^2(u_1(x) + f(x,y))(\vec{v}, \vec{v}) ~ dy.
$$
Since for any $y \in P_2$, $D^2(u_1(x) + f(x,y))(\vec{v}, \vec{v}) > 0$, we conclude that $(D^2 u_{f_1}(x))(\vec{v}, \vec{v}) > 0$.
\end{proof}

Next we define a subspace of $\mathcal{H}_S(N_1)$, denoted by $\mathcal{G}(f_1)$ as follows: 
$$\mathcal{G}(f_1) := \{ g_1 \in \mathcal{H}_S(N_1) ~ | ~ \int_{P_1} f_1 ~ dx = \int_{P_1} g_1 ~ d x. \}$$
Similarly, we define a subspace of $\mathcal{H}_S(N_2)$ as follows:
 $$\mathcal{G}(f_2) := \{ g_2 \in \mathcal{H}_S(N_2) ~ | ~ \int_{P_2} f_2 ~ dy = \int_{P_2} g_2 ~ d y. \}$$

\begin{lemma}
\label{min}
Let $f \in \mathcal{H}_S(N)$. We obtain $f_1 \in \mathcal{H}_S(N_1), ~ f_2 \in \mathcal{H}_S(N_2)$ as in Lemma (\ref{split}). Then
$$
\int_P (f(x,y) - f_1(x) - f_2(y))^2 ~ dx dy \le \int_P (f(x,y) - g_1(x) - g_2(y))^2 ~ dx dy, 
$$
for any $g_1(x) \in \mathcal{G}(f_1), ~g_2(x) \in \mathcal{G}(f_2).$ Moreover, the equality holds iff $f_1(x) \equiv g_1(x)$ and $f_2(y) \equiv g_2(y)$.
\end{lemma}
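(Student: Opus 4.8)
The plan is to read Lemma~\ref{min} as the statement that $f_1(x)+f_2(y)$ is the $L^2(P)$-orthogonal projection of $f$ onto the affine family of functions of the form $g_1(x)+g_2(y)$ with $g_i$ ranging over the prescribed mean-classes, so that the inequality is nothing but the Pythagorean identity and the equality case is the uniqueness of the projection. Accordingly, I would first pass to a centered description of $\mathcal{G}(f_1)$ and $\mathcal{G}(f_2)$: given $g_1\in\mathcal{G}(f_1)$ and $g_2\in\mathcal{G}(f_2)$, set $h_1=g_1-f_1\in C^\infty(\bar P_1)$ and $h_2=g_2-f_2\in C^\infty(\bar P_2)$, and note that the constraints defining $\mathcal{G}$ are exactly $\int_{P_1}h_1\,dx=0$ and $\int_{P_2}h_2\,dy=0$. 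Since $f-g_1-g_2=(f-f_1-f_2)-(h_1+h_2)$, expanding the square gives
$$\int_P(f-g_1-g_2)^2\,dx\,dy=\int_P(f-f_1-f_2)^2\,dx\,dy-2\int_P(f-f_1-f_2)(h_1+h_2)\,dx\,dy+\int_P(h_1+h_2)^2\,dx\,dy.$$

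The heart of the argument is that the middle term vanishes. For the $h_1$ contribution, Fubini gives $\int_P(f-f_1-f_2)\,h_1\,dx\,dy=\int_{P_1}h_1(x)\big(\int_{P_2}(f(x,y)-f_1(x)-f_2(y))\,dy\big)\,dx$, and by the very definition of $f_1$ one has $\int_{P_2}f(x,y)\,dy=Vol(P_2)\,f_1(x)$, so the inner integral collapses to the constant $-\int_{P_2}f_2(y)\,dy$; integrating this constant against $h_1$ yields $0$ because $h_1$ has zero mean on $P_1$. The $h_2$ contribution is killed symmetrically using the definition of $f_2$ and the zero-mean of $h_2$. Hence
$$\int_P(f-g_1-g_2)^2\,dx\,dy=\int_P(f-f_1-f_2)^2\,dx\,dy+\int_P(h_1+h_2)^2\,dx\,dy\ \geq\ \int_P(f-f_1-f_2)^2\,dx\,dy,$$
which is the claimed inequality. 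For equality, $\int_P(h_1+h_2)^2=0$ forces $h_1(x)+h_2(y)\equiv 0$ on $P=P_1\times P_2$; fixing the $y$-variable shows $h_1$ is constant, fixing the $x$-variable shows $h_2$ is constant, and the zero-mean conditions together with $Vol(P_i)>0$ force both constants to vanish, i.e.\ $g_1\equiv f_1$ and $g_2\equiv f_2$.

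I do not anticipate a substantive obstacle: the statement is essentially Hilbert-space geometry, and the only points requiring a little care are the Fubini bookkeeping in the cross term (where one must invoke precisely the averaging property defining $f_1,f_2$ from Lemma~\ref{split} and the mean constraints defining $\mathcal{G}$), and the elementary remark that a function of $x$ plus a function of $y$ vanishing identically on a product of positive-volume sets must split into two constants. It is also worth recording that $f_1\in\mathcal{H}_S(N_1)$ and $f_2\in\mathcal{H}_S(N_2)$ by Lemma~\ref{split}, so that $\mathcal{G}(f_1)$ and $\mathcal{G}(f_2)$ are nonempty and the minimization statement is not vacuous.
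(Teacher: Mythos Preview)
Your proof is correct and follows essentially the same approach as the paper: both reduce the inequality to a nonnegative quadratic remainder by expanding the squares and using the averaging definition of $f_1,f_2$ together with the mean constraints defining $\mathcal{G}(f_i)$. The only cosmetic difference is that the paper carries out the algebra directly and arrives at the separated form $Vol(P_2)\int_{P_1}(f_1-g_1)^2+Vol(P_1)\int_{P_2}(f_2-g_2)^2$, from which the equality case is immediate, whereas your Pythagorean framing produces the equivalent remainder $\int_P(h_1+h_2)^2$ and then needs the short extra observation that $h_1(x)+h_2(y)\equiv 0$ forces each $h_i$ to be a zero-mean constant.
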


\begin{proof}
It is equivalent to prove that
\begin{align*}
&\int_P -2 f(x,y) (f_1(x) + f_2(y)) + f_1^2(x) + f_2^2(y) ~ dx dy \\
\le &\int_P -2 f(x,y) (g_1(x) + g_2(y)) + g_1^2(x) + g_2^2(y) ~ dx dy.\\
\Leftrightarrow & \\
&- (Vol(P_2) \int_{P_1} f_1^2(x) ~ dx + Vol(P_1) \int_{P_2} f_2^2(y) ~ dy) \\
\le & Vol(P_2) \int_{P_1} - 2 f_1(x) g_1(x) + g_1^2(x) ~ d x +\\
& Vol(P_1) \int_{P_2} -2 f_2(y) g_2(y) + g_2^2(y) ~ dy \\
\Leftrightarrow & \\
& 0 \le Vol(P_2) \int_{P_1} (f_1(x) - g_1(x))^2 ~ d x + Vol(P_1) \int_{P_2} (f_2(y) - g_2(y))^2 ~ dy.
\end{align*}
Thus we obtain the desired inequality, and it is clear that the equality holds iff $f_1(x) \equiv g_1(x)$ and $f_2(y) \equiv g_2(y)$.
\end{proof}

Let $f \in \mathcal{H}_S(N)$ be a relative symplectic potential such that $u_f = u + f$ is an extremal symplectic potential. By Lemma (\ref{split}), we obtain relative symplectic potentials $f_1(x), f_2(y)$. Moreover, we have:

\begin{prop}
\label{potential_split}
$u_{f_1} = u_1 + f_1, ~ u_{f_2} = u_2 + f_2$ are extremal symplectic potentials on $P_1, P_2$ respectively.
\end{prop}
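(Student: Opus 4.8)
The plan is to recognise an extremal symplectic potential as a critical point, and hence (by convexity) a minimiser, of the Donaldson functional \cite{Don02}
\[
\mathcal{M}(u_g)=-\int_P \log\det(D^2 u_g)\,d\mu+2\int_{\partial P} u_g\,d\sigma-\int_P R_E\,u_g\,d\mu ,
\]
defined on the (convex) set of symplectic potentials $u_g=u_0+g$, $g\in\mathcal{H}_S$. By Proposition~\ref{integral} its first variation is $\frac{d}{dt}\big|_0\mathcal{M}(u_{g+t\phi})=\int_P(R_{u_g}-R_E)\phi\,d\mu$, so $u_g$ is a critical point precisely when \eqref{extremalEQ} holds; and since $-\log\det$ is convex on positive definite symmetric matrices while the other two terms are linear in $g$, $\mathcal{M}$ is convex (strictly so modulo affine functions, because adding an affine function to $g$ leaves $\mathcal{M}$ unchanged by the defining relation of $R_E$). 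On the product $P=P_1\times P_2$ I write $R_{E,1},R_{E,2},R_E$ for the extremal affine functions of $P_1,P_2,P$ and $\mathcal{M}_1,\mathcal{M}_2,\mathcal{M}$ for the corresponding functionals.

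First I would record the elementary splitting $R_E(x,y)=R_{E,1}(x)+R_{E,2}(y)$. Testing $\int_P h R_E\,d\mu=2\int_{\partial P}h\,d\sigma$ on an arbitrary affine $h$, decomposing $\partial P=(\partial P_1\times P_2)\cup(P_1\times\partial P_2)$ with $d\sigma=d\sigma_1\wedge dy$ and $dx\wedge d\sigma_2$ on the two pieces, and inserting the definitions of $R_{E,1},R_{E,2}$, the coupling terms (which involve $2\int_{\partial P_i}d\sigma_i=\int_{P_i}R_{E,i}\,d\mu_i$) cancel, and matching the remaining pairings forces the affine function $R_E-R_{E,1}-R_{E,2}$ to be $L^2(P_i)$-orthogonal to all affine functions, hence zero. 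The same relations, combined with Proposition~\ref{integral} applied to the constant test function $1$, also show that $\mathcal{M}$ is unchanged by adding a constant to $g$ and that $\int_{P_i}(R_{u_{g_i}}-R_{E,i})\,d\mu_i=0$ for every potential $u_{g_i}$ on $P_i$.

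The crux is a comparison estimate: for any symplectic potential $u_g$ on $P$, if $g_1,g_2$ denote the partial averages of $g$ from Lemma~\ref{split}, then the split potential $u_g':=u_{g_1}+u_{g_2}$ (a genuine element of $\mathcal{H}_S(N)$, by Lemma~\ref{split} and the fact that $u_0$ is itself split) satisfies $\mathcal{M}(u_g')\le\mathcal{M}(u_g)$. Indeed, normalising by a constant so that $\phi:=u_g-u_g'$ has vanishing partial averages $\int_{P_2}\phi(x,\cdot)\,dy\equiv0\equiv\int_{P_1}\phi(\cdot,y)\,dx$, the identity $R_E=R_{E,1}+R_{E,2}$ together with the product form of $d\sigma$ makes the linear part $2\int_{\partial P}\phi\,d\sigma-\int_P R_E\phi\,d\mu$ of $\mathcal{M}(u_g)-\mathcal{M}(u_g')$ vanish identically, leaving
\[
\mathcal{M}(u_g)-\mathcal{M}(u_g')=\int_P\big(\log\det D^2 u_g'-\log\det D^2 u_g\big)\,d\mu .
\]
Writing $D^2 u_g=\begin{pmatrix}A_{11}&A_{12}\\A_{12}^{\,t}&A_{22}\end{pmatrix}$ relative to $\RR^{n_1}\times\RR^{n_2}$, Fischer's inequality gives $\log\det D^2 u_g\le\log\det A_{11}+\log\det A_{22}$ pointwise, while $D^2 u_{g_1}(x)$ is the average of $A_{11}(x,\cdot)$ over $P_2$ and $D^2 u_{g_2}(y)$ the average of $A_{22}(\cdot,y)$ over $P_1$, so concavity of $\log\det$ (Jensen, slicing in $y$ then integrating in $x$, and symmetrically) gives $\int_P(\log\det A_{11}+\log\det A_{22})\,d\mu\le\int_P\log\det D^2 u_g'\,d\mu$; chaining the two inequalities proves the claim.

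Finally I apply the estimate to the given extremal potential $u_f=u+f$. Being a critical point of the convex functional $\mathcal{M}$, it is a global minimiser, so $\mathcal{M}(u_{f_1}+u_{f_2})\le\mathcal{M}(u_f)=\min\mathcal{M}$ forces $u_f':=u_{f_1}+u_{f_2}$ to be a minimiser as well, hence a critical point, hence $R_{u_f'}\equiv R_E$. Since $u_f'$ is split, $R_{u_f'}(x,y)=R_{u_{f_1}}(x)+R_{u_{f_2}}(y)$; comparing with $R_E=R_{E,1}+R_{E,2}$ gives $R_{u_{f_1}}-R_{E,1}=-(R_{u_{f_2}}-R_{E,2})=\mathrm{const}$, and the vanishing integrals $\int_{P_i}(R_{u_{f_i}}-R_{E,i})\,d\mu_i=0$ noted above make the constant zero. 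Thus $R_{u_{f_i}}\equiv R_{E,i}$, i.e. $u_{f_1}=u_1+f_1$ and $u_{f_2}=u_2+f_2$ are extremal symplectic potentials; Lemma~\ref{min} (or strict convexity of $\mathcal{M}_i$ modulo affine functions) then identifies them as the unique ones with their normalisations. I expect the main obstacle to be the comparison estimate: pinning down the normalisation of $d\sigma$ on the facets of $P_1\times P_2$ so that the linear term cancels exactly, and combining Fischer's inequality with Jensen's inequality for $\log\det$ cleanly (together with the mild boundary-regularity points needed to justify the variations of $\mathcal{M}$ near $\partial P$).
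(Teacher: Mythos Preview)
Your proof is correct, but it takes a genuinely different route from the paper's. The paper argues as follows: by Lemma~\ref{min} the pair $(f_1,f_2)$ minimises the $L^2$ distance $\int_P(f-g_1-g_2)^2\,d\mu$ over split potentials with fixed averages; deforming along $g_i(t)=f_i-t(R_{f_i}-R_{E,i})$, differentiating at $t=0$, and converting via Proposition~\ref{integral} gives
\[
0\le -2\int_P (u_{f,ij}-v_{ij})(u_f^{\,ij}-v^{\,ij})\,d\mu,\qquad v:=u_{f_1}+u_{f_2},
\]
which is also $\le 0$ by the matrix inequality $\mathrm{tr}\big((A-B)(A^{-1}-B^{-1})\big)\le 0$ for positive definite $A,B$ (the pointwise form of $(1-a)(1-a^{-1})\le0$). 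Equality forces $D^2 u_f\equiv D^2 v$, so $u_f$ itself splits up to an affine function and the result follows.

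Your argument instead works with the Donaldson functional $\mathcal{M}$: convexity of $-\log\det$ makes any extremal potential a global minimiser; Fischer's inequality plus Jensen for the concave $\log\det$ give $\mathcal{M}(u_{f_1}+u_{f_2})\le\mathcal{M}(u_f)$; hence $u_{f_1}+u_{f_2}$ is also a minimiser, and the split of $R_E$ forces $R_{u_{f_i}}=R_{E,i}$. This bypasses Lemma~\ref{min} entirely and trades the paper's ad hoc matrix trace inequality for the more standard pair (Fischer, Jensen). One small remark: your ``normalising by a constant so that $\phi$ has vanishing partial averages'' is slightly off --- the partial averages of $\phi=g-g_1-g_2$ are the nonzero constants $-\int_{P_1}g_1$ and $-\int_{P_2}g_2$ --- but no normalisation is actually needed: the computation you sketch shows directly that $2\int_{\partial P}\phi\,d\sigma=\int_P R_E\,\phi\,d\mu$ because both sides equal $-2C_2|\partial P_1|_\sigma-2C_1|\partial P_2|_\sigma$. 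The paper's approach buys a slightly stronger byproduct (the extremal potential $u_f$ itself splits up to affine) with a short self-contained computation; yours is more conceptual, places the result squarely in the Donaldson--Mabuchi variational framework, and would generalise more readily.
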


\begin{proof}
Let $R_{f_1}(x)$ be the scalar curvature of $u_{f_1}(x)$ and $R_{E, 1}(x)$ be the extremal affine function on $P_1$. Similarly, we let $R_{f_2}(y)$ be the scalar curvature of $u_{f_2}(y)$ and $R_{E, 2}(y)$ be the extremal affine function on $P_2$. Notice that $R_f(x,y) = R_{E, 1}(x) + R_{E, 2}(y)$.  Then there exists an $\epsilon > 0$ such that for any $t \in [0, \epsilon)$, we have
$$
f_1(t, x) := f_1 - t (R_{f_1} - R_{E,1}) \in \mathcal{G}(f_1), ~ f_2(t, y) := f_2 - t (R_{f_2} - R_{E,2}) \in \mathcal{G}(f_2).
$$
Then by Lemma (\ref{min}) and Proposition (\ref{integral}), we have
\begin{align*}
0 \le & \frac{\partial}{\partial t} \large|_{t=0} \int_P (f(x,y) - f_1(t, x) - f_2(t, y))^2 ~ dx dy\\
= & 2 \int_P (f(x,y) - f_1(x) - f_2(y)) (R_f(x,y) - R_{f_1}(x) - R_{f_2}(y)) ~ dx dy \\
=& - 2 \int_P (f_{ij} - f_{1, ij} - f_{2, ij}) (u^{ij}_f - u^{ij}_{f_1} - u^{ij}_{f_2}) ~ d x dy \\
=& - 2 \int_P (u_{f, ij} - u_{f_1, ij} - u_{f_2, ij}) (u^{ij}_f - u^{ij}_{f_1} - u^{ij}_{f_2}) ~ d x dy \\
\end{align*}

Let $v(x,y) = u_{f_1}(x) + u_{f_2}(y)$, then
\begin{align*}
& \int_P (u_{f, ij} - u_{f_1, ij} - u_{f_2, ij}) (u^{ij}_f - u^{ij}_{f_1} - u^{ij}_{f_2}) ~ d x dy\\
= & \int_P (u_{f, ij} - v_{ij}) (u^{ij}_f - v^{ij}) ~ d x dy\\
\le & 0.
\end{align*}
The last inequality uses the fact that for any positive constant $a, ~ (1-a)(1-a^{-1}) \le 0$. Moreover, the equality holds iff $(u_{f, ij}) \equiv (v_{ij})$. Thus we conclude that $f(x,y) = f_1(x) + f_2(y) + l_1(x) + l_2(y)$, where $l_1(x), l_2(y)$ are affine functions on $P_1, P_2$ respectively. Hence $u_{f_1}, u_{f_2}$ are extremal symplectic potentials on $P_1, P_2$ respectively.
\end{proof}

\begin{proof}[Proof of Theorem \ref{SasExtSPLIT}]
Here we adapt the arguments in \cite{Hua13}. Let $(M_1^{2*n_1+1}, \mathcal{S}_1, \TT^{n_1}),~ (M_1^{2*n_1+1}, \mathcal{S}_1, \TT^{n_1})$ be two quasi-regular Sasaki toric manifolds with moment maps $\mu_1, \mu_2$, respectively. Then we obtain two rational Delzant polytopes $P_1, P_2$ respectively. $M_3 = M_1 \star_{l_1, l_2} M_2$ is also a toric Sasaki manifold with a toric action $\TT^{n_3} = \TT^{n_1} \times \TT^{n_2}$. Its rational Delzant polytope $P_3 = l_1 P_1 \times l_2 P_2$. By hypothesis, there exists a transversally extremal K\"ahler metric in the transverse K\"ahler class of $M_3$. This implies that there exists an extremal symplectic potential $u_f$ on $P_3$. By Proposition \ref{potential_split}, we conclude that there exists extremal potentials $u_{f_1}, u_{f_2}$ on $l_1 P_1, l_2 P_2$ respectively. One can easily verify that $\frac{1}{l_1} u_{f_1} (x/l_1), \frac{1}{l_2} u_{f_2} (x/l_2)$ are extremal potentials on $P_1, P_2$ respectively. Thus, both $M_1, M_2$ admit extremal Sasaki structures.
\end{proof}

\def\cprime{$'$} \def\cprime{$'$} \def\cprime{$'$} \def\cprime{$'$}
  \def\cprime{$'$} \def\cprime{$'$} \def\cprime{$'$} \def\cprime{$'$}
  \def\cdprime{$''$} \def\cprime{$'$} \def\cprime{$'$} \def\cprime{$'$}
  \def\cprime{$'$}
\providecommand{\bysame}{\leavevmode\hbox to3em{\hrulefill}\thinspace}
\providecommand{\MR}{\relax\ifhmode\unskip\space\fi MR }
\providecommand{\MRhref}[2]{%
  \href{http://www.ams.org/mathscinet-getitem?mr=#1}{#2}
}
\providecommand{\href}[2]{#2}

\end{document}